\newtheorem*{theorem*}{Theorem}
\newtheorem{theorem}{Theorem}[section]
\newtheorem*{rep@theorem}{\rep@title}
\newcommand{\newreptheorem}[2]{%
\newenvironment{rep#1}[1]{%
 \def\rep@title{#2 \ref{##1}}%
 \begin{rep@theorem}}%
 {\end{rep@theorem}}}
\newtheorem{corollary}{Corollary}[section]
\newtheorem{proposition}{Proposition}[section]
\newtheorem{lemma}{Lemma}[section]
\newtheorem{remark}{Remark}[section]
\newtheorem{example}{Example}[section]
\newtheorem{definition}{Definition}[section]
\newcommand{\R}{\ensuremath{\mathbb R}}
\newcommand{\E}{\ensuremath{\mathbb E}}
\newcommand{\bin}{\text{bin}}
\newcommand{\argmax}{\ensuremath{\mathop{\text{\rm arg\,max}}}}
\newcommand{\argmin}{\ensuremath{\mathop{\text{\rm arg\,min}}}}
\newcommand{\tran}{\ensuremath{\text{T}}}
\newcommand{\given}{\ensuremath{\,|\,}}
\newcommand{\rank}{\ensuremath{\mathop{\rm rank}}}
\begin{document}

\begin{center}
{\bf{\Large{Optimal Rates for Community Estimation in the Weighted Stochastic Block Model}}}

\vspace*{.25in}

\begin{tabular}{ccccc}
{\large{Min Xu$^\dagger$}} & \hspace*{.2in} & {\large{Varun Jog$^\ddagger$}} & \hspace*{.2in} & {\large{Po-Ling Loh$^{\ddagger*}$}} \\
{\large{\texttt{mx76@stat.rutgers.edu}}} & & {\large{\texttt{vjog@wisc.edu}}} & & {\large{\texttt{loh@ece.wisc.edu}}}
\end{tabular}

\vspace{.2in}

\begin{tabular}{ccc}
Department of Statistics$^\dagger$ & \hspace{.3in} & Departments of ECE$^\ddagger$ \& Statistics$^*$ \\
Rutgers University && University of Wisconsin - Madison \\ Piscataway, NJ 08854 & & Madison, WI 53706
\end{tabular}
 
\vspace*{.2in}

August 2018

\vspace*{.2in}

\end{center}

\begin{abstract}

Community identification in a network is an important problem in fields such as social science, neuroscience, and genetics. Over the past decade, stochastic block models (SBMs) have emerged as a popular statistical framework for this problem. However, SBMs have an important limitation in that they are suited only for networks with unweighted edges; in various scientific applications, disregarding the edge weights may result in a loss of valuable information. We study a weighted generalization of the SBM, in which observations are collected in the form of a weighted adjacency matrix and the weight of each edge is generated independently from an unknown probability density determined by the community membership of its endpoints. We characterize the optimal rate of misclustering error of the weighted SBM in terms of the Renyi divergence of order 1/2 between the weight distributions of within-community and between-community edges, substantially generalizing existing results for unweighted SBMs. Furthermore, we present a computationally tractable algorithm based on discretization that achieves the optimal error rate. Our method is adaptive in the sense that the algorithm, without assuming knowledge of the weight densities, performs as well as the best algorithm that knows the weight densities.

\end{abstract}

\vskip10pt

\section{Introduction}

The recent explosion of network datasets has created a need for new statistical methodology~\cite{NewEtal06, DavKle10, Jac10, GolEtal10}. One active area of research with diverse scientific applications pertains to community detection and estimation, where observations take the form of edges between nodes in a graph, and the goal is to partition the nodes into disjoint groups based on their relative connectivity~\cite{FieEtal85, HarSha00, PriEtal00, ShiMal00, McS01, NewGir04}.

A standard model assumption in community recovery problems is that---conditioned on the community labels of the nodes of the graph---each edge is generated independently according to a distribution governed solely by the community labels of its endpoints. This is the setting of the stochastic block model (SBM)~\cite{HolEtal83}.
Community recovery may also be viewed as estimating the latent cluster memberships of the nodes a random graph generated by an SBM. The last decade has seen great progress on this problem, beginning with the seminal conjecture of Decelle et al.~\cite{DecEtal11} (see, e.g., the excellent survey paper by Abbe~\cite{Abbe17}). Various algorithms for community recovery have been devised with guaranteed optimality properties, measured in terms of correlated recovery~\cite{MosEtal12, MosEtal13, Mas14}, exact recovery~\cite{abbe2014exact, AbbSan15, abbe2015recovering}, and minimum misclustering error rate~\cite{GaoEtal15, zhangminimax}.

However, an important shortcoming of SBMs is that all edges are assumed to be binary. In contrast, the edges appearing in many real-world networks possess weights reflecting a diversity of strengths or characteristics~\cite{New04, BocEtal06}: Edges in social or cellular networks may quantify the frequency of interactions between pairs of individuals~\cite{Sad72, BloEtal08}. Similarly, edges in gene co-expression networks are assigned weights corresponding to the correlation between expression levels of pairs of genes~\cite{ZhaHor05}; and in brain networks, edge weights may indicate the level of neuronal activity between corresponding regions in the brain~\cite{RubSpo10}. Although an unweighted adjacency matrix could be constructed by disregarding the edge weight data, this might result in a loss of valuable information that could be used to recover hidden communities. 

This motivates the \emph{weighted} stochastic block model, which we study in this paper. Each edge is generated from a Bernoulli$(p)$ or Bernoulli$(q)$ distribution, depending on whether its endpoints lie in the same community, and then each edge is assigned an edge weight generated from one of two arbitrary densities, $p(\cdot)$ or $q(\cdot)$. We study the problem of community estimation based on observations of the edge weights in the network, \emph{without} assuming knowledge of $p$, $q$, $p(\cdot)$ or $q(\cdot)$. Since $p(\cdot)$ and $q(\cdot)$ are allowed to be continuous, our model strictly generalizes the discrete labeled SBMs considered in previous literature~\cite{HeiEtal12, LelEtal13, JogLoh15}, as well as the censored SBM~\cite{AbbEtal14, HajEtal14, HajEtal15}.

We emphasize key differences between the weighted SBM framework and the setting of other clustering problems involving continuous edge weights~\cite{balakrishnan2011noise, hajek2015submatrix}. First, we do not assume that between-cluster edges tend to have heavier weights than within-cluster edges (e.g., in mean-separation models). Such an assumption is critical to many algorithms for weighted networks, since it allows existing algorithms for unweighted SBMs, such as spectral clustering, to be applied in relatively straightforward ways. In contrast, the algorithms in this paper allow us to exploit other potential differences in $p(\cdot)$ and $q(\cdot)$, such as differences in variance or shape. This is crucial to achieve optimal performance. Second, our setting is \emph{nonparametric} in the sense that the densities $p(\cdot)$ and $q(\cdot)$ may be arbitrary and are only required to satisfy mild regularity conditions, whereas previous approaches generally assume that $p(\cdot)$ and $q(\cdot)$ belong to a specific parametric family. Nonparametric density estimation is itself a difficult problem, made even more difficult in the case of weighted SBMs, since we do not know a priori which edge weights have been drawn from which densities.

Our main theoretical contribution is to characterize the \emph{optimal rate of misclustering error} in the weighted SBM. On one side, we derive an information-theoretic lower bound for the performance of any community recovery algorithm for the weighted SBM. Our lower bound applies to all parameters in the parameter space (thus is not minimax) and all algorithms that produce the same output on isomorphic networks---a property that we call \emph{permutation equivariance}. On the other side, we present a computationally tractable algorithm with a rate of convergence that matches the lower bound. Our results show that the optimal rate for community estimation in a weighted SBM is governed by the Renyi divergence of order $\frac{1}{2}$ between two mixed distributions, capturing the discrepancy between the edge probabilities and edge weight densities for between-community and within-community connections. This provides a natural but highly nontrivial generalization of the results in Zhang and Zhou~\cite{zhangminimax} and Gao et al. \cite{GaoEtal15}, which show that the optimal rate of the unweighted SBM is characterized by the Renyi divergence of order $\frac{1}{2}$ between two Bernoulli distributions corresponding only to edge probabilities.

Remarkably, our rate-optimal algorithm is fully \emph{adaptive} and does not require prior knowledge of $p(\cdot)$ and $q(\cdot)$. Thus, even in cases where the densities belong to a parametric family, it is possible---\emph{without making any parametric assumptions}---to obtain the same optimal rate as if one imposes the true parametric form. This is in sharp contrast to most nonparametric estimation problems in statistics, where nonparametric methods usually lead to a slower rate of convergence than parametric methods if a specific parametric form is known. The apparent discrepancy is explained by the simply stated observation that in weighted SBMs, one does \emph{not} need to estimate edge densities well in order to recover communities to desirable accuracy. This intuition is also reflected in the work of Abbe and Sandon~\cite{abbe2015recovering} for the exact recovery problem and  Gao et al.~\cite{GaoEtal15} for the unweighted SBM. Our proposed recovery algorithm hinges on a careful discretization technique: When the edge weights are bounded, we discretize the distribution via a uniformly spaced binning to convert the weighted SBM into an instance of a \emph{labeled} SBM, where each edge possesses a label from a discrete set with finite (but divergent) cardinality; we then perform community recovery in the labeled SBM by extending a coarse-to-fine clustering algorithm that computes an initialization through spectral clustering~\cite{chin2015stochastic, lei2015consistency} and then performs refinement through nodewise likelihood maximization~\cite{GaoEtal15}. When the edge weights are unbounded, we reduce the problem to the bounded case by first applying an appropriate transformation to the edge weight distributions.

The remainder of our paper is organized as follows: Section~\ref{sec:formulation} introduces the mathematical framework of the weighted SBM, defines the community recovery problem, and formalizes the notion of permutation equivariance. Section~\ref{sec:summary} provides an informal summary of our results, later formalized in Section~\ref{sec:rate}. Section~\ref{sec:method} outlines our proposed community estimation algorithm. The key technical components of our proofs are highlighted in Section~\ref{sec:proofs}, and Section~\ref{sec:simulation} reports the results of various simulations. Section~\ref{sec:conclusion} concludes the paper with further implications and open questions.

\paragraph{\textbf{Notation:}} For a positive integer $n$, we write $[n]$ to denote the set $\{1, \dots, n\}$ and $S_n$ to denote the set of permutations of $[n]$. We write $o(1)$ to denote a sequence indexed by $n$ that tends to 0 as $n \rightarrow \infty$, and write $\Theta(1)$ to denote a sequence indexed by $n$ that is bounded away from 0 and $\infty$ as $n \rightarrow \infty$. For two real numbers $a$ and $b$, we write $a \vee b$ to denote $\max(a,b)$ and write $a \wedge b$ to denote $\min(a,b)$.


\section{Model and problem formulation}
\label{sec:formulation}

We begin with a formal definition of the homogeneous weighted SBM and a description of the community recovery problem. 

\subsection{Weighted stochastic block model}

Let $n$ denote the number of nodes in the network and let $K \geq 2$ denote the number of communities. A \emph{clustering} $\sigma$ is a function $[n] \rightarrow [K]$. For each node $u \in [n]$, we refer to $\sigma(u)$ as the cluster of node $u$. 

\begin{definition}
For a positive number $\beta \geq 1$, we define $\mathcal{C}(\beta, K)$ as the set of clusterings with minimum cluster size is at least $\frac{n}{\beta K}$, i.e., $\sigma \in \mathcal{C}(\beta, K)$ if and only if $| \sigma^{-1}(k) | \geq \frac{n}{\beta K}$ for all $k \in [K]$. We refer to $\beta$ as the \emph{cluster-imbalance constant}.
\end{definition}

We first define the homogeneous unweighted SBM, which is
characterized by the following probability distribution over adjacency matrices $A \in \{0,1\}^{n \times n}$:

\begin{definition} [Homogeneous unweighted SBM]
Let $\sigma_0 \in \mathcal{C}(\beta, K)$ and $p,q \in [0,1]$. We say that a random binary-valued matrix $A$ has the distribution $SBM(\sigma_0, p, q)$ if for all $u < v$, the entries of $A$ are generated independently according to 
  \[
A_{uv} \sim \left\{ \begin{array}{cc}
 Ber(p) & \text{ if } \sigma_0(u) = \sigma_0(v), \\
 Ber(q) & \text{ if } \sigma_0(u) \neq \sigma_0(v). 
\end{array} \right.
\]
\end{definition}
Thus, the parameters $p$ and $q$ correspond to the within-cluster and between-cluster edge probabilities. The more general \emph{heterogenous} unweighted SBM is characterized by a matrix $P \in \R^{K \times K}$ of probabilities instead of two scalars $p$ and $q$, and edges are generated independently according to $A_{uv} \sim Ber(P_{\sigma_0(u), \sigma_0(v)})$.

A homogeneous weighted SBM is parametrized by $\sigma_0 \in \mathcal{C}(\beta, K)$, the edge \emph{absence} probabilities $P_0$ and $Q_0$, and the edge weight probability densities $p(\cdot)$ and $q(\cdot)$ supported on $S \subset \mathbb{R}$, where $S$ may be $[0,1]$, $[0, \infty)$, or $\mathbb{R}$. The weighted SBM is then characterized by a distribution over symmetric matrices $A \in S^{n \times n}$ in the following manner:

\begin{definition}
[Homogeneous weighted SBM]
\label{def:weighted_homo_sbm1}
Let $\sigma_0 \in \mathcal{C}(\beta, K)$. We say that a random real-valued matrix $A$ has the distribution $WSBM(\sigma_0, (P_0, p), (Q_0, q))$ if for all $u < v$,
\begin{align}
A_{uv} \sim \left\{ 
   \begin{array}{cc} 
   P_0 \delta_0(\cdot) + (1 - P_0) p(\cdot) & \text{ if } \sigma_0(u) = \sigma_0(v), \\
   Q_0 \delta_0(\cdot) + (1 - Q_0) q(\cdot) & \text{ if } \sigma_0(u) \neq \sigma_0(v). 
   \end{array} \right. \label{eqn:simple_wsbm_defn}
\end{align}
where $P_0 \delta_0(\cdot) + (1 - P_0) p(\cdot)$ denotes a probability distribution whose singular part (with respect to the Lebesgue measure) is a point mass at $0$ with probability $P_0$ and whose continuous part has $(1-P_0)p(\cdot)$ as its Radon-Nikodym derivative with respect to the Lebesgue measure; and $Q_0 \delta_0(\cdot) + (1 - Q_0) q(\cdot)$ is defined analogously. 
\end{definition}

Note that if $p(\cdot)$ and $q(\cdot)$ are Dirac delta masses at $1$, the weighted SBM reduces to the unweighted version. We make a few additional remarks about the definition of the weighted SBM. First, we observe that $\E(A)$ may not exhibit the familiar block structure found in unweighted SBMs, since our model includes the case where $(P_0, p(\cdot))$ and $(Q_0, q(\cdot))$ have the same mean. Second, our definition treats an edge with weight 0 as a missing edge, but it is straightforward to distinguish the two notions by defining $P$ and $Q$ as probability measures over $S \cap \{ * \}$, where the symbol $*$ denotes a missing edge. Lastly, it is possible to generalize the weighted SBM to a \emph{weighted and labeled} SBM with the model
\begin{align*}
  A _{uv} \sim \begin{cases}
      P, \textrm{ if $\sigma_0(u) = \sigma_0(v)$} \\
      Q, \textrm{ if $\sigma_0(u) \neq \sigma_0(v)$}.
    \end{cases}
\end{align*}
where $P$ and $Q$ are general probability distributions over $S$ (and the labels correspond to a discrete part). The theory derived in this paper extends in a straightforward fashion to the cases where the discrete portion of $P$ and $Q$ has finite support.


\subsection{Community estimation}

Given an observation $A \in S^{n \times n}$ generated from a weighted SBM, the goal of community estimation is to recover the true cluster membership structure $\sigma_0$. We assume throughout our paper that the number of clusters $K$ is known.

We evaluate the performance of a community recovery algorithm in terms of its misclustering error.
For a clustering algorithm $\hat{\sigma}$, let $\hat{\sigma}(A) \,:\, [n] \rightarrow [K]$ denote the clustering produced by $\hat{\sigma}$ when provided with the input $A$. We have the following definition:
\begin{definition}
We define the \emph{misclustering error} to be
\[
l(\hat{\sigma}(A), \sigma_0) := \min_{\pi \in S_K} \frac{1}{n} d_H(\pi \circ \hat{\sigma}(A),\,  \sigma_0 ),
\]
where $d_H(\cdot, \, \cdot)$ denotes the Hamming distance. The \emph{risk} of $\hat{\sigma}$ is defined as $R(\hat{\sigma}, \sigma_0) := \E l(\hat{\sigma}(A), \sigma_0)$,
where the expectation is taken with respect to both the random network $A$ and any potential randomness in the algorithm $\hat{\sigma}$.
\end{definition}

The goal of this paper is to characterize the minimal achievable risk for community recovery on the weighted SBM in terms of the parameters $(n, \beta, K, (P_0, p), (Q_0, q))$.

\subsection{Permutation equivariance}
\label{sec:permutation_equivariance}

Since the cluster structure in a network does not depend on how the nodes are labeled, it is natural to focus on estimation algorithms that output equivalent clusterings when provided with isomorphic inputs. We formalize this property in the following definition: 
\begin{definition}
  \label{defn:permutation_equivariance_deterministic}
  For an $n \times n$ matrix $A$ and a permutation $\pi \in S_n$, let $\pi A$ denote the $n \times n$ matrix such that $A_{uv} = [\pi A]_{\pi(u), \pi(v)}$.
Let $\hat{\sigma}$ be a deterministic clustering algorithm. Then $\hat{\sigma}$ is \emph{permutation equivariant} if, for any $A$ and any $\pi \in S_n$,
\begin{align}
 \tau \circ \hat{\sigma}(\pi A) \circ \pi = \hat{\sigma}(A) \,\, \textrm{for some $\tau \in S_K$}. \label{eqn:perm_equiv_key}
\end{align}
\end{definition}

Note that $\hat{\sigma}(\pi A)$ by itself is not equivalent to $\hat{\sigma}(A)$, since the nodes in $\pi A$ are labeled with respect to the permutation $\pi$. It is straightforward to extend Definition~\ref{defn:permutation_equivariance_deterministic} to randomized algorithms by requiring condition~\eqref{eqn:perm_equiv_key} to hold almost everywhere in the probability space that underlies the algorithmic randomness. Permutation equivariance is a natural property satisfied by all the clustering algorithms studied in literature except algorithms that leverage extra side information in addition to the given network. In Section~\ref{sec:lower_bound}, we study permutation equivariance in detail and provide some properties of permutation equivariant estimators.

\section{Overview of main results}
\label{sec:summary}

The difficulty of community recovery depends on the extent to which $(P_0, p)$ and $(Q_0, q)$ are different; it is clearly impossible to have a consistent clustering algorithm if $(P_0, p)$ and $(Q_0, q)$ are equal. We show in this paper that natural measure of discrepancy between $(P_0, p)$ and $(Q_0, q))$ which governs the optimal rate of convergence is the Renyi divergence of order $\frac{1}{2}$.

Given any probability distributions $P$ and $Q$ that are absolutely continuous with respect to each other, the Renyi divergence of order $\frac{1}{2}$ is defined as $I(P,Q) := -2 \log \int \bigl( \frac{dP}{dQ} \bigr)^{1/2} dQ$. For our setting, the Renyi divergence takes on the special form
\begin{align*}
I((P_0, p), (Q_0, q)) =  -2 \log \! \left( \sqrt{P_0 Q_0} + \!\! \int \sqrt{(1-P_0)(1-Q_0)p(x)q(x) } dx \right).
\end{align*}

\noindent If $I((P_0, p), (Q_0, q))$ is bounded above by a universal constant, the Renyi divergence is of the same order as the Hellinger distance (cf.\ Lemma~\ref{lem:renyi_hellinger}):
\begin{align*}
I((P_0, p), (Q_0, q)) &\asymp  (\sqrt{P_0} - \sqrt{Q_0})^2 + \int_S (\sqrt{(1-P_0)p(x)} - \sqrt{(1-Q_0)q(x)} )^2 dx  \\
 &= (\sqrt{P_0} - \sqrt{Q_0})^2 + (\sqrt{1-P_0} - \sqrt{1-Q_0})^2 \\
 & \qquad \qquad + \sqrt{(1-P_0)(1-Q_0)} \int_S (\sqrt{p(x)} - \sqrt{q(x)} )^2 dx.
\end{align*}
Thus, we can think of $I((P_0, p), (Q_0, q))$ as having two components, the first of which captures the divergence between the edge presence probabilities (and also appears in the analysis of unweighted SBM), and the second of which captures the divergence between the edge weight densities.

The presence of the second term illustrates how the weighted SBM behaves quite differently from its unweighted counterpart---in particular, \emph{dense} networks may be interesting in a weighted setting. For example, even if the weighted network is
completely dense in the sense that $1-P_0 = 1- Q_0 = 1$, a nonzero signal $I$ may still exist if $p(\cdot)$ and $q(\cdot)$ are sufficiently different. Our results apply simultaneously to dense and sparse settings; it is important to note that dense weighted networks arise in real-world settings, such as gene co-expression data.

We now provide an informal overview of our main results.
\begin{theorem*} {\bf (Informal statement) }
Let $A$ be generated from a weighted SBM. Under regularity conditions on $((P_0, p), (Q_0, q))$, any permutation equivariant estimator $\hat{\sigma}$ satisfies the lower bound
\[
  \E l(\hat{\sigma}(A), \sigma_0) \geq \exp\left( - (1 + o(1)) \frac{n}{\beta K} I((P_0, p), (Q_0, q)) \right).
\]
\end{theorem*}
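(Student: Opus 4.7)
The plan is to establish the lower bound via a genie-aided reduction to a single-node hypothesis testing problem, followed by a sharp Chernoff-type bound governed by the R\'enyi divergence $I$. This parallels the strategy used for unweighted SBMs~\cite{zhangminimax, GaoEtal15}, with Bernoulli-specific calculations replaced by general distributional arguments. First I would show that $\E l(\hat{\sigma}(A), \sigma_0)$ is lower bounded by the average, over nodes $u$, of the Bayes error of the \emph{oracle problem} of estimating $\sigma_0(u)$ given $A$ together with the true labels $\{\sigma_0(v) : v \neq u\}$. Since providing extra information can only decrease error, any estimator restricted to node $u$ does no better than this oracle. Permutation equivariance plays a central role in handling the label ambiguity $\min_\tau$ built into $l$: by averaging over a uniformly random relabeling $\pi \in S_n$ and invoking equivariance, one can align the output with the truth in a canonical way so that per-node errors aggregate directly into the misclustering rate without being absorbed by the $\min_\tau$.

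Conditional on $\{\sigma_0(v) : v \neq u\}$, determining $\sigma_0(u) \in [K]$ from $A$ is a $K$-ary test based on the edges $\{A_{uv}\}_{v \neq u}$. Under the hypothesis $\sigma_0(u) = k$, the edge $A_{uv}$ is drawn from $P$ if $\sigma_0(v) = k$ and from $Q$ otherwise. The $K$-ary Bayes error is lower bounded by the Bayes error of any pairwise sub-test, and for a fixed pair $k \neq k'$ the sub-test reduces to distinguishing the product measure $P^{\otimes n_k} \otimes Q^{\otimes n_{k'}}$ against $Q^{\otimes n_k} \otimes P^{\otimes n_{k'}}$, since edges to nodes outside $C_k \cup C_{k'}$ have the same distribution under both hypotheses and cancel from the likelihood ratio.

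The core technical step is to prove the matching lower bound
\[
\mathrm{BayesErr}\!\left(P^{\otimes n_k} \otimes Q^{\otimes n_{k'}},\; Q^{\otimes n_k} \otimes P^{\otimes n_{k'}}\right) \geq \exp\!\left(-(1+o(1))\, \frac{(n_k + n_{k'})\, I}{2}\right).
\]
The Bhattacharyya coefficient of this product test equals $\rho(P,Q)^{n_k+n_{k'}} = \exp(-(n_k+n_{k'})I/2)$, and this matches the Chernoff exponent because the optimal tilting parameter for close distributions is $1/2$. The corresponding \emph{lower} bound cannot be obtained directly from Hellinger or total variation inequalities, which lose a factor of two in the exponent. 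Instead, I would perform a change of measure to the indifference distribution (the tilted measure under which the log-likelihood ratio has mean zero), then apply a local CLT to the centered log-likelihood ratio. Since $I = o(1)$ the per-sample variance is small and the CLT applies cleanly, giving the matching first-order exponent. Finally, one picks $k, k'$ minimizing $n_k + n_{k'}$; the cluster imbalance constraint gives $n_k + n_{k'} \geq 2n/(\beta K)$, so the per-node Bayes error is at least $\exp(-(1+o(1)) nI/(\beta K))$, and averaging over $u \in [n]$ yields the theorem.

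The main obstacle is obtaining the tight $(1+o(1))$ factor in the exponent of the binary testing lower bound. Naive Hellinger or TV-based approaches yield the exponent $(n_k+n_{k'})I$ rather than the correct $(n_k+n_{k'})I/2$, so the change-of-measure together with a local CLT is essential; this analysis must also accommodate the mixed singular plus continuous structure of $P$ and $Q$ (the point mass at $0$ alongside the densities $p(x), q(x)$) simultaneously. A secondary challenge is making the genie reduction fully rigorous in the presence of the $\min_\tau$ ambiguity in $l$: permutation equivariance is the tool that enables this, but the averaging over $S_n$ required to eliminate the label ambiguity, and the argument that this averaging does not inflate the error, warrant careful treatment.
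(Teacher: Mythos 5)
Your proposal captures the paper's core strategy---change of measure to the indifference distribution $Y^*$, permutation equivariance for the label-alignment problem, and the cluster-size argument giving the $2n/(\beta K)$ sample count---but there are two places where your plan diverges from the paper in ways worth flagging, one a genuine gap and one an overcomplication.

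The genuine gap is in the framing as a ``genie-aided Bayes error'' reduction. Your first step asserts $\E l(\hat{\sigma},\sigma_0) \geq \frac{1}{n}\sum_u \mathrm{BayesErr}_u$ where the genie reveals $\{\sigma_0(v):v\neq u\}$. But the theorem holds for a \emph{fixed} $\sigma_0$, not averaged over a prior, so ``Bayes error of estimating $\sigma_0(u)$'' is not well-defined without a randomization device. The paper supplies exactly this device: it constructs $\sigma^*$ by flipping \emph{only node 1's label} between two specific clusters (of sizes $\frac{n}{\beta K}+1$ and $\frac{n}{\beta K}$) with probability $\frac{1}{2}$ each, and then uses permutation equivariance twice: first to establish $P(u\in\mathcal{E}) = P(v\in\mathcal{E})$ for nodes $u,v$ in equally-sized clusters (so node 1's error controls the whole misclustering rate), and second to show $\E_1 \tilde{l}(\hat{\sigma},\sigma_0^1) = \E_2 \tilde{l}(\hat{\sigma},\sigma_0^2)$, which lets the conclusion under the random $\sigma^*$ be transferred back to the deterministic $\sigma_0$. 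Your averaging-over-$\pi\in S_n$ sketch gestures at this, but the construction in the paper is more surgical---it requires only two nearly-equal cluster sizes and a single flipped node---and also requires the careful definition of the misclustered set $\mathcal{E}[\hat\sigma,\sigma_0]$ using the ``for some $\rho\in S_K[\hat\sigma,\sigma_0]$'' qualifier, which handles the case where the Hamming-minimizing permutation is not unique (e.g., when $\hat\sigma$ has large error). Your proposal does not address this non-uniqueness at all, and a naive use of $l$ instead of the modified $\tilde{l}$ would make the per-node error accounting break down.

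The second point is the local CLT. You correctly identify that naive Hellinger/TV bounds lose a factor of two in the exponent and that a change of measure to $Y^*$ fixes this, but you propose closing the argument with a local CLT for the centered log-likelihood ratio. The paper instead uses a change-of-measure inequality $P_\Psi(\mathcal{Q}\leq f(n),\neg E)\leq e^{f(n)}P_\Phi(\neg E)$ together with plain Chebyshev: it shows $\E_\Psi\mathcal{Q} = \frac{nI}{\beta K}$ and $\sqrt{V_\Psi(\mathcal{Q})}\lesssim \sqrt{nI/(\beta K)}$, so $\mathcal{Q}$ concentrates to within $o(\E_\Psi\mathcal{Q})$ and the $(1+o(1))$ exponent follows. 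This is both simpler and more robust: because $P$ and $Q$ are mixed distributions (point mass at $0$ plus continuous density), $\log\frac{dY^*}{dP}$ is itself a mixture of an atom and a continuous law, so the sum $\mathcal{Q}$ has no nice density, and a local CLT would require nontrivial extra regularity or an Esseen-type smoothing argument. Since the mean is of order $nI\to\infty$ and the standard deviation is of strictly lower order, a second-moment bound suffices and bypasses these smoothness concerns entirely. Your proposal would work but at the cost of substantially more technical effort than is needed.
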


\begin{theorem*} {\bf (Informal statement) }
Under regularity conditions on $((P_0, p), (Q_0, q))$, there exists a permutation equivariant algorithm $\hat{\sigma}$ achieving the following misclustering error rate:
  \[
   \lim_{n \rightarrow \infty} P\left( l(\hat{\sigma}(A), \sigma_0) \leq \exp\left( - (1 + o(1)) \frac{n}{\beta K} I((P_0, p), (Q_0, q)) \right) \right) \rightarrow 1.
  \]
  Furthermore, if $\frac{n I}{\beta K \log n} \leq 1$, we have
  \begin{align*}
    \E l(\hat{\sigma}(A), \sigma_0) \leq \exp\left( - (1 + o(1)) \frac{n I}{\beta K} \right).
\end{align*}
\end{theorem*}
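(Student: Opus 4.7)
The plan is to realize the discretization strategy sketched in the introduction and carry out a careful constant-tracking analysis. First I would reduce to the bounded-support case: when $S = \R$ or $\R^+$, applying a fixed smooth strictly increasing map $T : S \to (0,1)$ to each nonzero entry of $A$ preserves the Renyi divergence $I$ exactly, so I may assume $S = [0,1]$. Next, for a slowly growing integer sequence $L_n \to \infty$, partition $[0,1]$ into $L_n$ equal bins and map every edge weight to its bin index, treating ``missing'' as an extra symbol. This converts the weighted network into a labeled SBM with $L_n + 1$ labels and induced within/between label distributions $\tilde P, \tilde Q$; a deterministic Riemann-sum argument under the regularity conditions on $p(x)$ and $q(x)$ shows that the discretized Renyi divergence satisfies $\tilde I = (1 - o(1)) I$ when $L_n$ diverges slowly enough.

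The algorithm on the labeled SBM has two stages. The initialization runs a spectral clustering routine on a scalar reduction of the labeled adjacency matrix (for instance, the edge-presence indicator matrix), and standard results in the style of Lei--Rinaldo and Chin et al.\ guarantee an initial clustering $\hat\sigma^{(0)}$ with misclustering rate $o(1)$ and failure probability $n^{-\omega(1)}$ whenever $n I \to \infty$. The refinement step processes one node at a time: for each node $u$, I would use the initial labels of the remaining nodes to form empirical per-label within- and between-cluster frequency estimates $\hat p_\ell, \hat q_\ell$, and reassign $u$ to the cluster maximizing $\sum_v \log \hat p_{\bin(A_{uv})}$, with the appropriate comparison across cluster pairs. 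The leave-one-out construction makes the plug-in estimators independent of edges incident to $u$, and the combined procedure depends only on the adjacency structure, hence is permutation equivariant.

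The core probabilistic step is a Chernoff bound on the refinement: conditional on $\hat\sigma^{(0)}$ being accurate and the plug-in estimates concentrating, the probability that $u$ is assigned to an incorrect cluster $k'$ instead of its true cluster $k$ is at most $\exp\bigl(-(1 + o(1)) n_k \tilde I\bigr)$, where $n_k \geq n/(\beta K)$. A union bound over $u$ and the $O(K^2)$ cluster pairs together with Markov's inequality delivers the high-probability statement. For the in-expectation bound, the hypothesis $n I / (\beta K \log n) \leq 1$ ensures $\exp(-n I/(\beta K)) \geq n^{-1}$, so the $n^{-\omega(1)}$ failure probability of the initialization event (on which the loss is trivially at most one) is absorbed into the main exponential $(1 + o(1))$ factor.

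The main obstacle is preserving the sharp constant $1 + o(1)$ in the exponent in the presence of plug-in density estimation. A naive bound using estimated densities would multiply $\tilde I$ by a constant strictly less than one and break the tight matching with the lower bound. The delicate step is showing that the empirical per-bin frequencies concentrate multiplicatively and uniformly over the $L_n$ bins, so that the perturbed Chernoff moment generating function agrees with its population counterpart up to a $(1 + o(1))$ factor. This forces a two-sided constraint on $L_n$: it must diverge fast enough that $\tilde I = (1 - o(1)) I$, yet slowly enough that expected per-bin counts remain $\omega(1)$ and uniform multiplicative concentration goes through. Balancing these two requirements, together with handling the leave-one-out dependence on $\hat\sigma^{(0)}$, is where the bulk of the technical work lies.
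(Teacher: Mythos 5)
Your outline correctly identifies the transform--discretize--refine pipeline and the need for a $(1+o(1))$-tight Chernoff analysis, but the initialization step as you describe it would fail on precisely the cases that motivate the weighted SBM. You propose to bootstrap via spectral clustering on ``a scalar reduction of the labeled adjacency matrix (for instance, the edge-presence indicator matrix).'' But the model explicitly allows $P_0 = Q_0$, in which case the edge-presence indicator has \emph{no} community signal at all -- $\E(A)$ restricted to that scalar is block-constant -- and spectral clustering on it returns noise even though $nI \to \infty$. More generally, the signal $I$ can be spread across the $L_n$ bins in a way that no a priori chosen scalar summary captures, and the paper stresses exactly this point when arguing that naive spectral clustering on $A$ itself is inadequate.

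The paper's fix is a search over labels: for each bin $\ell \in \{0,1,\dots,L_n\}$ it spectral-clusters the single-label indicator network $A_\ell$, forms a data-driven divergence proxy $\hat I_\ell = (\hat P_\ell - \hat Q_\ell)^2 / (\hat P_\ell \vee \hat Q_\ell)$, and takes the bin $\ell^* = \argmax_\ell \hat I_\ell$. The justification is a pigeonhole bound (Lemma~\ref{lem:friday_night}): since $\sum_\ell \Delta_\ell^2 / (P_\ell \vee Q_\ell) = \Theta(I_L)$, some single bin carries signal of order $I_L / L_n$, which suffices for consistent spectral initialization once $nI_L / L_n \to \infty$; Propositions~\ref{prop:initial_guarantee} and~\ref{prop:initialization_correctness} show that $\hat I_\ell$ detects such a bin. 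Your outline has no mechanism to locate a good scalar reduction, and patching it requires essentially this label-selection step.

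A second, more technical omission: you ask that ``expected per-bin counts remain $\omega(1)$,'' but the paper instead injects artificial noise (Algorithm~\ref{alg:noisify}) to force every $P_\ell, Q_\ell \geq c/n$ while perturbing $I_L$ by only a $(1+o(1))$ factor (Lemma~\ref{lemma: first}). This lower bound on the label probabilities is what makes the Bernstein-type concentration in Proposition~\ref{prop:estimation_consistency} go through uniformly over the $L_n$ bins; without it, bins with vanishing probability would wreck the multiplicative concentration you correctly flag as the delicate point. You would also need the consensus step to align the $n$ leave-one-out clusterings before taking the union bound over nodes, since the cluster labels produced by different $\hat\sigma_u$'s are only defined up to permutation.
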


Taken together, the theorems imply that in the regime where $\frac{n I}{\beta K \log n} \leq 1$, the optimal risk is tightly characterized by the quantity $\exp \left( - (1 + o(1)) \frac{ n I}{\beta K} \right)$. On the other hand, if $\frac{nI}{\beta K\log n} > 1$, we have $\exp \left( - (1 + o(1)) \frac{n I}{\beta K} \right) < \frac{1}{n}$ for large enough $n$, so $\lim_{n \rightarrow \infty} P\left( l(\hat{\sigma}(A), \sigma_0) = 0 \right) \rightarrow 1$ (since $l(\hat{\sigma}(A),\sigma_0) < \frac{1}{n}$ implies $l(\hat{\sigma}(A),\sigma_0)=0$). Thus, the regime where $\frac{n I}{\beta K \log n} > 1$ is in some sense an easier problem, since we can guarantee perfect recovery with high probability. 

\subsection{Relation to previous work}

Our result generalizes the work of Zhang and Zhou~\cite{zhangminimax}, which establishes the minimax rate of $\exp \left( -(1+o(1)) \frac{ n}{\beta K}I\bigl(Ber(p), Ber(q) \bigr) \right)$ for the unweighted SBM, where
\begin{equation*}
I(Ber(p), Ber(q)) = -2 \log \bigl(\sqrt{pq} + \sqrt{(1-p)(1-q)} \bigr).
\end{equation*}
The optimal algorithm proposed in Zhang and Zhou~\cite{zhangminimax} is intractable, but a computationally feasible version was developed by Gao et al.~\cite{GaoEtal15}; the latter algorithm is a building block for the estimation algorithm proposed in this paper. 

Our result should also be viewed in comparison to Yun and Proutiere~\cite{yun2016optimal}, who studied the optimal risk for the heterogenous labeled SBM with finitely many labels, with respect to a prior on the cluster assignment $\sigma_0$. They characterize the optimal rate under a notion of divergence that reduces to the Renyi divergence of order $\frac{1}{2}$ between two discrete distributions over a fixed finite number of labels in the homogeneous setting (cf.\ Lemma~\ref{lem:information_equivalence}). Since the discussion is somewhat technical, we provide a more detailed comparison of our work to the results of Yun and Proutiere in Section~\ref{sec:labeled_sbm_analysis}.

Jog and Loh~\cite{JogLoh15} proposed a similar weighted block model and show the exact recovery threshold to be dependent on the Renyi divergence. They focus on the setting where the distributions are discrete and known, whereas we consider continuous densities that are unknown. Aicher et al.~\cite{aicher2014learning} introduced a version of a weighted SBM that is a special case of the setting discussed in this paper, where the densities $P$ and $Q$ in equation~\eqref{eqn:simple_wsbm_defn} are drawn from a known exponential family. Notably, the definition of Aicher et al.~\cite{aicher2014learning} cannot incorporate sparsity. The weighted SBM model considered in Hajek et al.~\cite{hajek2017information} is also similar to the one we propose in our paper, except it only involves a single hidden community and assumes knowledge of the distributions $P$ and $Q$. Weighted networks have also received some attention in the physics community~\cite{New04, barrat2004architecture}, and various ad-hoc methods have been proposed; since theoretical properties are generally unknown, we do not explore these connections in our paper. 

\paragraph{\textbf{Other notions of recovery:}} A closely related problem is that of finding the exact recovery threshold. We say that the unweighted SBM has an \emph{exact recovery threshold} if a function $\theta(p, q, n, K, \beta, \sigma_0)$ exists such that exact recovery is asymptotically almost always impossible if $\theta < 1$, and almost always possible if $\theta > 1$. For the homogeneous unweighted SBM, Abbe et al.~\cite{abbe2014exact} show that when $\beta=1, K=2, 1 - P_0 = \frac{a \log n}{n}$, and $1 - Q_0 = \frac{b \log n}{n}$, for some constants $a$ and $b$, the exact recovery threshold is $\sqrt{a} - \sqrt{b}$. This result was later generalized to multiple communities with heterogenous edge probabilities in Abbe and Sandon~\cite{AbbSan15}, where a notion of CH-divergence was shown to characterize the threshold for exact recovery.
A notion of weak recovery, corresponding to a detection threshold, has also been considered~\cite{Mas14, MosEtal14}. 



\section{Estimation algorithm}
\label{sec:method}

A natural approach to community estimation is to first estimate the edge weight densities $p(\cdot)$ and $q(\cdot)$, but this is hindered by the fact that we do not know whether an edge weight observation originates from $p(\cdot)$ or $q(\cdot)$. An alternative approach of applying spectral clustering directly to the weighted adjacency matrix $A$ will also be ineffective if $(P_0, p)$ and $(Q_0, q)$ have the same mean, so $\E(A)$ does not exhibit any cluster structure. A third idea is to output the clustering that maximizes the Kolmogorov-Smirnov distance (or another nonparametric two-sample test statistic) between the empirical CDFs of within-cluster edge weights and the between-cluster edge weights. This idea, though feasible, is computationally intractable, since it involves searching over all possible clusterings. Our approach is appreciably different from the methods suggested above, and consists of combining the idea of discretization from nonparametric density estimation with clustering techniques for unweighted SBMs.

\subsection{Outline of algorithm}
\label{sec:algorithm}

We begin by describing the main components of our algorithm. The key ideas are to convert the edge weights into a finite set of labels by discretization, and then cluster nodes on the labeled network. Our algorithm is summarized pictorially in Figure~\ref{fig:method_pipeline1}.

\begin{enumerate}
\item \textbf{Transformation \& discretization.} We take as input a weighted adjacency matrix $A$ and apply an invertible transformation function $\Phi \,:\, S \rightarrow [0,1]$ (recall $S$ is the support of the edge weights and can be $[0,1]$, $[0, \infty)$, or $\mathbb{R}$) on the nonzero edges to obtain a matrix $\Phi(A)$ with weights between 0 and 1. Next, we divide the interval $[0,1]$  into $L$ equally-spaced subintervals. We replace the real-valued entries of $\Phi(A)$ with categorical labels in $[L]$. We denote the labeled adjacency matrix by $A_L$.

\item \textbf{Add noise.} We perform the following process on every edge of the labeled graph, independently of other edges: With probability $1-\delta$ where $\delta = \frac{2(L+1)}{n}$, keep an edge as it is, and with probability $\delta$, erase the edge and replace it with an edge with label uniformly drawn from the set of labels. We continue to denote the modified adjacency matrix as $A_L$.

\item \textbf{Initialization parts 1 \& 2.} For each label $l$, we create a sub-network by including only edges of label $l$. We then perform spectral clustering on all sub-networks, and output the label $l^*$ that induces the maximally separated spectral clustering. Let $A_{l^*}$ be the adjacency matrix for label $l^*$. For each $u \in \{1, \dots, n\}$, we perform spectral clustering on $A_{l^*} \setminus \{u\}$, which denotes the adjacency matrix with vertex $u$ removed. We output $n$ clusterings $\tilde{\sigma}_1, \dots, \tilde{\sigma}_n$.

\item \textbf{Refinement \& consensus.} From each $\tilde \sigma_u$, we generate a clustering $\hat \sigma_u$ on $\{1, 2, \dots, n\}$ that retains the assignments specified by $\tilde \sigma_u$ for $\{1, 2, \dots, n\} \setminus \{u\}$, and assigns $\hat \sigma_u(u)$ by maximizing the likelihood taking into account only the neighborhood of $u$.  We then align the cluster assignments made in the previous step. 
\end{enumerate}

\begin{figure}[htp]
\centering
\includegraphics[scale=0.38, trim={1in 7in 0 0.4in}]{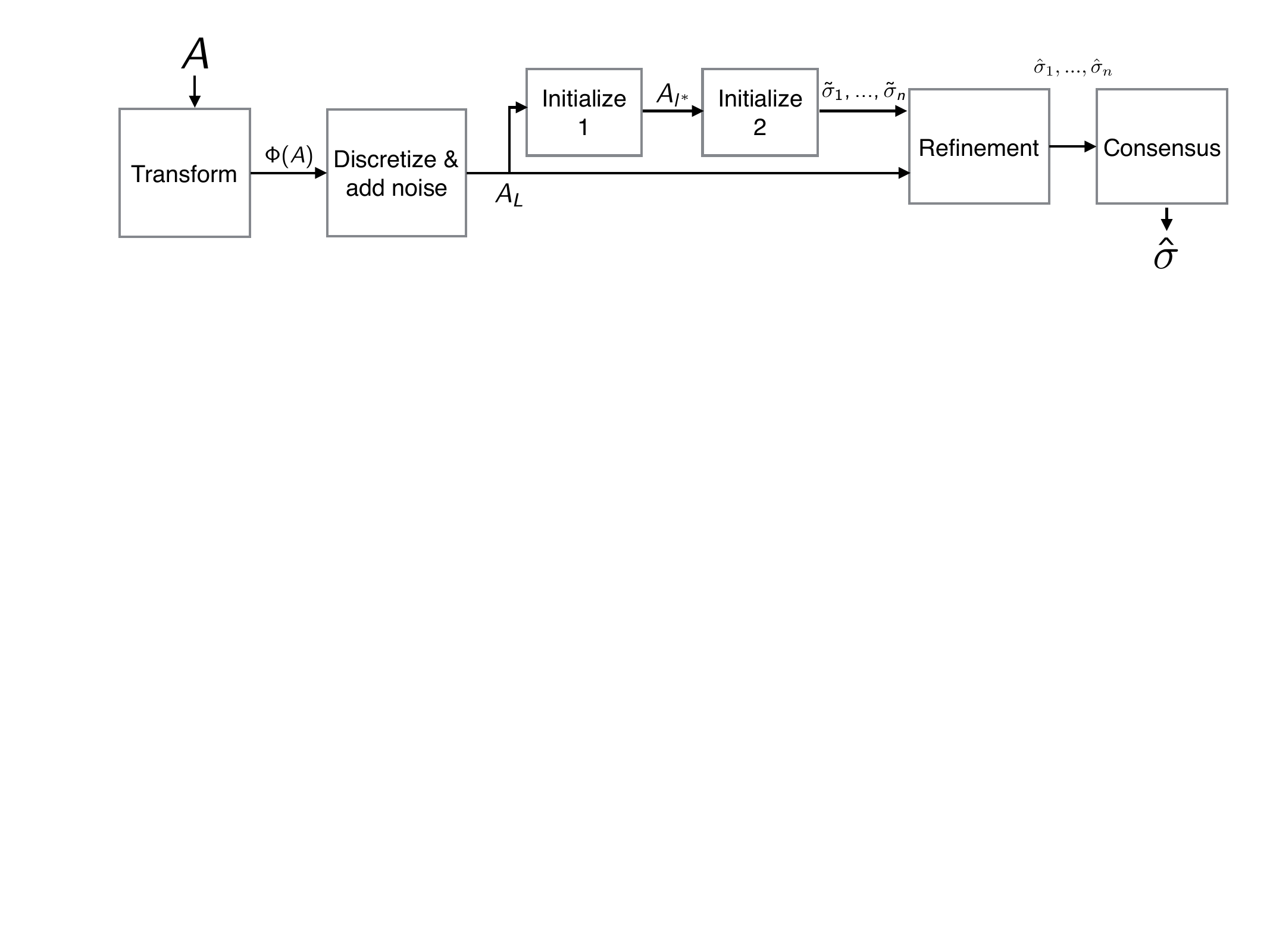}
\caption{Pipeline for our proposed algorithm}
\label{fig:method_pipeline1}
\end{figure}

\subsection{Transformation and discretization}

In the transformation step, we apply an invertible CDF $\Phi \,:\, S \rightarrow [0,1]$ as the transformation function on all the edge weights, so that each entry of $\Phi(A)$ lies in $[0,1]$. In the discretization step, we divide the interval $[0,1]$ into $L$ equally-spaced bins of the form $[a_l, b_l]$, where $a_1 = 0, b_L = 1$, and $b_l - a_l = \frac{1}{L}$. An edge is assigned the label $l$ if the weight of that edge lies in bin $l$. 

\begin{algorithm}
\caption{Transformation and Discretization}
\label{alg:transform_and_discretize}
\begin{flushleft}
 \textbf{Input:} A weighted network $A$, a positive integer $L$, and an invertible function $\Phi \,:\, S \rightarrow [0,1]$\\
 \textbf{Output:} A labeled network $A_L$ with $L$ labels\\
\end{flushleft}
\begin{algorithmic}
\State Divide $[0,1]$ into $L$ bins, labeled $\bin_1, \dots, \bin_L$
\For{every edge $(u,v)$ such that $A_{uv} \neq 0$}
   \State Let $l$ be the bin in which $\Phi(A_{uv})$ falls
   \State Assign the edge $(u,v)$ the label $l$ in the labeled network $A_L$
\EndFor
\end{algorithmic}
\end{algorithm}

\subsection{Add noise}

For technical reasons, we inject noise into the network as a form of regularization. As detailed in the proof of Proposition~\ref{prop:labeled_sbm_rate} in Appendix~\ref{appendix: first}, deliberately forming a noisy version of the graph barely affects the separation between the distributions of the within-community and between-community edge labels, but has the desirable effect of ensuring that all edge labels occur with probability at least $\frac{2}{n}$. This property is crucial to our analysis in subsequent steps of the algorithm. In the description of the algorithm below, we treat the label 0 (i.e., an empty edge) as a separate label, so we have a network with $L+1$ labels.

\begin{algorithm}
\caption{Add noise}
\label{alg:noisify}
\begin{flushleft}
\textbf{Input:} A labeled network $A_L$ with $L+1$ labels \\
\textbf{Output:} A labeled network $A_L$ with $L+1$ labels\\
\end{flushleft}
\begin{algorithmic}
\For{every edge $(u,v)$}
   \State With probability $1-\frac{2(L+1)}{n}$, do nothing
   \State With probability $\frac{2(L+1)}{n}$, replace the edge label with a label drawn uniformly at random from $\{0, 1, 2, \dots, L\}$
\EndFor
\end{algorithmic}
\end{algorithm}

\subsection{Initialization}

The initialization procedure takes as input a network with edges labeled $\{0, 1, \dots, L\}$. The goal of the initialization procedure is to create a rough clustering $\tilde{\sigma}$ that is consistent but not necessarily optimal. As outlined in Algorithm~\ref{alg:initialization1}, the rough clustering is based on a single label $l^*$, selected based on the maximum value of the estimated Renyi divergence between within-community and between-community distributions for the unweighted SBMs based on individual labels.

For technical reasons, we actually create $n$ separate rough clusterings $\{\tilde{\sigma}_u \}_{u = 1, \dots, n}$, where each $\tilde{\sigma}_u \,:\, [n-1] \rightarrow [K]$ is a clustering of a network of $n-1$ nodes with $u$ removed. The clusterings $\{\tilde{\sigma}_u\}$ will later be combined into a single clustering algorithm. In practice, it is sufficient to create a single rough clustering (see Remark~\ref{rem:simple_computation} below).

\begin{remark}
  \label{rem:aggregate_initialization}
The initialization procedure that we propose is based on choosing a single best label $l^*$ and deriving an initial clustering from the unweighted network associated with $l^*$. This is sufficient in theory, but a better initial clustering may be gained in practice by aggregating information from all labels. Such an aggregation must, however, be performed with care, so that uninformative labels do not dilute the information content of the informative labels.  
\end{remark}

\begin{algorithm}[h!]
\caption{Initialization}
\label{alg:initialization1}
\begin{flushleft}
\textbf{Input:} A labeled network $A_L$ with $L$ labels \\
\textbf{Output:} A set of clusterings $\{ \tilde{\sigma}_u \}_{u=1, \dots, n}$, where $\tilde \sigma_u$ is a clustering on $\{1, 2, \dots, n\} \setminus \{u\}$\\
\end{flushleft}
\begin{algorithmic}[1]
  \State Separate $A_L$ into $L$ networks $\{ A_l \}_{l=1, \dots, L}$
  \Comment{Stage 1}
\For{each label $l$}
   \State Perform \textsc{spectral clustering} (Algorithm~\ref{alg:spectral}) with $\tau = 40 K \bar{d}$, where $\bar{d} = \frac{1}{n} \sum_{u=1}^n d_u$ is the average degree, and $\mu = 4 \beta$ to obtain $\tilde{\sigma}_l$
   \State Estimate $\hat{P}_l = 
             \frac{ \sum_{ u\neq v \,:\, \tilde{\sigma}_l(u) = \tilde{\sigma}_l(v) } (A_l)_{uv} }
                  { | \{u \neq v \,:\, \tilde{\sigma}_l(u) = \tilde{\sigma}_l(v) \}| }$ and 
               $\hat{Q}_l = 
            \frac{ \sum_{u \neq v \,:\, \tilde{\sigma}_l(u) \neq \tilde{\sigma}_l(v) } (A_l)_{uv} }
              { |\{u \neq v \,:\, \tilde{\sigma}_l(u) \neq \tilde{\sigma}_l(v) \}| }$
   \State Compute $\hat{I}_l \leftarrow 
               \frac{ (\hat{P}_l - \hat{Q}_l)^2}{\hat{P}_l \vee \hat{Q}_l}$
\EndFor
\State Choose $l^* = \argmax_l \hat{I}_l$
\For{each node $u$}  \Comment{Stage 2}
   \State Create network $A_{l^*} \setminus \{u\}$ by removing node $u$ from $A_{l^*}$
   \State Perform \textsc{spectral clustering} (with the same parameter setting as stage 1) on $A_{l^*} \setminus \{ u \}$ to obtain $\tilde{\sigma}_u$ 
\EndFor 
\State Output the set of clusterings $\{ \tilde{\sigma}_u \}_{u=1, \dots, n}$
\end{algorithmic}
\end{algorithm}

\paragraph{\textbf{Spectral clustering:}} Note that Algorithm~\ref{alg:initialization1} involves several applications of \textsc{spectral clustering}. We describe the spectral clustering algorithm used as a subroutine in Algorithm~\ref{alg:spectral} below. Importantly, note that we may always choose the parameter $\mu$ sufficiently large such that Algorithm~\ref{alg:spectral} generates a set $S$ with $|S| = K$.

\begin{algorithm}[h!]
\caption{\textsc{Spectral clustering}}
\label{alg:spectral}
\begin{flushleft}
\textbf{Input:} An unweighted network $A$ with columns $\{A_u\}$, trim threshold $\tau$, number of communities $K$, and tuning parameter $\mu$ \\
\textbf{Output:} A clustering $\sigma$ 
\end{flushleft}
\begin{algorithmic}[1]
\State For each node $u$ with degree $d_u \geq \tau$, set $A_u = 0$ and $(A^\top)_u = 0$ to obtain $T_{\tau}(A)$
\State Compute $\hat{A} := \argmin_{\tilde{A} \,:\, \rank(\tilde{A}) \leq K} \| \tilde{A} - T_{\tau}(A)\|_2$ by SVD
\State For each node $u$, index the other nodes by $v_{(1)}, \ldots, v_{(n-1)}$ such that
\begin{equation*}
\| \hat{A}_u - \hat{A}_{v_{(1)}}\|_2 \leq \| \hat{A}_u - \hat{A}_{v_{(2)}} \|_2 \leq \ldots \leq \|\hat{A}_u - \hat{A}_{v_{(n-1)}} \|_2,
\end{equation*}
and define $$D(u) := \| \hat{A}_u - \hat{A}_{v_{(\lceil n/\mu K \rceil)}} \|_2 $$ 
\State Initialize $S \leftarrow 0$
\State Select node $u_1 := \argmin_u D(u)$ and add $u_1$ to $S$ as $S[1]$
\For{$i = 2, \dots, K$}
    \State Among all $u$ such that $|D(u)| \leq (1 - \frac{1}{\mu K})\textrm{-quantile}\{ D(v) \,:\, v \in [n]\}$, select 
           $$u_i = \argmax_u \min_{v \in \{S[1], \ldots, S[i-1]\}} \| \hat{A}_u - \hat{A}_v \|_2$$
    \State Add $u_i$ to $S$ as $S[i]$
\EndFor
\For{$u = 1, \dots, n$}
    \State Assign $\sigma(u) = \argmin_i \| \hat{A}_u - \hat{A}_{S[i]} \|$
\EndFor
\end{algorithmic}
\end{algorithm}

\subsection{Refinement and consensus}

This step parallels Gao et al.~\cite{GaoEtal15}. In the refinement step, we use the set of initial clusterings $\{\tilde{\sigma}_u\}_{u=1, \dots, n}$ to generate a more accurate clustering for the labeled network by locally maximizing an approximate log-likelihood for each node $u$. The consensus step resolves any cluster label inconsistencies present after the refinement stage. 

\begin{algorithm}[h!]
\caption{Refinement}
\label{alg:refinement}
\begin{flushleft}
\textbf{Input:} A labeled network $A_L$ and a set of clusterings $\{\tilde{\sigma}_u\}_{u=1, \dots, n}$, where $\tilde \sigma_u$ is a clustering on the set $\{1, 2, \dots, n\} \setminus \{u\}$ for each $u$ \\
\textbf{Output:} A clustering $\hat{\sigma}$ over the whole network
\end{flushleft}
\begin{algorithmic}[1]
\For{each node $u$}
   \State Estimate $\{ \hat{P}_l, \hat{Q}_l\}_{l=0,...,L}$ from $\tilde{\sigma}_u$
   \State Let $\hat{\sigma}_u : [n] \rightarrow [K]$, where 
       $\hat{\sigma}_u(v) = \tilde{\sigma}_u(v)$ for all $v \neq u$ and 
   \[
    \hat{\sigma}_u(u) = \argmax_{k \in [K]} \sum_{v \,:\, \tilde{\sigma}_u(v) = k ,\, v\neq u} 
         \sum_{l=0}^L \log \frac{\hat{P}_l}{\hat{Q}_l} \mathbf{1}(A_{uv} = l) 
     \]    
\EndFor 
\State Let $\hat{\sigma}(1) = \hat{\sigma}_1(1)$  \Comment{Consensus Stage}
\For{each node $u \neq 1$}
\[
\hat{\sigma}(u) = \argmax_{k \in [K]} | \{ v \,:\,  \hat{\sigma}_1(v) = k \} \cap
                                 \{ v \,:\, \hat{\sigma}_u(v) = \hat{\sigma}_u(u) \}|
\]
\EndFor
\State Output $\hat{\sigma}$
\end{algorithmic}

\end{algorithm}

\begin{remark}
  \label{rem:simple_computation}
In our simulation studies, we find that it is sufficient to output a single clustering $\tilde{\sigma}$ on the whole of $A_{l^*}$ in the initialization stage. In the refinement stage, we simply estimate $\{\hat{P}_l, \hat{Q}_l\}_{l \in \{0,\ldots, L\}}$ based on $\tilde{\sigma}$, assign $\hat{\sigma}(u) =  \argmax_{k \in [K]} \sum_{v \,:\, \tilde{\sigma}(v) = k ,\, v\neq u} 
         \sum_{l=0}^L \log \frac{\hat{P}_l}{\hat{Q}_l} \mathbf{1}(A_{uv} = l) $, and then output $\hat{\sigma}$ directly. We also note that one could in practice use a discretization level for the refinement stage that is different from that of the initialization stage (see discussions in Section~\ref{sec:proofs}).
\end{remark}


\section{Optimal misclustering error}
\label{sec:rate}

We analyze the rate of convergence of the estimation algorithm from Section~\ref{sec:method} in Section~\ref{sec:upper_bound}. In Section~\ref{sec:lower_bound}, we provide a matching information-theoretic lower bound. In both sections, we let $\mathcal{P}$ denote the set of probability distributions on $S$ whose singular part is a point mass at 0.

\subsection{Upper bound}
\label{sec:upper_bound}
We begin by stating a condition on the function $\Phi$.
\begin{definition}
  \label{defn:transformation}
Let $S$ be $[0,1]$, $\R$, or $\R^+$. We say that $\Phi \,:\, S \rightarrow [0,1]$ is a \emph{transformation function} if it is a differentiable bijection and $\phi := \Phi'$ satisfies $\left| \frac{\phi'(x)}{\phi(x)} \right| < \infty$. 
\end{definition}

For $S = [0,1]$, we always take $\Phi$ to be the identity. For $S = \R$ or $[0, \infty)$, we choose the function $\Phi$ so that all moments exist and $\phi$ has a subexponential tail. The specific choice of $\Phi$ is not crucial, and we will use the following definitions:
\begin{align}
\phi(x) = \frac{e^{1-\sqrt{x+1}}}{4},  
     \, \textrm{ if $S = [0, \infty)$}, \qquad
\phi(x) = \frac{e^{1-\sqrt{|x|+1}}}{8}, 
   \, \textrm{ if $S = \R$.} \label{eqn:phi_defn}
\end{align}

These expressions are similar to a generalized normal density, modified so that $\left| \frac{\phi'(x)}{\phi(x)} \right|$ is bounded. It is easy to verify that $\Phi(x) = \int_0^x \phi(t) dt$ (respectively, $\Phi(x) = \int_{-\infty}^x \phi(t) dt$) is a valid transformation function. The function $\Phi$ induces a probability measure on $S$, and we let $\Phi \{ \cdot \}$ denote the $\Phi$-measure of a set. 

We describe our regularity conditions by defining an appropriate subset of $\mathcal{P}^2$. For $C \in [1, \infty)$, $c_1, c_2 \in \textrm{int}(S)$, $r > 2$, and $t \in (2/r, 1)$, we define $\mathcal{G}_{\Phi, C, c_1, c_2, r, t} \subset \mathcal{P}^2$ such that $((P_0, p), (Q_0,q)) \in \mathcal{G}_{\Phi, C, c_1, c_2, r, t}$ if and only if

\begin{enumerate}
\item[A0] We have $\frac{1}{C} \leq \frac{1-P_0}{1-Q_0} \leq C$ and $\frac{1}{C} \leq \frac{P_0}{Q_0} \leq C$.
\item[A1] For all $x$ in the interior of $S$, $0 < p(x), q(x) \leq C \phi(x)$.
\item[A2] There exists a quasi-convex $g : S \rightarrow [0, \infty)$ such that $g(x) \geq \bigl| \log \frac{p(x)}{q(x)} \bigr|$ and $\int_S g(x)^r \phi(x) \, dx \leq C$.
\item[A3]   Denoting $\alpha^2 := \int_S (\sqrt{p(x)} - \sqrt{q(x)})^2 \, dx$ and $\gamma(x) := \frac{p(x) - q(x)}{\alpha}$, we have
  \[
    \int_S \biggl( \frac{\gamma(x)}{p(x) + q(x)} \biggr)^r (p(x) + q(x))\, dx \leq C.
    \]
\item[A4] There exists a quasi-convex function $h : S \rightarrow [0, \infty)$ such that
  \[
  h(x) \geq  \frac{1}{\phi(x)} \max \biggl\{ \Big| \frac{\gamma(x)}{p(x) + q(x)} \Big| \; \Big| \frac{\gamma'(x)}{p(x) + q(x)} \Big| \; \Big|\frac{q'(x)}{q(x)} \Big|, \; \Big|\frac{p'(x)}{p(x)}\Big| \biggr\}
  \]
  and $\int_S |h(x)|^{t} \phi(x) dx \leq C.$
\item[A5] We have $(\log p)'(x), (\log q)'(x) \geq (\log \phi)'(x)$ for all $x < c_1$, and $ (\log p)'(x), (\log q)'(x) \leq (\log \phi)'(x)$ for all $x > c_2$.\footnote{If $S = [0,\infty)$ and $g$ is non-decreasing, we only need  $(\log p)'(x), (\log q)'(x) \leq (\log \phi)'(x)$ for all $x > c_2$. \label{note:one_sided}}
\end{enumerate}

The above conditions depend on the choice of $\Phi$, but it generally suffices to choose $\Phi$ such that its derivative $\phi$ is a heavy-tailed density where all moments exist. In particular, we show in Section~\ref{sec:examples_unbounded_support} that choosing $\Phi$ according to equation~\eqref{eqn:phi_defn} allows $\mathcal{G}_\Phi$ to encompass Gaussian, Laplace, and other broad classes of densities. We also provide an intuitive discussion of the regularity conditions in Section~\ref{sec:assumption_discussion} below.

We now state our upper bound. For a given clustering $\sigma_0$ and $((P_0, p), (Q_0, q)) \in \mathcal{P}^2$, let the random network $A$ be distributed according to $WSBM(\sigma_0, (P_0, p), (Q_0, q))$. 
\begin{theorem}
  \label{thm:weighted_sbm_rate}
Let $\sigma_0 \in \mathcal{C}(\beta, K)$. Let $C \geq 1$, $c_1, c_2 \in \mathrm{int}(S)$, $r > 2$, and $t \in (2/r, 1)$, and let $\Phi$ be a transformation function. Define $\mathcal{G}_{\Phi} := \mathcal{G}_{\Phi, C, c_1, c_2, r, t}$. Let $\{I_n, I'_n\}_{n \in \mathbb{N}}$ be arbitrary sequences such that $I_n \rightarrow 0$ and $n I'_n \rightarrow \infty$. Let $L_n$ be a sequence such that $\frac{n I'_n}{ L_n \exp(L_n^{2/r}) } \rightarrow \infty$. Let $\hat{\sigma}_{\Phi, L_n}$ be the algorithm described in Section~\ref{sec:method} with transformation function $\Phi$ and discretization level $L_n$. Then there exists a sequence of real numbers $\zeta_n \rightarrow 0$ such that
\[
\lim_{n \rightarrow \infty} \sup_{ \substack{((P_0, p), (Q_0, q)) \in \mathcal{G}_{\Phi} \,:\,  \\ I'_n \leq I((P_0, p), (Q_0, q)) \leq I_n}}  \mathbb{P}_{\substack{(P_0, p),\\ (Q_0,q)}} \left\{
     l(\hat{\sigma}_{\Phi,L_n}(A), \sigma_0) \leq \exp\left( -  (1 - \zeta_n)\frac{n}{\beta K}I((P_0, p), (Q_0, q)) \right)
    \right\} = 1.
  \]
  Furthermore, if $\frac{nI_n}{\beta K \log n} \leq 1$, we have
  \[
\sup_{\substack{ ((P_0,p), (Q_0,q)) \in \mathcal{G}_{\Phi} \,:\, \\ I'_n \leq I((P_0, p), (Q_0, q)) \leq I_n} } \E_{(P_0, p), (Q_0, q)} \bigl[ l(\hat{\sigma}_{\Phi, L_n}(A), \sigma_0) \bigr] \exp\left((1 - \zeta_n)  \frac{n}{\beta K} I((P_0, p), (Q_0, q)) \right) \leq 1.
  \]
\end{theorem}

We relegate the full proof of Theorem~\ref{thm:weighted_sbm_rate}to Appendix~\ref{AppThmRate} but we provide a proof overview in Section~\ref{sec:proofs}. Since Theorem~\ref{thm:weighted_sbm_rate} involves many technical details, we first make a few high-level remarks to illustrate its implications.

\begin{remark}
It is important to observe that the supremum over $\mathcal{G}_\Phi$ appears \emph{after} the limit. Thus, an equivalent way to understand the theorem is to think of a sequence $((P_{0, n}, p_n), (Q_{0,n}, q_n))$, each term of which is a member of $\mathcal{G}_\Phi$. If $I((P_{0, n}, p_n), (Q_{0,n}, q_n))$ is $o(1)$ but $\omega( L_n \exp(L_n^{2/r}) n^{-1})$, Theorem~\ref{thm:weighted_sbm_rate} states that $\mathbb{P} \left\{
     l(\hat{\sigma}(A), \sigma_0) \leq \exp\left( - (1 + o(1)) \frac{n}{\beta K} I((P_{0, n}, p_n), (Q_{0,n}, q_n)) \right)
    \right\} \rightarrow 1$. Theorem~\ref{thm:weighted_sbm_rate} thus applies to the so-called \emph{sparse} setting where $P_0, Q_0 \rightarrow 1$. In particular, suppose there are constants $a, b > 0$ such that $ P_{0,n} = 1-\frac{a \log n}{n}$ and $Q_0 = 1- \frac{b \log n}{n}$. Then Theorem~\ref{thm:weighted_sbm_rate} states that perfect recovery is achievable if $(\sqrt{a} - \sqrt{b})^2 + \sqrt{ab} \int_S (\sqrt{p_n(x)} - \sqrt{q_n(x)})^2 \, dx > \beta K$; this generalizes the previously known result that perfect recovery for unweighted SBMs when $p = 1 - \frac{a \log n}{n}$ and $q = 1 - \frac{b \log n}{n}$ is possible if $(\sqrt{a} - \sqrt{b})^2 > \beta K$. 
\end{remark}

\begin{remark}
The assumption that there exist sequences $I_n \rightarrow 0$ and $I'_n = \omega(1/n)$ such that $I'_n \leq I((P_0, p), (Q_0, q)) \leq I_n$ is a very mild one. As our information-theoretic lower bound (cf.\ Section~\ref{sec:lower_bound}) shows, estimation consistency is impossible if a sequence $I'_n = \omega(1/n)$ such that $I((P_0, p), (Q_0, q)) \geq I'_n$ does not exist. Moreover, we observe that if $I((P_0, p), (Q_0, q)) > \beta K \frac{\log n}{n}$, then $\mathbb{P}( l(\hat{\sigma}(A), \sigma_0) = 0) \rightarrow 1$, and we are able to perfectly recover the clustering with high probability. Since the estimation problem is intrinsically easier if as $I((P_0, p), (Q_0, q))$ becomes larger, we expect the same perfect recovery guarantee to hold in the case where $I((P_0, p), (Q_0, q))$ is positively bounded away from 0. 
\end{remark}

\begin{remark}
Since $n I'_n \rightarrow \infty$, it is always possible to choose a sequence $L_n \rightarrow \infty$ satisfying the conditions of the theorem. Note that $L_n$ must grow very slowly to satisfy the condition that $\frac{n I'_n}{L_n \exp(L_n^{2/r})} \rightarrow \infty$; indeed, our simulation studies (cf.\ Section~\ref{sec:simulation}) confirm that we should choose the discretization level to be very small in order to achieve good performance. We note that $L_n$ has a second-order effect on the rate and appears in the $\zeta_n$ term. 
\end{remark}

\subsubsection{Additional discussion of the conditions}
\label{sec:assumption_discussion}

It is crucial to note that our algorithm does \emph{not} require prior knowledge of the form of $p(\cdot)$ and $q(\cdot)$; the same algorithm and guarantees apply so long as $((P_0, p), (Q_0, q)) \in \mathcal{G}_{\Phi, C, c_1, c_2, r, t}$ for some universal constants $C, c_1, c_2, r$, and $t$. To aid the reader, we now provide a brief, non-technical interpretation of the regularity conditions described above.

Condition A1 is simple; the last part states that $\phi$ must have a tail at least as heavy as that of $p(\cdot)$ and $q(\cdot)$. Condition A2 requires that the likelihood ratio be integrable. It is analogous to a bounded likelihood ratio condition, but much weaker; we add a mild quasi-convexity constraint for technical reasons related to the analysis of binning. In condition A3, the function $\gamma(\cdot)$ is of constant order in the sense that $\int_S \bigl( \frac{\gamma(x)}{p(x) + q(x)} \bigr)^r (p(x) + q(x)) dx \leq C$. Requirements on $\gamma(\cdot)$ translate into convergence statements on $|p - q|$: For instance, an $L_\infty$-bound on $\gamma$ implies almost uniform convergence (with respect to $\Phi$) of $|p - q|$ to 0. The integrability condition we impose on $\gamma(\cdot)$ in condition A3 is analogous to an $L_\infty$-bound, but much weaker.

Condition A4 controls the smoothness of the derivatives of $\log p(\cdot)$ and $\log q(\cdot)$. Condition A5 is a mild shape constraint on $p(\cdot)$ and $q(\cdot)$. When $S=\R$, this condition essentially requires $p(\cdot)$ and $q(\cdot)$ to be monotonically increasing in $x$ for $x \rightarrow -\infty$, and decreasing in $x$ for $x \rightarrow \infty$.

\subsubsection{Examples for $S = [0,1]$}
\label{sec:examples_bounded_support}

When $S=[0,1]$, we can always take $\Phi$ to be the identity---we do not need a transformation, but we keep the same notation in order to present our results in a unified manner. The simplest example of $\mathcal{G}_\Phi$ that satisfy conditions A1--A5 is when, for all $((P_0, p), (Q_0, q)) \in \mathcal{G}_\Phi$, the densities $p(\cdot)$ and $q(\cdot)$ are bounded above and below by strictly positive universal constants, and when the function $x \mapsto \frac{p(x) - q(x)}{\alpha}$ and its derivative are bounded by universal constants.

\subsubsection{Examples for $S = \R$ or $[0, \infty)$}  
\label{sec:examples_unbounded_support}

We begin with a proposition that characterizes conditions A1--A5 in the setting where $p(\cdot) = e^{f_{\theta_1}(\cdot)}$ and $q(\cdot) = e^{f_{\theta_0}(\cdot)}$, for some parametrized family $\{ f_{\theta} \}_{\theta \in \Theta}$. This result allows us to generate several large classes of examples.

\begin{proposition}
  \label{prop:theta_rate}
  Let $C^{**} \in [1, \infty)$, $c_1, c_2 \in S$, $r > 2$, and $t \in (2/r, 1/2)$. Let $\Theta \subset \mathbb{R}^d$ be compact and suppose $\textrm{diam}(\Theta) < 1 \wedge \frac{1}{2C^{** 2}}$. Let $\{ f_\theta \}_{\theta \in \Theta}$ be a collection of functions such that $e^{f_\theta(\cdot)}$ is a density and:
  \begin{enumerate}
  \item[B1] For all $\theta \in \Theta$ and all $x \in S$, we have $0 < e^{f_\theta(x)} \leq C^* \phi(x)$.
  \item[B2] We have $\inf_{\theta \in \Theta} \lambda_{\min} \bigl( \int_S 2 \nabla f_\theta(x) (\nabla f_\theta(x))^\top \phi(x) \,dx \bigr) \geq C^{*-1}$ and \\
            $ \sup_{\theta \in \Theta}  \int_S \lambda_{\max} \bigl(H (f_\theta)(x)\bigr)^2 \phi(x) \,dx \leq C^*$.
  \item[B3] There exists a quasi-convex function $g^* : S \rightarrow [0,\infty)$ such that $g^*(x) \geq \sup_{\theta} \| \nabla f_\theta(x) \|_2$ and $\int_S g^*(x)^r \phi(x) \,dx \leq C^*$.
  \item[B4] There exists a quasi-convex function $h^* : S \rightarrow [0, \infty)$ such that
    $$h^*(x) \geq \frac{1}{\phi(x)}
    \max\biggl\{ \sup_{\theta \in \Theta} \| \nabla f_\theta(x) \|_2,\;
    \sup_{\theta \in \Theta} \|\nabla f_{\theta}'(x)\|_2,\;
    \sup_{\theta \in \Theta} |f'_{\theta}(x)|
    \biggr\}
    $$
    and $\int_S h^*(x)^{2t} \phi(x) \, dx \leq C^*$.
  \item[B5] For all $x \leq c_1$, we have $\inf_{\theta \in \Theta} f_\theta'(x) \geq (\log \phi)'(x)$, and for all $x \geq c_2$, we have $\sup_{\theta \in \Theta} f_\theta'(x) \leq (\log \phi)'(x)$.\footnote{If $S = [0,\infty)$ and $g^*$ is non-decreasing, we only need  $\sup_{\theta \in \Theta} f_\theta'(x) \leq (\log \phi)'(x)$ for all $x \geq c_2$.  \label{note:one_sided_theta}}
\end{enumerate}

Then there exists $C \in [1, \infty)$ such that for any $\theta_1, \theta_2 \in \Theta$ and any $P_0, Q_0 \in [0,1]$ such that $\frac{1}{C} \leq \frac{P_0}{Q_0}, \frac{1 - P_0}{1-Q_0} \leq C$, we have $((P_0, e^{f_{\theta_1}}), (Q_0, e^{f_{\theta_2}})) \in \mathcal{G}_{\Phi, C, c_1, c_2, r, t}$. 
\end{proposition}

In all the examples below, we take $\Phi$ to be the transformation function defined in equation~\eqref{eqn:phi_defn}. The proofs of all statements in the examples are provided in Section~\ref{sec:appendix_examples}

\begin{example} [Location-scale family over $\R$]
  \label{ex:location_scale}

Let $f \,:\, \mathbb{R} \rightarrow \mathbb{R}$ be a continuously differentiable function such that $\int_{-\infty}^\infty e^{f(x)} \, dx = 1$. Suppose
\begin{itemize}
\item[(a)] $|f^{(k)}(x)|$ is bounded for some $k \geq 2$, and
\item[(b)] there exist $c, M > 0$ such that $f'(x) > M$ for $x < -c$ and $f'(x) < - M$ for $x > c$. 
\end{itemize}
For any $\mu \in \mathbb{R}$ and $\sigma > 0$, define $f_{\mu,\sigma} (x) := f\left( \frac{x - \mu}{\sigma} \right) - \log \sigma$.

Then there exists $C_\mu > 0$ and $c_{\sigma} > 1$ such that, with $\Theta := [- C_{\mu}, C_{\mu}] \times [\frac{1}{c_{\sigma}}, c_{\sigma}]$, the family $\{f_{\mu,\sigma}\}_{(\mu,\sigma) \in \Theta}$ satisfies conditions B1--B5 in Proposition~\ref{prop:theta_rate} with respect to $\phi$ defined in equation~\eqref{eqn:phi_defn}, and some universal constants $C^{**}, c_1, c_2, r$, and $t$. As a direct consequence of Proposition~\ref{prop:theta_rate}, for some universal constant $C > 0$, if we fix any $((\mu_1, \sigma_1), (\mu_0, \sigma_0)) \in \Theta^2$ and define
\begin{align}
  p(x) =  \frac{1}{\sigma_{1}} \exp\left( f\left( \frac{x - \mu_{1}}{\sigma_{1}} \right) \right),  \text{ and } 
  q(x) =  \frac{1}{\sigma_{0}} \exp\left( f\left( \frac{x - \mu_{0}}{\sigma_{0}} \right) \right),   
\end{align}
then $((P_0, p), (Q_0, q)) \in \mathcal{G}_{\Phi, C, c_1, c_2, r, t}$ for any $P_0, Q_0 \in [0,1]$ that satisfy condition A0. 

These assumptions on $f$ are satisfied for \textbf{Gaussian location-scale families}, where the base density is the standard Gaussian density with $f(x) = -x^2 - \frac{1}{2} \log 2 \pi$, and \textbf{Laplace location-scale families}, where the base density is the standard Laplace density with $f(x) = - |x| - \log 2$.
\end{example}

\begin{example} [Scale family over $[0,\infty)$]
  \label{ex:location_scale}

Let $f \,:\, [0,\infty) \rightarrow \mathbb{R}$ be a continuously differentiable function such that $\int_0^\infty e^{f(x)} \, dx = 1$. Suppose
\begin{itemize}
\item[(a)] $|f^{(k)}(x)|$ is bounded for some $k \geq 2$, and
\item[(b)] there exist $c, M > 0$ such that $f'(x) < - M$ for $x > c$. 
\end{itemize}
For any $\sigma > 0$, define $f_{\sigma} (x) := f\left( \frac{x}{\sigma} \right) - \log \sigma$.

Then there exists $c_{\sigma} > 1$ such that, with $\Theta := [\frac{1}{c_{\sigma}}, c_{\sigma}]$, the family $\{f_{\sigma}\}_{\sigma \in \Theta}$ satisfies conditions B1--B5 in Proposition~\ref{prop:theta_rate} with respect to $\phi$ defined in equation~\eqref{eqn:phi_defn}, and some universal constants $C^{**}, c_1, c_2, r$, and $t$. As a direct consequence of Proposition~\ref{prop:theta_rate}, for some universal constant $C > 0$, if we fix any $(\sigma_1, \sigma_0) \in \Theta^2$ and define
\begin{align*}
  p(x) =  \frac{1}{\sigma_{1}} \exp\left( f\left( \frac{x}{\sigma_{1}} \right) \right),  \text{ and } 
  q(x) =  \frac{1}{\sigma_{0}} \exp\left( f\left( \frac{x}{\sigma_{0}} \right) \right),  
\end{align*}
then $((P_0, p), (Q_0, q)) \in \mathcal{G}_{\Phi, C, c_1, c_2, r, t}$ for any $P_0, Q_0 \in [0,1]$ that satisfy condition A0. 

These assumptions on $f$ are satisfied for \textbf{exponential scale families}, where the base density is the standard exponential density with $f(x) = -x$.
\end{example}

Proposition~\ref{prop:theta_rate} also applies to the family of Gamma distributions, see Proposition~\ref{prop:gamma_family} in the appendix. 



  

In this paper, we only study continuous edge weights in detail; in practice, discrete edge weights such as counts are also important. Although Theorem~\ref{thm:weighted_sbm_rate} does not  apply directly to such cases, our analysis is relevant to some instances of SBMs with discrete edge weights. In Appendix~\ref{sec:discrete_weights}, we discuss a crude way to handle count-valued edge weights, with particular attention toward Poisson-distributed edge weights. 


\subsection{Lower bound}
\label{sec:lower_bound}

Our information-theoretic lower bound applies to any permutation equivariant estimators (Definition~\ref{defn:permutation_equivariance_deterministic}). Before stating the result, we define an appropriate subset of $\mathcal{P}^2$ to capture the conditions we need on $((P_0, p), (Q_0, q))$. Let $C^* \in [1, \infty)$, and let $\mathcal{G}^*_{C^*} \subset \mathcal{P}^2$ be such that $((P_0, p), (Q_0, q)) \in \mathcal{G}^*$ if and only if
\begin{enumerate}
\item[$A0^*$] $\frac{1}{C^*} \leq \frac{P_{0}}{Q_{0}} \leq C^*$, and
\item[$A1^*$] $\int_S (p(x) + q(x)) \bigl|\log  \frac{p(x)}{q(x)} \bigr|^2 \, dx \leq C^* \int_S (p(x)^{1/2} - q(x)^{1/2})^2 \, dx$.
\end{enumerate}

Condition A1$^*$ is similar to A2 and A3 in the definition of the set of regular distributions $\mathcal{G}_{{\Phi}, C, c_1, c_2, r, t}$ that appears in the upper bound (Theorem~\ref{thm:weighted_sbm_rate}). In fact, if $\int_S (p^{1/2} - q^{1/2})^2 dx$ is bounded away from 0, then there exists $C^*$ such that A$1^*$ is equivalent to A2. Thus, although $\mathcal{G}^*_{C^*}$ is in general not a superset of $\mathcal{G}_{{\Phi}, C, c_1, c_2, r, t}$, the set $\mathcal{G}^*_{C^*} \cap \mathcal{G}_{{\Phi}, C, c_1, c_2, r, t}$ contains important and interesting examples. For instance, any family that satisfies the conditions of Proposition~\ref{prop:theta_rate} belongs to the intersection, as is verified in the proof (cf.\ Appendix~\ref{sec:theta_rate_proof}).

\begin{theorem}
  \label{thm:lower_bound}
Let $C^* \geq 1$ and let $\sigma_0 \,:\, [n] \rightarrow [K]$ be a clustering such that one cluster is of size $\frac{n}{\beta K}$ and another is of size $\frac{n}{\beta K} + 1$. Let $I'_n$ be any sequence such that $nI'_n \rightarrow \infty$, and let $C = 2\log 2$. Then there exists $\zeta_n \rightarrow 0$ and $c' >0$ such that, for any permutation equivariant algorithm $\hat{\sigma}$,
\begin{align*}
 \inf_{\substack{((P_0, p), (Q_0, q)) \in \mathcal{G}^*_{C^*} \\ I'_n \leq I((P_0, p), (Q_0, q)) \leq C}} \mathbb{E}_{\substack{ (P_{0}, p) \\ (Q_{0}, q) }} \bigl[ \ell\bigl( \hat{\sigma}(A), \sigma_0 \bigr)\bigr] 
     \exp \left( \frac{n}{\beta K}  I\bigl((P_{0}, p), (Q_{0}, q)\bigr) (1 + \zeta_n) \right) \geq c'.
\end{align*}
Furthermore, for any $c > 0$, there exists $c' > 0$ such that for any permutation equivariant algorithm $\hat{\sigma}$,
\begin{align*}
    \inf_{\substack{((P_0, p), (Q_0, q)) \in \mathcal{G}^*_{C^*} \\ I((P_0, p), (Q_0, q)) \leq c/n}}    \mathbb{E}_{\substack{ (P_{0}, p) \\ (Q_{0}, q) }} \bigl[ \ell\bigl( \hat{\sigma}(A), \sigma_0\bigr) \bigr] \geq c'.
\end{align*}

\end{theorem}

Theorem~\ref{thm:lower_bound} shows that if $n I_n \rightarrow \infty$, the misclustering risk of any permutation equivariant algorithm is at least $\exp \left( - (1 + o(1)) \frac{ n I\bigl((P_0, p), (Q_0, q)\bigr)}{\beta K} \right)$. If $n I\bigl((P_0, p), (Q_0, q)\bigr) = O(1)$, any permutation invariant algorithm is inconsistent. 

\begin{remark}
  Rather than being a minimax lower bound that applies to the worst case, Theorem~\ref{thm:lower_bound} applies to \emph{any} parameter $((P_0, q), (Q_0 q)) \in \mathcal{G}^*_{C^*}$; we thus have an infimum over the parameter space rather than a supremum. This is possible because the permutation equivariance condition excludes the trivial case where $\hat{\sigma} = \sigma_0$. 
\end{remark}

\paragraph{\textbf{Proof sketch of Theorem~\ref{thm:lower_bound}:}} The full proof of the theorem is provided in Appendix~\ref{sec:lower_bound_proof}; we highlight key points here. The proof borrows elements from Yun and Proutiere~\citep{yun2016optimal} and Zhang and Zhou~\citep{zhangminimax}. One key difference is that Theorem~\ref{thm:lower_bound} holds for any parameters in the parameter space, rather than adopting a minimax framework, as in Zhang and Zhou~\cite{zhangminimax}, or assuming a prior on $\sigma_0$, as in Yun and Proutiere~\cite{yun2016optimal}.

Crucial to the proof is the notion of a \emph{misclustered} node. Let
\[
S_K[ \hat{\sigma}(A), \sigma_0] := \argmin_{\rho \in S_K} d_H(\rho \circ \hat{\sigma}(A), \sigma_0).
\] 
It is straightforward to define a set of misclustered nodes when $S_K[\hat{\sigma}(A), \sigma_0]$ is a singleton, but care must be taken when $S_K[\hat{\sigma}(A), \sigma_0]$ contains multiple elements.
We define the set of \emph{misclustered nodes} as
\begin{align}
\mathcal{E}[ \hat{\sigma}(A), \sigma_0] \! := \! \Big\{ v: (\rho \circ \hat{\sigma}(A))(v) \neq  \sigma_0(v),
          \text{ for some } \rho \in S_K[\hat{\sigma}(A), \sigma_0]   \Big\}. 
\end{align}
To see an example of the subtlety that arises when $S_K[ \hat{\sigma}(A), \sigma_0]$ is not a singleton, note that the ``$\text{for some } \rho \in S_K[\hat{\sigma}(A), \sigma_0]$'' qualifier in the definition of $\mathcal{E}[\hat{\sigma}(A), \sigma_0]$ cannot be replaced with ``$\text{for all } \rho \in S_K[\hat{\sigma}(A), \sigma_0]$.'' Otherwise, in the case where the clusters in $\sigma_0$ all have the same size, and where $\hat{\sigma}(A)$ is a trivial algorithm that maps all nodes to cluster 1, the set $S_K[\hat{\sigma}(A), \sigma_0]$ equals $S_K$ and $\mathcal{E}[ \hat{\sigma}(A), \sigma_0]$ would be empty. 

With the definition of $\mathcal{E}[ \hat{\sigma}(A), \sigma_0]$, we may formalize symmetry properties of permutation equivariant estimators. For example, if $A$ is distributed according to a weighted SBM with $\sigma_0$ as the true cluster assignment, and if nodes $u$ and $v$ lie in clusters of equal sizes, then $P(u \in \mathcal{E}[\hat{\sigma}(A), \sigma_0]) = P( v \in \mathcal{E}[\hat{\sigma}(A), \sigma_0])$ for any permutation equivariant $\hat{\sigma}$ (cf.\ Corollary~\ref{cor:permutation_equivariance_symmetry_sbm}).

Without loss of generality, let cluster 1 and 2 be the clusters in $\sigma_0$ that have sizes $\frac{n}{\beta K} + 1$ and $\frac{n}{ \beta K}$, respectively, and let node 1 belong to cluster 1. Let $\sigma^*$ be a random cluster assignment where $\sigma^*(u) = \sigma_0(u)$ for all $u \neq 1$, and $\sigma^*(1)$ is $1$ or $2$ with probability $\frac{1}{2}$ each. Let $\mathbb{P}_{SBM}( \cdot \given \sigma^*)$ denote the distribution on $\mathbb{R}^{n \times n}$ induced by the random cluster assignment $\sigma^*$. We perturb $\mathbb{P}_{SBM}( \cdot \given \sigma^*)$ to define a new distribution $\mathbb{P}_{\Psi}(\cdot)$ on $\mathbb{R}^{n \times n}$, where under $P_{\Psi}$, the $A_{uv}$'s are independent; and if $u = 1$ and $v$ is in cluster 1 or 2, then $A_{uv}$ is distributed according to a new probability distribution $Y^*$ instead of $P$ or $Q$. If $u \neq 1$ or if $v$ is not in cluster 1 or 2, then $A_{uv}$ is distributed according to $P_{SBM}(A \given \sigma^*)$. We take $Y^* := \arg\min_{Y} \max \left\{ \int_S \log \frac{dY}{dP} dY, \, \int_S \log \frac{dY}{dQ} dY \right\}$, constructed to be similar to both $P$ and $Q$ (see Lemma~\ref{lem:information_equivalence}). 

Under $\mathbb{P}_{\Psi}(\cdot)$, we can show that it is impossible to consistently cluster node 1 with respect to $\sigma^*$ as the true cluster assignment. We then use the fact that $\mathbb{P}_{\Psi}(A)$ and $\mathbb{P}_{SBM}(A \given \sigma^*)$ are similar to deduce the difficulty of correctly clustering node 1 under $\mathbb{P}_{SBM}(A \given \sigma^*)$. Permutation equivariance translates this into a result on the number of misclustered nodes in a way similar to Lemma~2.1 in Zhang and Zhou~\cite{zhangminimax}---the distinction being that Zhang and Zhou~\cite{zhangminimax} uses a uniform prior over the true clustering to transform arbitrary estimators into permutation equivariant ones, whereas we place no prior on the true clustering, but restrict estimators to be permutation equivariant. Finally, we finish the proof by using permutation equivariance again to show that the misclustering error does not depend on whether $\sigma^*(1) = 1$ or $\sigma^*(1) = 2$. 
\subsection{Adaptivity}

Let $\mathcal{F}^{p.e.}_n$ be the class of permutation equivariant clustering algorithm on networks with $n$ nodes. Theorems~\ref{thm:weighted_sbm_rate} and~\ref{thm:lower_bound} directly imply the following corollary, which sharply characterizes the optimal performance of $\mathcal{F}^{p.e.}_n$:

\begin{corollary}
  \label{cor:optimality_combined}
 Let $\sigma_0 \,:\, [n] \rightarrow [K]$, and suppose one cluster is of size $\frac{n}{\beta K}$ and another is of size $\frac{ n}{\beta K}+1$. Let $C^*, C \geq 1, c_1, c_2 > 0, r > 0$, and $t \in (2/r, 1)$, and let $\Phi$ be a transformation function. Write $\mathcal{G}_{\Phi} := \mathcal{G}_{\Phi, C, c_1, c_2, r, t}$, $\mathcal{G}^* := \mathcal{G}^*_{C^*}$, and $\Lambda := \{\beta, K, C^*, C, c_1, c_2, r, t, \Phi\}$. Let $\bigl( (P_{0,n}, p_n), (Q_{0,n}, q_n) \bigr) \in \mathcal{G}_{\Phi} \cap \mathcal{G}^*$ for every $n \in \mathbb{N}$. 

  \begin{itemize}
  \item[(i)] If $\limsup_n I\bigl( (P_{0,n}, p_n), (Q_{0,n}, q_n) \bigr) \frac{n}{\beta K \log n} \leq 1$,  there exists $\zeta_n \rightarrow 0$, depending only on $\Lambda$, such that
    \[
      \inf_{\hat{\sigma} \in \mathcal{F}^{p.e.}_n} \E_{\substack{ (P_{0,n}, p_n) \\ (Q_{0,n}, q_n)} } \bigl[ l(\hat{\sigma}(A), \sigma_0) \bigl] = \exp\left( - \frac{nI\bigl( (P_{0,n}, p_n), (Q_{0,n}, q_n) \bigr)}{\beta K} (1 + \zeta_n) \right).
    \]
  \item[(ii)] If $\liminf_n I\bigl( (P_{0,n}, p_n), (Q_{0,n}, q_n) \bigr) \frac{n}{\beta K \log n} > 1$,  there exists $\zeta_n \rightarrow 0$, depending only on $\Lambda$, such that $ \inf_{\hat{\sigma} \in \mathcal{F}^{p.e.}_n} \mathbb{P}_{(P_{0,n}, p_n), (Q_{0,n}, q_n)}\bigl( l(\hat{\sigma}(A),\sigma_0) > 0 \bigr) \leq \zeta_n$.
      
    \item[(iii)] If there exists $c > 0$ such that $\limsup_n I\bigl( (P_{0,n}, p_n), (Q_{0,n}, q_n) \bigr) n < c$, there exists $c' > 0$ such that  $\liminf_{n \rightarrow \infty} \inf_{\hat{\sigma} \in \mathcal{F}^{p.e.}_n} \E_{(P_{0,n}, p_n), (Q_{0,n}, q_n)} \bigl[ l(\hat{\sigma}(A), \sigma_0) \bigr] > c'$.
      
\end{itemize}
\end{corollary}

The algorithm $\hat{\sigma}$ described in Section~\ref{sec:algorithm} with discretization level $L_n$ diverging sufficiently slowly achieves the optimal rate in part (i) and (ii) for \emph{any} $((P_{0,n}, p_n), (Q_{0,n}, q_n)) \in \mathcal{G}_\Phi \cap \mathcal{G}^*$. Thus, $\hat{\sigma}$ adapts to the edge probabilities $P_{0,n}$ and $Q_{0,n}$ and the edge weight densities $p_n$ and $q_n$: Although $\hat{\sigma}$ has no knowledge of the parameters $((P_{0,n}, p_n), (Q_{0,n}, q_n))$, it achieves the same optimal rate as if $((P_{0,n}, p_n), (Q_{0,n}, q_n))$ were known. 

In particular, this implies that one does not have to pay a price for taking the nonparametric approach. This seemingly counterintuitive phenomenon arises because the cost of discretization is reflected in the lower-order $\zeta_n$ term in the exponent. As an illustrative example, suppose $1 - P_{0,n} = 1 - Q_{0,n} = a \frac{\log n}{n}$ for some $a > 0$, and the densities $p_n$ and $q_n$ are of $N(\mu_1, \sigma_1^2)$ and $N(\mu_0, \sigma_0^2)$, respectively. Then $I_n = (1 + o(1)) \frac{a \log n}{n} \theta$, where $\theta = 2 \left(1 - \sqrt{ \frac{2 \sigma_1^2 \sigma_0^2}{\sigma_1^2 + \sigma_0^2} } e^{- \frac{1}{4} \frac{(\mu_1 - \mu_0)^2}{\sigma_1^2 + \sigma_0^2}} \right)$, and the optimal rate is $n^{-(1+o(1)) \frac{2 \theta}{\beta K}}$, which is attained by the nonparametric discretization estimator $\hat{\sigma}$.

Similarly, if $1 - P_{0,n} = 1 - Q_{0,n} = \frac{a \log n}{n}$ and the densities $p_n$ and $q_n$ are $\textrm{Exp}(\lambda_1)$ and $\textrm{Exp}(\lambda_0)$, respectively, then $I_n = (1 + o(1)) \frac{\log n}{n} \theta'$, where $\theta' = 2\left( 1 - \sqrt{ \frac{\lambda_1 \lambda_0}{\lambda_1 + \lambda_0}} \right)$. The optimal rate $n^{-(1+o(1)) \frac{2 \theta'}{\beta K}}$ is again achieved by the nonparametric discretization estimator $\hat{\sigma}$.

\section{Proof sketch: Recovery algorithm}
\label{sec:proofs}

A large portion of the Appendix is devoted to proving that our recovery algorithm succeeds and achieves the optimal error rates. We provide an outline of the proofs here.

We divide our argument into propositions that focus on successive stages of our algorithm. A birds-eye view of our method reveals that it contains two major components: (1) convert a weighted network into a labeled network, and then (2) run a community recovery algorithm on the labeled network. The first component involves two steps, transformation and discretization. 
Step (1) comprises the red and green steps in Figure~\ref{fig:method_pipeline1} and outputs an adjacency matrix with discrete edge weights. Step (2) is denoted in blue.

\begin{figure}[htp]
\centering
\includegraphics[scale=0.38, trim={1in 7in 0 0.4in}, clip]{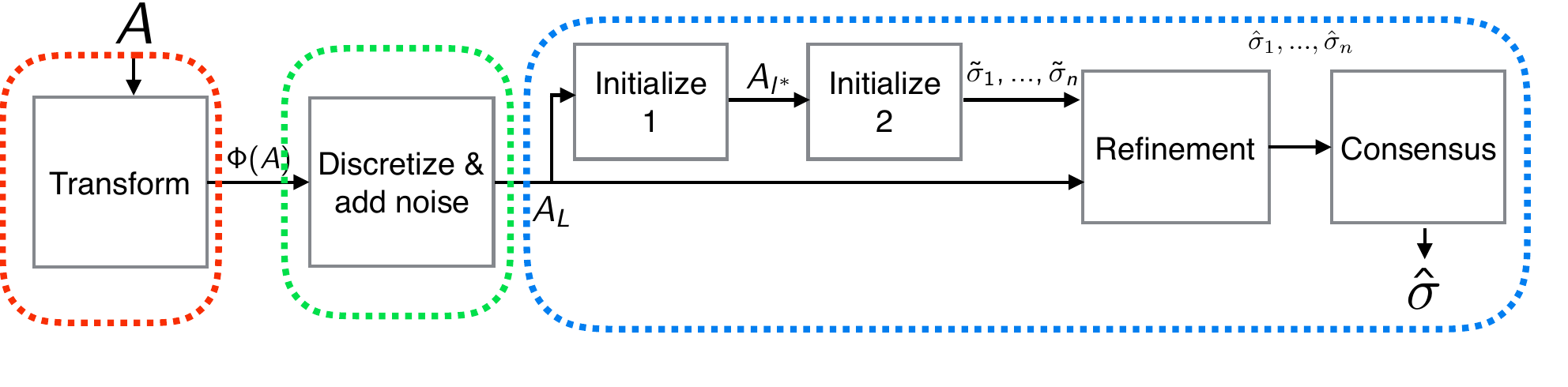}
\caption{Analysis of the right-most blue region is contained in Section~\ref{sec:labeled_sbm_analysis}, of the middle green region in Section~\ref{sec:discretization_analysis}, and of the left-most red region in Section~\ref{sec:transformation_analysis}}.
\label{fig:method_pipeline2}
\end{figure}

In our algorithm, we use a single discretization level $L$ throughout for ease of presentation. In practice, one could use different discretization levels for the initialization stage and for the refinement stage. By comparing Proposition~\ref{prop:labeled_sbm_rate}, Proposition~\ref{prop:discretization1}, and Theorem~\ref{thm:weighted_sbm_rate}, we can see that the bias introduced by discretization is a second-order effect compared to the variance, which is why the discretization level should be small in both stages. The discretization level for the initialization stage can, however, be chosen to be larger than that of the refinement stage, because the initialization stage aims to produce a consistent estimator rather than an optimal one, and can thus tolerate greater variance. More precisely, the theoretical requirements on discretization for the initialization stage are $L \rightarrow \infty$ and $\frac{n I'_n}{L} \rightarrow \infty$, whereas the requirements for the refinement stage are $L \rightarrow \infty$ and $\frac{n I'_n}{L e^{L^{r/2}}}\rightarrow \infty$ (note that $I'_n$ is defined in Theorem~\ref{thm:weighted_sbm_rate}); $L$ is required to be of smaller order to control the ratio $\frac{P_l}{Q_l}$ of the discretized probabilities. 

\subsection{Analysis of community recovery on a labeled network}
\label{sec:labeled_sbm_analysis}

We first examine the second component of our algorithm, which is a subroutine (right-most region in Figure~\ref{fig:method_pipeline1}) for recovering communities in a network where the edges have discrete labels $l=1, \dots, L_n$. The following proposition characterizes the rate of convergence of the output of the subroutine, where within-community edges are assigned edge labels with probabilities $\{P_{l}\}$, and between-community edges are assigned edge labels according to $\{Q_{l}\}$. For convenience, if an edge does not exist between $u$ and $v$, we assign the label 0 to $A_{uv}$, so $P_{0}$ and $Q_{0}$ are the edge absence probabilities.

Formally, for $L \in \mathbb{N}$, define $\mathcal{P}_L := \{ (P_0, ..., P_L) \in [0,1]^{L+1} \,:\, \sum_{l=1}^L P_l = 1 \}$. For a clustering $\sigma_0 \,:\, [n] \rightarrow [K]$ and $(\{P_l\}, \{Q_l\}) \in \mathcal{P}_L^2$, we define a Labeled Stochastic Block Model $LSBM(\sigma_0, \{P_l\}, \{Q_l\})$ as a distribution on $\{0,\ldots,L\}^{n \times n}$ such that if $A \sim LSBM(\sigma_0, \{P_l\}, \{Q_l\})$, then for any $u, v \in [n]$ such that $u > v$, 
\[
A_{uv} \sim \left\{ \begin{array}{cc}
 \{P_l\} & \text{ if } \sigma_0(u) = \sigma_0(v), \\
 \{Q_l\} & \text{ if } \sigma_0(u) \neq \sigma_0(v). 
\end{array} \right.
\]
For $\rho > 1$, let $\mathcal{G}_{L,\rho} \subset \mathcal{P}_L^2$ be such that $( \{P_l\}, \{Q_l\}) \in \mathcal{G}_{L,\rho}$ if and only if $\frac{1}{\rho} \leq \frac{P_l}{Q_l} \leq \rho$ for all $l=0,...,L$. For a pair $(\{P_l\}, \{Q_l\}) \in \mathcal{P}_L$, we define $I\bigl(\{P_l\}, \{Q_l\} \bigr) := -2\log \sum_{l=0}^{L} \sqrt{ P_l Q_l }$.

In the next proposition, for a given clustering $\sigma_0$ and $(\{P_l\}, \{Q_l\}) \in \mathcal{P}_L^2$, we let the random network $A$ have the distribution $LSBM(\sigma_0, \{P_l\}, \{Q_l\})$.

\begin{proposition}
  \label{prop:labeled_sbm_rate}
Let $\sigma_0 \in \mathcal{C}(\beta, K)$. Let $\{ I_n, I'_n, \rho_n, L_n\}_{n \in \mathbb{N}}$ be any sequences such that $I_n \rightarrow 0$, $\rho_n \geq 2$, $L_n \geq 1$, and $\frac{n I'_n}{(L_n + 1) \rho^2_n \log \rho_n} \rightarrow \infty$. Then there exists a sequence $\zeta_n \rightarrow 0$ such that
\[
\lim_{n \rightarrow \infty} \sup_{\substack{ (\{P_l\}, \{Q_l\}) \in \mathcal{G}_{L_n,\rho_n} \\ I'_n \leq I(\{P_l\}, \{Q_l\}) \leq I_n }} \mathbb{P}_{(\{P_l\}, \{Q_l\})} \left( l(\hat{\sigma}(A), \sigma_0) \leq \exp \left( - (1 - \zeta_n) \frac{ n}{ \beta K} I(\{P_l\}, \{Q_l\})  \right) \right) = 1.
\]
Furthermore, if $\frac{n I_n}{\beta K \log n} \leq 1$, then
\[
  \sup_{\substack{  (\{P_l\}, \{Q_l\}) \in \mathcal{G}_{L_n,\rho_n} \\ I'_n \leq I(\{P_l\}, \{Q_l\}) \leq I_n }} \E\bigl[ l(\hat{\sigma}(A), \sigma_0) \bigr] \exp \left( (1 - \zeta_n) \frac{ n }{ \beta K} I\bigl(\{P_l\}, \{Q_l\}\bigr) \right) \leq 1.
\]
\end{proposition}

\begin{remark}
This result resembles that of Yun and Proutiere~\cite{yun2016optimal}, who also study an SBM where the edges carry discrete labels. They state their results using a seemingly different divergence, but it coincides with the Renyi divergence when specialized to our setting (cf.\ Lemma~\ref{lem:information_equivalence}). Proposition~\ref{prop:labeled_sbm_rate} differs critically from Yun and Proutiere~\cite{yun2016optimal} in two respects, however. First, they hold the number of labels $L_n$ to be fixed and assume that the bound $\rho_n$ on the probability ratio $\frac{P_{l,n}}{Q_{l,n}}$ is fixed,  whereas we allow both ${L_n}$ and $\rho_n$ to diverge. Second, they assume that $\sum_{l=1}^{L_n} (P_{l,n} - Q_{l,n})^2$ is sufficiently large when compared to $\max_{l=1, \dots, L_n} P_{l,n}$, whereas we do not make any assumptions of this form. These generalizations are crucial in analyzing the weighted SBM, since in order to achieve consistency for continuous distributions, the discretization level $L_n$ and the bound $\rho_n$ must increase with $n$.
\end{remark}

\subsection{Discretization of the Renyi divergence}
\label{sec:discretization_analysis}

We now analyze the discretization step of the algorithm (green box in Figure \ref{fig:method_pipeline1}). The input to this step is the weighted network $\Phi(A)$ in which all the edge weights are in $[0,1]$. We use $\tilde{p}(z)$ and $\tilde{q}(z)$ for $z \in [0,1]$ to denote the densities of the transformed edge weights; the next section shows the relationship between $\tilde{p}(z)$ and $p(x)$ and $\tilde{q}(z)$ and $q(x)$. The discretization step of the algorithm divides $[0,1]$ into $L_n$ uniform bins, denoted by $[a_l, b_l]$, for $1 \le l \le L_n$. The output is a network $A_{L_n}$, where each edge is assigned label $l=1, \dots, L_n$ with probability either
\begin{align}
  P_l := (1 - P_{0}) \int_{a_l}^{b_l} \tilde{p}(z) dz,  \quad \textrm{ or } \quad Q_l := (1 - Q_{0}) \int_{a_l}^{b_l} \tilde{q}(z) dz.
  \label{eqn:Pl_Ql_defn}
\end{align}
A missing edge is assigned the label 0. It is easy to show that discretization always leads to a loss of information; i.e., $I \bigl(\{P_l\}, \{Q_l\} \bigr) \leq I\bigl((P_0, \tilde{p}), (Q_0, \tilde{q})\bigr)$.

Let $\tilde{\mathcal{P}}$ denote the set of probability distributions on $[0,1]$ whose singular part is a point mass at $0$. Let $\tilde{C} \in (0, \infty)$, $\tilde{c}_1, \tilde{c}_2 \in (0,1/2)$, $r > 2$, and $t > 0$, and define the set $\tilde{\mathcal{G}}_{\tilde{C}, \tilde{c}_1, \tilde{c}_2, r, t} \subset \tilde{\mathcal{P}}^2$ such that $((P_0, \tilde{p}), (Q_0, \tilde{q})) \in \tilde{\mathcal{G}}$ if and only if the following hold:

\begin{enumerate}
\item[C0] We have $\frac{1}{\tilde{C}} \leq \frac{1-P_0}{1-Q_0} \leq \tilde{C}$ and $\frac{1}{\tilde{C}} \leq \frac{P_0}{Q_0} \leq \tilde{C}$.
\item[C1] For all $z \in (0,1)$, we have $0 < \tilde{p}(z), \tilde{q}(z) \leq \tilde{C}$.
  \item[C2] There exists a quasi-convex $\tilde{g} : [0,1] \rightarrow [0, \infty)$ such that $\tilde{g}(z) \geq \bigl| \log \frac{\tilde{p}(z)}{\tilde{q}(z)} \bigr|$ and $\int_0^1 \tilde{g}(z)^r \,dz \leq \tilde{C}$.
  \item[C3]     Denoting $\tilde{\alpha} := \bigl\{ \int_0^1 (\sqrt{\tilde{p}(z)} - \sqrt{\tilde{q}(z)})^2 \, dz \bigr\}^{1/2}$ and
  $\tilde{\gamma}(z) := \frac{\tilde{p}(z) - \tilde{q}(z)}{\tilde{\alpha}}$, we have
  \[
    \int_0^1
    \bigl\{\frac{\tilde{\gamma}(z)}{\tilde{p}(z) + \tilde{q}(z)} \bigr\}^r (\tilde{p}(z) + \tilde{q}(z)) \, dz \leq \tilde{C}.
  \]
\item[C4] There exists a quasi-convex function $\tilde{h} : [0,1] \rightarrow [0, \infty)$ such that
  \[
    \tilde{h}(z) \geq \max \left\{\left| \frac{\tilde{\gamma}(z)}{\tilde{p}(z) + \tilde{q}(z)} \right|, \; \left|\frac{\tilde{p}'(z)}{\tilde{p}(z)} \right|, \; \left| \frac{\tilde{q}'(z)}{\tilde{q}(z)} \right|, \; \left|\frac{\tilde{\gamma}'(z)}{\tilde{p}(z) + \tilde{q}(z)}\right|  \right\}
  \]
  and $\int_0^1 \tilde{h}(z)^t dz < \tilde{C}$.
\item[C5]  We have $\tilde{p}'(z), \tilde{q}'(z) \geq 0$ for all $z < \tilde{c}_1$, and $\tilde{p}'(z), \tilde{q}'(z) \leq 0$ for all $z > 1-\tilde{c}_2$.\footnote{If $\tilde{g}$ is non-decreasing, we need only $\tilde{p}'(z), \tilde{q}'(z) \leq 0$ for all $z > 1-c'_2$. \label{note:one_sided_compact}}
\end{enumerate}

\begin{proposition}
  \label{prop:discretization1}
  Let $\tilde{C} \in (0, \infty)$, $\tilde{c}_1, \tilde{c}_2 \in (0, 1/2)$, $r > 2$, and $t > 0$. For any  $((P_0, \tilde{p}), (Q_0, \tilde{q})) \in \tilde{\mathcal{G}}_{\tilde{C}, \tilde{c}_1, \tilde{c}_2, r, t}$, for any $L \in \mathbb{N}$ such that $L \geq \tilde{c}_1^{-1} \vee \tilde{c}_2^{-1}$, and for $\{P_l, Q_l\}$ defined in equation~\eqref{eqn:Pl_Ql_defn}, we have $\frac{1}{2 \tilde{C} \exp((2 \tilde{C}L)^{1/r})} \leq \frac{P_{l}}{Q_{l}} \leq 2 \tilde{C} \exp((2 \tilde{C} L)^{1/r})$ for all $l \in \{0,\ldots L\}$. Furthermore, 
  \[
    \lim_{L \rightarrow \infty} \sup_{((P_0, \tilde{p}), (Q_0, \tilde{q})) \in \tilde{\mathcal{G}}_{\tilde{C}, \tilde{c}_1, \tilde{c}_2, r, t}}
    \left| 1 - \frac{I(\{P_l\}, \{Q_l\} \bigr)}{I\bigl((P_0, \tilde{p}), (Q_0, \tilde{q})\bigr)} \right| = 0.
  \]
\end{proposition}
We prove Proposition~\ref{prop:discretization1} in Appendix~\ref{sec:proof_of_discretization1}.

\subsection{Analysis of the transformation function}
\label{sec:transformation_analysis}

Proposition~\ref{prop:discretization1} considers densities supported on $[0,1]$. In conjunction with Proposition~\ref{prop:labeled_sbm_rate}, this suffices to obtain Theorem~\ref{thm:weighted_sbm_rate}, because the densities of the transformed edge weights are compactly supported and, importantly, the Renyi divergence is invariant with respect to the transformation function $\Phi$.

To be precise, let $p(\cdot)$ and $q(\cdot)$ denote probability densities on $S$, and for $X \sim p$ and $Y \sim q$, let $\tilde{p}(\cdot)$ and $\tilde{q}(\cdot)$ denote the densities of $\Phi(X)$ and $\Phi(Y)$. We then have $\tilde{p}(z) = \frac{p(\Phi^{-1}(z))}{\phi(\Phi^{-1}(z))}$ and $\tilde{q}(z) = \frac{q(\Phi^{-1}(z))}{\phi(\Phi^{-1}(z))}$ for $z \in [0,1]$. Therefore, via the change of variable $z = \Phi^{-1}(x)$, we have
\begin{align*}
\int_S \sqrt{p(x)q(x)} \, dx &= \int_0^1 \sqrt{\tilde{p}(z)\tilde{q}(z)}\, dz, \,\, \mathrm{and} \\
I\bigl( (P_0, p), (Q_0, q) \bigr) &= I\bigl( (P_0, \tilde{p}), (Q_0, \tilde{q}) \bigr).
\end{align*}


 \section{Simulation studies}
 \label{sec:simulation}
 
 We start with a toy example that illustrates the intuition behind our discretization-based algorithm. In this example, we have $n=1000$ nodes, $K=2$ clusters, and $P_0 = Q_0 = 0.5$. We also set $p(\cdot)$ and $q(\cdot)$ as the normal density $N(0, 1.3^2 +1)$ and mixture of normals $\frac{1}{2} N(-1.3, 1) + \frac{1}{2} N(1.3, 1)$, respectively (see Figure~\ref{fig:PQ}). Observe that $\int_{\mathbb{R}} x dP = \int_{\mathbb{R}} x dQ = 0$ and $\int_{\mathbb{R}} x^2 dP = \int_{\mathbb{R}} x^2 dQ = \frac{1}{2} (1.3^2 + 1)$. The true clustering $\sigma_0$ maps the first 500 nodes to cluster 1 and the rest to cluster 2. 

 \begin{figure}[!htp]
   \centering
   \begin{subfigure}[b]{0.13\textwidth}
     \centering
     \includegraphics[width=\textwidth]{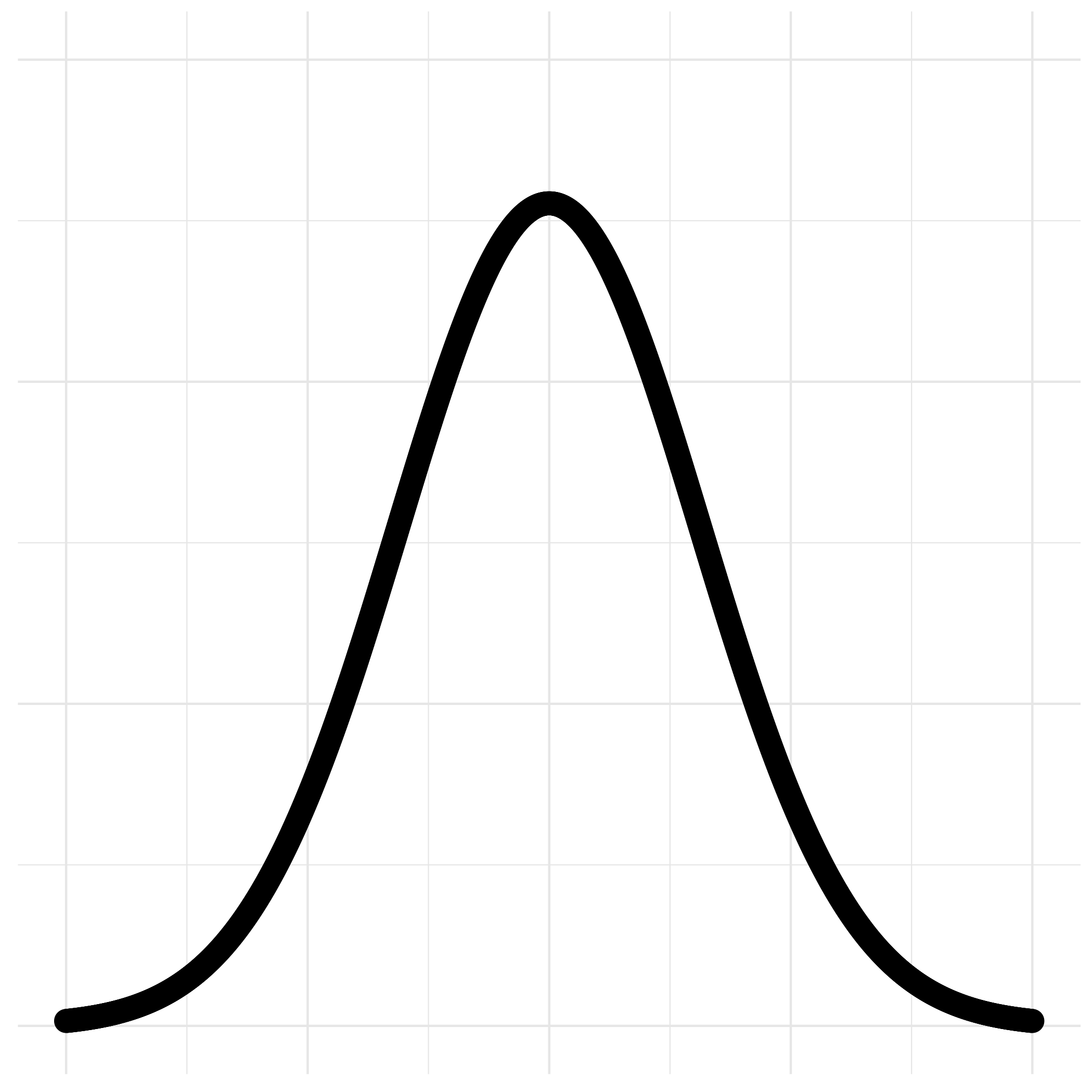}
     \caption{$p(\cdot)$}
   \end{subfigure}
   \hspace{1in}
   \begin{subfigure}[b]{0.13\textwidth}
     \centering
     \includegraphics[width=\textwidth]{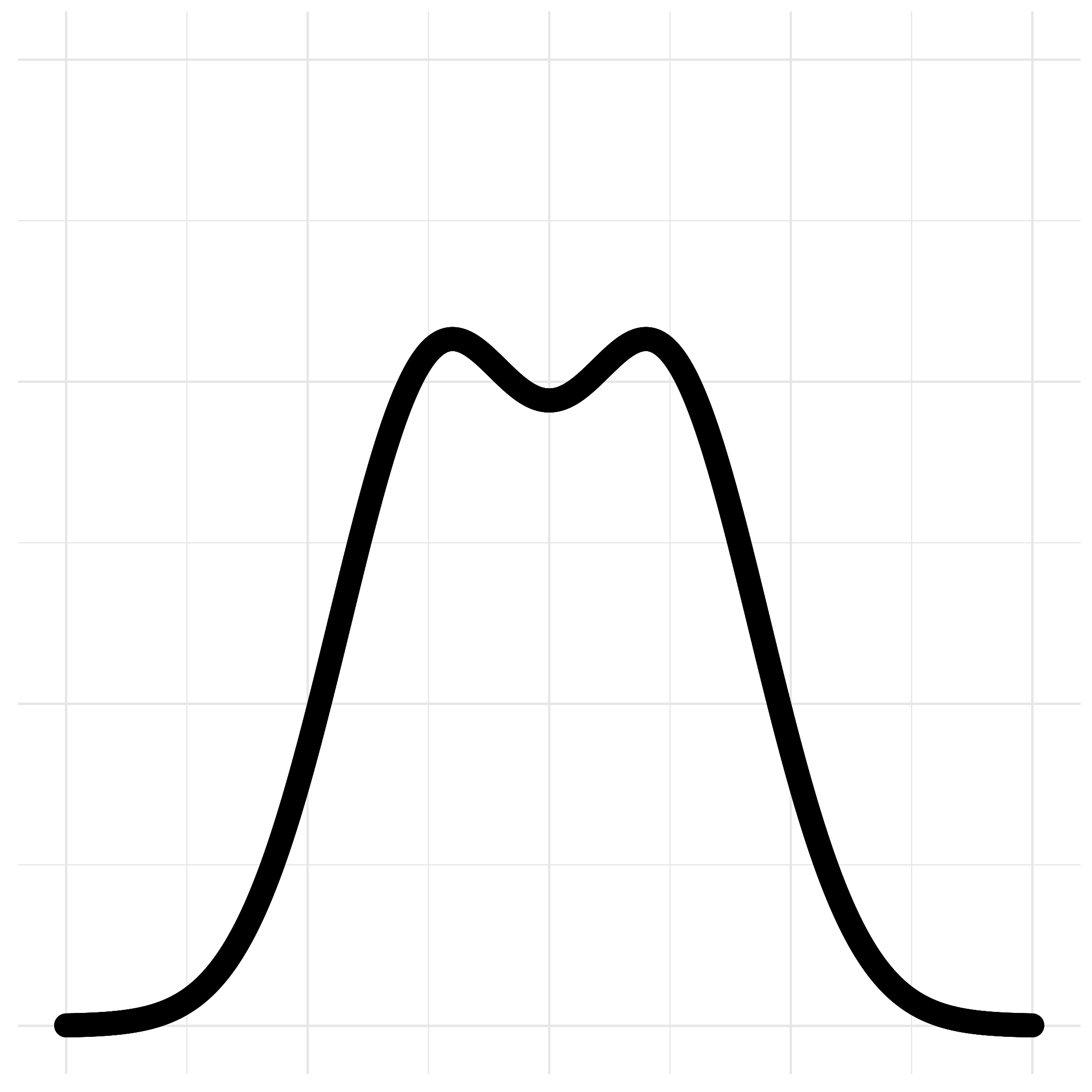}
     \caption{$q(\cdot)$}
   \end{subfigure}
   \caption{}
   \label{fig:PQ}
 \end{figure}
 
 In Figure~\ref{fig:weighted_adj}, we generate a random weighted network $A$ and display the adjacency matrix \emph{without randomly permuting the rows and columns}. It is difficult to discern the block structure because $(P_0, p)$ and $(Q_0, q)$ have equal mean and variance. In Figure~\ref{fig:discrete_adj1},~\ref{fig:discrete_adj2}, and~\ref{fig:discrete_adj3}, we discretize $A$ using the transformation $\Phi(x) = \int_{-\infty}^x \frac{1}{4}e^{-|t|/2}  \,dt$ and $L=3$ bins and show the discretized network $A^1, A^2, A^3$; recall that $A^1$ is a binary adjacency matrix, where $A^1_{uv} = 1$ if $A_{uv} \neq 0$ and $\phi(A_{uv}) \in [0,1/3)$, and $A^1_{uv} = 0$ otherwise, and likewise for $A^2$ and $A^3$. We observe that the block structure is clearly distinguishable in $A^2$ because the densities $p(\cdot)$ and $q(\cdot)$ differ most around the origin; the block structure is somewhat visible in $A^1$ and $A^3$, but to a lesser extent. These figures illustrate why the discretization and initialization stages are useful.
 
 \begin{figure}[!htp]
   \centering
   \begin{subfigure}[b]{0.29\textwidth}
     \centering
     \includegraphics[width=\textwidth, trim=0 57 0 0]{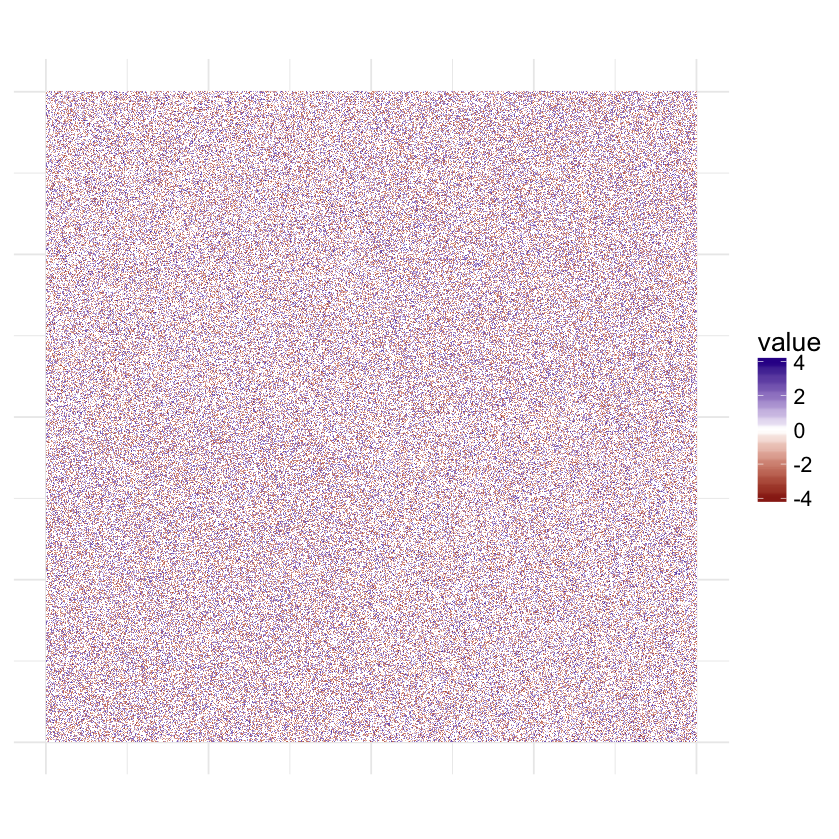}
     \caption{$A$}
     \label{fig:weighted_adj}
   \end{subfigure}
   \begin{subfigure}[b]{0.25\textwidth}
     \centering
     \includegraphics[width=\textwidth]{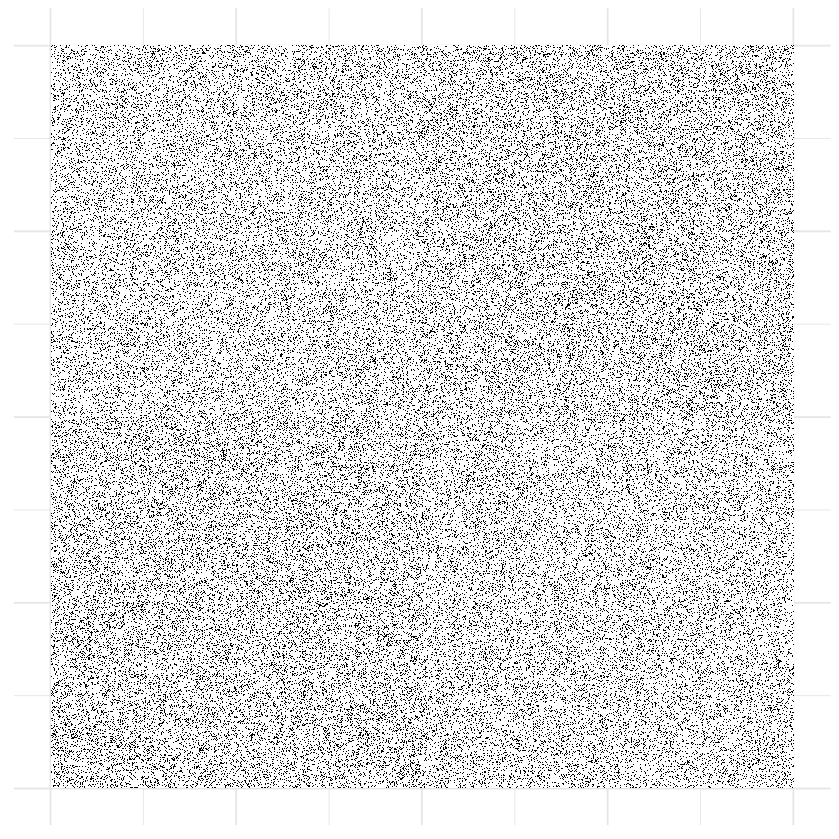}
     \caption{$A^1$}
     \label{fig:discrete_adj1}
   \end{subfigure}\\
   \begin{subfigure}[b]{0.25\textwidth}
     \centering
     \includegraphics[width=\textwidth]{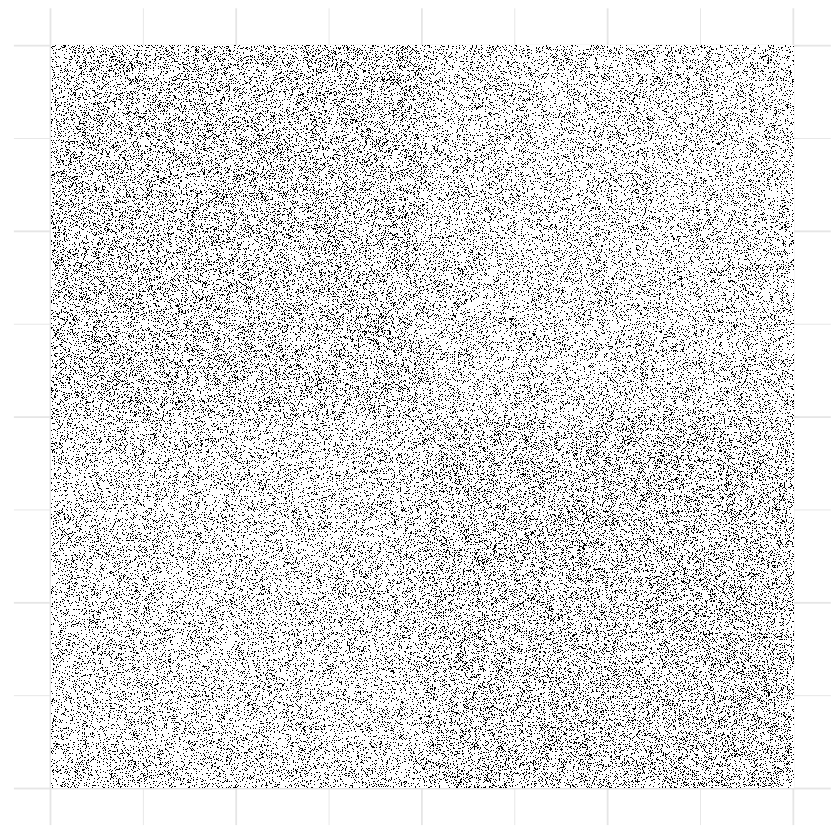}
     \caption{$A^2$}
     \label{fig:discrete_adj2}
   \end{subfigure}
   \hspace{0.2in}
   \begin{subfigure}[b]{0.25\textwidth}
     \centering
     \includegraphics[width=\textwidth]{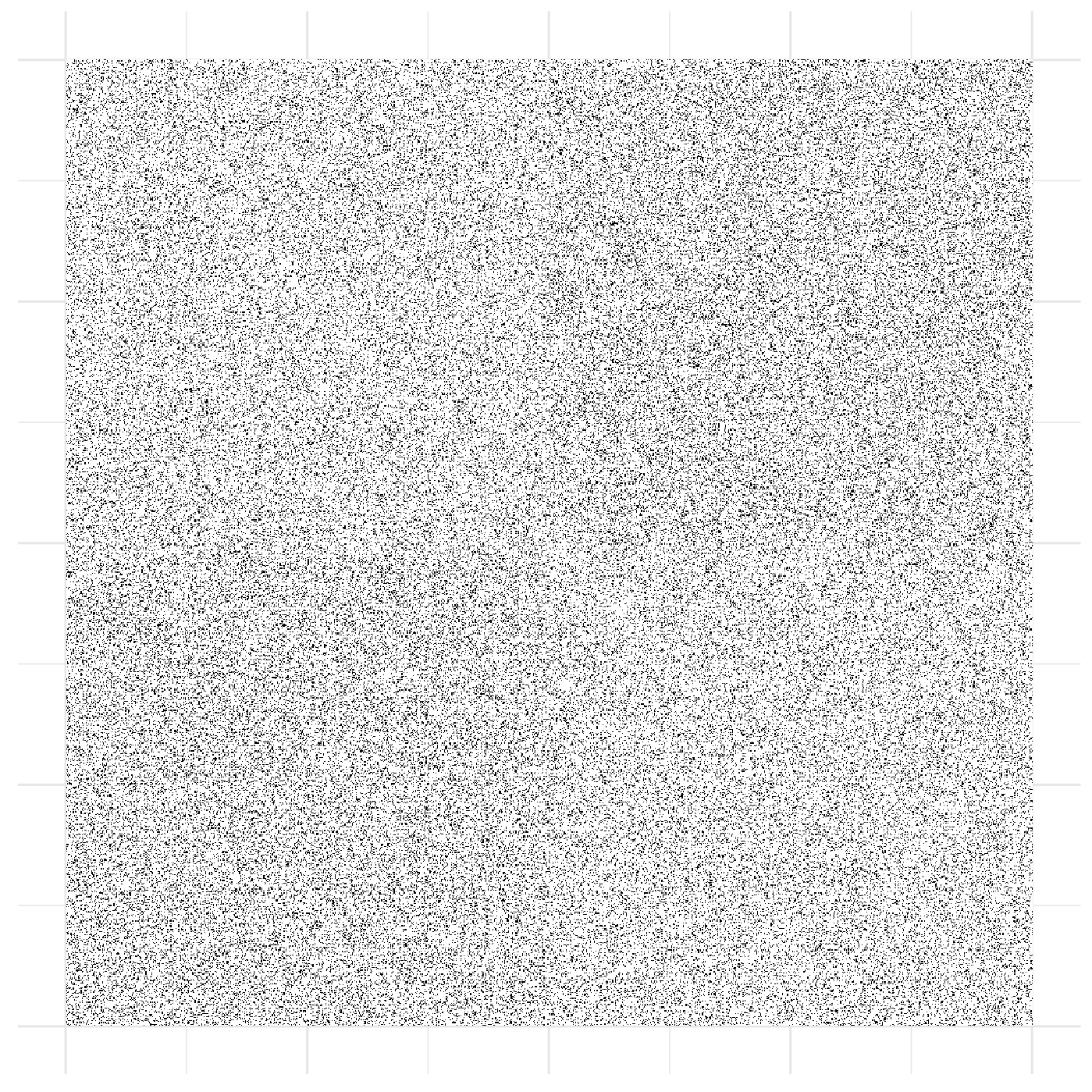}
     \caption{$A^3$}
     \label{fig:discrete_adj3}
   \end{subfigure}
   \caption{}
 \end{figure}

In Figure~\ref{fig:vary_n}, we test how the performance of our algorithm scales with the network size $n$. We use the same setting as our first simulation, except we let $n \in \{400, 600, 800, \ldots, 2000\}$ and $L_n = \lfloor 0.4 (\log(\log n))^4 \rfloor$. For each value of $n$, we perform 100 trials, where we generate a random network $A$, perform our clustering algorithm, and calculate the misclustering error. The misclustering errors are averaged across the 100 random trials and the aggregated medians are shown, with deviations, in Figure~\ref{fig:vary_n}. In Figure~\ref{fig:vary_n}, we observe the same threshold behavior that arises in the unweighted setting: the misclustering error is around $0.5$---equivalent to random guessing---for low $n$, and drops sharply to 0 as the value of $n$ passes a threshold (around $n=1000$ in this case). We note that for this and our next simulation study, we use a simplified version of our algorithm as described in Remark~\ref{rem:simple_computation}; we observed no difference in performance between the full version and the simplified version of the algorithm. 
 
 \begin{figure}[!htp]
   \centering
   \begin{subfigure}[b]{0.37\textwidth}
     \centering
     \includegraphics[width=\textwidth]{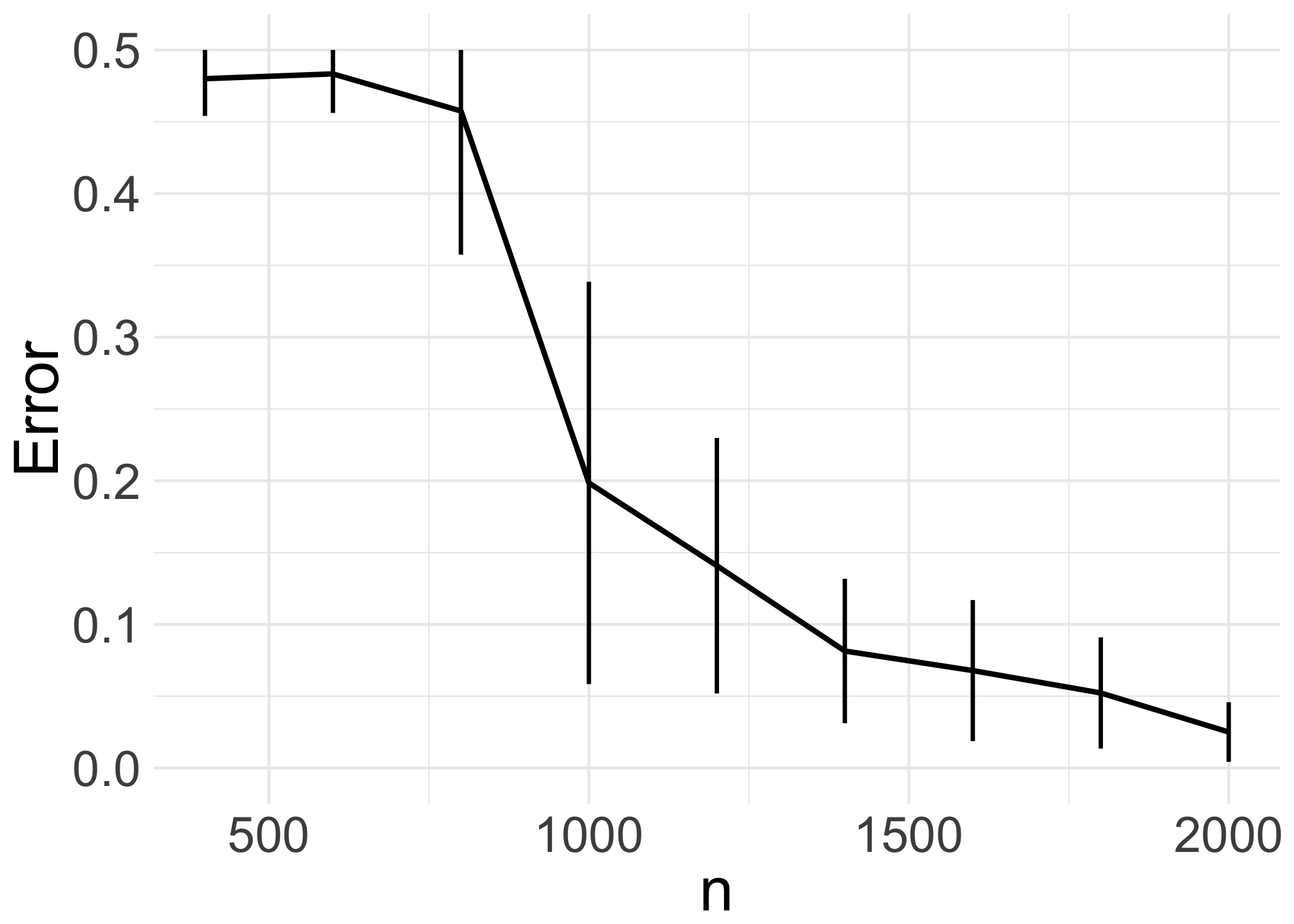}
     \caption{Misclustering error vs.\ $n$}
     \label{fig:vary_n}
   \end{subfigure}
   \hspace{0.5in}
 \begin{subfigure}[b]{0.37\textwidth}
   \centering
   \includegraphics[width=\textwidth]{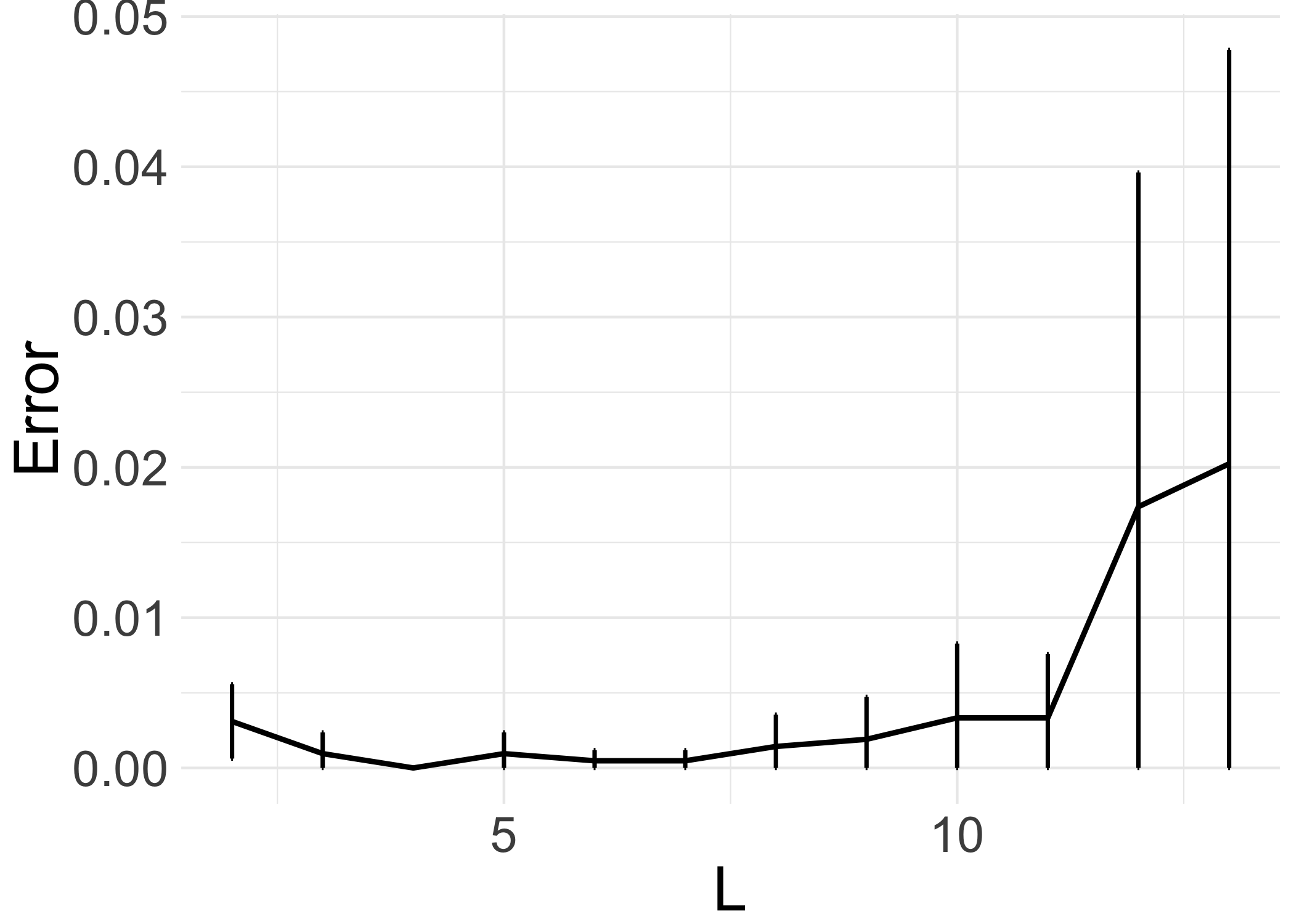}
   \caption{Misclustering error vs.\ $L$}
   \label{fig:vary_L}
 \end{subfigure}
 \caption{}
\end{figure}

\begin{figure}
  \centering
 \begin{subfigure}[b]{0.45\textwidth}
   \centering
   \includegraphics[width=\textwidth]{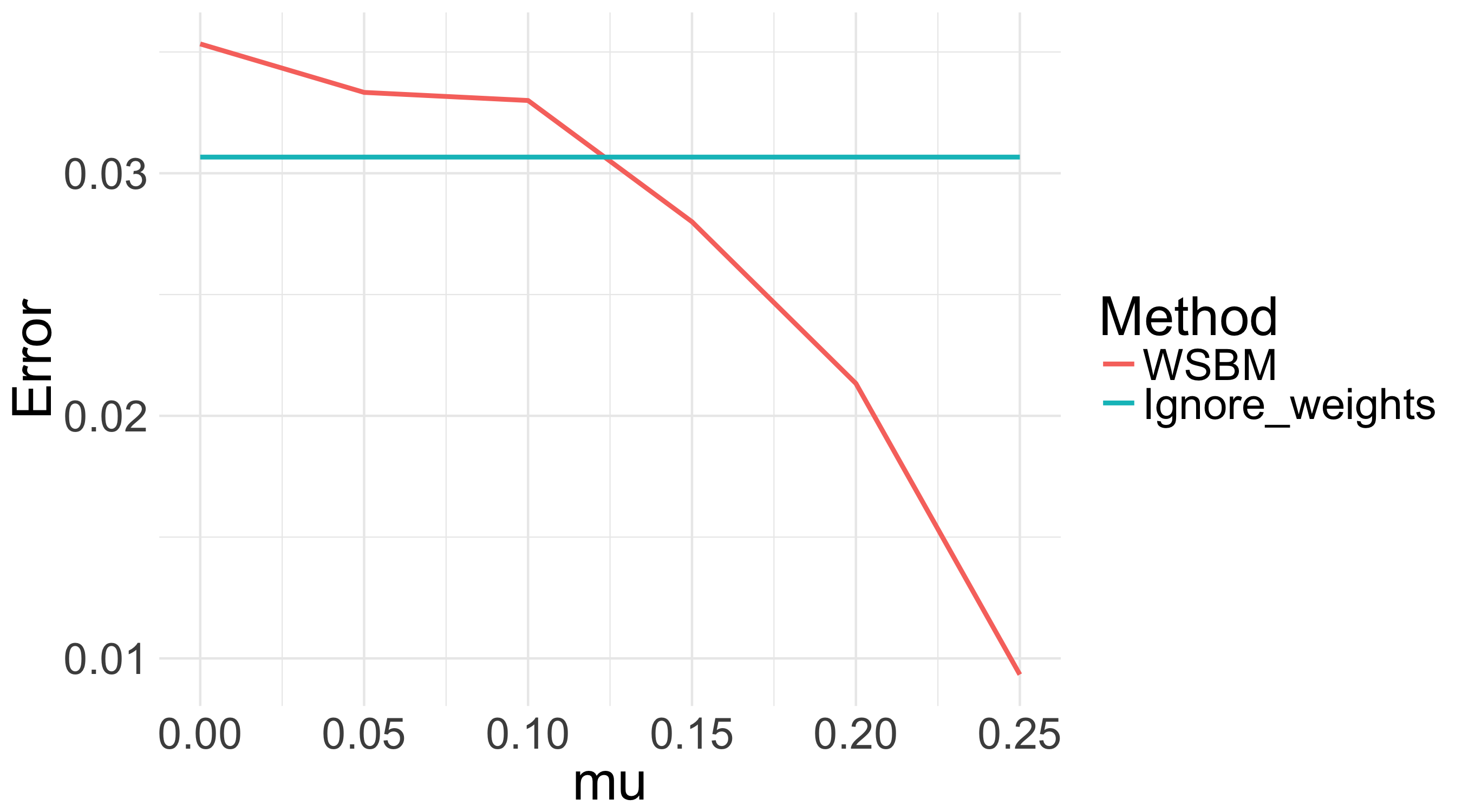}
   \caption{}
   \label{fig:ignore}
 \end{subfigure}
 \begin{subfigure}[b]{0.45\textwidth}
   \centering
   \includegraphics[width=\textwidth]{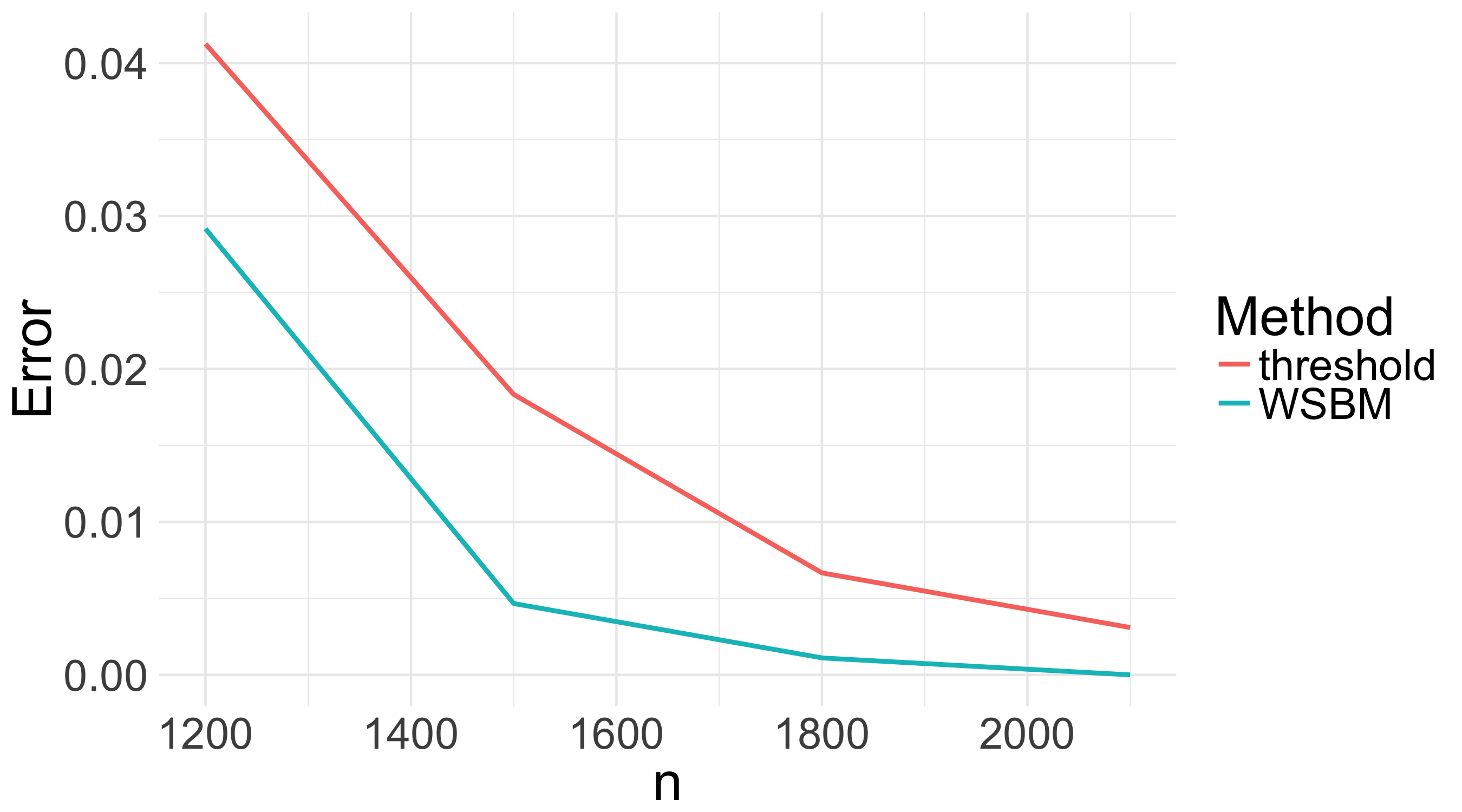}
   \caption{}
   \label{fig:binary}
 \end{subfigure}
 \caption{}
\end{figure}

In Figure~\ref{fig:vary_L}, we study the sensitivity of our algorithm to the choice of discretization level $L$. We let $K=3$, $n=2100$, $P_0 = 0.3$, and $Q_0 = 0.27$, and let $p(\cdot)$ be the density of $N(0.3, 0.8^2)$, and $q(\cdot)$ be the density of $N(0, 1)$. We let $L \in \{1,2,3,\ldots, 12, 13\}$ and, for each setting of $L$, we perform 100 random trials in which we generate a random network $A$, perform our clustering algorithm, and calculate the misclustering error; the results are shown in Figure~\ref{fig:vary_L}; the error for $L=1$, in which we discard the edge weights, exceeds $0.56$ and is thus omitted from the plot. We observe that the algorithm performs best when $L$ is chosen to be small, though not too small, as is suggested by our theoretical analysis.

In Figure~\ref{fig:ignore}, we compare our approach against treating a weighted network as an unweighted one by discarding the edge weights. In this setting, we let $n = 1500$, $P_0 = 0.3, Q_0=0.23$, and $K=3$. We choose $q(\cdot)$ as the density of $N(0,1)$ and $p(\cdot)$ as the density of $N(\mu, 1)$ where we let $\mu \in \{0, 0.05, 0.1, 0.15, 0.2, 0.25\}$. We perform 100 trials and aggregate the result in Figure~\ref{fig:ignore}. In red, we plot the misclustering proportion error incurred by our WSBM clustering algorithm with $L=5$; in blue, we plot the misclustering error incurred by ignoring the edge weights entirely and treating the network as an unweighted one. As we expect, when $\mu$ is close to 0, the edge weights are uninformative and it is better to ignore the edge weights. As $\mu$ increases, however, the advantage of using the weights become significant.

 In Figure~\ref{fig:binary}, we compare our algorithm against clustering an unweighted network formed by optimally thresholding the edge weights. We let $K=3$, $P_0 = 0.3$, and $Q_0=0.27$, and let $p(\cdot)$ be the density of $N(0.3, 0.8)$ and $q(\cdot)$ be the density of $N(0,1)$. For $\tau \in \mathbb{R}$, we define the thresholded network $A_{\tau} \in \{0, 1\}^{n \times n}$ as $A_{\tau,uv} = 1$ if $A_{uv} \neq 0$ and $A_{uv} \geq \tau$, and $A_{\tau, uv} = 0$ if $A_{uv} = 0$ or if $A_{uv} < \tau$. For each $\tau \in \{-2, -1.8, -1.6, \ldots, 1.6, 1.8, 2.0\}$, we form $A_{\tau}$, extract the cluster, and compute the misclustering error. We then report the lowest misclustering error among all $A_{\tau}$ for $\tau \in \{-2, -1.8, -1.6, \ldots, 1.6, 1.8, 2.0\}$ as the red line in Figure~\ref{fig:binary}; this approach is of course impossible to implement in practice, and we use it only for the purpose of comparison. The turquoise line is the misclustering error incurred by our algorithm, using $L_n = \lfloor 0.4 (\log(\log n))^4 \rfloor$. 


\section{Conclusion}
\label{sec:conclusion}

We have provided a rate-optimal community estimation algorithm for the homogeneous weighted stochastic block model. Our algorithm includes a preprocessing step consisting of transforming and discretizing the (possibly) continuous edge weights to obtain a simpler graph with edge weights supported on a finite, discrete set. This approach may be useful for other network data analysis problems involving continuous distributions, where discrete versions of the problem are simpler to analyze.

Our paper provides a step toward understanding the weighted SBM under the same mathematical framework that has been exceptionally fruitful in the case of unweighted models. It is far from comprehensive, however, and many open questions remain. We describe a few here:
\begin{enumerate}
\item An important extension is the \emph{heterogenous} stochastic block model, where edge weight distributions depend on the exact community assignments of both endpoints. In such a setting, Abbe and Sandon~\cite{AbbSan15} and Yun and Proutiere~\cite{yun2016optimal} have shown that a generalized information divergence---the CH divergence---governs the intrinsic difficulty of community recovery. We believe that a similar discretization-based approach should lead to analogous results in the case of a heterogeneous weighted SBM.
\item Real-world networks often have nodes with very high degrees, which may adversely affect the accuracy of recovery methods for the stochastic block model. To solve this problem, degree-corrected SBMs \cite{zhao2012consistency, gao2016community} have been proposed as an effective alternative to regular SBMs. It remains to extend the concept of degree-correction to the weighted SBM.
\end{enumerate}

\section*{Acknowledgments}
The authors would like to thank Zongming Ma for enlightening discussions in the earlier stages of this project. The authors would also like to thank Richard Samworth for several helpful discussions. 

\appendix

\section{Proof of Proposition~\ref{prop:labeled_sbm_rate}}
\label{appendix: first}

We structure the proof according to the flow of our algorithm. Since this proposition addresses the case of discrete labels, we do not need to consider the ``transformation and discretization" step. We will prove the proposition by constructing a sequence $\zeta_n \rightarrow 0$ such that the statements of the proposition are satisfied. 

Since $\frac{n I'_n }{(L_n+1) \rho_n^2 \log \rho_n} \rightarrow \infty$ and $I_n \rightarrow 0$, there exists $N_0 \in \mathbb{N}$ such that for all $n \geq N_0$, we have
\begin{align}
  & \frac{n-1}{\rho_n^2 \beta^2 K^5 \log(\rho_n^2 \beta K) } \frac{I'_n}{L_n+1}
   \geq c_{init}^{-1} C_{spec2} C_{mis},  \label{eqn:large_n_info1}\\
  & \frac{n-1}{\beta^2 K^6} \frac{I'_n}{L_n+1} \geq c_{init}^{-1} C_{spec1},
 \quad \frac{L_n+1}{n} \leq \frac{1}{8},   \quad \textrm{ and } I_n \leq \frac{1}{2}, \label{eqn:large_n_info2}
\end{align}
where $C_{spec1}$ and $C_{spec2}$ are universal constants defined in Proposition~\ref{prop:spectral_analysis}, $c_{init}$ is a universal constant defined in Proposition~\ref{prop:initial_guarantee}, and $C_{mis}$ is a universal constant defined in Proposition~\ref{prop:single_node_error_bound}. For $n \leq N_0$, we define $\zeta_n$ such that $\exp\bigl(- \frac{n I_n}{\beta K} (1 + \zeta_n) \bigr) = 1$. 

Now, suppose $n \geq N_0$, so inequalities~\eqref{eqn:large_n_info1} and~\eqref{eqn:large_n_info2} hold. Let us arbitrarily fix $(\{P_l\}, \{Q_l\}) \in \mathcal{G}_{L_n, \rho_n}$ such that $I_n' \leq I(\{P_l\}, \{Q_l\}) \leq I_n$. We consider Algorithm~\ref{alg:noisify}. Let $\delta = 2 \frac{L+1}{n} $ and define
  \begin{align*}
    P_l' := P_l(1-\delta) + \frac{\delta}{L+1}, \quad \text{and} \quad
    Q_l' := Q_l(1-\delta) + \frac{\delta}{L+1}.
  \end{align*}
  Since the input network $A_L$ has the distribution $LSBM(\sigma_0, (\{P_l\}, \{Q_l\})$, the output $A_L$ of Algorithm~\ref{alg:noisify} has the distribution $LSBM(\sigma_0, (\{P'_l\}, \{Q'_l\}))$. It is then clear that $P'_l, Q'_l \geq \frac{2}{n}$ for all $l \in \{0,\ldots,L+1\}$, and furthermore, by Lemma~\ref{lem:add_noise_bound}, we know that
  \begin{align}
    I( \{P'_l\}, \{Q'_l\} )
    &\geq
      \frac{I(\{P_l\}, \{Q_l\})}{1 + 2 I(\{P_l\}, \{Q_l\})} \bigl( 1 - 4 \frac{L+1}{n} \bigr) 
      \geq \frac{I'_n}{4}, \label{eqn:Iprime_lower_bound} \\
    I( \{P'_l\}, \{Q'_l\} ) &\leq I( \{P_l\}, \{Q_l\}) \leq I_n.
                              \label{eqn:Iprime_upper_bound}
  \end{align}
  
Let $l^* \in \{0,\ldots,L\}$ be the output of the first stage of \textsc{initialization} (Algorithm~\ref{alg:initialization1}). Define $E_1$ as the event that 
\begin{align}
\frac{\Delta_{l^*}^2 }{P_{l^*} \vee Q_{l^*}} 
  \geq c_{init} \frac{I(\{P'_l\}, \{Q'_l\})}{L+1}.
  \label{eqn:informative_lstar_event}
\end{align}

Since
\[
\frac{n I(\{P'_l\}, \{Q'_l\})}{(L+1) \beta^2 K^6} 
\geq \frac{n I'_n}{4 (L+1) \beta^2 K^6} \geq \frac{1}{4} c^{-1}_{init}  C_{spec1} \geq 2 C_{spec1},
\] 
$P'_l \vee Q'_l \geq \frac{1}{n}$, and $\frac{n}{\log n} \geq 2^{15} \vee 30 \beta K$, we may apply Claim 3 of Proposition~\ref{prop:initial_guarantee} to conclude that $\mathbb{P}(E_1) \geq 1 - 12(L+1)^2 n^{-5}$.

Let $\{\tilde{\sigma}_u\}_{u\in [n]}$ be the initial clusterings output by the second stage of \textsc{initialization} (Algorithm~\ref{alg:initialization1}). Define $E_2$ as the event that
 \[
 \textrm{for all $u \in [n]$, }  l_{\setminus \{u\}} (\tilde{\sigma}_u, \sigma_0) \leq C_{spec2} \frac{\beta K^4}{n-1} \frac{ P_{l^*} \vee Q_{l^*}}{\Delta_{l^*}^2},
 \]
where $l_{\setminus \{u\}}(\cdot, \cdot)$ is the misclustering error defined on $[n] \backslash \{u\}$. For $u \in [n]$ and $l \in \{0, \ldots, L\}$, define $\tilde{\sigma}_u^{(l)}$ as the result of applying spectral clustering (with $\mu = 4 \beta$ and $\tau = 40 K \bar{d}$ where $\bar{d}$ is defined with respect to $A_{l, -u}$) on $A_{l, -u} \in \{0,1\}^{(n-1)\times (n-1)}$, that is, the network excluding node $u$ with only the edges whose label is $l$. By Proposition~\ref{prop:spectral_analysis} and a union bound, we have that, with probability at least $1 - Ln(n-1)^{-5}$, 
\[
\max_{u \in [n]} \quad \max_{l \in \{0, \ldots L\} \,:\, \frac{n}{\beta^2 K^6} \frac{(P_l - Q_l)^2}{P_l \vee Q_l} \geq C_{spec1}} l_{\setminus \{u\}} (\tilde{\sigma}_u^{(l)}, \sigma_0) C_{spec2}^{-1} \frac{n}{\beta K^4} \frac{(P_l - Q_l)^2}{P_l \vee Q_l} \leq 1.
\]
 Since, under event $E_1$,
 \begin{align*}
   \frac{n-1}{\beta^2 K^6} \frac{\Delta_{l^*}^2 }{P_{l^*} \vee Q_{l^*}} 
   \geq c_{init} \frac{n I(\{P'_l\}, \{Q'_l\})}{L_n +1} \geq
   c_{init} \frac{n-1}{\beta^2 K^6} \frac{I'_n}{L_n+1} \geq C_{spec1},
 \end{align*}
 since $P_{l^*} \vee Q_{l^*} \geq \frac{1}{n-1}$, and since $n-1 \geq 8 \beta^2 K^2 \vee 2000$, we obtain
 \begin{align}
   \mathbb{P}(E_1 \cap E_2) \geq 1 - 12(L+1)^2n^{-5} - Ln(n-1)^{-5}.
 \end{align}

Note that under event $E_1 \cap E_2$, for all $u \in [n]$, we have
\begin{align}
  l_{\setminus \{u\}} (\tilde{\sigma}_u, \sigma_0)
  &\leq
  C_{spec2} \frac{\beta K^4}{n-1} \frac{ P_{l^*} \vee Q_{l^*}}{\Delta_{l^*}^2} \leq
  c_{init}^{-1} C_{spec2} \frac{\beta K^4}{n-1} \frac{L_n + 1}{I(\{P'_l\}, \{Q'_l\})} \nonumber \\
  & \leq 4 c_{init}^{-1} C_{spec2} \frac{\beta K^4}{n-1} \frac{L_n + 1}{I'_n}
    \stackrel{(a)} \leq \bigl( C_{mis} \rho_n^2 \beta K \log( \rho_n^2 \beta K) \bigr)^{-1}, \label{eqn:initial_errors_small_enough}
\end{align}
where $(a)$ follows from inequality~\eqref{eqn:large_n_info1}.
Since $C_{mis} > 16$ and $\rho_n \geq 1$, we have $l_{\setminus \{u\}} (\tilde{\sigma}_u, \sigma_0) < \frac{1}{16\beta K}$. Recall the definition~\eqref{eqn:Sk_defn} of $S_K[\tilde{\sigma}_u, \sigma_0]$. Since the smallest cluster of $\sigma_0$ is of size at least $\frac{n}{\beta K}$, we have by Lemma~\ref{lem:consensus} that $S_K[\tilde{\sigma}_u, \sigma_0]$ is a singleton; we let $\pi_u$ denote the only element of $S_K[\tilde{\sigma}_u, \sigma_0]$. 

Since $\hat{\sigma}_u = \tilde{\sigma}_u$ on $[n] \backslash \{u\}$, we thus have, for all $u \in [n]$, that 
\begin{align}
  \frac{1}{n} d_H(\pi_u \circ \hat{\sigma}_u, \sigma_0) \leq \frac{1}{n} \bigl\{ d_H(\pi_u \circ \tilde{\sigma}_u, \sigma_0) + 1 \bigr\} \leq 
  \frac{1}{16 \beta K} \frac{n-1}{n} + \frac{1}{n} < \frac{1}{8 \beta K}.
\end{align}
By Lemma~\ref{lem:consensus} again, we know that $\pi_u$ is the only element of $S_K[\hat{\sigma}_u, \sigma_0]$.

Since the smallest cluster of $\sigma_0$ is of size at least $\frac{n}{\beta K}$, the smallest cluster of $\hat{\sigma}_u$ is of size at least $\frac{n}{2 \beta K}$. Furthermore, we have
\begin{align}
  l(\hat{\sigma}_1, \hat{\sigma}_u)
  \leq
  \frac{1}{n} d_H(\pi_1 \circ \hat{\sigma}_1, \pi_u \circ \hat{\sigma}_u)
  \leq \frac{1}{n} \bigl( d_H(\pi_1 \circ \hat{\sigma}_1, \sigma_0) +
    d_H(\pi_u \circ \hat{\sigma}_u, \sigma_0) \bigr) < \frac{1}{4 \beta K}.
\end{align}
Therefore, from Lemma~\ref{lem:consensus}, we conclude that $\pi_1^{-1} \circ \pi_u$ is the only element of $S_K[\hat{\sigma}_u, \hat{\sigma}_1]$ and
\begin{align}
  \hat{\sigma}(u)
  =
  \argmax_{k \in [K]} \bigl|\{ v \in [n] \,:\, \hat{\sigma}_u(v) = \hat{\sigma}_u(u) \} \cap \{ v \in [n] \,:\, \hat{\sigma}_1(v) = k \} \bigr| = 
  (\pi_1^{-1} \circ \pi_u)(\hat{\sigma}_u(u)).
  \label{eqn:sigma_hat_characterization}
\end{align}

Define $\gamma_u = l_{\setminus \{u\}}( \tilde{\sigma}_u, \sigma_0)$, $\eta'_u = 2 \beta K \gamma_u \log \frac{e K}{\gamma_u} + 12 \frac{\log n}{n}$, and $\eta_u = 10 (\sqrt{\eta'_u} + \eta'_u)$. Because inequality~\eqref{eqn:initial_errors_small_enough} holds under $E_1 \cap E_2$ and also $P'_l \vee Q'_l \geq \frac{1}{n-1}$ and $\frac{n}{\log n} \geq 2^8 \rho_n^2$,  we can apply Proposition~\ref{prop:single_node_error_bound}, for each $u \in [n]$, to obtain
\begin{align*}
  \mathbb{P}\bigl\{
  &
  \exists \pi \in S_K[\hat{\sigma}_1, \sigma_0],\, 
  \pi(\hat{\sigma}(u)) \neq \sigma_0(u) \bigr\} \\
  & \leq  \mathbb{P}\bigl\{
    \exists \pi \in S_K[\hat{\sigma}_1, \sigma_0],\,
   \pi(\hat{\sigma}(u)) \neq \sigma_0(u) \given E_1 \cap E_2 \bigr\} + P(E_1^c \cup E_2^c) \\
 &\stackrel{(a)} = \mathbb{P}\bigl\{ \pi_1(\hat{\sigma}(u)) \neq \sigma_0(u) \given E_1 \cap E_2 \bigr\} + P(E_1^c \cup E_2^c) \\
 & \stackrel{(b)} = \mathbb{P}\bigl\{
   \pi_u^{-1}(\sigma_0(u)) \neq \hat{\sigma}_u(u) \given E_1 \cap E_2 \bigr\} + P(E_1^c \cup E_2^c) 
   \\
  &\leq (K-1) \exp\biggl( - (1 - C_{err} \beta K \rho_n \eta_u) \frac{n}{\beta K} I(\{P'_l\}, \{Q'_l\}) \biggr) +  5(L+1)n^{-6} + 12 (L+1)^2 n^{-5} - Ln(n-1)^{-5} \\
  &\leq (K-1) \exp\biggl( - (1 - C_{err} \beta K \rho_n \eta_u) \frac{n}{\beta K} I(\{P'_l\}, \{Q'_l\}) \biggr) + n^{-3},
\end{align*}
where $(a)$ follows because $S_K[\hat{\sigma}_1, \sigma_0]$ is a singleton under $E_1 \cap E_2$, and $(b)$ follows from equation~\eqref{eqn:sigma_hat_characterization}.

Define $\zeta'_n :=  C_{err} \beta K \rho_n \max_{u \in [n]} \eta_u$. By the penultimate statement in inequality~\eqref{eqn:initial_errors_small_enough} and the assumption that $\frac{n I'_n}{(L+1) \rho_n^2 \log \rho_n} \rightarrow \infty$, we have $\beta K \log(\beta K) \max_{u \in [n]} \gamma_u \rightarrow 0$, so $\max_{u \in [n]} \eta_u \rightarrow 0$ and $\zeta'_n \rightarrow 0$.

Observe that
\begin{align*}
  \E\bigl[ l(\hat{\sigma}, \sigma_0) \bigr] 
  & = \E\left[ \min_{\pi \in S_K} \frac{1}{n} \sum_{u=1}^n
    \mathbf{1} \{ (\pi \circ \hat{\sigma})(u) \neq \sigma_0(u) \}  \right] \\
    & \leq \E\left[ \min_{\pi \in S_K[\hat{\sigma}_1, \sigma_0]} \frac{1}{n} \sum_{u=1}^n
    \mathbf{1} \{ (\pi \circ \hat{\sigma})(u) \neq \sigma_0(u) \}  \right] \\
  &\leq
    \E\left[ \frac{1}{n} \sum_{u=1}^n \mathbf{1} \bigl \{ \exists \pi \in S_K[\hat{\sigma}_1, \sigma_0],\, (\pi \circ \hat{\sigma})(u) \neq \sigma_0(u)  \bigr \}  \right] \\
  & \leq   \frac{1}{n} \sum_{u=1}^n
    \mathbb{P}\bigl\{
    \exists \pi \in S_K[\hat{\sigma}_1, \sigma_0],\, 
    \pi(\hat{\sigma}(u)) \neq \sigma_0(u)  \bigr\}\\
  & \leq \exp\biggl( - (1 - \zeta'_n)
    \frac{n}{\beta K} I(\{P'_l\}, \{Q'_l\}) \biggr) +  n^{-3}, \\
  &\leq \exp\biggl( - (1 - \zeta''_n) \frac{n}{\beta K} I(\{P_l\}, \{Q_l\}) \biggr) + n^{-3},
\end{align*}
where in the last inequality, we define $\zeta''_n := 1 - (1 - \zeta'_n)\bigl( 1 - \frac{I(\{P_l\}, \{Q_l\}) - I(\{P'_l\}, \{Q'_l\})}{I( \{P_l\}, \{Q_l\})} \bigr)$. By inequalities~\eqref{eqn:Iprime_upper_bound} and~\eqref{eqn:Iprime_lower_bound}, we have
\[
  0 \leq \frac{I(\{P_l\}, \{Q_l\}) - I(\{P'_l\}, \{Q'_l\})}{I( \{P_l\}, \{Q_l\})} \leq 1 - \frac{1}{1 + 2 I_n} (1 - 4 \frac{L+1}{n}),
\]
so since $I_n \rightarrow 0$ by assumption, we have $\zeta''_n \rightarrow 0$. For the second claim of Proposition~\ref{prop:labeled_sbm_rate}, let $\zeta_n = \zeta''_n$ for all $n \geq N_0$. It is then clear that if $\frac{n I_n}{\beta K \log n} \leq 1$, we have
\[
  \E l(\hat{\sigma}, \sigma_0) \leq \exp \biggl( - (1 - \zeta_n) \frac{n}{\beta K} I(\{P_l\}, \{Q_l\}) \biggr).
\]
Since $(\{P_l\}, \{Q_l\})$ was chosen arbitrarily, the second claim of the proposition follows.   

For the first claim, let us first suppose that $ \exp\biggl( - (1 - \zeta''_n)
\frac{n I(\{P_l\}, \{Q_l\})}{\beta K} \biggr) \geq n^{-2}$. Define $\tilde{\zeta}_n := \zeta''_n + \bigl( \frac{\beta K}{n I_{n'}} \bigr)^{1/2}$. Then

\begin{align*}
 & \mathbb{P} \biggl \{ l(\hat{\sigma}, \sigma_0) >  \exp\biggl( - (1 - \tilde{\zeta}_n)
  \frac{n I(\{P_l\}, \{Q_l\})}{\beta K} \biggr) \biggr\}
  \leq \frac{\E l(\hat{\sigma}, \sigma_0)}{  \exp\biggl( - (1 - \tilde{\zeta}_n)
    \frac{n I(\{P_l\}, \{Q_l\})}{\beta K} \biggr)} \\
  &\qquad \leq  \exp\biggl( (\zeta''_n - \tilde{\zeta}_n)
    \frac{n I(\{P_l\}, \{Q_l\})}{\beta K} \biggr) + \frac{n^{-3}}{ \exp\biggl( - (1 - \tilde{\zeta}_n)
    \frac{n I(\{P_l\}, \{Q_l\})}{\beta K} \biggr)} \\
  &\qquad \leq \exp\biggl( - \bigl( \frac{n I(\{P_l\}, \{Q_l\})}{\beta K} \bigr)^{1/2} \biggr) + n^{-1} \leq
    \exp \biggl( - \bigl( \frac{ n I'_n}{\beta K} \bigr)^{1/2} \biggr) + n^{-1}.
\end{align*}

Let us now suppose that $ \exp\biggl( - (1 - \zeta''_n)
\frac{n I(\{P_l\}, \{Q_l\})}{\beta K} \biggr) < n^{-2}$. Then 
\begin{align*}
 & \mathbb{P} \biggl\{ l(\hat{\sigma}, \sigma_0) \geq
  \exp\biggl( - (1 - \zeta''_n)
   \frac{n I(\{P_l\}, \{Q_l\})}{\beta K} \biggr) \biggr\} \leq
   \mathbb{P}( l(\hat{\sigma}, \sigma_0) > 0) \\
  &\qquad \leq \mathbb{P} \bigl\{  \min_{\pi \in S_K} d_H(\pi \circ \hat{\sigma}, \sigma_0) > 0 \bigr\} \\
  &\qquad \leq \mathbb{P}\bigl\{ \min_{\pi \in S_K[\hat{\sigma}_1, \sigma_0]} d_H(\pi \circ \hat{\sigma}, \sigma_0) > 0 \bigr\} \\
  &\qquad \leq \sum_{u=1}^n
    \mathbb{P}\bigl\{\exists \pi \in S_K[\hat{\sigma}_1, \sigma_0],\, \pi(\hat{\sigma}(u)) \neq \sigma_0(u) \bigr\} \\
  &\qquad \leq n  \exp\biggl( - (1 - \zeta''_n)
\frac{n I(\{P_l\}, \{Q_l\})}{\beta K} \biggr) + n^{-2} \leq 2n^{-1}.
\end{align*}

We now let $\zeta_n = \tilde{\zeta}_n$. Since $\tilde{\zeta}_n \geq \zeta''_n$, we can conclude that
\[
\mathbb{P} \biggl\{ l(\hat{\sigma}, \sigma_0) > \exp\biggl( -(1-\zeta_n) \frac{n}{\beta K} I(\{P_l\}, \{Q_l\}) \biggr) \biggr\} \rightarrow 0.
\]


\section{Supporting results for Proposition~\ref{prop:labeled_sbm_rate}}
\label{appendix: labeled_sbm_rate}

We now provide proofs for the supporting results stated in Appendix~\ref{appendix: first}.

\subsection{Analysis of estimation error of $\hat{P}_l$ and $\hat{Q}_l$}
\label{appendix: hats galore}

We begin with a proposition.

\begin{proposition}
  \label{prop:estimation_consistency}
  Let $\sigma_0 \in \mathcal{C}(\beta, K)$. Let $L \in \mathbb{Z}^+$, let $(\{P_l\}, \{Q_l\}) \in \mathcal{P}_L^2$, and let $A \in \{0,\ldots\,L\}^{n \times n}$ be a random labeled network with the distribution $LSBM(\sigma_0, (\{P_l\}, \{Q_l\}))$. Define $\Delta_l := | P_l - Q_l|$.
For a clustering $\sigma$, define
\begin{align*}
S(\sigma) &:= \{ (u,v) \in [n]^2 \,:\, u\neq v,\, \sigma(u) = \sigma(v) \}, \\
S(\sigma)^c & :=  \{ (u,v) \in [n]^2 \,:\, u\neq v,\, \sigma(u) \neq \sigma(v) \},
\end{align*}
  and also define the estimators
\begin{align}
  \hat{P}_l := \frac{1}{|S(\sigma)|} \sum_{(u,v) \in S(\sigma)} \mathbf{1}\{A_{uv} = l\}, \qquad
  \hat{Q}_l := \frac{1}{|S(\sigma)^c|} \sum_{(u,v) \in S(\sigma)^c} \mathbf{1}\{A_{uv} = l\}. \label{eqn:hatPl_hatQl_definition}
\end{align}
Let $\gamma \in [0, 1]$ and let $\eta := 8 (\sqrt{\eta'} + \eta')$, where $\eta' := \beta K \gamma \log \frac{e K}{\gamma} + 6 \frac{\log n}{n}$. 

Then with probability at least $1 - 4(L+1)n^{-6}$, it holds that for any $\sigma : [n] \rightarrow [K]$ such that $l(\sigma, \sigma_0) \leq \gamma$, we have
\[
  \max\{ |\hat{P}_l - P_l|, |\hat{Q}_l - Q_l| \}
    \leq \Delta_l +
    \eta \biggl( \frac{P_l \vee Q_l}{n} \biggr)^{1/2}.
  \]
  Furthermore, if $\gamma \leq \frac{1}{4 \beta K}$, then
  \begin{enumerate}
  \item for all $l \in \{0,\ldots,L\}$ where $P_l \vee Q_l \geq \frac{1}{n}$ and $\frac{n \Delta_l^2}{P_l \vee Q_l} \geq 1$, we have
    \begin{align*}
      \max\{ | \hat{P}_l - P_l |,\, | \hat{Q}_l - Q_l | \} \leq \eta \Delta_l,
    \end{align*}
  \item and for all $l \in \{0,\ldots,L\}$ where $P_l \vee Q_l \geq \frac{1}{n}$ and $\frac{n \Delta_l^2}{P_l \vee Q_l} \leq 1$, we have
    \begin{align*}
      \max\{ | \hat{P}_l - P_l |,\, | \hat{Q}_l - Q_l | \} \leq \eta \biggl( \frac{P_l \vee Q_l}{n} \biggr)^{1/2}.
    \end{align*}
  \end{enumerate}
\end{proposition}

\begin{proof}
We first fix a clustering $\sigma : [n] \rightarrow [K]$ satisfying $l(\sigma, \sigma_0) \leq \gamma$ and fix a color $l \in \{0,\ldots,L\}$. Then
\begin{align}
\E\hat{P_l} - P_l &= 
 \frac{1}{|S(\sigma)|}  \bigl(  |S(\sigma) \cap S(\sigma_0)| P_l +
                    |S(\sigma) \cap S(\sigma_0)^c| Q_l \bigr) - P_l = \frac{|S(\sigma) \cap S(\sigma_0)^c|}{| S(\sigma)|} (Q_l - P_l), \label{eqn:Pl_bias}
\end{align}
and
\begin{align}
  \E \hat{Q}_l - Q_l &=
 \frac{1}{|S(\sigma)^c|}  \bigl(|S(\sigma)^c \cap S(\sigma_0)| P_l +
                    |S(\sigma)^c \cap S(\sigma_0)^c| Q_l \bigr) - P_l = \frac{|S(\sigma)^c \cap S(\sigma_0)|}{| S(\sigma)^c|} (P_l - Q_l).                     
  \label{eqn:Ql_bias}
\end{align}
Hence, we have
\begin{align}
  \max\{ | \E \hat{P}_l - P_l|,\, |\E \hat{Q}_l - Q_l| \} \leq \Delta_l.
  \label{eqn:bias_crude_bound}
\end{align}
Now suppose $\gamma \leq \frac{1}{4\beta K}$. 
Note that for any $u, v \in [n]$ such that $\sigma(u) = \sigma(v)$ but $\sigma_0(u) \neq \sigma_0(v)$, we either have $\sigma(u) \neq \sigma_0(u)$ or $\sigma(v) \neq \sigma_0(v)$. Therefore,
\begin{align*}
  |S(\sigma) \cap S(\sigma_0)^c| \leq
  |\{ (u,v) \in [n]^2 \,:\, u \neq v,\, \sigma_0(u) \neq \sigma(u) \textrm{ or }  \sigma_0(v) \neq \sigma(v) \}| \leq 2 n^2 \gamma.
\end{align*}
By a symmetric argument, it follows that $|S(\sigma)^c \cap S(\sigma_0)| \leq 2 n^2 \gamma$, as well. Define $\hat{n}_k = |\{ u \in [n] \,:\, \sigma(u) = k\}|$. Then
\begin{align}
  |S(\sigma)| = \sum_{k=1}^K |\{(u,v) \in [n] \,:\, u\neq v,\, \sigma(u) = \sigma(v) = k \} |
  = \sum_{k=1}^K \hat{n}_k^2 \geq \frac{n^2}{K},
  \label{eqn:S_bound_1}
\end{align}
where the inequality holds by observing that $n = \sum_{k=1}^K n_k$ and applying the Cauchy-Schwarz inequality. We also have
\begin{align}
  |S(\sigma)^c| = \sum_{k=1}^K \hat{n}_k \sum_{k' \neq k} \hat{n}_{k'}
 \stackrel{(a)} \geq \sum_{k=1}^K \hat{n}_k (K-1) \frac{n}{2 \beta K} \geq n^2 \frac{(K-1)}{2 \beta K} \geq \frac{n^2}{4 \beta},
  \label{eqn:S_bound_2}
\end{align}
where $(a)$ follows because $l(\sigma, \sigma_0) \leq \gamma \leq \frac{1}{4 \beta K}$, so $\min_{k \in [K]} | \{ u \in [n] \,:\, \sigma(u) = k \} | \geq \frac{n}{2\beta K}$. 

Combining these bounds with equations~\eqref{eqn:Pl_bias} and~\eqref{eqn:Ql_bias}, we conclude that if $\gamma \leq \frac{1}{4 \beta K}$, we have
\begin{align}
  \E \hat{P}_l - P_l \leq K \gamma \Delta_l \quad \textrm{ and } \quad
  \E \hat{Q}_l - Q_l \leq 4\beta \gamma \Delta_l .                     \label{eqn:PlQl_bias_bound}
\end{align}

We now bound the variance. We use the shorthand
\begin{align*}
  T_{1,P} &:= (P_l \vee Q_l) | S(\sigma)|,  \quad T_2 :=  \gamma n \log \frac{e K}{\gamma} + 6 \log n, \quad t_P := \sqrt{7/3} \{ T_{1,P} T_2 \vee T_2^2 \}^{1/2}, \\
  T_{1,Q} &= (P_l \vee Q_l) | S(\sigma)^c |, \quad  \textrm{ and }
            t_Q := \sqrt{7/3} \{ T_{1,Q} T_2 \vee T_2^2 \}^{1/2}.
\end{align*}

We let $E_P(\sigma, l)$ be the event that $| \hat{P}_l - \E \hat{P}_l | \leq \frac{t_P}{\frac{1}{2} | S(\sigma)|}$ and let $E_Q(\sigma, l)$ be the event that $| \hat{Q}_l - \E \hat{Q}_l | \leq \frac{t_Q}{\frac{1}{2} | S(\sigma)^c |}$. For convenience, we also denote $ \tilde{A}_{uv} := \mathbf{1}\{A_{ij} = l\}$. By Bernstein's inequality, we have
\begin{align}
P\bigl(E_P(\sigma, l)^c \bigr) \leq P\biggl( \biggl| \sum_{ \substack{ (u,v) \,:\, u < v, \\ \sigma(u) = \sigma(v)} } & (\tilde{A}_{uv} - \E \tilde{A}_{uv} )  \biggr|  > t_P 
 \biggr) \leq 2 \exp\biggl( 
    - \frac{\frac{1}{2} t_P^2}{ \sum_{ \substack{ (u,v) \,:\,u< v, \\ \sigma(u) = \sigma(v)} } \E \tilde{A}_{uv}  + \frac{1}{3}t_P }
           \biggr) \nonumber \\
  & \leq \exp\biggl( 
    - \frac{\frac{1}{2} t_P^2}{\frac{1}{2} (P_l\vee Q_l) |S(\sigma)|   + \frac{2}{3}t_P } \biggr)
    = \exp \bigg( - \frac{t_P^2}{ T_{1,P} + \frac{1}{3}t_P } \biggr) \label{eqn:bernstein1}.
\end{align}

We consider two cases:
\begin{enumerate}
\item Suppose $T_{1,P} \geq T_2$. Then $t_P^2 = (7/3) T_{1,P}T_2$ and 
\begin{equation*}
2 \exp \left( - \frac{t_P^2}{T_{1,P} + \frac{2}{3} t_P} \right)  \leq 2 \exp \left( - \frac{(7/3) T_{1,P}T_2}{T_{1,P} + \frac{4}{3} T_{1,P}} \right) \leq  2 \exp( - T_2 ).
\end{equation*}
\item Suppose $T_{1,P} \leq T_2$. Then $t_P^2 = (7/3) T_2^2$, and the probability term is at most
\begin{equation*}
2 \exp \left( - \frac{4 T_2^2}{A + \frac{4}{3} T_2} \right) \leq 2 \exp \left( - \frac{(7/3) T_2^2}{ T_2 + \frac{4}{3} T_2} \right) \leq 2 \exp( - T_2).
\end{equation*}
\end{enumerate}

Combining the above with inequality~\eqref{eqn:bernstein1}, we have $P(E_P(\sigma, l)) \geq 1 - 2 \exp( - T_2)$. By an identical argument, we can show that $P(E_Q(\sigma, l)) \geq 1 - 2 \exp(- T_2)$, as well. Now note that
\begin{align*}
    \frac{t_P}{ \frac{1}{2} |S(\sigma)|  } 
  &\leq 
    2\sqrt{\frac{7}{3}} \frac{(T_1 T_2)^{1/2}}{|S(\sigma)|} +
    2\sqrt{\frac{7}{3}} \frac{T_2}{|S(\sigma)|} \\
  &\leq 4 \biggl( \frac{(P_l \vee Q_l) T_2}{|S(\sigma)|} \biggr)^{1/2}
    + 4 \frac{T_2}{|S(\sigma)|} \\
  &\stackrel{(a)} \leq 4 \biggl( \frac{K (P_l \vee Q_l) }{n} \frac{T_2}{n} \biggr)^{1/2} +
    4 \frac{K}{n} \frac{T_2}{n} \\
   &\stackrel{(b)} \leq 4 \biggl( K \frac{T_2}{n} \biggr)^{1/2} \biggl( \frac{P_l \vee Q_l }{n} \biggr)^{1/2} + 4 K \frac{T_2}{n} \biggl( \frac{P_l \vee Q_l }{n} \biggr)^{1/2},
\end{align*}
where $(a)$ follows from inequality~\eqref{eqn:S_bound_1} and $(b)$ holds because $P_l \vee Q_l \geq \frac{1}{n}$. In an identical manner, we can use inequality~\eqref{eqn:S_bound_2} to show that
\begin{align*}
  \frac{t_Q}{2 |S(\sigma)^c| } \leq  4 \biggl( 4 \beta \frac{T_2}{n} \biggr)^{1/2} \biggl( \frac{P_l \vee Q_l }{n} \biggr)^{1/2} + 16 \beta \frac{T_2}{n} \biggl( \frac{P_l \vee Q_l }{n} \biggr)^{1/2}.
\end{align*}
Let $\eta' := 2 \beta K \frac{T_2}{n}$. Using the fact that $4\beta \vee K \leq 2\beta K$, we have
\begin{align}
  \max\{ | \hat{P}_l - \E \hat{P}_l|, |\hat{Q}_l - \E \hat{Q}_l| \}
  \leq 4 ( \sqrt{\eta'} + \eta' )  \biggl( \frac{P_l \vee Q_l}{n} \biggr)^{1/2},\label{eqn:PlQl_variance_bound}
\end{align}
under event $E_P(\sigma, l) \cap E_Q(\sigma, l)$.

We now take a union bound over all colors $l$ and over all clusterings $\sigma$ satisfying $l(\sigma, \sigma_0) \leq \gamma$. There are at most $\binom{n}{\gamma n} K^{\gamma n}$ possible $\sigma$'s satisfying the error bound. Since
\begin{align*}
\log \left(\binom{n}{\gamma n} K^{\gamma n}\right)
 & \leq \log \left( \frac{ n^{\gamma n} e^{\gamma n} }
     { (\gamma n)^{\gamma n} } \frac{1}{\sqrt{2\pi \gamma n}} \right) + \gamma n \log K \\
 & \leq \log \left( \frac{ e^{\gamma n} }{\gamma^{\gamma n}} \right) - \frac{1}{2} \log 2 \pi \gamma n + \gamma n \log K \\
 & \leq \gamma n \log \frac{e}{\gamma}  + \gamma n \log K = \gamma n \log \frac{e K}{\gamma} ,
\end{align*}
we conclude that
\begin{align*}
\mathbb{P} \biggl( \bigcap_{l \in \{0,\ldots, L} \bigcap_{\sigma \,:\, l(\sigma, \sigma_0) \leq \gamma} (E_P(\sigma, l) \cap E_Q(\sigma, l)) \biggr) & \geq 1 - 4 (L+1) \exp( - 6 \log n) \\
& \geq 1 - 4 (L+1) n^{-6}.
\end{align*}
Combining inequalities~\eqref{eqn:bias_crude_bound}, \eqref{eqn:PlQl_bias_bound}, and~\eqref{eqn:PlQl_variance_bound}, we conclude that, with probability at least $1 - 4 (L+1) n^{-6}$, for all clusterings $\sigma$ such that $l(\sigma, \sigma_0) \leq \gamma$ and for all $l \in \{0, \ldots, L\}$, 
\begin{align*}
\max \{ | P_l - \hat{P}_l|,\, |Q_l - \hat{Q}_l| \} \leq \Delta_l + \eta \biggl( \frac{P_l \vee Q_l}{n} \biggr)^{1/2}.
\end{align*}
If also $\gamma \leq \frac{1}{4 K \beta}$, then
\begin{equation*}
\max\left\{|P_l - \hat P_l|, \; |Q_l - \hat Q_l|\right\} \leq \eta \Delta_l + \eta \biggl( \frac{P_l \vee Q_l}{n} \biggr)^{1/2},
\end{equation*}
where $\eta := (2 \beta K \gamma) \vee 4 (\sqrt{\eta'} + \eta') \leq 4 ( \sqrt{\eta'} + \eta')$ and $\eta' = 2 \beta K \gamma \log \frac{e K}{\gamma} + 6 \frac{\log n}{n}$. The statement of the theorem follows immediately.
\end{proof}

\begin{proposition}
  \label{prop:PlQl_lower_bound}
  Let $L, A, (\{\hat{P}_l\}, \{\hat{Q}_l\})$ be defined as in Proposition~\ref{prop:estimation_consistency} and suppose $n \geq 2$. Then with probability at least $1 - (L+1) \exp\bigl( - \frac{n}{5 \beta K} \bigr)$, we have that, for any clustering $\sigma$, and for any $l \in \{0,\ldots,L\}$ such that $P_l \vee Q_l \geq \frac{1}{n}$,
  \[
    \hat{P}_l \vee \hat{Q}_l \geq \frac{1}{\beta K} (P_l \vee Q_l).
  \]
\end{proposition}

\begin{proof}

  Fix a clustering $\sigma$, fix $l \in \{0, \ldots, L\}$, and suppose $P_l \vee Q_l \geq \frac{1}{n}$. Define $\hat{T}_l := \frac{2}{n(n-1)} \sum_{(u,v) \,:\, u < v} (A_l)_{uv}$. Let $S(\sigma)$ and $S(\sigma)^c$ be defined as in the statement of Proposition~\ref{prop:estimation_consistency}. Then
  \begin{align}
    \hat{P}_l \vee \hat{Q}_l \geq
    \frac{|S(\sigma)| \hat{P}_l}{n(n-1)} +
    \frac{|S(\sigma)^c| \hat{Q}_l}{n(n-1)} = \hat{T}_l.
  \end{align}
Note that $\hat{T}_l$ does not depend on $\sigma$. Let $t := \frac{1}{2} \E \hat{T}_l$, and let $E(l)$ be the event that $\hat{T}_l \geq t$. Note that
    \begin{align}
    t = \frac{1}{n(n-1)} ( | S(\sigma_0) P_l + |S(\sigma_0)^c| Q_l)
      \stackrel{(a)} \geq \frac{1}{K} P_l + \frac{1}{2 \beta} Q_l \geq \frac{1}{\beta K} (P_l \vee Q_l). \label{eqn:E_hatT_bound}
    \end{align}
    In the above derivations, $(a)$ follows because we can use similar reason as in inequalities~\eqref{eqn:S_bound_1} and~\eqref{eqn:S_bound_2} to show that $|S(\sigma_0)| \geq \frac{n^2}{K}$ and $| S(\sigma_0)^c | \geq \frac{n^2}{2\beta}$. The last inequality follows because $2 \beta \vee K \leq \beta K$. Thus, under event $E(l)$, for any $\sigma$, we have $\hat{P}_l \vee \hat{Q}_l \geq \hat{T}_l \geq \frac{1}{\beta K} (P_l \vee Q_l)$. By Bernstein's inequality, we have
\begin{align*}
    \mathbb{P}(E(l)^c)
    &\leq \mathbb{P}\biggl( \biggl| \sum_{(u,v) \,:\, u < v} (A_l)_{uv} - \E (A_l)_{uv} \biggr| \geq t \biggr) \\
    &\leq \exp \biggl( \frac{-  \frac{1}{2} t^2}{ \frac{1}{3} t + \sum_{(u,v) \,:\, u < v } \E (A_l)_{uv} } \biggr) \leq \exp \bigl( - \frac{t}{5} \bigr) \stackrel{(a)} \leq \exp \bigl( - \frac{n}{5 \beta K} \bigr),
  \end{align*}
  where $(a)$ follows from inequality~\eqref{eqn:E_hatT_bound}. A union bound over all colors $l \in \{0, \ldots, L\}$ finishes the proof.
\end{proof}


\subsection{Analysis of spectral clustering}
\label{appendix: spectral}

\begin{proposition}
\label{prop:spectral_analysis}
Let $\sigma_0 \in \mathcal{C}(\beta, K)$, let $p,q \in [1/n, 1]$, and let $A \in \{0,1\}^{n \times n}$ be a random matrix with the distribution $SBM(\sigma_0, p, q)$. Let $C_{spec1} := 2^{34}$ and $C_{spec2} := 2^{29}$. Suppose $n \geq 8 \beta^2 K^2\vee 2000$ and
\begin{align}
 \frac{n}{\beta^2 K^6} \frac{(p-q)^2}{p \vee q} \geq C_{spec1}. \label{eqn:ridiculous_constant_bound}
\end{align}
Then the output $\sigma$ of Algorithm~\ref{alg:spectral} with parameters $\mu = 4 \beta$ and $\tau = 40 K \bar{d}$ satisfies
\[
l(\sigma, \sigma_0) \leq C_{spec2} \frac{\beta K^4}{n} \frac{p \vee q}{(p-q)^2},
\]
with probability at least $1 - n^{-5}$.
\end{proposition}

\begin{proof}
Let $E_1$ be the event that $\frac{1}{2K} n (p \vee q) \leq \bar{d} \leq 3 n (p \vee q)$. 
By Proposition~\ref{prop:dbar_bound} and the assumption that $K \leq n^{1/6}$, we have $P(E_1) \geq 1 - \exp\bigl( - \frac{n}{12 K}\bigr) \geq 1 - \exp\bigl( - \frac{n^{5/6}}{12}\bigr)$. Under event $E_1$, we have $\tau = 40 K \bar{d} \geq 20 n (p \vee q)$, so we may apply Lemma~\ref{lem:trimmed_A_bound} to obtain
\begin{equation*}
P(E_2 \cap E_1) \geq 1 - \exp \bigl(-\frac{n^{5/6}}{12} \bigr) - n^{-6},
\end{equation*}
where $E_2$ is defined to be the event that
\[
  \| T_\tau(A) - P \|_2 \leq 2^{11} K \sqrt{n (p \vee q)}.
\]
Note that $n \geq 2000$ implies that $\frac{3}{2} \frac{n}{\log n} \geq 6 + \log(2)$. 

Now suppose the event $E_1 \cap E_2$ holds. Then
\begin{align}
  D^2_{total} := \sum_{u=1}^n \| \hat{A}_u - P_u \|^2
  &\leq \| \hat{A} - P \|_F^2 \leq K \| \hat{A} - P \|_2^2 \nonumber  \\
  &\leq K \bigl( \| T_\tau (A) - P \|_2 + \| \hat{A} - T_{\tau}(A) \|_2 \bigr)^2 \nonumber \\
  &\stackrel{(a)} \leq 4 K \| T_\tau(A) - P \|^2_2 \leq 2^{24} K^3 n (p \vee q). \label{eqn:total_error}
\end{align}
Let $\sigma_0 : [n] \rightarrow [K]$ denote the true clustering function, and let $\{ \mathcal{Z}_1, \ldots, \mathcal{Z}_K\}$ denote the $K$ unique rows of $P$, where we use the indexing convention that $P_u = \mathcal{Z}_k$ if and only if $\sigma_0(u) = k$. We also let $\{ \mathcal{S}_1, \ldots \mathcal{S}_K \}$ denote the set $\{ \hat{A}_u \}_{u \in S}$, where we use the indexing convention that, for any $k \in [K]$, we have $\argmin_{\mathcal{Z}_{k'} \in \{\mathcal{Z}_1, \ldots, \mathcal{Z}_K\}} \| \mathcal{S}_k - \mathcal{Z}_{k'}\|_2 = \mathcal{Z}_k$. For each node $u \in [n]$, we define
\[
  \mathcal{S}^*(u) := \argmin_{\mathcal{S}_k \in \{\mathcal{S}_1,\ldots \mathcal{S}_K\}}
  \| \hat{A}_u - \mathcal{S}_k \|_2, \quad \textrm{and} \quad
   \mathcal{Z}^*(u) := \argmin_{\mathcal{Z}_k \in \{\mathcal{Z}_1,\ldots \mathcal{Z}_K\}}
  \| \hat{A}_u - \mathcal{Z}_k \|_2.
\]
We also define the shorthand $D_{sep}^2 :=  2 \frac{n}{\beta K} (p-q)^2 $. For a node $u \in [n]$, we call $u$ \emph{valid} if $\| \hat{A}_u - \mathcal{Z}^*(u) \|_2 \leq D_{sep}/8$. We now make several claims that we use in the proof.

\begin{enumerate}
  \item[Claim 1:]
For $k, k' \in [K]$ such that $k \neq k'$, we have 
$ \| \mathcal{Z}_k - \mathcal{Z}_{k'} \|_2 \geq D_{sep}$.
\item[Claim 2:] For any $k \in [K]$, we have
  $\bigl| \bigl\{ u \in \mathcal{C}_k \,:\, D(u) < \frac{2 D_{total}}{( \frac{n}{\mu \beta K^2} )^{1/2}},\,
  \| \hat{A}_u - \mathcal{Z}_k \|_2 < \frac{D_{total}}{\left( \frac{n}{\mu \beta K^2} \right)^{1/2}} \bigr\} \bigr|
  \geq |\mathcal{C}_k|(1-1/(\mu K))$.
  Thus, it follows that the $(1-1/(\mu K))\textrm{-quantile}$ of $\{ D(u) \,:\, u \in [n] \}$ is less than  $\frac{2 D_{total}}{\left( \frac{n}{\mu \beta^2 K} \right)^{1/2}}$.
\item[Claim 3:] If a node $u \in S$, then $u$ is valid.
\item[Claim 4:] For any $u, v \in S$ such that $u \neq v$, we have $\mathcal{Z}^*(u) \neq \mathcal{Z}^*(v)$.
\item[Claim 5:] For $k, k' \in [K]$ such that $k \neq k'$, we have $\| \mathcal{S}_k - \mathcal{S}_{k'} \|_2 \geq \frac{3}{4} D_{sep}$. 
\item[Claim 6:] If $u$ is valid, then $\| \hat{A}_u - \mathcal{S}^*(u) \|_2 \leq D_{sep}/4$.
  \end{enumerate}

 Claim 1 follows from the SBM definition and the fact that the smallest cluster has at least $\frac{n}{\beta K}$ elements.  To derive Claim 2, let $k \in [K]$, define $\mathcal{C}_k := \{ u \,:\, \sigma_0(u) = k\}$, and let $C' \subset \mathcal{C}_k$ be such that $|C'| = \lfloor |\mathcal{C}_k|/(\mu K) \rfloor$ and $\max_{u \in \mathcal{C}_k \backslash C'} \| \hat{A}_u - \mathcal{Z}_k \|_2 \leq \min_{u \in C'} \| \hat{A}_u - \mathcal{Z}_k \|_2$. Then
  \begin{align*}
    D^2_{total} \geq \sum_{v \in C'} \| \hat{A}_v - \mathcal{Z}_k \|_2^2
    \geq | C' | \min_{v \in C'} \| \hat{A}_v - \mathcal{Z}_k \|_2^2
    \stackrel{(a)} > \bigl( \frac{n}{\mu \beta K^2}-1 \bigr) \max_{v \in \mathcal{C}_k \backslash C'} \| \hat{A}_v - \mathcal{Z}_k \|_2^2,
  \end{align*}
  where $(a)$ follows because $|\mathcal{C}_k| \geq \frac{n}{\beta K}$. Thus, we have shown that for any $v \in \mathcal{C}_k \backslash C'$, we have $\| \hat{A}_v - \mathcal{Z}_k \|_2 < \frac{D_{total}}{\bigl( \frac{n}{\mu \beta K^2} - 1 \bigr)^{1/2}}$. Since $| \mathcal{C}_k \backslash C' | = \mathcal{C}_k | (1 - 1/(\mu K)) \geq \frac{n}{2 \beta K} \geq \lceil \frac{n}{\mu K} \rceil$, we have that, for any $v \in \mathcal{C}_k \backslash C'$,
  \begin{align*}
    D(v) \leq  \max_{v' \in \mathcal{C}_k \backslash C'} \| \hat{A}_v - \hat{A}_{v'} \|_2
    \leq  \max_{v' \in \mathcal{C}_k \backslash C'} \bigl( \| \hat{A}_v - \mathcal{Z}_k \|_2 + \| \hat{A}_{v'} - \mathcal{Z}_k \|_2 \bigr)
    < \frac{2 D_{total}}{\bigl( \frac{n}{\mu \beta K^2} - 1 \bigr)^{1/2}}.
  \end{align*}
  Claim 2 follows by again noting that $| \mathcal{C}_k \backslash C'| \geq |\mathcal{C}_k | ( 1 - 1/(\mu K))$. 

To argue Claim 3, let $u$ be an arbitrary invalid node and let $N(u) = \{ v \in [n] \,:\, \| \hat{A}_u - \hat{A}_v \| \leq D(u) \}$; it follows from the definition of $D(u)$ that $|N(u)| \geq \lceil \frac{n}{\mu K} \rceil$. Then
  \begin{align*}
    D^2_{total} &\geq \sum_{v \in N(u)} \| \hat{A}_v - \mathcal{Z}^*(v) \|_2^2
     \geq  \sum_{v \in N(u)}  \max\bigl\{ \| \hat{A}_u - \mathcal{Z}^*(v) \|_2 -
    \| \hat{A}_v - \hat{A}_u \|_2,\, 0 \bigr\}^2  \\
                &\geq \frac{n}{\mu K} \max\{ D_{sep}/8 - D(u), 0 \}^2.
  \end{align*}
  Therefore,
\begin{align*}
  D(u) \geq \frac{D_{sep}}{8} - \frac{D_{total} }{( \frac{n}{\mu K} )^{1/2}}
  \stackrel{(a)} \geq  \frac{D_{sep}}{16}
  \stackrel{(b)} \geq \frac{2 D_{total}}{ ( \frac{n}{\mu \beta K^2} -1 )^{1/2}}
  \stackrel{(c)} > (1-1/(\mu K))\textrm{-quantile} \{ D(u) \,:\, u \in [n]\},
\end{align*}
where $(a)$ and $(b)$ follows from hypothesis~\eqref{eqn:ridiculous_constant_bound} of the proposition, and $(c)$ follows from Claim 2. Claim 3 can then be shown from the definition of the set $S$. 
  
We argue Claim 4 by induction. Let $\{u_1, \ldots u_K\} = S$, where $u_1$ is the first node added to $S$, $u_2$ is the second, etc. Let $r$ be a positive integer such that $2 \leq r < K$, and suppose $\mathcal{Z}^*(u_s) \neq \mathcal{Z}^*(u_{s'})$ for all $s, s' \in [r]$. Since $r < K$, there exists $k \in [K]$ such that $\mathcal{Z}^*(u_s) \neq \mathcal{Z}_k$ for all $s \in [r]$. Since $\frac{n}{\mu K} \leq \frac{n}{2\beta K}$, by Claim 2 and the Pigeonhole Principle, there must exist a node
$u' \in \mathcal{C}_k$ such that $\| \hat{A}_{u'} - \mathcal{Z}_k \|_2
\leq \frac{D_{total}}{(\frac{n}{\mu \beta K^2} - 1 )^{1/2}}
\leq D_{sep}/8$,
where the latter inequality is true by hypothesis~\eqref{eqn:ridiculous_constant_bound} of the proposition and the fact that $D(u') \leq (1 - 1/(\mu K))\textrm{-quantile} \{ D(u) \,:\, u \in [n]\}$. Let $u_{r+1} \in S$. Then by the definition of $S$, for any $s \in [r]$, we have
\begin{align*}
  \| \hat{A}_{u_s} - \hat{A}_{u_{r+1}} \|_2
  \geq \| \hat{A}_{u_s} - \hat{A}_{u'} \|_2
  \geq \| \mathcal{Z}_k - \mathcal{Z}^*(u_s) \|_2 -
  \| \hat{A}_{u_s} - \mathcal{Z}^*(u_s) \|_2 -
  \| \hat{A}_{u'} - \mathcal{Z}_k \|_2
  \stackrel{(a)} \geq \frac{3}{4} D_{sep},
\end{align*}
where $(a)$ follows by Claim 1 and Claim 3. At the same time, let $v$ be a valid node such that $\mathcal{Z}^*(v) = \mathcal{Z}^*(u_s)$ for some $s \in [r]$. Then
\[
  \| \hat{A}_v - \hat{A}_{u_s} \|_2
  \leq \| \hat{A}_v - \mathcal{Z}^*(v) \|_2 +
  \| \hat{A}_{u_s} - \mathcal{Z}^*(u_s) \|_2
 \stackrel{(a)} \leq D/4,
\]
where $(a)$ follows again by Claim 3. Therefore, by definition of $u_{r+1}$, it must be that $\mathcal{Z}^*(u_{r+1}) \neq \mathcal{Z}^*(u_s)$ for any $s \in [r]$. Claim 4 follows by induction. 

Claim 5 is true because, for any $k \neq k'$, we have
  \begin{align*}
    \| \mathcal{S}_k - \mathcal{S}_{k'} \|_2
    \geq \|\mathcal{Z}_{k'} - \mathcal{Z}_k \|_2 -
    \| \mathcal{Z}_k - \mathcal{S}_k \|_2 -
    \| \mathcal{Z}_{k'} - \mathcal{S}_{k'} \|_2 \stackrel{(a)} \geq (3/4) D_{sep},
  \end{align*}
where $(a)$ holds because of Claim 1, Claim 3, Claim 4, and the indexing convention for $\{ \mathcal{S}_1, \ldots \mathcal{S}_{K} \}$.

To see that Claim 6 is true, let $u$ be valid and suppose $\mathcal{Z}^*(u) = \mathcal{Z}_k$ for some $k \in [K]$. Then
\begin{align*}
  \| \hat{A}_u - \mathcal{S}^*(u) \|_2
  \leq \| \hat{A}_u - \mathcal{S}_k \|_2
  \leq \|\hat{A}_u - \mathcal{Z}_k \|_2 + \| \mathcal{Z}_k - \mathcal{S}_k \|_2 \leq \frac{D_{sep}}{4}.
  \end{align*}

We have proved all six claims and now proceed to the proof of the proposition. We say that a node $u \in [n]$ is \emph{incorrect} if $u$ is valid and if $\mathcal{S}^*(u) \neq \mathcal{S}_{\sigma_0(u)}$. Suppose $u$ is incorrect and suppose without the loss of generality that $\sigma_0(u) = k$. Then
\begin{align*}
  \| \hat{A}_u - \mathcal{Z}_k \|_2
  &\geq \| \hat{A}_u - \mathcal{S}_k \|_2 -
  \| \mathcal{S}_k - \mathcal{Z}_k\|_2 \\
  &\geq \| \mathcal{S}_k - \mathcal{S}^*(u) \|_2 -
  \| \hat{A}_u - \mathcal{S}^*(u) \|_2 -  \| \mathcal{S}_k - \mathcal{Z}_k\|_2 \\
  &\stackrel{(a)} \geq \frac{3}{4} D_{sep} - \frac{1}{4} D_{sep} - \frac{1}{8} D_{sep} \geq \frac{3}{8} D_{sep},
\end{align*}
where $(a)$ holds because of Claim 5 and Claim 6. Define a permutation $\tau \,:\, [K] \rightarrow [K]$ such that for $i, k \in [K]$, we have $\tau(i) = k$ if $S[i] = \mathcal{S}_k$. Then
\begin{align*}
  l(\sigma_0, \sigma)
  &\leq d_H(\sigma_0, \tau \circ \sigma) \leq \frac{| \{ u \in [n] \,:\, u \textrm{ is invalid or incorrect} \}| }{n} \\
  &\leq \frac{1}{n} \frac{D^2_{total}}{(D_{sep}/8)^2}
    \leq 2^{29} \frac{\beta K^4}{n} \frac{p \vee q}{(p-q)^2}.
\end{align*}
We finish the proof by noting that if $n \geq 2000$, then $P(E_1 \cap E_2) \geq 1 - e^{- n^{5/6} / 12} - n^{-6} \geq n^{-5}$.
\end{proof}

The following Lemma is Lemma 3.3 in \cite{chin2015stochastic} and also as Lemma 5 in \cite{Gaoetal15}. We transcribe the full statement and proof here to make the paper self-contained.

\begin{lemma}
\label{lem:trimmed_A_bound}
Let $P \in [0,1]^{n \times n}$ be a symmetric matrix, let $p_{max} := \max_{u \geq v} P_{uv}$, and suppose $n p_{\max} \geq 1$. Let $A$ be an adjacency matrix such that $A_{uu} = 0$ and $A_{uv} \sim Ber(P_{uv})$ for $u < v$. Let $\tilde{\tau} \geq 20$ and $\tau := \tilde{\tau} n p_{\max}$. Then for any $C' > 0$ and any $n$ such that $\frac{3}{2} \frac{n}{\log n} \geq C' + \log2$, we have
\[
 \| T_\tau(A) - P \|_2 \leq \frac{4}{3}(461 + 160 C' + 16 \tilde{\tau}) \sqrt{n p_{\max}},
\]
with probability at least $1 - n^{-C'}$.
\end{lemma}

\begin{proof}
  Let $S := \{ u \in [n] \,:\, d_u < \tau\}$ and let $A_{SS} := T_{\tau}(A)$. Let $P_{SS}$ be the result of setting row/column $u$ of $P$ to zero for every $u \in S^c$. Observe then that
  \[
    \|T_{\tau}(A) - P \|_2 \leq  \| P_{SS} - P \|_2 + \| A_{SS} - P_{SS} \|_2.
  \]
We first bound $\| P_{SS} - P \|_2$. By Proposition~\ref{prop:high_degree_nodes_bound}, with probability at least $1 - 2\exp(- n/2)$, we have
\begin{align*}
  \| P_{SS} - P \|_2 &\leq \| P_{SS} - P \|_F \leq \bigl| \{(u, v) \,:\, u \in S^c \textrm{ or } v \in S^c \} \bigr|^{1/2} p_{\max} \\
  & \leq 2^{1/2} \frac{n}{\tau^{1/2}} p_{\max}  \stackrel{(a)} \leq \sqrt{n p_{\max}},
\end{align*}
where $(a)$ follows because $\tilde{\tau} \geq 20$. Define $E_1$ to be the event that $\| P_{SS} - P \|_2 \leq \sqrt{n p_{\max}}$. 

Also define $M := A_{SS} - P_{SS}$. Let $\mathcal{C}$ be the minimal $1/8$-covering of $S^{n-1}$; it follows that $| \mathcal{C}| \leq 64^n$. For any $x, y \in S^{n-1}$, let $\tilde{x}, \tilde{y} \in \mathcal{C}$ be such that $\|x - \tilde{x} \|, \| y -\tilde{y} \| \leq 1/8$. Then
\begin{align*}
x^\top M y = \tilde{x}^\top M \tilde{y} + (x - \tilde{x})^\top M y + \tilde{x}^\top M (y - \tilde{y}) \leq \tilde{x}^\top M \tilde{y} + \frac{1}{4} \| M \|_2 .
\end{align*}
Taking the supremum over $x, y \in S^{n-1}$, we have
\begin{align}
  \| M \|_2 = \sup_{x, y \in S^{n-1}} x^\top M y \leq \frac{4}{3} \sup_{\tilde{x}, \tilde{y} \in \mathcal{C}} \tilde{x}^\top M \tilde{y}. \label{eqn:covering}
\end{align}
For any $x, y \in \mathcal{C}$, define $H(x,y) := \{ (u,v) \,:\, |x_u y_v| \geq \sqrt{\frac{p_{\max}}{n}}\}$ and $L(x,y) := \{ (u,v) \,:\, |x_u y_v| < \sqrt{\frac{p_{\max}}{n}}\}$. It follows that for any $x,y \in \mathcal{C}$, we have
\begin{align}
  x^\top M y = \sum_{(u,v) \in H(x, y)} x_u y_v M_{uv} + \sum_{(u,v) \in L(x,y)} x_u y_v M_{uv}.
  \label{eqn:light_heavy_decomposition}
\end{align}
Define $E_2$ to be the event in which $\max_{x,y \in \mathcal{C}} \sum_{(u,v) \in L(x,y)} x_u y_v M_{uv} \leq 12 \sqrt{p_{\max} n}$. By Proposition~\ref{prop:light_set_analysis} and a union bound over all $x,y \in \mathcal{C}$, we have $P(E_2) \geq 1 - e^{-n}$.
For any $x, y \in \mathcal{C}$, $\sum_{(u,v) \in H(x,y)} x_u y_v \leq \sum_{(u,v) \in H(x,y)} x_u^2 y_v^2 \sqrt{\frac{n}{p_{\max}}} \leq \sqrt{n}{p_{\max}}$. Thus, 
\begin{align}
  \sum_{(u,v) \in H(x, y)} x_u y_v (P_{SS})_{uv} \leq \sqrt{ n p_{\max}}. \label{eqn:heavy1}
\end{align}
For any $S',T' \subseteq [n]$, define $e(S', T' | A) :=  \sum_{(u,v) \,:\, (u,v) \in (S' \times T') \cup (T' \times S')} A_{uv}$. Let $E_3$ be the event that, for any $S', T' \subseteq [n]$, either
\begin{equation*}
\frac{e(S', T' | A)}{|S'||T'| p_{\max}} \leq 4
\end{equation*}
or
\begin{equation*}
\frac{e(S', T' | A)}{|S'||T'| p_{\max}} \log \frac{e(S', T' | A)}{|S'||T'| p_{\max}} \leq 3(4 + 2C') \frac{1}{|S'|} \log \frac{en}{|T'|}.
\end{equation*}
Then by Proposition~\ref{prop:heavy_set_prep}, we have $P(E_3) \geq 1 - n^{-2C'}$. For any $S', T' \subseteq [n]$, we have $e(S', T' | A_{SS} ) \leq e(S', T' | A)$, so in the event $E_3$, for any $S', T' \subseteq [n]$, we have either $\frac{e(S', T' | A_{SS})}{|S'||T'| p_{\max}} \leq 4$ or $\frac{e(S', T' | A_{SS})}{|S'||T'| p_{\max}} \log \frac{e(S', T' | A_{SS})}{|S'||T'| p_{\max}} \leq 3(4 + 2C') \frac{1}{|S'|} \log \frac{en}{|T'|}$. By Proposition~\ref{prop:heavy_set_analysis} with $d = n p_{\max}$, $C_2$ set to $8$, $C_3 = 2(4 + 2C')$, and $\tau = \tilde{\tau} n p_{\max}$, it is known that, in the event $E_3$, we have
\begin{align}
  \max_{x,y \in \mathcal{C}} \sum_{(u,v) \in H(x,y)} x_u y_v (A_{SS})_{uv} \leq (448 + 160 C' + 16 \tilde{\tau}) \sqrt{n p_{\max}} \label{eqn:heavy2}.
\end{align}
Under the event $E_1 \cap E_2 \cap E_3$, by combining inequalities~\eqref{eqn:covering}, \ref{eqn:light_heavy_decomposition}, \ref{eqn:heavy1}, \ref{eqn:heavy2}, and the definition of $E_2$, we have
\[
  \| M \|_2 \leq \frac{4}{3}(461 + 160 C' + 16 \tilde{\tau}) \sqrt{n p_{\max}}.
\]
To finish the proof, we take a union bound over the events $E^c_1, E^c_2$, and $E^c_3$, and observe that $2 e^{-n/2} + e^{-n} + n^{-2C'} \leq n^{-C'}$ when $\frac{3}{2} \frac{n}{\log n} \geq C' + \log 2$. 
\end{proof}

\begin{proposition}
  \label{prop:dbar_bound}
  Let $A, P$, and $p_{\max}$ be defined as in Lemma~\ref{lem:trimmed_A_bound}. Let $C_1 > 0$ be defined such that $\sum_{u < v} P_{uv} = C_1 n^2 p_{\max}$. Let $\bar{d} := \frac{1}{n} \sum_{u \neq v} A_{uv}$ be the average degree. Then with probability at least $\exp(- C n)$ where $C := \frac{C_1/2}{1/3 + 2}$, we have
  \[
    C_1 n p_{\max} \leq \bar{d} \leq 3 C_1 n p_{\max}.
  \]
  In the case of $SBM(K, \beta, p, q)$, we have $\frac{1}{2K} \leq C_1 \leq 1$. 
\end{proposition}

\begin{proof}
We use the shorthand $t := \frac{1}{2} C_1 n^2 p_{\max}$. By Bernstein's inequality (Proposition~\ref{prop:bernstein}), we have
  \begin{align*}
    \mathbb{P}\bigl( C_1 n p_{\max} \leq \bar{d} \leq 3 C_1 n p_{\max} \bigr)
    &\leq \mathbb{P}\biggl( 
      \biggl|  \sum_{(u,v) \,:\, u < v} (A_{uv} - \E A_{uv}) \biggr| \geq t
      \biggr) \\
    &\leq \exp\biggl(
      - \frac{ (1/2) t^2}{(1/3) t + C_1 n^2 p_{\max}} \biggr) \\
    &\leq \exp \biggl(
      - \frac{(1/2) C_1 n}{1/3 + 2} \biggr),
  \end{align*}
  where the last inequality follows because $n p_{\max} \geq 1$ by assumption. For the second claim, define $N_w := \sum_{k=1}^K \frac{n_k (n_k-1)}{2} $ and $N_b := \frac{n(n-1)}{2} - N_w$. Then
  \[
    C_1 = \frac{N_w}{n^2} \frac{p}{p_{\max}} + \frac{N_b}{n^2} \frac{q}{p_{\max}}.
  \]
  The lower bound on $C_1$ follows from the fact that $N_w \leq n(n/K - 1)$ and the assumption that $K \leq n/2$. The upper bound on $C_1$ follows from the fact that $N_w + N_b = n(n-1)/2$. 
\end{proof}

\subsection{Supporting results for Lemma~\ref{lem:trimmed_A_bound}}

The following proposition is Lemma 3.1 in \cite{chin2015stochastic} and Lemma 11 in \cite{GaoEtal15}. We transcribe the full statement and the proof here to make the paper self-contained.

\begin{proposition}
  \label{prop:high_degree_nodes_bound}
  Let $A, P$, and $p_{max}$ be defined as in Lemma~\ref{lem:trimmed_A_bound}. For a node $u$, let $d_u := \sum_{v \neq u} A_{uv}$ be the degree. Let $\tilde{\tau} \geq 20$ and let $\tau := \tilde{\tau} n p_{\max}$. Then with probability at least $1 - 2 \exp(- n/2)$, we have
  \[
    \bigl| \{ u \in \{1,\ldots,n\} \,:\, d_u \geq \tau \} \bigr| \leq \frac{n}{\tau}.
   \]
\end{proposition}

\begin{proof}
  Let $S \subseteq [n]$ and define $e(S,V) := \frac{1}{2} \sum_{(u,v) \,:\, u \in S \textrm{ or } v \in S} A_{uv}$. Since
\begin{equation*}
\big| \{ (u,v) \,:\, u \in S \textrm{ or } v \in S\} \bigr| \leq 2|S|n,
\end{equation*}
 we have $\E e(S,V) \leq |S| n p_{\max}$. Therefore, by Chernoff's bound (Proposition~\ref{prop:chernoff_for_bernoulli}), we have
  \begin{align}
    \mathbb{P} \bigl( d_u \geq \tau ,\, \forall u \in S \bigr)
    &\leq \mathbb{P}\bigl( e(S,V) \geq \frac{\tau}{2} |S| \bigr) \nonumber \\
    &\leq \mathbb{P} \biggl( e(S,V) \geq \E e(S,V) + \bigl( \frac{\tilde{\tau}}{2} - 1 \bigr) p_{\max} n |S| \biggr) \nonumber \\
    &\leq \exp \biggl( - \bigl( \frac{\tilde{\tau}}{2} -1 \bigr) p_{\max} n
      \biggl( \log \bigl( 1 + \frac{\bigl(\frac{\tilde{\tau}}{2}-1\bigr)  p_{\max} n}{\E e(S,V)} \bigr) - 1 \biggr) \biggr) \nonumber \\
    &\stackrel{(a)} \leq \exp \bigl( - \bigl( \bigl(\frac{\tilde{\tau}}{2} -1 \bigr) \log 10 \bigr) p_{\max} n \bigr) \stackrel{(b)} \leq \exp \bigl( - \frac{\tilde{\tau}}{4} p_{\max} n |S| \bigr).
      \label{eqn:single_S_bound}
  \end{align}
In the above, $(a)$ follows because $\frac{(\tilde{\tau}/2 -1) p_{\max} n}{\E e_1(S,V)} \geq 9$ and $\log(x) -1 \geq \log(x)/2$ for all $x \geq 10$, and $(b)$ follows because $(\tilde{\tau}/2 - 1) \log 10 \geq \tilde{\tau}$ for all $\tilde{\tau} \geq 20$.

Taking a union bound over all subsets of size greater than $\frac{n}{\tau}$, we have
  \begin{align*}
    \mathbb{P}\bigl( \exists S,\, |S| > \frac{n}{\tau},\, d_u \geq \tau,\, \forall u \in S \bigr)
    &\leq \sum_{l = \lceil n/\tau \rceil}^n \sum_{S \subseteq V,\, |S|=l} \exp \bigl( - \tilde{\tau} p_{\max} n l \bigr) \\
    &\stackrel{(a)} \leq  \sum_{l = \lceil n/\tau \rceil}^n \exp \bigl( - \tilde{\tau} p_{\max} n l + l \log \frac{en}{l} \bigr) \\
    &\stackrel{(b)} \leq  \sum_{l = \lceil n/\tau \rceil}^n  \exp \biggl( - l \bigl( \tilde{\tau} p_{\max} n - \log( e \tilde{\tau} p_{\max} n ) \bigr) \biggr) \\
    &\stackrel{(c)}\leq   \sum_{l = \lceil n/\tau \rceil}^n \exp \bigl( - \frac{\tilde{\tau}}{2} p_{\max} n l \bigr) \stackrel{(d)} \leq 2 \exp( - n/2 ).
  \end{align*}
  In the above derivations, $(a)$ follows from the Stirling approximation; $(b)$ follows because $\frac{n}{l} > \tau$; $(c)$ follows because $\tilde{\tau} - \log(e \tilde{\tau} ) \geq \tilde{\tau}/2$ for all $\tilde{\tau} \geq 20$ and $p_{\max} n \geq 1$; $(d)$ follows because  $ \sum_{l = \lceil n/\tau \rceil}^n \exp \bigl( - \frac{\tilde{\tau}}{2} p_{\max} n l \bigr)$ is a geometric series the largest term of which is bounded above by $\exp( - n/2)$, and the ratio is $\exp( - \frac{\tilde{\tau}}{2} p_{\max} n) \leq e^{-10} \leq 1/2$. 
\end{proof}

\begin{proposition}
  \label{prop:light_set_analysis}
  Let $A, P$, and $p_{\max}$ be defined as in Lemma~\ref{lem:trimmed_A_bound}. Let $x, y \in S^{n-1}$ and let $L := \{ (u,v) \,:\, |x_u y_v| \leq \sqrt{\frac{p_{\max}}{n}} \}$. Then with probability at least $\exp(- 8 n)$,
  \[
    \sup_{S \subseteq [n]} \sum_{(u,v) \in L} x_u y_v (A_{SS} - P_{SS})_{uv} \leq 12 \sqrt{p_{\max} n}.
  \]
\end{proposition}

\begin{proof}
  First fix $S \subseteq [n]$. Since $\max_{(u,v) \in L} |x_u y_v (A_{SS} - P_{SS})_{uv} | \leq \sqrt{ \frac{p_{\max}}{n}}$ and $\sum_{(u,v) \in L} |x_u y_v|^2 \E ((A_{SS} - P_{SS})_{uv}^2 ) \leq p_{\max}$, we may apply Bernstein's inequality (Proposition~\ref{prop:bernstein}) to obtain $\sum_{(u,v) \in L} x_u y_v (A_{SS} - P_{SS})_{uv} \geq 12 \sqrt{p_{\max} n}$, with probability at most $\exp( - 9 n)$. We now take a union bound to obtain
  \begin{align*}
    \mathbb{P}\biggl( \max_{S \subseteq [n]} \sum_{(u,v) \in L} x_u y_v (A_{SS} - P_{SS})_{uv} \geq 12 \sqrt{p_{\max} n} \biggr)
    \leq \exp\bigl( - 9 n + n \log 2 \bigr) \leq \exp(- 8 n).
  \end{align*}
\end{proof}

The following Lemma is Lemma A.3 in \cite{chin2015stochastic}. We transcribe the full statement and proof here to make the paper self-contained.

\begin{proposition}
  \label{prop:heavy_set_prep}
  Let $A, p_{\max}$ be defined as in Lemma~\ref{lem:trimmed_A_bound}. Let $d := n p_{\max}$. Let $C' > 0$. For any $S,T \subseteq [n]$, define $e(S,T) := \frac{1}{2} \sum_{(u,v) \,:\, (u,v) \in (S \times T ) \cup (T \times S)} A_{uv}$. Then with probability at least $1 - n^{-C'}$, for any $S,T \subseteq [n]$, we have
  $$ \frac{e(S, T)}{|S||T| \frac{d}{n}} \leq 8, \qquad \textrm{or} \qquad
  \frac{e(S, T)}{|S||T| \frac{d}{n}} \log \frac{e(S, T)}{|S||T| \frac{d}{n}} \leq 2(4 + C') \frac{1}{|S|} \log \frac{en}{|T|}. $$
\end{proposition}

\begin{proof}

 Fix $C' > 0$. Let us first fix $S, T \subseteq [n]$ and, without loss of generality, suppose that $|T| \geq |S|$. Since $|\{ (u,v) \,:\, u\in S, v \in T \textrm{ or } v \in S, u\in T\}| \leq 2 |S||T|$, we have $\E e(S,T) \leq p_{\max} |S||T|$. By a Chernoff bound (Proposition~\ref{prop:chernoff_for_bernoulli}), for $l \geq 8$, we have $\frac{e(S,T)}{|S||T| p_{\max}} \leq l$ with probability at least $1 - \exp\bigl( - \frac{1}{2} |S||T| p_{\max} l \log l\bigr)$.
 Let $l' > 0$ be a positive real number such that $l' \log l' = \frac{2(4 + C') |T|}{|S||T| p_{\max}} \log \frac{e n}{|T|}$. Let $l = 8 \vee l'$. It is clear that $l \log l \geq  \frac{2(4 + C') |T|}{|S||T| p_{\max}} \log \frac{e n}{|T|}$. Taking a union bound over all subsets, we obtain
 \begin{align*}
   \mathbb{P}\biggl( \exists S, T \subseteq V,\, e(S,T) \geq l p_{\max} |S||T| \biggr)
   &\leq \sum_{(s,t) \in [n]\times [n]\,:\, t \geq s}
     \exp\biggl( - (4 + C') t \log \frac{en}{t} \biggr) \binom{n}{s} \binom{n}{t} \\
   &\leq \sum_{(s,t) \in [n]\times [n],\, t \geq s}  \exp\bigl( - (2 + C') t \log \frac{en}{t} \bigr) \\
   &\stackrel{(a)} \leq  \sum_{(s,t) \in [n]\times [n],\, t \geq s}  n^{-(2 + C')} \leq n^{-C'},
 \end{align*}
where $(a)$ follows because $t \log \frac{en}{t} \geq \log en$ for $1 \leq t \leq n$. The proposition follows.
\end{proof}

The following Lemma is Lemma A.2 in \cite{chin2015stochastic}. We transcribe the full statement and proof here to make the paper self-contained.

\begin{proposition}
  \label{prop:heavy_set_analysis}
  Let $\tau > 0$. Let $A$ be a graph on $n$ nodes such that the maximum degree is $\tau$. For any $S, T \subseteq [n]$, let $e(S,T)$ be defined as in Proposition~\ref{prop:heavy_set_prep}. Let $d > 0$ be a positive number such that, for any $S, T \subset [n]$, we have either
 $$ \frac{e(S, T)}{|S||T| \frac{d}{n}} \leq C_2, \qquad \textrm{or} \qquad
 \frac{e(S, T)}{|S||T| \frac{d}{n}} \log \frac{e(S, T)}{|S||T| \frac{d}{n}} \leq C_3 \frac{1}{|S|} \log \frac{en}{|T|},$$
 where $C_2 \geq e$ and $C_3 > 0$ are positive universal constants. Let $x, y \in S^{n-1}$, and let $H = \{ (u,v) \,:\, |x_u y_v| \geq \sqrt{d}/n \}$. Then
 \[
   \sum_{(u,v) \in H} x_u y_v A_{uv} \leq \bigl(16 C_2 + 40 C_3 + 8 + 16\frac{\tau}{d}\bigr) \sqrt{d}.
   \]
\end{proposition}

\begin{proof}
Fix $x, y \in S^{n-1}$. Since $A_{uv} \geq 0$, we may assume without loss of generality that $x_u, y_v \geq 0$ for all $u,v \in [n]$. For $i, j \in \{1, \ldots, \lceil \log_2 \frac{n}{\sqrt{d}} \rceil \}$, let $\gamma_0 = \sqrt{d/n}$, $\gamma_i := 2^i \gamma_0$, and $\gamma_j := 2^j \gamma_0$. We also define
  \[
    S_i := \bigl \{ u \,:\, \frac{\gamma_{i-1}}{\sqrt{n}} \leq x_u \leq \frac{\gamma_i}{\sqrt{n}} \bigr\}, \qquad
    T_j := \bigl \{ v \,:\, \frac{\gamma_{j-1}}{\sqrt{n}} \leq y_v \leq
      \frac{\gamma_j}{\sqrt{n}} \bigr\}.
    \]
Define the shorthand $s_i := |S_i|$ and $t_j := |T_j|$. Note that since $|x_u y_v| \geq \frac{\sqrt{d}}{n}$ and $x_u, y_v \leq 1$ for all $(u,v) \in H$, we have $H \subseteq \bigcup_{(i,j) \,:\, \gamma_i \gamma_j \geq \sqrt{d}} S_i \times T_j$. Hence,
    \begin{align}
      \sum_{(u,v) \in H} x_u y_v A_{uv} \leq
      \sum_{\substack{ (i,j) : \\\gamma_i \gamma_j \geq \sqrt{d}}} 
      \sum_{u \in S_i,\, v \in T_j} x_u y_v A_{uv} \leq
      \sum_{\substack{(i,j) : \\ \gamma_i \gamma_j \geq \sqrt{d}}}
           \frac{\gamma_i \gamma_j}{n} \frac{e(S_i, T_j)}{s_i t_j \frac{d}{n} } s_i t_j \frac{d}{n}.
      \label{eqn:H_decomposition} 
      \end{align}
Also note that since $x, y \in S^{n-1}$, we have $\sum_i \frac{\gamma_i^2}{n} s_i \leq 4$ and $\sum_j \frac{\gamma_j^2}{n} t_j \leq 4$. Define the shorthand $H^* := \{(i,j) \,:\, \gamma_i \gamma_j \geq \sqrt{d} \}$ and define $H_1 := \bigl\{ (i,j) \in H^* \,:\, \frac{e(S_i, T_j)}{s_i t_j \frac{d}{n}} \leq C_2 \frac{\gamma_i \gamma_j}{\sqrt{d}} \bigr\}$. Then
      \begin{align}
        \sum_{(i,j) \in H_1}  \frac{\gamma_i \gamma_j}{n} \frac{e(S_i, T_j)}{s_i t_j \frac{d}{n} } s_i t_j \frac{d}{n}
        \leq
          C_2\sqrt{d} \sum_{(i,j) \in H_1} \frac{\gamma_i^2}{n} \frac{\gamma_j^2}{n} s_i t_j \leq 16 C_2 \sqrt{d}. \label{eqn:H1_bound}
      \end{align}
Now define $H_2 := \bigl \{(i,j) \in H^* \,:\, \frac{\gamma_i}{\gamma_j} \geq \sqrt{d} \bigr\}$. Note that because the maximum degree is bounded by $\tau$, we have $\frac{e(S_i, T_j)}{s_i t_j \frac{d}{n}} \leq \frac{s_i \tau}{s_i t_j \frac{d}{n}} \leq \frac{\tau}{d} \frac{n}{t_j}$. Thus,
\begin{align}
\sum_{(i,j) \in H_2}  \frac{\gamma_i \gamma_j}{n} \frac{e(S_i, T_j)}{s_i t_j \frac{d}{n} } s_i t_j \frac{d}{n} 
  & \leq \frac{\tau}{d} \sum_{(i,j) \in H_2} \frac{\gamma_i\gamma_j}{n} s_i d \nonumber \\
  &\leq \frac{\tau}{d}  \sum_i \frac{\gamma_i^2}{n} s_i d \sum_{j \,:\, (i,j) \in H_2} \frac{\gamma_j}{\gamma_i} \nonumber \\
  &\stackrel{(a)} \leq
    2 \frac{\tau}{d} \sqrt{d} \sum_i \frac{\gamma_i^2}{n} s_i \leq 8 \frac{\tau}{d} \sqrt{d}, \label{eqn:H2_bound}
\end{align}
where in the above derivations, $(a)$ follows because, for any fixed $i$, the sum $\sum_{(i,j) \in H_2} \frac{\gamma_j}{\gamma_i}$ is a geometric series whose term-to-term ratio is 2. Let $j^* := \max \{ j \,:\, (i,j) \in H_2 \}$. Then $\frac{\gamma_{j^*}}{\gamma_i} \leq \frac{1}{\sqrt{d}}$. Therefore, we have $\sum_{(i,j) \in H_2} \frac{\gamma_j}{\gamma_i} \leq \frac{2}{\sqrt{d}}$.
Now define $H'_2 := \{(i,j) \in H^* \,:\, \frac{\gamma_j}{\gamma_i} \geq \sqrt{d}\}$. By similar reasoning as above, we have
\begin{align}
  \sum_{(i,j) \in H'_2}  \frac{\gamma_i \gamma_j}{n} \frac{e(S_i, T_j)}{s_i t_j \frac{d}{n} } s_i t_j \frac{d}{n} \leq 8 \frac{\tau}{d} \sqrt{d}. \label{eqn:H2prime_bound}
\end{align}
Define $H_3 := \bigl \{ (i,j) \in H^* \,:\,
\frac{e(S_i, T_j)}{s_it_j \frac{d}{n}} \geq C_2 \frac{\gamma_i \gamma_j}{\sqrt{d}},\,
\frac{1}{\sqrt{d}} \leq \frac{\gamma_i}{\gamma_j} \leq \sqrt{d},\,
\frac{e(S_i, T_j)}{s_i t_j \frac{d}{n}} \geq \bigl( \frac{n}{t_j} \bigr)^{1/2} \bigr \}$. Since $\frac{ e(S_i, T_j)}{s_i t_j \frac{d}{n}} \geq C_2 \frac{\gamma_i \gamma_j}{\sqrt{d}} \geq C_2$ for all $(i,j) \in H_3$, we have
$\frac{ e(S_i, T_j)}{s_i t_j \frac{d}{n}} \leq 
\frac{C_3}{\log \frac{ e(S_i, T_j)}{s_i t_j \frac{d}{n}}} \frac{n}{d s_i} \log \frac{n}{t_j} \leq
2 C_3 \frac{n}{ds_i}$. Thus, 
\begin{align}
  \sum_{(i,j) \in H_3} \frac{\gamma_i \gamma_j}{n} \frac{e(S_i, T_j)}{s_i t_j \frac{d}{n} } s_i t_j \frac{d}{n}
  \leq 2 C_3 \sqrt{d} \sum_{(i,j) \in H_3}
       \frac{\gamma_j^2}{n} t_j \frac{\gamma_i}{\gamma_j} \frac{1}{\sqrt{d}} \leq 8 C_3 \sqrt{d}. \label{eqn:H3_bound}
\end{align}
Define $H_4 := \bigl \{ (i,j) \in H^* \,:\,
\frac{e(S_i, T_j)}{s_it_j \frac{d}{n}} \geq C_2 \frac{\gamma_i \gamma_j}{\sqrt{d}},\,
\frac{1}{\sqrt{d}} \leq \frac{\gamma_i}{\gamma_j} \leq \sqrt{d},\,
\frac{e(S_i, T_j)}{s_it_j \frac{d}{n}} \leq \bigl( \frac{n}{t_j} \bigr)^{1/2},\, \gamma_j^2 \leq \frac{n}{t_j \gamma_j^2} \bigr\}$. Note that if $(i,j) \in H_3$, then $\frac{e(S_i, T_j)}{s_it_j \frac{d}{n}} \leq \bigl( \frac{n}{t_j \gamma_j^2} \bigr)^{1/2} \gamma_j \leq \frac{n}{t_j \gamma_j^2}$. Therefore,

\begin{align}
  \sum_{(i,j) \in H_4} \frac{\gamma_i \gamma_j}{n} \frac{e(S_i, T_j)}{s_i t_j \frac{d}{n} } s_i t_j \frac{d}{n}
  \leq \sqrt{d} \sum_{(i,j) \in H_4} \frac{\gamma_i^2}{n} s_i \frac{\sqrt{d}}{\gamma_i \gamma_j} \leq \sqrt{d} \sum_i \frac{\gamma_i^2}{n} s_i \sum_{j \,:\, (i,j)\in H_4} \frac{\sqrt{d}}{\gamma_i \gamma_j} \leq 8 \sqrt{d}. \label{eqn:H4_bound}
\end{align}
The last inequality follows because, for every $i$, the sum $\sum_{j \,:\, (i,j) \in H_4} \frac{\sqrt{d}}{\gamma_i \gamma_j}$ is a geometric series where the term-to-term ratio is 2 and  the largest term is at most 1. Define $H_5 := \bigl \{ (i,j) \in H^* \,:\,
\frac{e(S_i, T_j)}{s_it_j \frac{d}{n}} \geq C_2 \frac{\gamma_i \gamma_j}{\sqrt{d}},\,
\frac{1}{\sqrt{d}} \leq \frac{\gamma_i}{\gamma_j} \leq \sqrt{d},\, \gamma_j^2 \geq \frac{n}{t_j \gamma_j^2} \bigr\}$. For any $(i,j) \in H_5$, we have $4 \log \gamma_j \geq \log \frac{n}{t_j}$. Therefore, $\frac{e(S_i, T_j)}{s_i t_j \frac{d}{n}} \leq 4 C_3 \frac{n}{d s_i} \frac{\log \gamma_j}{\log \bigl( C_2 \frac{\gamma_i \gamma_j}{\sqrt{d}} \bigr) }$, and
\begin{align}
  \sum_{(i,j) \in H_5}  \frac{\gamma_i \gamma_j}{n} \frac{e(S_i, T_j)}{s_i t_j \frac{d}{n} } s_i t_j \frac{d}{n}
  &\leq 4 C_3 \sqrt{d} \sum_{(i,j) \in H_5} \frac{\gamma_j^2}{n} t_j \frac{\gamma_i \gamma_j}{\sqrt{d}} \frac{1}{\gamma_j^2}  \frac{\log \gamma_j}{\log \bigl( C_2 \frac{\gamma_i \gamma_j}{\sqrt{d}} \bigr)} \nonumber \\
  &\leq 4 C_3 \sqrt{d} \sum_j \frac{\gamma_j^2}{n} t_j \sum_{i \,:\, (i,j) \in H_5} \frac{\gamma_i \gamma_j}{\sqrt{d}} \frac{1}{\gamma_j^2}  \frac{\log \gamma_j}{\log \bigl( C_2 \frac{\gamma_i \gamma_j}{\sqrt{d}} \bigr)}
    \nonumber \\
  &\stackrel{(a)} \leq 32 C_3 \sqrt{d} . \label{eqn:H5_bound}
\end{align}
In the above derivations, $(a)$ follows because for all $(i,j) \in H_5$, we have $\gamma_j^2 \geq \frac{\gamma_i \gamma_j}{\sqrt{d}} \geq 1$. Since $C_2 \geq e$ and $x \mapsto \frac{x}{ \log (C_2 x)}$ is non-decreasing for all $x \geq 1$, we have $\frac{\gamma_i \gamma_j}{\sqrt{d}} \frac{1}{\gamma_j^2}
\frac{\log \gamma_j}{\log (C_2 \frac{\gamma_i \gamma_j}{\sqrt{d}} ) } \leq
\frac{\gamma_i \gamma_j}{\sqrt{d}} \frac{1}{\gamma_j^2}
\frac{\log (C_2 \gamma^2_j)}{\log (C_2 \frac{\gamma_i \gamma_j}{\sqrt{d}} ) } \leq 1$. For any $j$, the quantity $\sum_{i \,:\, (i,j) \in H_5} \frac{\gamma_i \gamma_j}{\sqrt{d}} \frac{1}{\gamma_j^2}  \frac{\log \gamma_j}{\log \bigl( C_2 \frac{\gamma_i \gamma_j}{\sqrt{d}} \bigr)}$ is a geometric series with largest term bounded by 1.

Since $H^* = H_1 \cup H_2 \cup H_2' \cup H_3 \cup H_4 \cup H_5$, we may combine inequalities~\eqref{eqn:H_decomposition}, \ref{eqn:H1_bound}, \ref{eqn:H2_bound}, \ref{eqn:H2prime_bound}, \ref{eqn:H3_bound}, \ref{eqn:H4_bound}, and \ref{eqn:H5_bound} to obtain the bound
\begin{align*}
  \sum_{(u,v) \in H} x_u y_v A_{uv} \leq (16 C_2 + 16 \frac{\tau}{d} + 8 + 40 C_3) \sqrt{d}.
  \end{align*}
\end{proof}

\begin{proposition} [Chernoff bound]
  \label{prop:chernoff_for_bernoulli}
  For $i=1,\ldots,n$, let $p_i \in [0,1]$. Let $X_i \sim \textrm{Ber}(p_i)$, and let $p = \frac{1}{n} \sum_{i=1}^n p_i$. Then for any $t > 0$, we have
  \[
    \mathbb{P}\biggl( \sum_{i=1}^n X_i \geq pn + t \biggr) \leq
    \exp \biggl( t - (t + pn) \log\left( 1 + \frac{t}{pn} \right) \biggr).
  \]
\end{proposition}
A bound that we frequently use is $\mathbb{P}\biggl( \sum_{i=1}^n X_i \geq pn + t) \leq \exp \biggl( - (t + pn) \bigl(\log\bigl( 1 + \frac{t}{pn} \bigr) -1 \bigr) \biggr)$.

\begin{proposition} [Bernstein's inequality]
  \label{prop:bernstein}
  Let $X_1,\ldots,X_n$ be real-valued random variables bounded in absolute value by $M > 0$. Suppose $\E X_i = 0$ for all $i=1,\ldots,n$. Then for any $t > 0$, we have
  \[
    \mathbb{P} \biggl( \sum_{i=1}^n X_i  \geq t \biggr)
    \leq
    \exp\biggl( - \frac{(1/2) t^2}{(1/3) M t + \sum_{i=1}^n \E X_i^2 } \biggr).
  \]
\end{proposition}
If $t \geq \frac{3}{M} \sum_{i=1}^n \E X_i^2$, then $\mathbb{P} \biggl(  \sum_{i=1}^n X_i \geq t \biggr) \leq \exp \bigl( - (3/4) \frac{t}{M} \bigr)$.





\subsection{Choosing the label $l^*$}
\label{appendix: starry night}

First,  we show that for sufficiently well-separated labels, $\hat I_l$ is close to $\frac{( P_l -  Q_l)^2}{P_l \vee Q_l}$. If the probabilities are not well-separated, we claim that $\hat I_l$ is negligibly small.

\begin{proposition}
\label{prop:initial_guarantee}
Let $\sigma_0 \in \mathcal{C}(\beta, K)$, let $L \in \mathbb{Z}^+$, $(\{P_l\}, \{Q_l\}) \in \mathcal{P}_L^2$, and let $A \in \{0,\ldots,L\}^{n \times n}$ have the distribution $LSBM(\sigma_0, (\{P_l\}, \{Q_l\})$. For $l=0,\ldots,L$, let $\tilde{A}^l \in \{0,1\}^{n \times n}$ be defined as $\tilde{A}^l_{ij} := \mathbf{1}\{A_{ij} = l\}$. Let $\sigma^l$ be the output of \textsc{spectral clustering} (Algorithm~\ref{alg:spectral} with parameters $\mu = 4 \beta$ and $\tau = 20 \bar{d}$) on $\tilde{A}^l$, and let $\hat{P}_l$ and $\hat{Q}_l$ be estimates of $P_l$ and $Q_l$ constructed from $\sigma^l$. Suppose $\frac{n}{\log n} \geq 2^{15} \vee 30 \beta K$. 

Let $C_{spec1} > 0$ be the universal constant defined in Proposition~\ref{prop:spectral_analysis}. Then the following claims are true with probability at least $1 - 12(L+1)^2 n^{-5}$:

\begin{enumerate} 
\item For all labels $l$ satisfying $P_l \vee Q_l \geq \frac{1}{n}$ and $\frac{ n \Delta_l^2}{P_l \vee Q_l} \geq C_{spec1} \beta^2 K^6$, we have
\begin{align}
\frac{1}{4} \frac{ | P_l - Q_l |}{\sqrt{P_l \vee Q_l}}  \leq \frac{ | \hat{P}_l - \hat{Q}_l| }{\sqrt{ \hat{P}_l \vee \hat{Q}_l }} \leq  
2 \frac{ | P_l - Q_l | }{\sqrt{ P_l \vee Q_l}}. \label{eqn:prop_init_claim1}
\end{align}

\item For all labels satisfying $P_l \vee Q_l \geq \frac{1}{n}$ and $\frac{ n \Delta_l^2}{P_l \vee Q_l} \leq C_{spec1} \beta^2 K^6$, we have
\begin{align}
\frac{ | \hat{P}_l - \hat{Q}_l|}{\sqrt{ \hat{P}_l \vee \hat{Q}_l}} \leq 16 \sqrt{C_{spec1}} \beta^2 K^4 \sqrt{ \frac{1}{n} }.
\label{eqn:prop_init_claim2}
\end{align}

\item Suppose $P_l \vee Q_l \geq \frac{1}{n}$ for all $l \in \{0,\ldots,L\}$ and
  $\frac{n I(\{P_l\}, \{Q_l\})}{L \beta^2 K^6} \geq 2 C_{spec1}$. Let $c_{init} := 2^{-7}$. Then, with $l^* := \argmin_{l \in \{0,\ldots,L\}} \frac{|\hat{P}_l - \hat{Q}_l|}{\sqrt{ \hat{P}_l \vee \hat{Q}_l}}$, we have
\begin{align}
   \frac{\Delta_{l^*}^2}{P_{l^*} \vee Q_{l^*}} \geq c_{init} \frac{I(\{P_l\}, \{Q_l\})}{L}. \label{eqn:l_star_guarantee}
\end{align}

\end{enumerate}
\end{proposition}

\begin{proof}

  Let $C_{spec1} = 2^{34}$. Let us fix $l \in \{0, \ldots, L\}$. Let us first suppose that $P_l \vee Q_l \geq \frac{1}{n}$ and
\begin{align}
  \frac{n \Delta_l^2}{P_l \vee Q_l} \geq C_{spec1} \beta^2 K^6. \label{eqn:strong_signal_test}
\end{align}
Let $\sigma_l$ be the result of performing \textsc{spectral clustering} (Algorithm~\ref{alg:spectral}) on $A_l$, and let $\hat{P}_l$ and $\hat{Q}_l$ be the subsequent estimators of $P_l$ and $Q_l$. Let $E_1(l)$ be the event that $l(\sigma_0, \sigma_l) \leq \gamma$, where $\gamma := \frac{1}{2^{14} \beta K \log (\beta K)}$.
Under assumption~\eqref{eqn:strong_signal_test}, we can verify that the hypothesis of Proposition~\ref{prop:spectral_analysis} holds and thus apply Proposition~\ref{prop:spectral_analysis} to show that $\mathbb{P}(E_1(l)) \geq 1 - n^{-5}$.
Define $E_2(l)$ to be the event that
\begin{align}
\max\{ \hat{P}_l - P_l, \hat{Q}_l - Q_l \} \leq \frac{1}{4} \Delta_l. \label{eqn:est_err_bound_init1}
\end{align}
It is straightforward to show that, under $E_1(l)$ and the hypothesis that $6 \frac{\log n}{n} \leq 2^{-12}$, we can bound $\eta \leq \frac{1}{4}$, where $\eta$ is defined in the statement of Proposition~\ref{prop:estimation_consistency}. Therefore, by Proposition~\ref{prop:estimation_consistency} and a union bound, we have $\mathbb{P}\bigl(E_1(l) \cap E_2(l)\bigr) \geq 1 - 4(L+1)n^{-5}$.
Under $E_1(l) \cap E_2(l)$, we have from inequality~\eqref{eqn:est_err_bound_init1} that
\begin{align*}
| \hat{P}_l - \hat{Q}_l | &\leq |\hat{P}_l - P_l| + |P_l - Q_l| + |\hat{Q}_l - Q_l| \leq 2 \eta \Delta_l + \Delta_l \leq \frac{3}{2} \Delta, \\
| \hat{P}_l - \hat{Q}_l | &\geq  |P_l - Q_l| - |\hat{Q}_l - Q_l| - |\hat{P}_l - P_l| \geq  \Delta_l - 2\eta \Delta_l \geq \frac{1}{2} \Delta_l.
\end{align*}
Furthermore, we can again apply inequality~\eqref{eqn:est_err_bound_init1} to obtain 
\begin{align*}
\hat{P}_l \vee \hat{Q}_l &\leq (P_l \vee Q_l) + \eta \Delta_l \leq (P_l \vee Q_l) + \eta (P_l \vee Q_l) \leq \frac{5}{4} (P_l \vee Q_l), \\
  \hat{P}_l \vee \hat{Q}_l &\ge (P_l \vee Q_l) - \eta \Delta_l \ge \frac{3}{4} (P_l \vee Q_l).
\end{align*}
We can thus prove inequality~\eqref{eqn:prop_init_claim1}:
\[
\frac{1}{\sqrt{5}} \frac{\Delta_l}{\sqrt{P_l \vee Q_l}} \leq \frac{ | \hat{P}_l - \hat{Q}_l | }{\sqrt{ \hat{P}_l \vee \hat{Q}_l}} \leq \frac{3}{\sqrt{3}} \frac{\Delta_l}{\sqrt{P_l \vee Q_l}}.
\]

Now suppose $P_l \vee Q_l \geq \frac{1}{n}$ and $\frac{n \Delta_l^2}{P_l \vee Q_l} < C_{spec1} \beta^2 K^6$. Let $E_1(l)$ denote the entire probability space, and let $E_2(l)$ be the event that
\begin{align}
  \max\{ \hat{P}_l - P_l, \hat{Q}_l - Q_l \} \leq \Delta_l + 4 \beta^2 K^2 \biggl( \frac{P_l \vee Q_l}{n} \biggr)^{1/2},
\quad \textrm{and }
  \hat{P}_l \vee \hat{Q}_l \geq \frac{1}{\beta K} (P_l \vee Q_l).   \label{eqn:est_err_bound_init2}
 \end{align}
Since $\gamma \leq 1$ we have $\eta \leq 4 \beta K^2$, where $\eta$ is defined in the statement of Proposition~\ref{prop:estimation_consistency}. Hence, by Proposition~\ref{prop:estimation_consistency} and Proposition~\ref{prop:PlQl_lower_bound}, we have
 \[
   \mathbb{P}(E_2(l)) \geq 1 - 4(L+1)n^{-6} - (L+1) \exp\bigl( - \frac{n}{5 \beta K} \bigr) \stackrel{(a)} \geq 1 - 8(L+1)n^{-6},
 \]
where $(a)$ follows under the hypothesis that $\frac{n}{\log n} \geq 30 \beta K$. Under $E_2(l) = E_1(l) \cap E_2(l)$, we have
\begin{align*}
  | \hat{P}_l - \hat{Q}_l |
  &\leq \Delta_l + | \hat{P}_l - P_l| + |\hat{Q}_l - Q_l| \\
  &\leq 3 \Delta_l + 8 \beta K^2 \biggl( \frac{P_l \vee Q_l}{n} \biggr)^{1/2} \leq 16 \sqrt{C_{spec1}} \beta K^3  \biggl( \frac{P_l \vee Q_l}{n} \biggr)^{1/2}.
\end{align*}
Therefore, 
\[
  \frac{| \hat{P}_l - \hat{Q}_l| }{\sqrt{\hat{P}_l \vee \hat{Q}_l}} \leq
  16 \sqrt{C_{spec1}} \beta^2 K^4 \biggl( \frac{1}{n} \biggr)^{1/2}.
  \]
A union bound gives us $ \mathbb{P}\biggl( \bigcap_{l=0}^L E_1(l) \cap E_2(l) \biggr) \geq 1 - 12(L+1)^2 n^{-5}$, which proves the first and the second claims of the proposition. For the third claim~\eqref{eqn:l_star_guarantee}, suppose $ \bigcap_{l=0}^L E_1(l) \cap E_2(l)$ holds and let $l' := \argmax_{l \in \{0,\ldots,L\}} \frac{\Delta_{l'}^2}{P_{l'} \vee Q_{l'}}$. Observe that
  \begin{align}
    \frac{n\Delta_{l'}^2}{P_{l'} \vee Q_{l'}} \geq
    \frac{n}{L} \sum_{l=0}^L \frac{\Delta_l^2}{P_l \vee Q_l} \geq
    \frac{n}{L} \sum_{l=0}^L (\sqrt{P_l} - \sqrt{Q_l})^2
    \stackrel{(a)} \geq
    \frac{n}{2L} I(\{P_l\}, \{Q_l\})
    \stackrel{(b)} \geq C_{spec1} \beta^2 K^6,
  \end{align}
  where $(a)$ holds by Lemma~\ref{lem:renyi_hellinger} and $(b)$ follows from the hypothesis of the proposition. Therefore, from the definition of $l^*$ and from inequality~\eqref{eqn:prop_init_claim1}, we have
  \begin{align}
    \frac{| \hat{P}_{l^*} - \hat{Q}_{l^*}|}{\sqrt{\hat{P}_{l^*} \vee \hat{Q}_{l^*}}}
    \geq
    \frac{| \hat{P}_{l'} - \hat{Q}_{l'}|}{\sqrt{\hat{P}_{l'} \vee \hat{Q}_{l'}}}
    \geq 
    \frac{1}{4}
    \frac{\Delta_{l'}}{\sqrt{P_{l'} \vee Q_{l'}}} \geq \sqrt{C_{spec1}} \beta K^3 >
    16 \sqrt{C_{spec1}} \beta^2 K^4 n^{-1/2}.
  \end{align}
We may deduce from inequality~\eqref{eqn:prop_init_claim2} that $\frac{n \Delta_{l^*}^2}{P_{l^*} \vee Q_{l^*}} \geq C_{spec1} \beta^2 K^6$. Hence, by inequality~\eqref{eqn:prop_init_claim1}, we have
  \begin{align}
    \frac{\Delta_{l^*}}{\sqrt{P_{l^*} \vee Q_{l^*}}}
    \geq \frac{1}{2} \frac{| \hat{P}_{l^*} - \hat{Q}_{l^*}|}{\sqrt{\hat{P}_{l^*} \vee \hat{Q}_{l^*}}} \geq \frac{1}{8} \frac{\Delta_{l'}}{\sqrt{P_{l'} \vee Q_{l'}}} \geq \frac{1}{8} \bigl( \frac{I(\{P_l\}, \{Q_l\})}{2L} \bigr)^{1/2}.
  \end{align} 
\end{proof}


\subsection{Analysis of error probability for a single node}
\label{appendix: single node}

\begin{proposition}
  \label{prop:single_node_error_bound}
  Let $(\{P_l\}, \{Q_l\}) \in \mathcal{G}_{L, \rho}$ be such that $P_l \vee Q_l \geq \frac{1}{n-1}$ for all $l \in \{0, \ldots, L\}$. Let $\sigma_0 \in \mathcal{C}(\beta, K)$, and let $A$ be a random labeled matrix taking values in $\{0,\ldots,L\}^{n \times n}$, with the distribution $LSBM(\sigma_0, (\{P_l\}, \{Q_l\}))$. Let $u \in [n]$, and let $\tilde \sigma_u$ be the output clustering on $\{1,\ldots,n\} \backslash \{u\}$ of \textsc{initialization} (Algorithm \ref{alg:initialization1}).
  For $l \in \{0, \ldots, L\}$, let $\hat{P}_l, \hat{Q}_l$ be computed by equation~\eqref{eqn:hatPl_hatQl_definition} on $A_{l^*} \backslash \{u\}$ with respect to $\tilde{\sigma}_l$. 
 Suppose $\pi_u \in S_K$ satisfies
$$l(\sigma_0, \tilde \sigma_u) = \frac{1}{n-1} d_H(\sigma_0, \pi_u \circ \tilde \sigma_u),$$
where both $l$ and $d$ are taken with respect to the set $\{1, 2, \dots, n\} \setminus \{u\}$.
Let $C_{mis} := 2^{15} \log(2^{15})$, let $\gamma_0 := \bigl( C_{mis} \rho^2 K \beta \log (e \rho^2 K \beta) \bigr)^{-1}$, let $\gamma \in [0, \gamma_0]$, and suppose that $l(\sigma_0, \tilde{\sigma}_u) \leq \gamma$. Suppose also that $\frac{n}{\log n} \geq 2^{16} \rho^2$ and $\frac{nI(\{P_l\}, \{Q_l\})}{L+1} \geq 1$. Let $\eta' := 2 \beta K \gamma \log \frac{e K}{\gamma} + 12 \frac{\log n}{n}$ and $\eta := 10 (\sqrt{\eta'} + \eta')$.

Then there exists a universal constant $C > 0$ such that, with probability at least
\begin{equation*}
1 - 5(L+1)n^{-6} - (K-1) \exp \biggl( -  (1 - C \beta K \rho \eta) \frac{n}{\beta K} I(\{P_l\}, \{Q_l\}) \biggr),
\end{equation*}
we have
\[
\pi_u^{-1}(\sigma_0(u)) = \argmax_{k \in [K]} \sum_{v\,: \tilde \sigma_u(v)=k} \sum_{l=0}^L \log \frac{\hat{P}_l}{\hat{Q}_l} \mathbf{1}(A_{uv} = l).
\]
\end{proposition}

\begin{proof}
Define the shorthand $C_{tmp1} := 4 \cdot 40^2 \rho^2 \beta K$ and $C_{tmp2} := e K$. Since $\gamma_0 \leq \bigl( C_{mis} \rho^2 \beta K \log(e \rho^2 \beta K) \bigr)^{-1}$ with $C_{mis} = 2^{15} \log(2^{15})$, we have $\gamma \leq \gamma_0 \leq \bigl(4 C_{tmp1} \log(C_{tmp1} + C_{tmp2} + 4)\bigr)^{-1}$, which implies that
\[
  4 \cdot 40^2 \rho^2 \beta K \gamma \log \frac{e K}{\gamma} =
  C_{tmp1} \gamma \log \frac{C_{tmp2}}{\gamma} \leq 1,
\]
so $2 \beta K \gamma \log \frac{e K}{\gamma} \leq \frac{1}{2} \frac{1}{40^2} \frac{1}{\rho^2}$. Additionally, since $\frac{n}{\log n} \geq 2^{16} \rho^2$ by assumption, we have $\frac{12 \log n}{n} \leq \frac{1}{2} \frac{1}{40^2} \frac{1}{\rho^2}$, as well. Thus, 
  \begin{align}
    \eta' \leq \frac{1}{40^2}{\rho^2} < 1 ,\, \textrm{ and }
    \eta \leq 20 \sqrt{\eta'} \leq \frac{1}{2 \rho}.
    \label{eqn:eta_etaprime_bound}
    \end{align}
Let us assume without loss of generality that $\sigma_0(u) = 1$, and $\pi_u$ is the identity.
Observe that since $2 \beta \geq \frac{n-1}{K} \frac{1}{\frac{n}{\beta K} - 1}$, the minimum cluster size of $\sigma_0$ with respect to $\{1, \ldots, n\} \backslash \{u\}$ is at least $\frac{n}{2 \beta K}$. Define $E_1$ to be the event that for any $l \in \{0, \ldots, L\}$,
  \begin{align}
    \max\{ |\hat{P}_l - P_l|,\, |\hat{Q}_l - Q_l| \}
    \leq \eta \biggl( \Delta_l \vee \biggl( \frac{P_l \vee Q_l}{n} \biggr)^{1/2} \biggr) . \label{eqn:single_node_PlQl_estimation_error}
    \end{align}
Since $l(\tilde{\sigma}_u, \sigma_0) \leq \gamma \leq \frac{1}{8 \beta K}$, we have by Proposition~\ref{prop:estimation_consistency} that
\begin{equation*}
P(E_1) \geq 1 - 4 (L+1)(n-1)^{-6} \geq 1 - 5(L+1)n^{-6}.
\end{equation*}
For $k \in [K]$, define $E_2(k)$ to be the event
\begin{align}
  \sum_{v \in [n]\backslash \{u\} \,:\, \tilde \sigma_u(v)=1} \sum_{l=0}^L \log \frac{\hat{P}_l}{\hat{Q}_l} \mathbf{1} \{ A_{uv} = l \} > 
  \sum_{v \in [n] \backslash \{u\} \,:\, \tilde \sigma_u(v)=k} \sum_{l=0}^L \log \frac{\hat{P}_l}{\hat{Q}_l} \mathbf{1}\{ A_{uv} = l\} .
\end{align}
For any $v \in [n] \backslash \{u\}$, define $\bar{A}_{uv} := \sum_{l=0}^L \log \frac{\hat{P}_l}{\hat{Q}_l} \mathbf{1}(A_{uv} = l)$. Note that $\{\bar{A}_{uv}\}_{v \in [n] \backslash \{u\}}$ is a set of independent random variables. If $v \in \sigma_0^{-1}(1)$, then for $l \in \{0,\ldots,L\}$, we have $\bar{A}_{uv} = \log \frac{\hat{P}_l}{\hat{Q}_l}$ with probability $ P_l$. On the other hand, if $v \notin \sigma_0^{-1}(1)$, then 
for $l \in \{0,\ldots,L\}$, $\bar{A}_{uv} = \log \frac{\hat{P}_l}{\hat{Q}_l} $ with probability $Q_l$. 
Now define the shorthand
\begin{align}
  V_1 &:= \{ v \in [n] \backslash \{u\} \,:\, \sigma_0(v) = 1,\, \tilde{\sigma}_u(v) = 1 \}, \quad
        V_1' :=  \{ v \in [n] \backslash \{u\} \,:\, \sigma_0(v) \neq 1,\, \tilde{\sigma}_u(v) = 1 \}, \\
  V_k &:= \{ v \in [n] \backslash \{u\} \,:\, \sigma_0(v) = 1,\, \tilde{\sigma}_u(v) = k \}, \quad
  V_k' := \{ v \in [n] \backslash \{u\} \,:\, \sigma_0(v) \neq 1,\, \tilde{\sigma}_u(v) = k \}.
\end{align}
Then for any $t \geq 0$, we have
\begin{align*}
  \mathbb{P}&(E_1^c \cup E_2(k)^c) - \mathbb{P}(E_1^c) \\
  &\leq 
    \mathbb{P} \biggl( \sum_{v \in V_k} \bar{A}_{uv} + \sum_{v \in V'_k} \bar{A}_{uv} - \sum_{v \in V_1} \bar{A}_{uv} - \sum_{v \in V'_1} \bar{A}_{uv} \geq 0 \biggr) \\
  &\leq 
    \mathbb{P} \biggl\{ \exp\biggl( t \biggl( \sum_{v \in V_k} \bar{A}_{uv} + \sum_{v \in V'_k} \bar{A}_{uv} - \sum_{v \in V_1} \bar{A}_{uv} - \sum_{v \in V'_1} \bar{A}_{uv}\biggr) \biggr) \geq 1 \biggr\} \\
  &\leq 
    \mathbb{E} \biggl[ \exp\biggl( t \biggl( \sum_{v \in V_k} \bar{A}_{uv} + \sum_{v \in V'_k} \bar{A}_{uv} - \sum_{v \in V_1} \bar{A}_{uv} - \sum_{v \in V'_1} \bar{A}_{uv} \biggr) \biggr)
    \biggr] \\
  &\leq 
    \biggl( \sum_{l=0}^L e^{t \log \frac{\hat{P}_l}{\hat{Q}_l} } P_l \biggr)^{|V_k|}  
    \biggl( \sum_{l=0}^L e^{t \log \frac{\hat{P}_l}{\hat{Q}_l}} Q_l \biggr)^{|V'_k|} \\
  & \qquad \qquad
      \biggl( \sum_{l=0}^L e^{- t \log \frac{\hat{P}_l}{\hat{Q_l}} } P_l \biggr)^{|V_1|}
    \biggl( \sum_{l=0}^L e^{-t \log \frac{\hat{P}_l}{\hat{Q}_l}} Q_l \biggr)^{|V'_1|}. 
\end{align*}

Setting $t = 1/2$, we have

\begin{align}
  \mathbb{P}&(E_1^c \cup E_2(k)^c) - \mathbb{P}(E_1^c)\\
            &\leq
    \left( \sum_{l=0}^L \sqrt{\frac{\hat{P}_l}{\hat{Q}_l} } P_l \right)^{|V_k|}
 \left( \sum_{l=0}^L \sqrt{\frac{\hat{P}_l}{\hat{Q}_l} } Q_l \right)^{|V_k'|}
 \left( \sum_{l=0}^L \sqrt{\frac{\hat{Q}_l}{\hat{P}_l} } P_l \right)^{|V_1|}
              \left( \sum_{l=0}^L \sqrt{\frac{\hat{Q_l}}{\hat{P}_l} } Q_l \right)^{|V_1'|}\nonumber \\
  &= \left( \frac{\sum_{l=0}^L \sqrt{\frac{\hat{P}_l}{\hat{Q}_l} } P_l}
                {\sum_{l=0}^L \sqrt{\frac{\hat{P}_l}{\hat{Q}_l} } Q_l}  \right)^{|V_k|}
 \left( \frac{ \sum_{l=0}^L \sqrt{\frac{\hat{Q}_l}{\hat{P}_l} } Q_l} 
    { \sum_{l=0}^L \sqrt{\frac{\hat{Q}_l}{\hat{P}_l} } P_l} \right)^{|V'_1|}  \nonumber \\
 & \qquad \qquad   \left( \sum_{l=0}^L \sqrt{ \frac{\hat{P}_l}{\hat{Q}_l}} Q_l \right)^{|V_k'| - |V_k|} 
    \left( \sum_{l=0}^L \sqrt{\frac{\hat{Q_l}}{\hat{P}_l} } P_l \right)^{|V_1| - |V_1'|} . \label{eqn:overall_prob_bound}
  \end{align}
  Since $\frac{1}{n-1} d_H(\sigma_0, \tilde{\sigma}_u) \leq \gamma$, we have
  \begin{align}
    \min(|V_1|,\, |V_k'|) &\geq \frac{n}{\beta K} - 1 - \gamma (n-1) \geq \frac{n}{2 \beta K} - \gamma n, \label{eqn:V1Vkprime_lower_bound}\\
    \max(|V_1'|, |V_k|) &\leq \gamma(n-1) \leq \gamma n.
                          \label{eqn:V1primeVk_upper_bound}
  \end{align}
Observe that the bounds~\eqref{eqn:eta_etaprime_bound} and~\eqref{eqn:single_node_PlQl_estimation_error} satisfy the conditions of Lemmas~\ref{lem:bound_ratio_P_Pl} and~\ref{lem:sqrt_ratio_pl_ql_minus_1}. Define $L_1 := \{ l \in \{0, \ldots, L\} \,:\, \frac{n \Delta^2_l}{P_l \vee Q_l} \geq 1 \}$. Then
  \begin{align*}
& \left| 1 -  \frac{\sum_{l=0}^L \sqrt{\frac{\hat{P}_l}{\hat{Q}_l} } P_l}
                {\sum_{l=0}^L \sqrt{\frac{\hat{P}_l}{\hat{Q}_l} } Q_l}  \right|
 = \left| \frac{ \sum_{l=0}^L \sqrt{ \frac{\hat{P}_l}{\hat{Q}_l}} (P_l - Q_l) }
     { \sum_{l=0}^L \sqrt{ \frac{\hat{P}_l}{\hat{Q}_l}} Q_l } \right| \stackrel{(a)}\leq \frac{4}{\sum_{l=0}^L \sqrt{P_l Q_l}} 
     \left| \sum_{l=0}^L \sqrt{ \frac{\hat{P}_l}{\hat{Q}_l} }(P_l - Q_l) \right| \\
& \qquad \stackrel{(b)}\leq 8 \left|  \sum_{l=0}^L \left( \sqrt{ \frac{\hat{P}_l}{\hat{Q}_l} } - 1 \right) (P_l - Q_l)  \right| \\
& \qquad \leq 8 \left| 
     \sum_{l \in L_1} \left( \sqrt{\frac{\hat{P}_l}{\hat{Q}_l}} - 1 \right)(P_l - Q_l) 
     \right| + 8 \sum_{l \notin L_1} \left| \sqrt{ \frac{\hat{P}_l}{\hat{Q}_l}} - 1 \right| |P_l - Q_l| \\
& \qquad \stackrel{(c)}\leq 8(1+ 6 \eta \rho) \sum_{l \in L_1} \frac{\Delta^2_l}{Q_l} + 
       32 \rho \sum_{i \notin L_1} \frac{\Delta_l}{\sqrt{n (P_l \vee Q_l)}} \\
& \qquad \leq 8 (1+ 6 \eta \rho) \rho \sum_{l \in L_1} \frac{\Delta^2_l}{P_l \vee Q_l}  + 
       32 \rho\sum_{l \notin L_1} \frac{\Delta_l}{\sqrt{n (P_l \vee Q_l)}} \\
& \qquad \stackrel{(d)}\leq 2^6 \rho I(\{P_l\}, \{Q_l\}) + 32 \rho \frac{L+1}{n} \leq 2^7 \rho I(\{P_l\}, \{Q_l\}),
\end{align*}
where $(a)$ follows from Claim 3 of Lemma~\ref{lem:bound_ratio_P_Pl}, $(b)$ follows because $\sum_{l=0}^L \sqrt{P_l Q_l} \geq e^{- I(\{P_l\}, \{Q_l\})} \geq \frac{1}{2}$ by assumption, $(c)$ follows by Lemma~\ref{lem:sqrt_ratio_pl_ql_minus_1}, and $(d)$ follows from Lemma~\ref{lem:L1_info_bound} and the fact that $2 \eta \rho \leq 1$. An identical analysis shows that $\left| 1 -  \frac{\sum_{l=0}^L \sqrt{\frac{\hat{Q}_l}{\hat{P}_l} } Q_l}
  {\sum_{l=0}^L \sqrt{\frac{\hat{Q}_l}{\hat{P}_l} } P_l}  \right| \leq 2^7 \rho I(\{P_l\}, \{Q_l\})$.  Using the fact that $|x| \leq \exp(|1-x|)$ for all $x \in \mathbb{R}$, we have
\begin{align}
  \left( \frac{\sum_{l=0}^L \sqrt{\frac{\hat{P}_l}{\hat{Q}_l} } P_l}
                {\sum_{l=0}^L \sqrt{\frac{\hat{P}_l}{\hat{Q}_l} } Q_l}  \right)^{|V_k|}
 \left( \frac{ \sum_{l=0}^L \sqrt{\frac{\hat{Q}_l}{\hat{P}_l} } Q_l} 
  { \sum_{l=0}^L \sqrt{\frac{\hat{Q}_l}{\hat{P}_l} } P_l} \right)^{|V'_1|}
  &\leq \exp\bigl( (|V_k| + |V_1'|) 2^7 \rho I(\{P_l\}, \{Q_l\}) \bigr)
  \nonumber \\
  &\leq \exp\bigl( 2^7 \gamma \rho n I(\{P_l\}, \{Q_l\}) \bigr) \nonumber \\
  &\leq \exp\bigl(  2^7 \rho \beta K \gamma \frac{n}{\beta K}  I(\{P_l\}, \{Q_l\}) 
                  \bigr).
    \label{eqn:extra_final}
  \end{align}
We now use the shorthand $\hat{I} := - \log \biggl\{ \left( \sum_{l=0}^L \sqrt{\frac{\hat{P}_l}{\hat{Q}_l}} Q_l \right) \left( \sum_{l=0}^L \sqrt{\frac{\hat{Q}_l}{\hat{P}_l}} P_l \right) \biggr\} $, and define $m_k := |V_k'| - |V_k|$ and $m_1 := |V_1| - |V_1'|$. Then
  \begin{align}
\biggl( \sum_{l=0}^L &\sqrt{\frac{\hat{P}_l}{\hat{Q}_l} } Q_l \biggr)^{|V'_k| - |V_k|} 
       \biggl( \sum_{l=0}^L \sqrt{\frac{\hat{Q_l}}{\hat{P}_l} } P_l \biggr)^{|V_1| - |V_1'|}  \nonumber \\
       &= \exp\bigl( - \frac{m_1+m_k}{2} \hat{I} \bigr)  \left( \sum_{l=0}^L \sqrt{\frac{\hat{P}_l}{\hat{Q}_l}} Q_l \right)^{\frac{m_k - m_1}{2}} 
 \left( \sum_{l=0}^L \sqrt{\frac{\hat{Q}_l}{\hat{P}_l}} P_l \right)^{\frac{m_1 - m_k}{2}}. \label{eqn:Ihat_overall}
  \end{align}
We now have
\begin{align}
\hat{I} - I(\{P_l\}, \{Q_l\})  &= - \log \frac{ 
     \left( \sum_{l=0}^L \sqrt{\frac{\hat{P}_l}{\hat{Q}_l}} Q_l \right)
     \left( \sum_{l=0}^L \sqrt{\frac{\hat{Q}_l}{\hat{P}_l}} P_l \right)}{ 
          \left( \sum_{l=0}^L \sqrt{P_l Q_l} \right)^2 }.
\end{align}  
Let us first consider the numerator:
 \begin{align*}
 \biggl( \sum_{l=0}^L & \sqrt{ \frac{\hat{P}_l}{\hat{Q}_l}} Q_l \biggr)
\biggl( \sum_{l=0}^L \sqrt{ \frac{\hat{Q}_l}{\hat{P}_l}} P_l \biggr) = \left( \sum_{l=0}^L \sqrt{ P_l Q_l} \sqrt{ \frac{\hat{P}_l}{P_l} \frac{Q_l}{\hat{Q}_l}} \right) 
     \left( \sum_{l=0}^L \sqrt{P_l Q_l} \sqrt{ \frac{P_l}{\hat{P}_l} \frac{\hat{Q}_l}{ Q_l}} \right) \\
&= \sum_{l=0}^L P_l Q_l + 2\sum_{l < l'} \sqrt{P_l Q_l P_{l'} Q_{l'}} + 
   \sum_{l < l'} \sqrt{P_l Q_l P_{l'} Q_{l'}} \left( \sqrt{T_{l,l'}} + \frac{1}{\sqrt{T_{l,l'}}} - 2 \right) \\
&= \left( \sum_{l=0}^L \sqrt{P_l Q_l} \right)^2 + \sum_{l < l'} 
                                                                                                                  \sqrt{P_l Q_l P_{l'} Q_{l'}} \left( \sqrt{T_{l,l'}} + \frac{1}{\sqrt{T_{l,l'}}} - 2 \right),
\end{align*}
where, we write $T_{l,l'} := \frac{\hat{P}_l}{P_l} \frac{Q_l}{\hat{Q}_l} \frac{P_{l'}}{\hat{P}_{l'}} \frac{\hat{Q}_{l'}}{Q_{l'}}.$ Furthermore, since $\sum_{l=0}^L \sqrt{P_l Q_l} \geq 1/2$, we have
\begin{align}
  & \hat{I} - I(\{P_l\}, \{Q_l\}) \nonumber  \\
  &= - \log \biggl\{ 1 + \frac{ \sum_{l<l'} \sqrt{P_l Q_l P_{l'} Q_{l'}} 
    \bigl( \sqrt{T_{l,l'}} + \frac{1}{\sqrt{T_{l,l'}}} - 2 \bigr)}
    { \bigl( \sum_{l=0}^L \sqrt{ P_l Q_l} \bigr)^2 }  \biggr\} \nonumber \\
     &  \geq  - \log \biggl\{ 1 + 4 \sum_{l<l'} \sqrt{P_l Q_l P_{l'} Q_{l'}}  
    \left( \sqrt{T_{l,l'}} + \frac{1}{\sqrt{T_{l,l'}}} - 2 \right)  \biggr\} \nonumber \\
   & \geq - 4 \sum_{l < l'} \sqrt{P_l Q_l P_{l'} Q_{l'}} 
    \left( \sqrt{T_{l,l'}} + \frac{1}{\sqrt{T_{l,l'}}} - 2 \right). \label{eqn:Ihat_Istar2}
\end{align}

We now bound $|T_{l,l'} - 1|$:
\begin{align*}
& |T_{l,l'} - 1| = \left| \frac{\hat{P}_l}{P_l} \frac{Q_l}{\hat{Q}_l} 
      \frac{P_{l'}}{\hat{P}_{l'}} \frac{\hat{Q}_{l'}}{Q_{l'}} - 1 \right| \\
 & \qquad = \left| \left( 1 - \frac{P_l - \hat{P}_l}{P_l} \right)
    \left( 1 - \frac{\hat{Q}_l - Q_l}{\hat{Q}_l} \right)
   \left( 1- \frac{\hat{P}_{l'} - P_{l'}}{\hat{P}_{l'}}\right)
   \left( 1 -  \frac{Q_{l'}- \hat{Q}_{l'}}{Q_{l'}} \right) -1 \right| \\
& \qquad \stackrel{(a)} \leq 4 \left( \frac{|P_l - \hat{P}_l|}{P_l} +  \frac{|\hat{Q}_l - Q_l|}{\hat{Q}_l}
           +   \frac{| \hat{P}_{l'} - P_{l'}|}{\hat{P}_{l'}} +
               \frac{| Q_{l'} - \hat{Q}_{l'} | }{Q_{l'}} \right) \\
& \qquad \stackrel{(b)} \leq 8 \left( \frac{|P_l - \hat{P}_l|}{P_l} +  \frac{|\hat{Q}_l - Q_l|}{Q_l}
           +   \frac{| \hat{P}_{l'} - P_{l'}|}{P_{l'}} +
               \frac{| Q_{l'} - \hat{Q}_{l'} | }{Q_{l'}} \right), 
\end{align*}
where $(a)$ and $(b)$ follow from Lemma~\ref{lem:bound_ratio_P_Pl}. Suppose without loss of generality that for all $l = \{0, \ldots, L-1\}$, we have $\frac{| \hat{P}_l - P_l|}{P_l} + \frac{|\hat{Q}_l - Q_l|}{Q_l} \geq \frac{| \hat{P}_{l+1} - P_{l+1}|}{P_{l+1}} + \frac{|\hat{Q}_{l+1} - Q_{l+1}|}{Q_{l+1}}$. Then for $l, l' \in \{0, \ldots, L\}$ such that $l < l'$, we have, by Proposition~\ref{prop:estimation_consistency}, that
\begin{align}
| T_{l,l'} - 1 | \leq 8
    \left( \frac{|\hat{P}_l - P_l|}{P_l} + \frac{|\hat{Q}_l - Q_l|}{Q_l} \right)
\stackrel{(a)} \leq \begin{cases}
8 \eta \frac{\Delta_l}{P_l \vee Q_l}, & \text{if } l \in L_1, \\
8 \eta \frac{1}{\sqrt{ n (P_l \vee Q_l)}}, & \text{if } l \notin L_1.
\end{cases}
\end{align}
Applying Taylor's theorem on the function $g(x) = x^{1/2} + x^{-1/2} - 2$ and using the fact that $\bigl| \frac{3}{8} x^{-5/2} - \frac{15}{8} x^{-7/2}\bigr | \leq 24$ for all $x \in [1/2, 3/2]$, we have, for some $\tau_{l,l'} \in [-3, 3]$, that $\sqrt{T_{l,l'}} + \frac{1}{\sqrt{T_{l,l'}}} -2 = 
  \tau_{l,l'} (T_{l,l'} - 1)^2$ . Continuing from inequality~\eqref{eqn:Ihat_Istar2}, we obtain
 \begin{align*}
          & \hat{I} - I(\{P_l\}, \{Q_l\}) \\
            &\geq - 4 \sum_{l < l'} \sqrt{P_l Q_l P_{l'} Q_{l'}} 
    \left( \sqrt{T_{l,l'}} + \frac{1}{\sqrt{T_{l,l'}}} - 2 \right) \\
          & \geq - 4 \sum_{l \in L_1} \sum_{l' \,:\, l' > l} \sqrt{P_l Q_l P_{l'} Q_{l'}} 
    \left( \sqrt{T_{l,l'}} + \frac{1}{\sqrt{T_{l,l'}}} - 2 \right) \\
    & \quad \qquad \qquad
     - 4 \sum_{l \notin L_1} \sum_{l' \,:\, l' > l} \sqrt{P_l Q_l P_{l'} Q_{l'}} 
    \left( \sqrt{T_{l,l'}} + \frac{1}{\sqrt{T_{l,l'}}} - 2 \right) \\
  & \geq - 24 \eta \sum_{l \in L_1} \sum_{l' > l} \sqrt{P_l Q_l P_{l'} Q_{l'}} 
              \left( \frac{\Delta_l}{P_l \vee Q_l}  \right)^2 
        - 24 \eta \sum_{l \notin L_1} \sum_{l' > l} \sqrt{P_l Q_l P_{l'} Q_{l'}} 
             \frac{1}{n (P_l \vee Q_l)} \\
&  \geq - 24 \eta \left( \sum_{l \in L_1} \frac{\Delta_l^2 \sqrt{P_l Q_l}}{(P_l \vee Q_l)^2} \right)
 	\left( \sum_{l'=0}^L  \sqrt{P_{l'}Q_{l'}} \right) 
	- 24 \eta \left( \sum_{l \notin L_1} \frac{\sqrt{P_lQ_l}}{n(P_l \vee Q_l)} \right) 
          \left( \sum_{l'=0}^L \sqrt{P_{l'} Q_{l'} } \right) \\
 &  \geq - 24 \eta \left( \sum_{l \in L_1} \frac{\Delta_l^2}{P_l \vee Q_l} \right)
         \left( \sum_{l'=0}^L  \sqrt{P_{l'}Q_{l'}} \right) 
       - 24 \eta \left( \sum_{l \notin L_1} \frac{1}{n} \right) 
          \left( \sum_{l'=0}^L \sqrt{P_{l'} Q_{l'} } \right) \\
 &  \stackrel{(a)}= - 2^7 \eta I(\{P_l\}, \{Q_l\}),
\end{align*}
where $(a)$ follows from the assumptions that $I(\{P_l\}, \{Q_l\}) \leq 2 \log 2$ and $I(\{P_l\}, \{Q_l\}) \geq \frac{L+1}{n}$, and also from Lemma~\ref{lem:L1_info_bound}. Thus, using the fact that $m_k = |V_k'| - |V_k| \geq \frac{n}{2\beta K} - 2 \gamma n$ and $m_1 = |V_1| - |V_1'| \geq \frac{n}{2 \beta K} - 2 \gamma n$, where both inequalities follow from~\eqref{eqn:V1Vkprime_lower_bound} and~\eqref{eqn:V1primeVk_upper_bound}, we have 
\begin{align}
  \exp\bigl( - \frac{(m_1 + m_k)}{2} \hat{I} \bigr)
  &\leq
  \exp\biggl( - \bigl( \frac{n}{\beta K} - \gamma n \bigr)
    I(\{P_l\}, \{Q_l\}) (1 - 2^7 \eta) \biggr) \nonumber \\
  &\leq \exp \biggl( -   (1 - 2^7 \eta)(1 - \beta K \gamma) \frac{n}{\beta K} I(\{P_l\}, \{Q_l\})
   \biggr).
    \label{eqn:Ihat_first_term_final}
\end{align}
Now we bound the last two terms of equation~\eqref{eqn:Ihat_overall}. We first assume that $m_k \geq m_1$. Then
\begin{align}
& \left( \sum_{l=0}^L \sqrt{\frac{\hat{P}_l}{\hat{Q}_l}} Q_l \right)^{\frac{m_k - m_1}{2}} 
 \left( \sum_{l=0}^L \sqrt{\frac{\hat{Q}_l}{\hat{P}_l}} P_l \right)^{\frac{m_1 - m_k}{2}} \nonumber \\
&=  \left( 
   \frac{\sum_{l=0}^L \sqrt{\frac{\hat{P}_l}{\hat{Q}_l}} Q_l}
        {\sum_{l=0}^L \sqrt{\frac{\hat{P}_l}{\hat{Q}_l}} \hat{Q}_l} 
     \right)^{\frac{m_k - m_1}{2}} 
   \left( \frac{\sum_{l=0}^L \sqrt{\frac{\hat{Q}_l}{\hat{P}_l}} P_l}
                                                                                               {\sum_{l=0}^L \sqrt{\frac{\hat{Q}_l}{\hat{P}_l}} \hat{P_l} } \right)^{\frac{m_1 - m_k}{2}} \nonumber \\
  &=
\left( 1 + 
   \frac{\sum_{l=0}^L \sqrt{\frac{\hat{P}_l}{\hat{Q}_l}} (Q_l - \hat{Q}_l)}
        {\sum_{l=0}^L \sqrt{\frac{\hat{P}_l}{\hat{Q}_l}} \hat{Q}_l} 
     \right)^{\frac{m_k - m_1}{2}} 
   \left( 1+ \frac{\sum_{l=0}^L \sqrt{\frac{\hat{Q}_l}{\hat{P}_l}} (\hat{P}_l - P_l)}
         {\sum_{l=0}^L \sqrt{\frac{\hat{Q}_l}{\hat{P}_l}} P_l } \right)^{\frac{m_k - m_1}{2}}. \label{eqn:Ihat_last_term_overall}
\end{align}
By Lemma~\ref{lem:sqrt_ratio_pl_ql_minus_1}, we have $\sum_{l=0}^L \sqrt{\frac{\hat{P}_l}{\hat{Q}_l}} \hat{Q}_l \geq \frac{1}{2} \sqrt{P_l Q_l} \geq \frac{1}{4}$ and $\sum_{l=0}^L \sqrt{\hat{Q}_l}{\hat{P}_l} P_l \geq \frac{1}{4} \sqrt{P_l Q_l} \geq \frac{1}{8}$. We also have 
 \begin{align}
& \left| \sum_{l=0}^L \sqrt{\frac{\hat{P}_l}{\hat{Q}_l}} (Q_l - \hat{Q}_l) \right| = 
  \left|  \sum_{l=0}^L \left( \sqrt{\frac{\hat{P}_l}{\hat{Q}_l}} -1 \right) (Q_l - \hat{Q}_l) 
 \right| \nonumber \\
& \qquad \leq \sum_{l \in L_1} \biggl| \sqrt{\frac{\hat{P}_l}{\hat{Q}_l}} -1 \biggr| |Q_l - \hat{Q}_l| +  
    \sum_{l \notin L_1} \biggl| \sqrt{\frac{\hat{P}_l}{\hat{Q}_l}} -1 \biggr| |Q_l - \hat{Q}_l| \nonumber  \\
   & \qquad \stackrel{(a)} \leq
     \eta(1 + 6 \eta \rho)  \sum_{l \in L_1} \frac{\Delta_l^2}{Q_l}  +
     4 \eta \rho \frac{L+1}{n} \nonumber \\
   & \qquad \stackrel{(b)} \leq \eta \rho ( 1 + 6 \eta \rho) \sum_{l \in L_1} \frac{\Delta_l^2}{Q_l \vee P_l} + 4 \eta \rho I(\{P_l\}, \{Q_l\}) \leq
     12 \eta \rho I(\{P_l\}, \{Q_l\}), \label{eqn:Ihat_last_term_part2}
\end{align}
where $(a)$ follows from Proposition~\ref{prop:estimation_consistency} and Lemma~\ref{lem:sqrt_ratio_pl_ql_minus_1}, $(b)$ follows from the assumption that $\frac{n I(\{P_l\}, \{Q_l\})}{L+1} \geq 1$, and the last inequality follows from Lemma~\ref{lem:L1_info_bound}. By an identical argument, we have
\begin{align}
  \left| \sum_{l=0}^L \sqrt{\frac{\hat{P}_l}{\hat{Q}_l}} (Q_l - \hat{Q}_l) \right| \leq 12 \eta \rho I(\{P_l\}, \{Q_l\}).
  \label{eqn:Ihat_last_term_part3}
  \end{align}
Using inequalities~\eqref{eqn:Ihat_last_term_overall},~\eqref{eqn:Ihat_last_term_part2}, and~\eqref{eqn:Ihat_last_term_part3}, we have 
\begin{align}
 &  \left( \sum_{l=0}^L \sqrt{\frac{\hat{P}_l}{\hat{Q}_l}} Q_l \right)^{\frac{m_k - m_1}{2}} 
 \left( \sum_{l=0}^L \sqrt{\frac{\hat{Q}_l}{\hat{P}_l}} P_l \right)^{\frac{m_1 - m_k}{2}} \nonumber \\
  & \qquad \leq \exp\bigl( (m_k - m_1) \log(1 + 2^7 \eta \rho I(\{P_l\}, \{Q_l\} ) \bigr) \nonumber \\
  &\qquad \leq \exp\bigl( (m_k + m_1) 2^7 \eta \rho I(\{P_l\}, \{Q_l\})  \bigr) \leq \exp \bigl( 2^8 \eta \rho n I(\{P_l\}, \{Q_l\}) \bigr)
    \nonumber \\
  & \qquad \leq \exp \bigl( 2^8 \beta K \rho \eta \frac{n}{\beta K} I(\{P_l\}, \{Q_l\}) \bigr). 
    \label{eqn:Ihat_last_term_final}
\end{align}
If $m_1 \geq m_k$, inequality~\eqref{eqn:Ihat_last_term_final} still holds by an identical argument.

Finally, we combine inequalities~\eqref{eqn:overall_prob_bound},~\eqref{eqn:extra_final},~\eqref{eqn:Ihat_overall},~\eqref{eqn:Ihat_first_term_final}, and~\eqref{eqn:Ihat_last_term_final} to obtain
\begin{align*}
  \mathbb{P}(E_1(k)^c \cup E_2(k)^c)
  &\leq \mathbb{P}(E_1(k)^c) +
  \exp\biggl( - (1 - C \eta \rho) \frac{n}{\beta K} I(\{P_l\}, \{Q_l\}) \biggr)  \\
  &\leq 5(L+1) n^{-6} + \exp\biggl( - (1 - C \beta K \rho \eta) \frac{n}{\beta K} I(\{P_l\}, \{Q_l\}) \biggr).
\end{align*}
A union bound over $\{E_1^c \cup E_2 (k)^c \,:\, k \in [K]\backslash \{1\} \}$ finishes the proof.
\end{proof}


\subsection{Additional lemmas for Proposition~\ref{prop:labeled_sbm_rate}}
\label{appendix: lemmas for labeled_sbm_rate}

\begin{lemma}
  \label{lem:add_noise_bound}
  Let $L \in \mathbb{Z}^+$ and $(\{P_l\}, \{Q_l\}) \in \mathcal{P}_L^2$. Suppose $ I(\{P_l\}, \{Q_l\}) \leq 2 \log 2$. Let $c \in [0, \infty)$, let $\delta := c \frac{L+1}{n}$, and let
  \begin{equation*}
    P_l' := P_l(1-\delta) + \frac{\delta}{L+1}, \quad \text{and} \quad
    Q_l' := Q_l(1-\delta) + \frac{\delta}{L+1}.
  \end{equation*}
Then $P_l', Q_l' \geq \frac{c}{n}$ for all $l \in \{0, \ldots, L\}$, and
  \[
    1 - c \frac{L+1}{n} \geq \frac{I(\{P'_l\}, \{Q'_l\})}{I(\{P_l\}, \{Q_l\}) }
    \geq \frac{1}{1 + 2 I(\{P_l\}, \{Q_l\}) } \biggl( 1 - c \frac{L+1}{n} \biggr).
  \]
\end{lemma}

\begin{proof}
  Define $P''_l := \frac{1}{L+1}$ and $Q''_l := \frac{1}{L+1}$ for all $l \in \{0,\ldots,L\}$. Then $(\{P'_l\}, \{Q'_l\}) = (1 - \delta)
  (\{P_l\}, \{Q_l\}) + \delta (\{P''_l\}, \{Q''_l\})$. Since $I \,:\, \mathcal{P}_L \times \mathcal{P}_L \rightarrow [0, \infty]$ is convex, we have
  \[
    I(\{P'_l\}, \{Q'_l\}) \leq (1-\delta) I(\{P_l\}, \{Q_l\}) + \delta I (\{P''_l\}, \{Q''_l\}) \leq \biggl( 1 - \frac{L+1}{n} \biggr) I(\{P_l\}, \{Q_l\}).
    \]
Define the shorthand $H := \sum_{l=0}^L (\sqrt{P_l} - \sqrt{Q_l})^2$ and $H' :=  \sum_{l=0}^L (\sqrt{P'_l} - \sqrt{Q'_l})^2$. We then have
    \begin{align*}
      H - H'
      & = \sum_{l=0}^L (\sqrt{P_l} - \sqrt{Q_l})^2
        \biggl( 1 - \frac{(\sqrt{P'_l} - \sqrt{Q'_l})^2}{ (\sqrt{P_l} - \sqrt{Q_l})^2 } \biggr) \\
      & = \sum_{l=0}^L (\sqrt{P_l} - \sqrt{Q_l})^2
        \biggl( 1 - \bigl( 1 - c \frac{L+1}{n} \bigr)^2
        \bigl( \frac{\sqrt{P_l} + \sqrt{Q_l}}{\sqrt{P'_l} + \sqrt{Q'_l}} \bigr)^2 \biggr) \\
      &\stackrel{(a)} \leq 2 c \frac{L+1}{n} \sum_{l=0}^L (\sqrt{P_l} - \sqrt{Q_l})^2 \leq 2 c \frac{L+1}{n} H,
    \end{align*}
    where $(a)$ is true because $(\sqrt{P'_l} + \sqrt{Q'_l})^2 \leq (\sqrt{P_l} + \sqrt{Q_l})^2$. Note also that since $ I(\{P_l\}, \{Q_l\}) \leq 2 \log 2$ and $ I(\{P_l\}, \{Q_l\}) = -2 \log( 1 - H/2)$, we have $H \leq 1$, so by Lemma~\ref{lem:log_linearize}, we conclude that$H \leq  I(\{P_l\}, \{Q_l\}) \leq H(1 + 2H)$. 
    
Therefore, by Lemma~\ref{lem:log_linearize}, we have
    \begin{align*}
      I(\{P'_l\}, \{Q'_l\})
      & \geq H' \geq H \bigl( 1 - 2 c \frac{L+1}{n} \bigr) \stackrel{(a)} \geq
        I(\{P_l\}, \{Q_l\}) \frac{1}{1 + 2H} \bigl( 1 - 2 c \frac{L+1}{n} \bigr) \\
       & \stackrel{(b)} \geq I(\{P_l\}, \{Q_l\}) \frac{1}{1 + 2 I(\{P_l\}, \{Q_l\})}  \bigl( 1 - 2 c \frac{L+1}{n} \bigr).
    \end{align*}
\end{proof}

We often use the bound $\frac{1}{2} P \leq \hat{P}_l \leq 2 P_l$, justified in the following lemma:
\begin{lemma}
  \label{lem:bound_ratio_P_Pl}
  Let $L \in \mathbb{Z}^+,\, \rho \in [1, \infty)$, let $(\{P_l\}, \{Q_l\}) \in \mathcal{G}_{L, \rho}$, and suppose $P_l \vee Q_l \geq \frac{1}{n}$ for all $l \in \{0,\ldots,L\}$. Suppose there exists $\eta \in [0, \frac{1}{2 \rho})$ such that, for all $l \in \{0,\ldots,L\}$, we have
\begin{equation*}
\max\{ | \hat{P}_l - P_l| ,\, |\hat{Q}_l - Q_l| \} \leq \eta \bigr(\Delta_l \vee \bigl( \frac{P_l \vee Q_l}{n} \bigr)^{1/2} \bigr).
\end{equation*}
Then for all $l \in \{0, \ldots, L\}$:
  \begin{enumerate}
    \item It holds that
\[
  \max\biggl\{ \frac{|\hat{P}_l - P_l|}{P_l}, \frac{|\hat{Q}_l - Q_l|}{Q_l} \biggr\} \leq \eta \rho.
\]
\item It holds that $\frac{1}{2} P_l \leq \hat{P}_l \leq 2P_l$ and $\frac{1}{2} Q_l \leq \hat{Q}_l \leq 2 Q_l$.
\item It holds that $\bigl( \frac{\hat{P}_l}{\hat{Q}_l} \bigr)^{1/2} Q_l \geq
  \frac{1}{2} \sqrt{\hat{P}_l \hat{Q}_l} \geq\frac{1}{4} \sqrt{P_l Q_l}$.
  \end{enumerate}
\end{lemma}

\begin{proof}
Fix an $l \in \{0,\ldots, L\}$ arbitrarily. We prove the first claim for $P_l$; the same argument applies to $Q_l$. Since $n (P_l \vee Q_l) \geq 1$, we have
\begin{align*}
  \frac{ | \hat{P}_l - P_l |}{P_l} \leq \eta \max\biggl\{ \frac{\Delta_l}{P_l},\, \frac{\sqrt{(P_l \vee Q_l)}}{\sqrt{n} P_l}
  \biggr\} \leq \eta \rho .
\end{align*}
The second claim follows from Claim 1, since $\eta \rho \leq \frac{1}{2}$ by assumption.
The third claim follows because, by Claim 2, we have
\begin{equation*}
\bigl( \frac{\hat{P}_l}{\hat{Q}_l} \bigr)^{1/2} Q_l \geq \sqrt{\hat{P}_l \hat{Q}_l} \frac{Q_l}{\hat{Q}_l} \geq \frac{1}{2} \sqrt{\hat{P}_l \hat{Q}_l} \geq \frac{1}{4} \sqrt{P_l Q_l}.
\end{equation*}
\end{proof}

\begin{lemma}
  \label{lem:sqrt_ratio_pl_ql_minus_1}
  Let $L \in \mathbb{Z}^+,\, \rho \in [1, \infty)$, let $(\{P_l\}, \{Q_l\}) \in \mathcal{G}_{L, \rho}$, and suppose $P_l \vee Q_l \geq \frac{1}{n}$ for all $l \in \{0,\ldots,L\}$. Suppose there exists $\eta \in [0, \frac{1}{2 \rho})$ such that, for all $l \in \{0,\ldots,L\}$, we have
\begin{equation*}
\max\{ | \hat{P}_l - P_l| ,\, |\hat{Q}_l - Q_l| \} \leq \eta \bigr(\Delta_l \vee \bigl( \frac{P_l \vee Q_l}{n} \bigr)^{1/2} \bigr).
\end{equation*}
Then:
\begin{enumerate}
\item 
For all $l$ satisfying  $n \frac{\Delta_l^2}{P_l \vee Q_l} \geq 1$, we have  
\begin{align}
\biggl| \biggl( \frac{\hat{P}_l }{\hat{Q}_l} \biggr)^{1/2}  - 1 \biggr|
  \leq  \frac{\Delta}{Q_l} (1 + 6 \eta \rho) \quad \textrm{and} \quad
  \biggl| \biggl( \frac{\hat{Q}_l }{\hat{P}_l} \biggr)^{1/2}  - 1 \biggr|
  \leq  \frac{\Delta}{P_l} (1 + 6 \eta \rho).
  \label{eqn:sqrt_ratio_case1}
\end{align}

\item
For all $l$ satisfying $n \frac{\Delta_l^2}{P_l \vee Q_l} < 1$, we have
\begin{align}
\biggl| \biggl( \frac{\hat{P}_l}{\hat{Q}_l} \biggr)^{1/2}  - 1 \biggr| \leq
4 \rho \frac{1}{\sqrt{n  (P_l \vee Q_l) } } \quad \textrm{and} \quad
\biggl| \biggl( \frac{\hat{Q}_l}{\hat{P}_l} \biggr)^{1/2} - 1 \biggr| \leq
  4 \rho \frac{1}{\sqrt{n  (P_l \vee Q_l) } }.
  \label{eqn:sqrt_ratio_case2}
\end{align}
\end{enumerate}
\end{lemma}

\begin{proof}
  Fix an $l \in \{0, \ldots, L\}$, and suppose $\Delta_l \geq \bigl( \frac{P_l \vee Q_l}{n} \bigr)^{1/2}$. Define the shorthand
  \begin{align*}
    \eta_1 := \frac{\hat{P}_l - P_l}{P_l - Q_l} ,\;
    \eta_2 := \frac{Q_l - \hat{Q}_l}{P_l - Q_l} ,\;
    \eta_3 := \frac{Q_l}{\hat{Q}_l} - 1.
  \end{align*}
  Then
  \begin{align*}
    \frac{\hat{P}_l}{\hat{Q}_l} - 1
    = \frac{\hat{P}_l - P_l + (P_l - Q_l) + (Q_l - \hat{Q}_l)}{Q_l } \frac{Q_l}{\hat{Q}_l}
    = \frac{P_l - Q_l}{Q_l} ( 1 + \eta_1 + \eta_2) (1 + \eta_3).
  \end{align*}
  Now note that $|\eta_1| \leq \frac{| \hat{P}_l - P_l|}{\Delta_l} \leq \eta$. Likewise, we have $|\eta_2 | \leq \eta$. Finally, we know that $|\eta_3| \leq \frac{|\hat{Q}_l - Q_l|}{Q_l} \frac{Q_l}{\hat{Q}_l} \leq 2 \eta \rho$ by Lemma~\ref{lem:bound_ratio_P_Pl}. Therefore, we have $\bigl|  \frac{\hat{P}_l}{\hat{Q}_l} - 1 \bigr| \leq  \bigl| \frac{P_l - Q_l}{Q_l} \bigr| (1 + 2\eta)(1 + 2\eta \rho)$. We then use the fact that $2 \eta \rho \leq 1$ and apply the inequality $| \sqrt{x} - 1 | \leq |x - 1|$ for all $x \geq 0$ to prove the first inequality of the first case~\eqref{eqn:sqrt_ratio_case1}. The second inequality holds by symmetry.

 Now suppose $\Delta_l < \bigl( \frac{P_l \vee Q_l}{n} \bigr)^{1/2}$. Define the shorthand
  \begin{align*}
    \eta_1 := \frac{\hat{P}_l - P_l}{Q_l} \sqrt{n (P_l \vee Q_l)} ,\;
    \eta_2 := \frac{Q_l - \hat{Q}_l}{Q_l} \sqrt{n (P_l \vee Q_l)} ,\;
    \eta_3 := \frac{Q_l}{\hat{Q}_l} - 1 ,\;
    \tau := \frac{P_l - Q_l}{Q_l} \sqrt{n (P_l \vee Q_l)}.
  \end{align*}
  Then
  \begin{align*}
    \frac{\hat{P}_l}{\hat{Q}_l} - 1
    = \frac{\hat{P}_l - P_l + (P_l - Q_l) + (Q_l - \hat{Q}_l)}{Q_l } \frac{Q_l}{\hat{Q}_l}
    = \frac{1}{\sqrt{n(P_l \vee Q_l)} } (\eta_1 + \eta_2 + \tau) (1 + \eta_3).
  \end{align*}
Observe $|\eta_1| \leq \eta \frac{P_l \vee Q_l}{Q_l} \leq \eta \rho$, and likewise $|\eta_2| \leq \eta \rho$. We use Lemma~\ref{lem:bound_ratio_P_Pl}, the fact that $Q_l n \geq 1$, and $\rho \geq 1$ to bound $|\eta_3 | \leq 2 \eta \frac{\sqrt{P_l\vee Q_l}}{Q_l \sqrt{n}} \leq 2 \eta \rho^{1/2} \leq 2 \eta \rho$. Finally, we have $|\tau| \leq \frac{\Delta_l}{Q_l} \sqrt{n (P_l \vee Q_l)} \leq \rho$. Therefore, we have $\bigl|  \frac{\hat{P}_l}{\hat{Q}_l} - 1 \bigr| \leq  \frac{1}{\sqrt{n(P_l \vee Q_l)} } (2 \eta \rho + \rho)(1 + 2 \eta \rho)$. Since $2 \eta \rho \leq 1 \leq \rho$ and $| \sqrt{x} - 1 | \leq |x - 1|$ for all $x \geq 0$, the first inequality of the second case~\eqref{eqn:sqrt_ratio_case2} holds. The second inequality holds by symmetry.

\end{proof}

\begin{lemma}
  \label{lem:L1_info_bound}
  Let $L \in \mathbb{Z}^+$, let $(\{P_l\}, \{Q_l\}) \in \mathcal{P}_L \times \mathcal{P}_L$, and let $\Delta_l := |P_l - Q_l|$. Let $C \in (0, \infty)$ be such that $\frac{n I(\{P_l\}, \{Q_l\})}{L+1} \geq 4 C$ and suppose $I(\{P_l\}, \{Q_l\}) \leq 2 \log 2$. Define $L_1 := \{ l \in \{0,\ldots, L\} \,:\, \frac{\Delta^2_l}{P_l \vee Q_l} \geq C \}$. Then
  \begin{align}
    \frac{1}{2} \sum_{l \in L_1} \frac{\Delta_l^2}{P_l\vee Q_l}
    \leq I( \{P_l\}, \{Q_l\}) \leq 4\sum_{l \in L_1} \frac{\Delta_l^2}{P_l \vee Q_l}.
    \end{align}
\end{lemma}

\begin{proof}
Observe that
  \[
    I(\{P_l\}, \{Q_l\}) = -2 \log \biggl( 1 - \frac{\sum_{l=0}^L (\sqrt{P_l} - \sqrt{Q_l})^2}{2} \biggr).
  \]
  Since $\log(1 - x) \leq -x$ for all $x \in (-1, \infty)$, we have
  \begin{align*}
    I(\{P_l\}, \{Q_l\}) \geq \sum_{l=0}^L (\sqrt{P_l} - \sqrt{Q_l})^2 \geq \sum_{l=0}^L \frac{\Delta_l^2}{(\sqrt{P_l} + \sqrt{Q_l})^2} \geq
    \frac{1}{2} \sum_{l \in L_1} \frac{\Delta_l^2}{P_l \vee Q_l}.
  \end{align*}
  On the other hand, we know that $\sum_{l=0}^L \sqrt{P_l Q_l} = e^{- \frac{1}{2} I(\{P_l\}, \{Q_l\}) } \geq \frac{1}{2}$. By the fact that $-2x \leq \log(1 - x)$ for all $x \in [0, 1/2]$, we have
  \begin{align*}
    I(\{P_l\}, \{Q_l\}) &\leq 2 \sum_{l=0}^L (\sqrt{P_l} - \sqrt{Q_l})^2 \leq 2 \sum_{l=0}^L \frac{\Delta_l^2}{ P_l \vee Q_l} \\
    &\leq 2 \sum_{l \in L_1} \frac{\Delta_l^2}{P_l \vee Q_l} + 2 \frac{(L+1)C}{n} \leq 2 \sum_{l \in L_1} \frac{\Delta_l^2}{P_l \vee Q_l} + \frac{  I(\{P_l\}, \{Q_l\})}{2}.
    \end{align*}  
\end{proof}

The following lemma slightly expands upon Lemma 4 of Gao et al~\cite{GaoEtal15}. 

\begin{lemma}
  \label{lem:consensus}
  Let $\sigma, \sigma' \,:\, [n] \rightarrow [K]$ be two clusters such that, for some $T \in [n]$, the minimum cluster size of $\sigma$ is at least $T$. Suppose $l(\sigma, \sigma') < \frac{T}{2n}$. Then there is a unique $\xi \in S_K$ such that $l(\sigma, \sigma') = \frac{1}{n} d_H(\xi \circ \sigma, \sigma')$; furthermore, the unique permutation $\xi$ is of the form
  \begin{align}
\xi(k) = \argmax_{k' \in [K]} | \{ v \in [n] \,:\, \sigma(v) = k \} \cap \{ v \in [n] \,:\, \sigma'(v) = k' \} |. \label{eqn:consensus_equation}
\end{align}
\end{lemma}

\begin{proof}
 Suppose $\pi \in S_K$ satisfies $l(\sigma, \sigma') = \frac{1}{n} d_H(\pi \circ \sigma, \sigma') < \frac{T}{2n}$. Fix $k \in [K]$. Then
  \[
    | \{ u \in [n] \,:\, \sigma(u) = k \} \cap
      \{ u \in [n] \,:\, \sigma'(u) \neq \pi(k) \} | \leq
      d_H(\pi \circ \sigma, \sigma') < \frac{T}{2},
  \]
  and
\begin{align*}
  & | \{ u \in [n] \,:\, \sigma(u) = k \} \cap
  \{ u \in [n] \,:\, \sigma'(u) = \pi(k) \} | \\
  &= | \{ u \in [n] \,:\, \sigma(u) = k \}| -
     | \{ u \in [n] \,:\, \sigma(u) = k \} \cap
    \{ u \in [n] \,:\, \sigma'(u) \neq \pi(k) \} | \\
  &\geq T - 
    d_H(\xi \circ \sigma, \sigma') \geq \frac{T}{2}.
\end{align*}
It thus follows that $\pi(k)$ is the unique maximizer of $\argmax_{k' \in [K] } | \{ u \in [n] \,:\, \sigma(u) = k\} \cap \{ u \in [n] \,:\, \sigma'(u) = k' \} |$. Since $k$ was fixed arbitrarily, the lemma follows.
\end{proof}


\section{Proof of Proposition~\ref{prop:discretization1}}
\label{sec:proof_of_discretization1}

\begin{proof}
  Let $\mu$ be the Lebesgue measure on $[0,1]$. Let us arbitrarily fix $((P_0, p), (Q_0, q)) \in \mathcal{G}_{\tilde{C}, \tilde{c}_1, \tilde{c}_2, r, t}$ and define $\tilde{g}$ as in Condition C2. Let $L \in \mathbb{N}$, and suppose $L \geq \tilde{c}_1^{-1} \vee \tilde{c}_2^{-1}$. Let $\{[a_l, b_l]\}_{l \in \{1, \ldots, L\}}$ be a uniformly spaced binning of $[0,1]$, and for each $l \in \{1, \ldots, L\}$, define
  \begin{align}
    \tilde{P}_l := \int_{a_l}^{b_l} \tilde{p}(z) dz \quad \textrm{and} \quad \int_{a_l}^{b_l} \tilde{q}(z) dz.
    \label{eqn:tildePl_Ql_defn}
  \end{align}
  Define
  $R := \{ z \in [0,1] : \tilde{g}(z) \leq (2 \tilde{C} L)^{1/r} \}$. By Markov's inequality, we have
  $\mu(R^c) \leq \frac{1}{2L}$. Since $\tilde{g}(z)$ is quasi-convex, $R$ must be an interval. Thus, only bins $[0, 1/L]$ and $[1-1/L, 1]$ have non-empty intersection with $R^c$. Let $[a_l, b_l]$ be a bin such that $[a_l, b_l] \subset R$. Then
  \[
    \frac{P_l}{Q_l} = \frac{(1-P_0) \int_{a_l}^{b_l} \tilde{p}(z) \,dz}{(1-Q_0) \int_{a_l}^{b_l} \tilde{q}(z)}
    \leq \tilde{C}
    \frac{ \int_{a_l}^{b_l} \frac{\tilde{p}(z)}{\tilde{q}(z)} \tilde{q}(z) \, dz }{ \int_{a_l}^{b_l} \tilde{q}(z) \, dz }
    \leq \tilde{C} \exp( (2 \tilde{C} L)^{1/r} ).
  \]
Likewise, we can show that $\frac{P_l}{Q_l} \geq \tilde{C}^{-1} \exp( - (2\tilde{C} L)^{1/r})$. Now we consider $[0, 1/L]$ and suppose $[0, 1/L] \cap R^c \neq \emptyset$. Define $\tilde{P}_l' := \int_{\bin_l \cap R} \tilde{p}(z) \,dz$ and $\tilde{Q}'_l := \int_{\bin_l \cap R} \tilde{q}(z) \, dz$, and define $\tilde{P}_l'' := \int_{\bin_l \cap R^c} p(z) \, dz$ and $\tilde{Q}''_l := \int_{\bin_l \cap R^c} q(z) \, dz$. We can use the same reasoning as above to show that $\exp(-(2 \tilde{C} L)^{1/r}) \leq \frac{\tilde{P}'_l}{\tilde{Q}'_l} \leq \exp( (2 \tilde{C} L)^{1/r})$. Since $L \geq \tilde{c}_1^{-1}$, both $\tilde{p}(z)$ and $\tilde{q}(z)$ are non-decreasing in $[0, 1/L]$ by Assumption C5. Thus,
\[
\tilde{P}'_l \geq \min_{z \in [0, 1/L] \cap R} \frac{\tilde{p}(z)}{2L} \geq \max_{z \in [0, 1/L] \cap R^c} \frac{\tilde{p}(z)}{2L} \geq \tilde{P}''_l,
\]
where the first inequality follows because $\mu(R^c) \leq \frac{1}{2L}$. With the same reasoning, we know that $Q'_l \geq Q''_l$. Thus,
\begin{align*}
\frac{1}{2} \exp( - (2 \tilde{C} L)^{1/r} )\leq \frac{\tilde{P}'_l}{2 \tilde{Q}'_l} \leq \frac{\tilde{P}_l}{\tilde{Q}_l} \leq \frac{2 \tilde{P}'_l}{ \tilde{Q}'_l} \leq 2 \exp((2 \tilde{C} L)^{1/r}).
\end{align*}
Therefore, we have
\begin{align*}
  \frac{1}{2 \tilde{C}} \exp( - (2 \tilde{C} L)^{1/r} ) \leq \frac{(1-P_0) \tilde{P}_l}{(1-Q_0) \tilde{Q}_0} \leq
  2 \tilde{C}  \exp( (2 \tilde{C} L)^{1/r} ).
\end{align*}
Using identical reasoning, we may obtain the same bounds for $\frac{P_l}{Q_l}$ for $l$ corresponding to the $[1-1/L, 1]$ bin. This proves the first claim of the proposition. Moreover, if $\tilde{g}$ is non-decreasing, then $R^c \subseteq [1-1/L, 1]$. Thus, the condition that $\tilde{p}, \tilde{q}$ are non-increasing in $(1-c_2, 1]$ yields the first claim of the proposition as observed in footnote~\ref{note:one_sided_compact}.

For the second claim, define
\begin{align*}
  \tilde{H} &:= \int_0^1 (\sqrt{\tilde{p}(z)} - \sqrt{\tilde{q}(z)})^2 \, dz, \quad
              \tilde{H}_L := \sum_{l=1}^L (\sqrt{\tilde{P}_l} - \sqrt{\tilde{Q}_l})^2, \\
  H &:= (\sqrt{P_0} - \sqrt{Q_0})^2 + ( \sqrt{1 - P_0} - \sqrt{1 - Q_0})^2 +
  \sqrt{(1-P_0)(1-Q_0)} \tilde{H}, \\
  H_L &:= (\sqrt{P_0} - \sqrt{Q_0})^2 + ( \sqrt{1 - P_0} - \sqrt{1 - Q_0})^2 +
  \sqrt{(1-P_0)(1-Q_0)} \tilde{H}_L.
\end{align*}
Let
$\eta_L := \sup \bigl \{ \bigl| 1 - \frac{\tilde{H}_L}{\tilde{H}} \bigr|
\,:\, ((P_0, \tilde{p}), (Q_0, \tilde{q})) \in \tilde{\mathcal{G}}_{\tilde{C}, \tilde{c}_1, \tilde{c}_2, r, t} \bigr \}$; we know
then by Proposition~\ref{prop:H_HL_convergence1} that $\lim_{L \rightarrow \infty} \eta_L  = 0$. Fix $((P_0, \tilde{p}), (Q_0, \tilde{q})) \in \tilde{\mathcal{G}}_{\tilde{C}, \tilde{c}_1, \tilde{c}_2, r, t}$, and let $\{P_l, Q_l\}$ be the corresponding discretized probabilities. Then
\begin{align*}
  |H - H_L| = \sqrt{(1 - P_0)(1 - Q_0)} |\tilde{H}_L - \tilde{H}| 
              \leq \sqrt{(1 - P_0)(1 - Q_0)} \tilde{H} \eta_L \leq H \eta_L.
\end{align*}
Observe that we also have
\begin{align*}
  I((P_0, \tilde{p}), (Q_0, \tilde{q}))
  = -2 \log \biggl \{ \sqrt{P_0 Q_0} + \sqrt{(1-P_0)(1-Q_0)} \int_0^1 \sqrt{\tilde{p}(z) \tilde{q}(z)} \,dz \biggr\}
  = -2 \log (1 - \frac{1}{2} H ),
\end{align*}
and also $I(\{P_l\}, \{Q_l\}) = -2 \log (1 - \frac{1}{2} H_L )$. Hence,
\begin{align*}
&  I(\{P_l\}, \{Q_l\}) - I((P_0, \tilde{p}), (Q_0, \tilde{q}))
  = -2 \log \frac{ 1 - \frac{1}{2} H_L}{1 - \frac{1}{2} H} 
  = -2 \log \biggl( 1 + \frac{1}{2} \frac{H - H_L}{1 - \frac{1}{2} H} \biggr) \\
&\qquad   \geq - \frac{H - H_L}{1 - \frac{1}{2} H} \geq - 2 (H - H_L) \geq -2 \eta_L H \stackrel{(a)} \geq - 2 \eta_L I((P_0, \tilde{p}), (Q_0, \tilde{q})),
\end{align*}
where $(a)$ follows from Lemma~\ref{lem:log_linearize}. On the other hand, we have by the Cauchy-Schwarz inequality that
\[
  \int_0^1 \sqrt{\tilde{p}(z) \tilde{q}(z)} \, dz \leq
  \biggl\{ \int_0^1 \tilde{p}(z) \, dz \biggr\}^{1/2} \biggl\{ \int_0^1 \tilde{q}(z) \, dz \biggr\}^{1/2}
  = \sqrt{\tilde{P}_l \tilde{Q}_l}.
  \]
Therefore, we have $I((P_0, \tilde{p}), (Q_0, \tilde{q})) \geq I(\{P_l\}, \{ Q_l\})$. Since $((P_0, \tilde{p}), (Q_0, \tilde{q}))$ was chosen arbitrarily, the proposition follows. 
\end{proof}

\begin{proposition}
\label{prop:H_HL_convergence1}
Let $\tilde{C} \in [1, \infty)$, $\tilde{c}_1, \tilde{c}_2 \in (0, 1/2)$, $r > 2$, and $t \in (0, 1)$. For any $((P_0, \tilde{p}), (Q_0, \tilde{q})) \in \tilde{\mathcal{G}}_{\tilde{C}, \tilde{c}_1, \tilde{c}_2, r, t}$ and for any $L \in \mathbb{N}$, let $\tilde{P}_l, \tilde{Q}_l$ be defined as in equation~\eqref{eqn:tildePl_Ql_defn} for $l \in \{1,\ldots,L\}$. 

Then
\[
  \lim_{L \rightarrow \infty}
  \sup_{\substack{ ((P_0, \tilde{p}), (Q_0, \tilde{q})) \\
      \in \tilde{\mathcal{G}}_{\tilde{C}, \tilde{c}_1, \tilde{c}_2, r, t}}}
  \biggl| 1 - \frac{\sum_{l=1}^{L} (\sqrt{\tilde{P}_l} - \sqrt{\tilde{Q}_l} )^2}{\int_0^1 (\sqrt{\tilde{p}(z)} - \sqrt{\tilde{q}(z)})^2 \, dz} \biggr| = 0. 
  \]

\end{proposition}

\begin{proof}
  Let us arbitrarily fix $((P_0, \tilde{p}), (Q_0, \tilde{q})) \in \tilde{\mathcal{G}}_{\tilde{C}, \tilde{c}_1, \tilde{c}_2, r, t}$. Let $\tilde{\alpha} := \int_0^1 (\sqrt{\tilde{p}(z)} - \sqrt{\tilde{q}(z)})^2 \, dz$, and let $\tilde{\gamma}(z) := \frac{\tilde{p}(z) - \tilde{q}(z)}{\tilde{\alpha}}$. Let $L \in \mathbb{N}$ and suppose that $L \geq 4^{ - \frac{r(t+1)}{2 - rt}}$.

For each $l=1,\ldots,L$, we also define $\tilde{\gamma}_l := \int_{a_l}^{b_l} \tilde{\gamma}(z) \, dz$. Since $\tilde{p}$ and $\tilde{q}$ are continuous and bounded, we may define $z_l := \argmax_{z \in [a_l, b_l]} \tilde{p}(z) + \tilde{q}(z)$, $z'_l := \argmax_{z \in [a_l, b_l]}  \tilde{p}(z) $, and $z''_l := \argmax_{z \in [a_l, b_l]} \tilde{q}(z)$. We also define the shorthand 
  \begin{align*}
    &D_l := \int_{a_l}^{b_l} \frac{\tilde{\gamma}(z)^2}{ (\sqrt{\tilde{p}(z)} + \sqrt{\tilde{q}(z)} )^2} \, dz,\quad D'_l := \frac{\tilde{\gamma}_l^2}{(\sqrt{\tilde{P}_l} + \sqrt{\tilde{Q}_l})^2}, 
    &\textrm{and } \, D''_l :=  \frac{1}{L} \frac{\tilde{\gamma}(z_l)^2}{(\sqrt{\tilde{p}(z'_l)} + \sqrt{\tilde{q}(z''_l)} )^2}.
  \end{align*}
  for each $l \in \{1, \ldots, L\}$.

Let $\tau = \frac{2+r}{r} \frac{1}{1+t}$, and note that $0 < \tau < 1$, since $2 < rt$ by assumption. Also, $L^{\tau - 1} \leq \frac{1}{4}$, since we assumed that $L \geq 4^{- \frac{r(t+1)}{2 - rt}}$. Define $\mathcal{B} := \{ l \in \{1,\ldots,L\} : \sup_{z \in [a_l, b_l]} \tilde{h}(z) \leq L^\tau \}$ and $\mathcal{B}^c := \{1,\ldots,L_n\} \setminus \mathcal{B}$.  
  Then
  \begin{align}
  \label{eqn:Dl_overall}
    \biggl| \sum_{l=1}^{L_n} D_l - \sum_{l=1}^{L_n} D'_l \biggr| \leq \sum_{l \in \mathcal{B}^c} | D_l - D'_l| + \sum_{l \in \mathcal{B}} | D_l - D'_l|.
  \end{align}
We first bound the first term of inequality~\eqref{eqn:Dl_overall}. Let $l \in \mathcal{B}^c$ and note that, by the Cauchy-Schwarz inequality, we have
  \begin{align*}
    D'_l \leq \frac{\tilde{\gamma}^2_l}{\tilde{P}_l + \tilde{Q}_l}
    & \leq \biggl\{ \int_{a_l}^{b_l} \frac{\tilde{\gamma}(z)}{\tilde{p}(z) + \tilde{q}(z)}
      \frac{\tilde{p}(z) + \tilde{q}(z)}{\tilde{P}_l + \tilde{Q}_l} \biggr\}^2 \tilde{P}_l + \tilde{Q}_l \\
    &\leq \int_{a_l}^{b_l} \biggl( \frac{\tilde{\gamma}(z)}{\tilde{p}(z) + \tilde{q}(z)} \biggr)^2 (\tilde{p}(z) + \tilde{q}(z)) \, dz \\
    &\leq \biggl\{ \int_{a_l}^{b_l} \biggl( \frac{\tilde{\gamma}(z)}{\tilde{p}(z) + \tilde{q}(z)} \biggr)^r (\tilde{p}(z) + \tilde{q}(z)) \, dz \biggl\}^{2/r}
      \biggl\{ \int_{a_l}^{b_l} \tilde{p}(z) + \tilde{q}(z) \, dz \biggr\}^{(r-2)/r}\\
      &      \leq \tilde{C}^{r} L^{- (r-2)/r}.
  \end{align*}
By similar reasoning, we have $D_l \leq \tilde{C}^{r} L^{-(r-2)/r}$. Now, since $\tilde{h}(z)$ is quasi-convex, the set $\{z \in [0,1] : \tilde{h}(z) \leq L^{\tau}\}$ is an interval and $\frac{1}{L} | \mathcal{B}^c |  \leq  \mu( \{ z \in [0,1] : \tilde{h}(z) > L^\tau \} ) + \frac{4}{L}$. Thus, by Markov's inequality, we have
  \begin{align*}
    |\mathcal{B}^c | &\leq  L \mu( \{ z \in [0,1] : \tilde{h}(z) > L^\tau \} ) + 4 \leq \tilde{C} L^{1 -\tau t} + 4 \leq 8 \tilde{C} L^{1 - \tau t},
  \end{align*}
where the last inequality follows because $1 - \tau t = \frac{r - 2t}{(1+t) r} > 0$. Therefore,
  \begin{align}
    \sum_{l \in \mathcal{B}^c} | D_l - D'_l| \leq \tilde{C}^r L^{-(r-2)/r} | \mathcal{B}^c |
    \leq 8 \tilde{C}^{r+1} L^{- \frac{r-2}{r} + (1 - \tau t)} = 8 \tilde{C}^{r+1} L^{ \frac{2 - rt}{(1 +t)r} }.
    \label{eqn:H_HL_first_term_bound}
  \end{align}
  
We now turn our attention to the second term of inequality~\eqref{eqn:Dl_overall}. Let $l \in \mathcal{B}$, and let $z \in [a_l, b_l]$. By the Mean Value Theorem, there exists some $c_{\gamma, z}, c_{p,z}, c_{q,z} \in [a_l,b_l]$ such that
  \begin{align}
    \frac{\tilde{\gamma}(z)^2}{ (\sqrt{\tilde{p}(z)} + \sqrt{\tilde{q}(z)})^2 }
    &=  \frac{(\tilde{\gamma}(z_l) + \tilde{\gamma}'(c_{\gamma,z})(z - z_l))^2}
      {( \sqrt{\tilde{p}(z'_l) + \tilde{p}'(c_{p,z})(z - z'_l)} + \sqrt{\tilde{q}(z''_l) + \tilde{q}'(c_{q,z})(z - z''_l)})^2} \nonumber \\
    &= \frac{ \biggl( \frac{\tilde{\gamma}(z_l)}{\tilde{p}(z_l) + \tilde{q}(z_l)} + T_1 \biggr)^2 }
       {  \bigl( \sqrt{\tilde{p}(z'_l) ( 1 + T_2) } +  \sqrt{\tilde{q}(z''_l) ( 1 + T_3) } \bigr)^2  } (\tilde{p}(z_l) + \tilde{q}(z_l))^2, \label{eqn:T1_T2_T3_quantity}
  \end{align}
where we denote $T_1 :=\frac{\tilde{\gamma}'(c_{\gamma,z})(z - z_l)}{\tilde{p}(z_l) + \tilde{q}(z_l)}$, $T_2 := \frac{\tilde{p}'(c_{p, z})}{\tilde{p}(z'_l)} (z - z_l)$, and $T_3 := \frac{\tilde{q}'(c_{q, z})}{\tilde{q}(z''_l)} (z - z_l) $. Since $c_{\gamma,z} \in [a_l, b_l]$ and $l \in \mathcal{B}$, we have
  \[
  |T_1| = \biggl| \frac{\tilde{\gamma}'(c_{\gamma,z})(z - z_l)}{\tilde{p}(z_l) + \tilde{q}(z_l)} \biggr| \leq
  \biggl| \frac{\tilde{\gamma}'(c_{\gamma,z})(z - z_l)}{\tilde{p}(c_{\gamma,z}) + \tilde{q}(c_{\gamma,z})} \biggr|
       \leq |\tilde{h}(c_{\gamma, z})| L^{-1} \leq L^{\tau - 1} \leq \frac{1}{4}.
  \]
Likewise, we have $|T_2|, |T_3| \leq L^{\tau - 1} \leq \frac{1}{4}$.
  
We now observe that
  \begin{align}
    & \biggl( \frac{\tilde{\gamma}(z_l)}{\tilde{p}(z_l) + \tilde{q}(z_l)} + T_1 \biggr)^2
    - \biggl( \frac{\tilde{\gamma}(z_l)}{\tilde{p}(z_l) + \tilde{q}(z_l)} \biggr)^2
     \leq
      2 \biggl|  \frac{\tilde{\gamma}(z_l)}{\tilde{p}(z_l) + \tilde{q}(z_l)} \biggr| |T_1|
    \\
    &\qquad \qquad \leq \biggl(  \frac{\tilde{\gamma}(z_l)}{\tilde{p}(z_l) + \tilde{q}(z_l)} \biggr)^2 |T_1| + |T_1|
      \leq \biggl(  \frac{\tilde{\gamma}(z_l)}{\tilde{p}(z_l) + \tilde{q}(z_l)} \biggr)^2 L^{\tau - 1} + L^{\tau - 1} \label{eqn:T1_bound1} \\
   & \biggl( \frac{\tilde{\gamma}(z_l)}{\tilde{p}(z_l) + \tilde{q}(z_l)} + T_1 \biggr)^2
    - \biggl( \frac{\tilde{\gamma}(z_l)}{\tilde{p}(z_l) + \tilde{q}(z_l)} \biggr)^2
    \geq -
      2 \biggl|  \frac{\tilde{\gamma}(z_l)}{\tilde{p}(z_l) + \tilde{q}(z_l)} \biggr| |T_1| 
    \\
    & \qquad \qquad \geq - \biggl(  \frac{\tilde{\gamma}(z_l)}{\tilde{p}(z_l) + \tilde{q}(z_l)} \biggr)^2 |T_1| + |T_1| \geq
                    - \biggl(  \frac{\tilde{\gamma}(z_l)}{\tilde{p}(z_l) + \tilde{q}(z_l)} \biggr)^2 L^{\tau - 1} + L^{\tau - 1}.
    \label{eqn:T1_bound2}
  \end{align}
  Furthermore,
  \begin{align}
    \biggl( \sqrt{\tilde{p}(z'_l) (1 + T_2) } + \sqrt{\tilde{q}(z''_l)(1 + T_3)} \biggr)^{-2}
    &\geq \biggl( \bigl(\sqrt{\tilde{p}(z'_l)} + \sqrt{\tilde{q}(z''_l)} \bigr) (1 + |T_2| + |T_3|) \biggr)^{-2} \nonumber \\
    &\geq \biggl( \frac{1}{ \sqrt{\tilde{p}(z'_l)} + \sqrt{\tilde{q}(z''_l)} } (1 - |T_2| - |T_3|) \biggr)^2 \nonumber \\
    &\geq \biggl( \frac{1}{ \sqrt{\tilde{p}(z'_l)} + \sqrt{\tilde{q}(z''_l)} } \biggr)^2 (1 - 4 L^{\tau - 1} ),
      \label{eqn:T2_T3_bound1}
  \end{align}
  and
  \begin{align}
     \biggl( \sqrt{\tilde{p}(z'_l) (1 + T_2) } + \sqrt{\tilde{q}(z''_l)(1 + T_3)} \biggr)^{-2}
     &\leq  \biggl( \bigl(\sqrt{\tilde{p}(z'_l)} + \sqrt{\tilde{q}(z''_l)} \bigr) (1 - |T_2| - |T_3|) \biggr)^{-2} \nonumber \\
     &\stackrel{(a)} \leq \biggl( \frac{1}{ \sqrt{\tilde{p}(z'_l)} + \sqrt{\tilde{q}(z''_l)} } (1 + 2 |T_2| + 2 |T_3|) \biggr)^2 \nonumber \\
     &\leq \biggl( \frac{1}{ \sqrt{\tilde{p}(z'_l)} + \sqrt{\tilde{q}(z''_l)} } \biggr)^2 (1 + 8 L^{\tau - 1}).
       \label{eqn:T2_T3_bound2}
   \end{align}
   where $(a)$ follows because $\frac{1}{1 - x} \leq 1 + 2x$ for all $x \in [0, 1/2]$. Combining inequalities~\eqref{eqn:T1_T2_T3_quantity},~\eqref{eqn:T1_bound1},~\eqref{eqn:T1_bound2},~\eqref{eqn:T2_T3_bound1}, and~\eqref{eqn:T2_T3_bound2}, we have
  \begin{align*}
    &\left| \frac{\tilde{\gamma}(z)^2}{ (\sqrt{\tilde{p}(z)} + \sqrt{\tilde{q}(z)})^2 } -
    \frac{\tilde{\gamma}(z_l)^2}{ (\sqrt{\tilde{p}(z'_l)} + \sqrt{\tilde{q}(z''_l)})^2 } \right| \\
    &\qquad \qquad \leq
    10 L^{\tau - 1} \frac{\tilde{\gamma}(z_l)^2}{ (\sqrt{\tilde{p}(z'_l)} + \sqrt{\tilde{q}(z''_l)})^2}  +
      8 L^{\tau - 1} \frac{(\tilde{p}(z_l) + \tilde{q}(z_l))^2}{(\sqrt{\tilde{p}(z'_l)} + \sqrt{\tilde{q}(z''_l)})^2} \\
      &\qquad \qquad \leq 10 L^{\tau - 1}  \frac{\tilde{\gamma}(z_l)^2}{ (\sqrt{\tilde{p}(z'_l)} + \sqrt{\tilde{q}(z''_l)})^2}  + 8 L^{\tau - 1},
  \end{align*}
  where the last inequality holds because $\tilde{p}(z'_l) \geq \tilde{p}(z_l)$ and $\tilde{q}(z''_l) \geq \tilde{q}(z_l)$ by the definitions of $z'_l$ and $z''_l$. Hence, $|D_l - D''_l| \leq 10 L^{\tau - 2} D''_l + 8 L^{\tau - 2}$, which, with the triangle inequality, implies that
  \[
    |D_l - D''_l| \leq 20 L^{\tau - 2} D_l + 16 L^{\tau - 2}.
  \]

  We now bound $|D'_l - D''_l|$ in the same manner. Note that
  \[
    \tilde{\gamma}_l = \int_{a_l}^{b_l} \tilde{\gamma}(z_l) + \tilde{\gamma}'(c_{\gamma,z})(z - z_l) \, dz
    = \frac{1}{L} \biggl( \frac{\tilde{\gamma}(z_l)}{\tilde{p}(z_l) + \tilde{q}(z_l)} + T'_1 \biggr) ( \tilde{p}(z_l) + \tilde{q}(z_l) ),
  \]
  where $T'_1 := \frac{1}{L} \int_{a_l}^{b_l} \frac{\tilde{\gamma}'(c_{\gamma,z})}{\tilde{p}(z_l) + \tilde{q}(z_l)} (z - z_l) \, dz $ satisfies $|T'_1| \leq L^{\tau - 1}$. Likewise, we have $\tilde{P}_l = \frac{1}{L} \tilde{p}(z'_l) (1 + T'_2)$ and $\tilde{Q}_l = \frac{1}{L} \tilde{q}(z''_l) (1 + T'_3)$, where $T'_2 := \frac{1}{L} \int_{a_l}^{b_l} \frac{\tilde{p}'(c_{p,z})}{\tilde{p}(z'_l)} (z - z_l) \, dz$ and $T'_3 := \frac{1}{L} \int_{a_l}^{b_l} \frac{\tilde{q}'(c_{q,z})}{\tilde{q}(z''_l)} (z - z_l) \, dz$ both satisfy $|T'_2|, |T'_3| \leq L^{\tau - 1}$. By similar reasoning as in the case of $|D_l - D''_l|$, we have
  \[
    |D'_l - D''_l| \leq 10 L^{\tau - 2} D''_l + 8 L^{\tau - 2} \leq
          20 L^{\tau -2 } D_l + 16 L^{\tau - 2}.
  \]
Therefore,
  \begin{align}
    \sum_{l \in \mathcal{B}} | D_l - D'_l |
    &\leq \sum_{l \in \mathcal{B}} |D_l - D''_l| + |D'_l - D''_l| \nonumber \\
    &\leq  20 L^{\tau - 1} \sum_{l \in \mathcal{B}} D_l  + 16 L^{\tau - 1} \leq
      36 L^{\tau - 1}.
      \label{eqn:H_HL_second_term_bound}
  \end{align}

Combining inequalities~\eqref{eqn:H_HL_first_term_bound} and~\eqref{eqn:H_HL_second_term_bound}, we obtain
  \[
    \biggl|  1 - \frac{\sum_{l=1}^L (\sqrt{\tilde{P}_l} - \sqrt{\tilde{Q}_l})^2 }{\int_0^1 (\sqrt{\tilde{p}(z)} - \sqrt{\tilde{q}(z)})^2 \,dz } \biggr| \leq
    \sum_{l=1}^L |D_l - D'_l| \leq
    36 L^{\tau - 1} + 8 \tilde{C}^{r + 1} L^{\frac{2 - rt}{(1+t)r} } \leq
    (36 + 8 \tilde{C}^{r+1}) L^{\frac{2 - rt}{(1+t)r} }.
  \]
   The RHS goes to 0 as $L \rightarrow \infty$, since $rt > 2$ by assumption. Since the RHS does not depend on the particular choice of $((P_0, \tilde{p}), (Q_0, \tilde{q}))$, the proposition follows. 
\end{proof}

\section{Proof of Theorem~\ref{thm:weighted_sbm_rate}}
\label{sec:transformation_proof}

We provide the proof of Theorem~\ref{thm:weighted_sbm_rate}, with proofs of supporting propositions in the succeeding subsection.

\subsection{Main argument: Proof of Theorem~\ref{thm:weighted_sbm_rate}}
\label{AppThmRate}

Fix $((P_0, p), (Q_0, q)) \in \mathcal{G}_{\Phi, C, c_1, c_2, r, t}$, and let $\tilde{p}, \tilde{q}$ be defined as in Proposition~\ref{prop:transformation1}. If the random network $A \in S^{n \times n}$ has the distribution $WSBM(\sigma_0, ((P_0, p), (Q_0, q)))$, then the transformed network $\tilde{A}$ has the distribution $WSBM(\sigma_0, ((P_0, \tilde{p}), (Q_0, \tilde{q})))$. By Proposition~\ref{prop:transformation1}, we know that there exists $\tilde{c}_1, \tilde{c}_2 \in (0, 1/2)$ such that  $((P_0, \tilde{p}), (Q_0, \tilde{q})) \in \tilde{\mathcal{G}}_{\tilde{C}, \tilde{c}_1, \tilde{c}_2, r, t}$.

Let $A_{L_n}$ be the labeled network that is the result of discretizing $\tilde{A}$ with $L_n$ bins, and let $\{P_l, Q_l\}_{l \in \{1, \ldots, {L_n}\}}$ be defined as in equation~\eqref{eqn:Pl_Ql_defn}. Then $A_{L_n}$ follows the distribution $LSBM(\sigma_0, (\{P_l\}, \{Q_l\}))$. Let $\rho_n := 2 \tilde{C} \exp((2 \tilde{C} L_n)^{1/r})$; by Proposition~\ref{prop:discretization1}, we have $(\{P_l\}, \{Q_l\}) \in \mathcal{G}_{L_n, \rho_n}$. 

Note that $\frac{n I'_n}{(L_n+1) \rho_n^2 \log rho_n} \rightarrow \infty$. Thus, by Proposition~\ref{prop:labeled_sbm_rate}, there exists $\zeta'_n \rightarrow 0$ such that 
\[
  \lim_{n \rightarrow \infty}
  \sup_{\substack{ (\{P_l\}, \{Q_l\}) \in \mathcal{G}_{L_n, \rho_n} \\
            I'_n \leq I(\{P_l\}, \{Q_l\}) \leq 2 I_n } } 
  \mathbb{P}_{(\{P_l\}, \{Q_l\})}
  \biggl( l(\hat{\sigma}(A_{L_n}), \sigma_0) >
  \exp \biggl( - (1 - \zeta'_n) \frac{n}{\beta K} I(\{P_l\}, \{Q_l\}) \biggr) \biggr).
\]
We now define
\[
\zeta_n := 1 - (1 - \zeta'_n)
\sup_{ \substack{ ((P_0, \tilde{p}), (Q_0, \tilde{q})) \\
    \in \tilde{\mathcal{G}}_{\tilde{C}, \tilde{c}_1, \tilde{c}_2, r, t} }}
 \frac{I((P_0, \tilde{p}), (Q_0, \tilde{q}))}{I(\{P_l\}, \{Q_l\})}. 
\]
By Proposition~\ref{prop:discretization1} and the fact that $L_n \rightarrow 0$, we have $\lim_{n \rightarrow \infty} \zeta_n = 0$. Since $I((P_0, p), (Q_0, p)) = I((P_0, \tilde{p}), (Q_0, \tilde{q}))$, we have
\begin{align*}
& \sup_{\substack{ (\{P_l\}, \{Q_l\}) \in \mathcal{G}_{L_n, \rho_n} \\
            I'_n \leq I(\{P_l\}, \{Q_l\}) \leq 2 I_n } } 
  \mathbb{P}_{(\{P_l\}, \{Q_l\})}
  \biggl( l(\hat{\sigma}(A_{L_n}), \sigma_0) >
  \exp \biggl( - (1 - \zeta'_n) \frac{n}{\beta K} I(\{P_l\}, \{Q_l\}) \biggr) \biggr) \\
  &\leq \sup_{\substack{((P_0, \tilde{p}), (Q_0, \tilde{q})) \\
     \in \tilde{\mathcal{G}}_{\tilde{C}, \tilde{c}_1, \tilde{c}_2, r, t} \\
            I'_n \leq I((P_0, \tilde{p}), (Q_0, \tilde{q})) \leq I_n } } 
  \mathbb{P}_{((P_0, \tilde{p}), (Q_0, \tilde{q}))}
  \biggl( l(\hat{\sigma}(\tilde{A}), \sigma_0) >
  \exp \biggl( - (1 - \zeta_n) \frac{n}{\beta K} I((P_0, \tilde{p}), (Q_0, \tilde{q})) \biggr) \biggr) \\
 &\leq  \sup_{\substack{((P_0, p), (Q_0, q)) \\
     \in \tilde{\mathcal{G}}_{\Phi, C, c_1, c_2, r, t} \\
            I'_n \leq I((P_0, p), (Q_0, q)) \leq I_n } } 
  \mathbb{P}_{((P_0, p), (Q_0, q))}
  \biggl( l(\hat{\sigma}(A), \sigma_0) >
  \exp \biggl( - (1 - \zeta_n) \frac{n}{\beta K} I((P_0, p), (Q_0, q)) \biggr) \biggr).
\end{align*}
The first claim of the theorem follows immediately. The second claim can be shown in exactly the same manner.


\subsection{Transformation analysis}

\begin{proposition}
  \label{prop:transformation1}
  Let $\Phi$ be a transformation function~\eqref{defn:transformation}, $C \in [1, \infty)$, $c_1, c_2 \in (0, \infty)$, $r > 2$, and $t \geq \frac{2}{r}$. Let $((P_0, p), (Q_0, q)) \in \mathcal{G}_{\Phi, C, c_1, c_2, r, t}$, and let $\tilde{p}(z) := \frac{p(\Phi^{-1}(z))}{\phi(\Phi^{-1}(z))}$ and $\tilde{q}(z) = \frac{q(\Phi^{-1}(z))}{\phi(\Phi^{-1}(z))}$ for $z \in [0,1]$. Then, with $C_\Phi := \sup_{x \in S} \bigl| \frac{\phi'(x)}{\phi(x)} \bigr|$, with $\tilde{C} = C(1 + C_\Phi)$,  with $\tilde{c}_1 = \Phi(c_1)$, and with $\tilde{c}_2 = 1 - \Phi(c_2)$, we have that $((P_0, \tilde{p}), (Q_0, \tilde{q})) \in \tilde{\mathcal{G}}_{\tilde{C}, \tilde{c}_1, \tilde{c}_2, r, t}$. 
\end{proposition}
 
\begin{proof}
  Let $((P_0, p), (Q_0, q)) \in \mathcal{G}_{\phi, C, c_1, c_2, r, t}$. We show that $((P_0, \tilde{p}), (Q_0, \tilde{q})) \in \tilde{\mathcal{G}}_{\tilde{C}, \tilde{c}_1, \tilde{c}_2, r, t}$ by verifying conditions C0--C5 in the definition of $\tilde{\mathcal{G}}_{\tilde{C}, \tilde{c}_1, \tilde{c}_2, r, t}$ in Section~\ref{sec:discretization_analysis}. Condition C0 follows trivially from A0. It is also trivial to verify condition C1 from the definitions of $\tilde{p}$ and $\tilde{q}$. For condition C2, define $\tilde{g}(z) := g( \Phi^{-1}(z))$; the integrability conditions holds from a change of variables $x := \Phi^{-1}(z)$. For condition C3, we first note that $\tilde{\alpha} = \alpha$ and $\tilde{\gamma}(z) = \gamma(\Phi^{-1}(z))$. The integrability condition follows by a change of variable again.

  For condition C4,  define $\tilde{h}(z) := (1 + C_\Phi) h(\Phi^{-1}(z))$. Then
  \begin{align}
    \biggl|  \frac{\tilde{\gamma}'(z)}{\tilde{q}(z) + \tilde{p}(z)} \biggr|
    &\leq
      \biggl| \frac{1}{\alpha} \frac{p'(\Phi^{-1}(z)) - q'(\Phi^{-1}(z))}{p(\Phi^{-1}(z)) + q(\Phi^{-1}(z))} \biggr| 
            \frac{1}{\phi(\Phi^{-1}(z))} \\
    & \qquad \qquad    + \biggl| \frac{1}{\alpha} \frac{p(\Phi^{-1}(z)) - q(\Phi^{-1}(z))}{q(\Phi^{-1}(z)) + p(\Phi^{-1}(z))} \biggr| 
        \left| \frac{\phi'(\Phi^{-1}(z))}{\phi(\Phi^{-1}(z))}\right|   
      \frac{1}{\phi(\Phi^{-1}(z))} \\
    & \leq \biggl| \frac{\gamma'(\Phi^{-1}(z))}{p(\Phi^{-1}(z)) + q(\Phi^{-1}(z))} \biggr| \frac{1}{\phi(\Phi^{-1}(z))} + 
      \biggl| \frac{\gamma(\Phi^{-1}(z))}{p(\Phi^{-1}(z)) + q(\Phi^{-1}(z))} \biggr| \frac{C_\Phi}{\phi(\Phi^{-1}(z))} \\
    &\leq (1 + C_\Phi) h(\Phi^{-1}(z)) = \tilde{h}(z).
  \end{align}
In the same manner, we can show that $\tilde{h}(z) \geq \biggl| \frac{\tilde{p}'(z)}{\tilde{p}(z)} \biggr|, \biggl| \frac{\tilde{q}'(z)}{\tilde{q}(z)} \biggr|$. The integrability condition again follows from a change of variable. Condition C5 follows directly from A5; footnote~\ref{note:one_sided_compact} follows from footnote~\ref{note:one_sided}.
\end{proof}

\section{Proof of Proposition~\ref{prop:theta_rate}}
\label{sec:theta_rate_proof}

\begin{proof}
Condition A0 is satisfied by assumption. Let $\theta_1, \theta_0 \in \Theta$. Observe that for any $x \in S$, we have
  \[
    \bigl| \log \frac{p(x)}{q(x)} \bigr| \leq | f_{\theta_1}(x) - f_{\theta_0}(x) | \leq \| \theta_1 - \theta_0 \|_2 \| \nabla f_{\bar{\theta}}(x) \|_2 \leq  g^*(x).
  \]
  Thus, condition \textbf{A2} holds where we let $g = g^*$. 

  To verify A3, we let $\alpha = \int_S (\sqrt{p(x)} - \sqrt{q(x)})^2 \, dx$. By Lemma~\ref{lem:hellinger_theta_equivalence}, it holds that $\alpha \geq \frac{1}{2 C^*} \| \theta_0 - \theta_1 \|_2^2$. By the Mean Value Theorem, there exists $\rho \,:\, S \rightarrow [0,1]$ such that
  \begin{align*}
    \int_S \biggl( \frac{\gamma(x)}{p(x)+ q(x)} \biggr)^r (p(x) + q(x)) \,dx
    &= \int_S \biggl( \frac{ (f_{\theta_1}(x) - f_{\theta_0}(x)) e^{\rho(x) f_{\theta_1}(x)  + (1 - \rho(x)) f_{\theta_0}(x)} }{ \alpha (p(x) + q(x))} \biggr)^r (p(x) + q(x)) \, dx \\
    &\leq 2 C^*
        \int_S \sup_{\theta \in \Theta} \| \nabla f_\theta(x) \|_2^r (p(x) + q(x)) \, dx \\
    &\leq 2 C^{*2}
      \int_S \sup_{\theta \in \Theta} \| \nabla f_\theta(x) \|_2^r \phi(x) \, dx  \\
    &\leq 2 C^{*2} \int_S g^*(x)^r \phi(x) \, dx \leq 2 C^{* 3}.
  \end{align*}
  
 For A4, observe that
  \begin{align*}
    \biggl| \frac{\gamma(x)}{p(x) + q(x)} \biggr|
    &= \biggl| \frac{ e^{f_{\theta_1}(x) } - e^{f_{\theta_0}(x)} }{\alpha (p(x) + q(x))} \biggr| 
      \leq  \frac{|f_{\theta_1}(x) - f_{\theta_0}(x)| }{\alpha} \\
    &\leq 2 C^* \sup_{\theta \in \Theta} \| \nabla f_\theta(x) \|_2 \leq 2 C^* h^*(x) \phi(x),
  \end{align*}
  and
 \begin{align*}
    \biggl| \frac{\gamma'(x)}{p(x) + q(x)} \biggr|
    &= \bigg| \frac{ f_{\theta_1}'(x) p(x) - f_{\theta_0}'(x) q(x)}{ \alpha(p(x) + q(x))} \biggr| \\
    &\leq \frac{1}{\alpha} | f_{\theta_1}'(x) - f_{\theta_0}'(x) | + | f_{\theta_0}'(x)| |\gamma(x)| \\
    &\leq 2C^* \biggl( \sup_{\theta \in \Theta} \| \nabla f_{\theta}'(x) \|_2 +  | f_{\theta_0}'(x)| \sup_{\theta \in \Theta} \| \nabla f_\theta(x) \|_2 \biggr) \\
   &\leq 4C^* h^*(x)^2 \phi(x).
 \end{align*}
 It is straightforward to verify that $\phi(x) h^*(x)^2 \geq \bigl| \frac{p'(x)}{p(x)} \bigr|, \bigl| \frac{q'(x)}{q(x)} \bigr|$, so
 \[
   \bigl| \frac{\gamma'(x)}{p(x) + q(x)} \bigr|, \bigl| \frac{\gamma(x)}{p(x) + q(x)} \bigr|,\bigl| \frac{p'(x)}{p(x)} \bigr|, \bigl| \frac{q'(x)}{q(x)} \bigr| \leq 2 C^{* 3} h^*(x)^2 \phi(x).
 \]
 We then define $h(x) := 2 C^{*3} h^*(x)^2$.  The integrability and quasi-convexity properties hold, implying A4. Condition A5 follows trivially; footnote~\ref{note:one_sided} follows from footnote~\ref{note:one_sided_theta}. 
\end{proof}


\subsection{Supporting lemmas}

\begin{lemma}
\label{lem:hellinger_theta_equivalence}
  Let $\{ f_\theta \}_{\theta \in \Theta}$ be as defined in Proposition~\ref{prop:theta_rate}. Let $\theta_0, \theta_1 \in \Theta$ and let $p(x) = \exp( f_{\theta_1}(x))$ and $q(x) = \exp( f_{\theta_0}(x))$. Then
\[
\frac{1}{2C^*} \| \theta_1 - \theta_0 \|^2_2 \leq  \int_S \bigl(\sqrt{p(x)} - \sqrt{q(x)} \bigr)^2\, dx \leq C^* \| \theta_0 - \theta_1 \|_2^2.
\]
\end{lemma}

\begin{proof}
  By the Mean Value Theorem, there exists a function $\rho : S \rightarrow [0,1]$ such that
  \begin{align}
    \int_S \bigl(\sqrt{p(x)} - \sqrt{q(x)} \bigr)^2\, dx
    &= \int_S e^{f_{\theta_1}(x)} \biggl( 1 - e^{\frac{1}{2}( f_{\theta_0}(x) - f_{\theta_1}(x)) } \biggr)^2 \, dx \nonumber \\
    &= \int_S (f_{\theta_1}(x) - f_{\theta_0}(x))^2 e^{ \rho(x) f_{\theta_0}(x) + (1 - \rho(x)) f_{\theta_1}(x)} \, dx \label{eqn:hellinger_start}.
  \end{align}

  For the upper bound, we note that, from \eqref{eqn:hellinger_start},
  \[
    \int_S \bigl(\sqrt{p(x)} - \sqrt{q(x)} \bigr)^2\, dx \leq \max \biggl\{ \int_S (f_{\theta_1}(x) - f_{\theta_0}(x))^2 e^{ f_{\theta_0}(x)}\,dx ,
    \int_S (f_{\theta_1}(x) - f_{\theta_0}(x))^2 e^{ f_{\theta_1}(x)} \, dx \biggr\}.
  \]
Applying the Mean Value Theorem again, there exists a function $\rho : S \rightarrow [0,1]$ such that
  \begin{align*}
    \int_S (f_{\theta_1}(x) - f_{\theta_0}(x))^2 e^{ f_{\theta_0}(x)}\,dx
    &\leq  \| \theta_1 - \theta_0 \|^2 \int_S \| \nabla f_{\rho(x) \theta_1 + (1-\rho(x))\theta_0}(x) \|^2 e^{f_{\theta_0}(x)} \, dx \\
    &\leq  \| \theta_1 - \theta_0 \|^2 \int_S g^*(x)^2 \phi(x) \, dx \leq C^* \| \theta_1 - \theta_0 \|_2^2.
  \end{align*}
  We can bound $ \int_S (f_{\theta_1}(x) - f_{\theta_0}(x))^2 e^{ f_{\theta_0}(x)}\,dx$ using the same argument.

  For the lower bound, we first note that, from \eqref{eqn:hellinger_start},
  \[
    \int_S \bigl(\sqrt{p(x)} - \sqrt{q(x)} \bigr)^2\, dx \geq \min \biggl\{ \int_S (f_{\theta_1}(x) - f_{\theta_0}(x))^2 e^{ f_{\theta_0}(x)}\,dx ,
    \int_S (f_{\theta_1}(x) - f_{\theta_0}(x))^2 e^{ f_{\theta_1}(x)} \, dx \biggr\}.
  \]
Define a function $\Psi : \Theta \rightarrow [0, \infty)$ by $\theta \mapsto  \int_S (f_{\theta}(x) - f_{\theta_0}(x))^2 e^{ f_{\theta_0}(x)}\,dx$.
  Then the gradient of $\Psi$ at $\theta_0$ is $0$. Thus, by Taylor's theorem, there exists $\bar{\theta} = \rho \theta_0 + (1 - \rho)\theta_1$ for some $\rho \in [0,1]$ such that
  \begin{align*}
    \int_S & (f_{\theta_1}(x) - f_{\theta_0}(x))^2 e^{ f_{\theta_0}(x)}\,dx \\
           &= (\theta_1 - \theta_0)^\tran \biggl\{
      \int_S 2 (\nabla f_{\bar{\theta}}(x))(\nabla f_{\bar{\theta}}(x))^\tran e^{f_{\theta_0}(x)} \, dx + 
      \int_S 2 (f_{\bar{\theta}}(x) - f_{\theta_0}(x)) H(f_{\bar{\theta}})(x) e^{f_{\theta_0}(x)}\, dx \biggr\} (\theta_1 - \theta_0) \\
           &\geq \|\theta_1 - \theta_0\|_2^2 \inf_{\theta \in \Theta}
             \lambda_{\min} \biggl( \int_S 2 (\nabla f_{\theta}(x))(\nabla f_{\theta}(x))^\tran e^{f_{\theta_0}(x)} \, dx \biggr) \\
           &\qquad \qquad   - \|\theta_1 - \theta_0\|_2^3 \biggl\{ \int_S  \sup_{\theta \in \Theta} \| \nabla f_\theta(x) \|^2 e^{f_{\theta_0}(x)} \, dx \biggr\}^{1/2}
            \biggl\{ \sup_{\theta \in \Theta}  \int_S  \lambda_{\max}\bigl( H(f_\theta)(x)  \bigr)^2 e^{f_{\theta_0}(x)} \, dx \biggr\}^{1/2} \\
           &\geq \|\theta_1 - \theta_0 \|_2^2(\frac{1}{C^*} - C^* \textrm{diam}(\Theta))
             \geq \frac{1}{2 C^*}  \|\theta_1 - \theta_0 \|_2^2.
  \end{align*}
By the same reasoning, the same lower bound holds for  $\int_S  (f_{\theta_1}(x) - f_{\theta_0}(x))^2 e^{ f_{\theta_0}(x)}\,dx$. The proposition thus follows.
\end{proof}


\subsection{Proofs of examples}
\label{sec:appendix_examples}

\begin{proposition}
\label{prop:scale_location_family}
Let $k \geq 2$ be an integer and let $f \,:\, \mathbb{R} \rightarrow [-\infty, \infty)$ be a $k$-times continuously differentiable function such that $\int_{-\infty}^\infty e^{f(x)} \, dx = 1$. Suppose that
\begin{itemize}
\item[(a)] $\sup_{x \in \mathbb{R}} |f^{(k)}(x)| < \infty$, and
\item[(b)] there exist $c > 0$ and $M > 0$ such that $f'(x) > M$ for $x < -c$ and $f'(x) < - M$ for $x > c$. 
\end{itemize}
For any $\mu \in \mathbb{R}$ and $\sigma > 0$, define 
\[
  f_{\mu, \sigma} (x) := f\left( \frac{x - \mu}{\sigma} \right) - \log \sigma.
\]
Then there exists $C^{**} \geq 1$, $c_1, c_2 \in \mathbb{R}$, $r > 4$, and $t \in (2/r, 1/2)$, such that the following holds: For some $C_\mu > 0$ and $c_\sigma > 1$, with $\Theta := [-C_\mu, C_\mu] \times [c_{\sigma}^{-1}, c_{\sigma}]$, the family $\{ f_{\mu, \sigma} \}_{(\mu, \sigma) \in \Theta}$ satisfies conditions B1--B4 in Proposition~\ref{prop:theta_rate} with respect to $\phi$ defined in equation~\eqref{eqn:phi_defn} for $S = \mathbb{R}$, and $C^{**}, c_1, c_2, r$, and $t$. 
\end{proposition}

\begin{proof}
 We use the notation $a \lesssim_f b$ to denote that $a \leq C_f b$ for some $C_f > 0$ possibly dependent on $f$. Let $r > 4 $ and $t \in (2/r, 1/2)$ be fixed arbitrarily. let $\Theta_0 = [-1,1] \times [1/2, 2]$. To verify conditions B1--B4, we will show that
  \begin{align}
    &\sup_{x \in \mathbb{R}}  e^{f_{\mu,\sigma}(x)} - \phi(x) < \infty, \label{eqn:B1} \\
    &\sup_{(\mu,\sigma) \in \Theta_0} \lambda_{\min}^{-1} \biggl( \int_{-\infty}^\infty 2 \nabla f_{\mu,\sigma}(x) (\nabla f_{\mu,\sigma}(x))^\top \phi(x) \,dx \biggr) < \infty, \label{eqn:B2a} \\
    &\sup_{(\mu,\sigma) \in \Theta_0}  \int_{-\infty}^\infty \lambda_{\max} \bigl( H(f_{\mu,\sigma}) \bigr)^2 \phi(x) \,dx < \infty,
      \label{eqn:B2b}
  \end{align}
  that there exists a quasi-convex $g^* \,:\, \mathbb{R} \rightarrow [0,\infty)$ such that $g^*(x) \geq \sup_{(\mu,\sigma) \in \Theta_0} \| \nabla f_{\mu,\sigma}(x) \|_2$ and
  \begin{align}
    \int_{-\infty}^\infty g^*(x)^r \phi(x) \, dx < \infty, \label{eqn:B3}
  \end{align}
  and that there exists a quasi-convex $h^* \,:\, \mathbb{R} \rightarrow [0,\infty)$ such that
  \[
    h^*(x) \geq \frac{1}{\phi(x)}
    \max \biggl\{ \sup_{(\mu,\sigma) \in \Theta_0} \| \nabla f_{\mu,\sigma}(x) \|,\, \sup_{(\mu,\sigma) \in \Theta_0} | \nabla f'_{\mu,\sigma}(x)|,\, \sup_{(\mu,\sigma)\in \Theta_0} | f'_{\mu,\sigma}(x)| \biggr\}
  \]
  and
  \begin{align}
        \int_{-\infty}^\infty h^*(x)^{2t} \phi(x) \, dx < \infty. \label{eqn:B4}
  \end{align}
  We may then choose $C^{**}$ as the maximum of bounds~\eqref{eqn:B1},~\eqref{eqn:B2a},~\eqref{eqn:B2b},~\eqref{eqn:B3}, and~\eqref{eqn:B4}, and choose $\Theta$ as any subset of $\Theta_0$ satisfying $\textrm{diam}(\Theta) \leq \frac{1}{2 C^{**2}}$.

  We make a few observations before proving the above statements. Note that for any $x > c$, we have
\begin{align*}
f(x) \leq \sup_{x \in [-c, c]} f(x)  + \int_c^x f'(t) dt \lesssim_f 1 -  \int_c^x M dt \lesssim_f 1 - x.
\end{align*}
Similarly, for any $x < -c$, it can be shown that $f(x) \lesssim_f 1 + x$. Therefore, we conclude that $f(x) \lesssim_f 1 - |x|$. Thus, for any $\mu \in [-1, 1]$ and $\sigma \in [1/2, 2]$,
\begin{align}
f\left( \frac{x - \mu}{\sigma} \right) \lesssim_f 1 - \left| \frac{x - \mu}{\sigma} \right| 
         \lesssim_f 1 - \left| \frac{x}{\sigma} \right| + \frac{\mu}{\sigma} 
    \lesssim_f 1 - |x|, \label{eqn:f_bound}
\end{align}
Since $f^{(k)}(x)$ is bounded, L'Hopital's rule implies that $|f'(x)| \lesssim_f |x|^{k-1} + 1$ and $|f''(x)| \lesssim_f |x|^{k-2} + 1$. Moreover, 
\begin{align}
\biggl| f'\left( \frac{x - \mu}{\sigma} \right) \biggr| \lesssim_f \left| \frac{x -\mu}{\sigma} \right|^{k-1} + 1 
          \lesssim_f \left| \frac{x}{\sigma} \right|^{k-1} + \left| \frac{\mu}{\sigma} \right|^{k-1} + 1
   \lesssim_f |x|^{k-1} + 1. \label{eqn:fprime_bound}
\end{align}

To verify inequality~\eqref{eqn:B1}, observe that for any $\theta \in \Theta$ and $x \in \mathbb{R}$, we have
\begin{align*}
 \log \phi (x) - f_{\mu, \sigma} (x) &= \log \frac{e}{8} - \sqrt{|x| + 1} - f\left( \frac{x - \mu}{\sigma} \right) - \log \sigma \\
              & \geq  - \sqrt{|x| + 1} - f\left( \frac{x - \mu}{\sigma} \right) - \log \frac{1}{c_{\sigma}} + \log \frac{e}{8}  \\
        &\gtrsim_f - \sqrt{|x| +1} + |x| - 1.
\end{align*}
Inequality \eqref{eqn:B1} follows so long as we choose $C^{**}$ such that $\log C^{**} \gtrsim_f \sup_{x \in \mathbb{R}} - |x| + \sqrt{|x|+1} + 1$.

To verify inequality~\eqref{eqn:B2a}, we first claim that
\begin{align}
\sup_{(\mu, \sigma) \in \Theta} \lambda_{\max}
  \biggl( \int_S 2 \nabla f_{\mu, \sigma} (x)(\nabla f_{\mu, \sigma} (x))^\top \phi(x)\,dx \biggr) < \infty.
  \label{eqn:lambda_max_bound}
\end{align}
Since
\begin{align*}
\nabla f_{\mu, \sigma}(x) = \left[ \begin{array}{c}
           - \frac{1}{\sigma} f'\left( \frac{x - \mu}{\sigma} \right) \\
           -\left( \frac{x - \mu}{\sigma^2} \right) f'\left( \frac{x - \mu}{\sigma} \right) - \frac{1}{\sigma} 
        \end{array} \right]
   = - \frac{1}{\sigma} f'\left( \frac{x - \mu}{\sigma} \right)  \left[ \begin{array}{c}
           1  \\
           \frac{x - \mu}{\sigma} + 1  
        \end{array} \right],
\end{align*}
we have, for any $(\mu, \sigma) \in \Theta$, that
\begin{align*}
  &  \lambda_{\max}
  \biggl( \int_{-\infty}^\infty 2 \nabla f_\theta(x)(\nabla f_\theta(x))^\top \phi(x)\,dx \biggr) 
    \leq \int_{-\infty}^\infty 2 \| \nabla f_\theta(x) \|^2 \phi(x) \, dx \\
   &\qquad  = \int_{-\infty}^\infty \frac{1}{\sigma^2} f'\bigl( \frac{x-\mu}{\sigma} \bigr)^2
         \bigl( \bigl( \frac{x - \mu}{\sigma} + 1 \bigr)^2 + 1 \bigr) \phi(x) \, dx \lesssim_f \int_{-\infty}^\infty (|x|^{2k} + 1) \phi(x) \, dx.
\end{align*}
Since the RHS is finite and does not depend on $(\mu, \sigma)$, inequality~\eqref{eqn:lambda_max_bound} holds. To verify inequality~\eqref{eqn:B2a}, we need only show that
\begin{align}
  \inf_{(\mu, \sigma) \in \Theta} \det \biggl( \int_{-\infty}^\infty 2 (\nabla f_{\mu, \sigma}(x))(\nabla f_{\mu, \sigma}(x))^\top \phi(x)\,dx \biggr) > 0.
  \label{eqn:det_bound}
\end{align}
To show the bound~\eqref{eqn:det_bound}, fix $(\mu, \sigma) \in \Theta$ and let $s_{\mu, \sigma}(x) = \frac{1}{\sigma^2} f'\left( \frac{x - \mu}{\sigma} \right)^2 \phi(x)$. Note that $s_{\mu, \sigma}$ is positive and integrable. Since $|f'(x)| \geq M$ for all $|x| > c$, it holds that $\inf_{(\mu, \sigma) \in \Theta} \int_{-\infty}^\infty s_{\mu, \sigma}(x) \, dx > 0$. Thus, $s_{\mu, \sigma}$ may be normalized to a density $\bar{s}_{\mu, \sigma}$.
Fix $(\mu, \sigma) \in \Theta$. We then have
\begin{align*}
  \det \biggl( \int_{-\infty}^\infty 2 \nabla f_{\mu, \sigma}(x))(\nabla f_{\mu, \sigma}(x))^\top \phi(x)\,dx \biggr)
  & \geq
    \int_{-\infty}^\infty \biggl(  \frac{x -\mu}{\sigma} + 1 \biggr)^2 s_{\mu, \sigma}(x) \,dx - \biggl( \int_{-\infty}^\infty \biggl( \frac{x -\mu}{\sigma} + 1 \biggr) s_{\mu, \sigma}(x) \,dx \biggr)^2 \\
  &\geq \textrm{Var}_{s_{\mu, \sigma}} \biggl( \frac{X - \mu}{\sigma} + 1\biggr) \gtrsim \textrm{Var}_{s_{\mu,\sigma}} (X).
\end{align*}
Since $\inf_{(\mu,\sigma) \in \Theta} \int_{-\infty}^\infty s_{\mu,\sigma}(x) \, dx > 0$, we have  $\inf_{(\mu,\sigma) \in \Theta} \textrm{Var}_{s_{\mu,\sigma}}(X) > 0$, which proves inequality~\eqref{eqn:det_bound} and thus also inequality~\eqref{eqn:B2a}. For inequality~\eqref{eqn:B2b}, observe that
\begin{align*}
  H(f_{\mu,\sigma})(x) =
  \frac{1}{\sigma^2} f'\bigl( \frac{x - \mu}{\sigma} \bigr) 
  \left( \begin{array}{cc}
            0 & 1 \\
            1 & 2 \frac{x - \mu}{\sigma} + 1
         \end{array} \right)
                +
                \frac{1}{\sigma^2} f''\bigl( \frac{x - \mu}{\sigma} \bigr)
    \left( \begin{array}{cc}
            1 & \frac{x - \mu}{\sigma} \\
            \frac{x - \mu}{\sigma} &  \frac{x - \mu}{\sigma}  + \bigl(\frac{x - \mu}{\sigma}\bigr)^2
           \end{array} \right).
\end{align*} 
Therefore, we have
\begin{align*}
  \int_{-\infty}^\infty \lambda_{\max} \bigl( H(f_{\mu,\sigma})(x) \bigr)^2 \phi(x) \, dx
  &\leq \int_{-\infty}^\infty \| H(f_{\mu,\sigma})(x) \|_F^2 \phi(x) \, dx \\
  &\lesssim_f \int_{-\infty}^\infty (|x|^{2k} + 1 ) \phi(x) \, dx.
\end{align*}
Since the RHS is finite and does not depend on $(\mu,\sigma)$, inequality~\eqref{eqn:B2b} follows. To verify~\eqref{eqn:B3}, note that
\begin{align*}
\| \nabla f_{\mu,\sigma}(x) \|_2 = \left| \frac{1}{\sigma} f'\left( \frac{x - \mu}{\sigma} \right) \right| 
            \sqrt{ 1 + \biggl(\frac{x - \mu}{\sigma} + 1 \biggr)^2 } 
     \lesssim_f |x|^k + 1.
\end{align*}
Thus, there exists $C > 0$ such that if we set $g^*(x) := C(1 + |x|^k) $, then $g^*(x) \geq \sup_{(\mu,\sigma) \in \Theta} \| \nabla f_{\mu,\sigma}(x) \|_2$. The function $g^*(x)$ is quasi-convex and $\int_{-\infty}^\infty g^*(x)^r \phi(x) dx < \infty$, since all the moments of $\phi$ are finite.

To verify inequality~\eqref{eqn:B4}, observe that
\begin{align*}
f'_{\mu,\sigma}(x) &= \frac{1}{\sigma} f'\left( \frac{x - \mu}{\sigma} \right) , \quad \text{and} \quad
\nabla f'_{\mu,\sigma}(x) = 
           \left[   \begin{array}{c}
                  -\frac{1}{\sigma^2} f''\left( \frac{x -\mu}{\sigma} \right) \\
            - \frac{1}{\sigma^2} f'\left( \frac{x - \mu}{\sigma} \right) 
        -  \frac{x - \mu}{\sigma^3} f''\left( \frac{x -\mu}{\sigma} \right)   \end{array} \right].
\end{align*}
Therefore, $|f'_{\mu,\sigma}(x)| \lesssim 1 + |x|^{k-1}$, and
\begin{align*}
\| \nabla f'_{\mu,\sigma}(x) \| &\leq \frac{1}{\sigma^2} \left| f''\left( \frac{x - \mu}{\sigma} \right) \right| + 
                            \frac{1}{\sigma^2} \left| f'\left( \frac{x - \mu}{\sigma} \right) \right| +
                           \frac{1}{\sigma^2} \left| f''\left( \frac{x - \mu}{\sigma} \right) \right| 
     \left| \frac{x - \mu}{\sigma} \right| \\
                    &  \lesssim_f 1 + |x|^{k-1}.
\end{align*}
Thus, there exists $C >0$ such that if we set $h^*(x) := C (1 + |x|^k)$, then
\[
  h^*(x) \geq \max\bigl\{ \sup_{(\mu,\sigma) \in \Theta} \| \nabla f_{\mu,\sigma}(x) \|,\, \sup_{(\mu,\sigma) \in \Theta} \| \nabla f'_{\mu,\sigma}(x) \|,\,
  \sup_{(\mu,\sigma) \in \Theta} |f'_{\mu,\sigma}(x)| \bigr\}.
\]
It is clear that $h^*$ is quasi-convex and $\int_S h^*(x)^{2 t} \phi(x) \,dx < \infty$. 

Finally, to verify condition B5 of Proposition~\ref{prop:theta_rate}, observe that
\begin{equation*}
(\log \phi)'(x) = \begin{cases}
\frac{1}{2} \frac{1}{\sqrt{1 - x }}, & \text{if } x < 0, \\
- \frac{1}{2} \frac{1}{\sqrt{1 + x}}, & \text{if } x > 0.
\end{cases}
\end{equation*}
In particular, $(\log \phi)'(x) \rightarrow 0$ as $|x| \rightarrow \infty$.
Since $f'(x) \geq M$ for all $x \leq -c$ and $f'(x) \leq -M$ for all $x \geq c$, if $x \leq - \frac{c}{c_\sigma} - C_\mu$, then $\frac{x - \mu}{\sigma} \leq -c$ and
\[
f'_{\mu,\sigma}(x) = \frac{1}{\sigma} f'\left( \frac{x - \mu}{\sigma} \right) \geq \frac{M}{c_\sigma}.
\]
If $x \geq \frac{c}{c_\sigma} + C_\mu$, then $\frac{x - \mu}{\sigma} \geq c$ and
\[
f'_{\mu,\sigma}(x) = \frac{1}{\sigma} f'\left( \frac{x - \mu}{\sigma} \right) \leq -\frac{M}{c_\sigma}.
\]
Thus, there exist $c_1 < 0$ and $c_2 > 0$ such that B5 holds.
\end{proof}

\begin{proposition}
  \label{prop:scale_family}
  
Let $k \geq 2$ be an integer and let $f \,:\, [0, \infty) \rightarrow [-\infty, \infty)$ be a $k$-times continuously differentiable function such that $\int_0^\infty e^{f(x)} \, dx = 1$. Suppose
\begin{itemize}
\item[(a)] $\sup_{x \in \mathbb{R}} |f^{(k)}(x)| < \infty$, and
\item[(b)] there exist $c > 0$ and $M > 0$ such that $f'(x) < - M$ for $x > c$.
\end{itemize}
For any $\sigma > 0$, define
\[
  f_{\sigma} (x) := f\left( \frac{x}{\sigma} \right) - \log \sigma.
\]
Then there exist $C^{**} \geq 1$, $c_1, c_2 \in (0,\infty)$, $r > 4$, and $t \in (2/r, 1/2)$ such that the following holds: For some $c_\sigma > 1$, with $\Theta := [c_{\sigma}^{-1}, c_{\sigma}]$, the family $\{ f_{\sigma} \}_{(\sigma) \in \Theta}$ satisfies conditions B1--B4 in Proposition~\ref{prop:theta_rate} with respect to $\phi$ defined in equation~\eqref{eqn:phi_defn} for $S = [0,\infty)$, and $C^{**}, c_1, c_2, r$, and $t$. 
\end{proposition}

The proof is almost identical to that of Proposition~\ref{prop:scale_location_family}. We note that the $g^*$ function that arises from Proposition~\ref{prop:scale_family} is non-decreasing on $[0, \infty)$. Hence, by footnote~\ref{note:one_sided_theta}, we need only verify the right side condition of \textbf{B5}.

\begin{proposition}
  \label{prop:gamma_family}
  For any $\alpha > 1$, $\beta > 0$, define $f_{\alpha, \beta}(x) = (\alpha - 1) \log x - \beta x + \alpha \log \beta - \log \Gamma(\alpha)$ for $x \in (0, \infty)$ and $f_{\alpha, \beta}(0) = -\infty$. Then there exist $C^{**} \geq 1$, $c_1, c_2 \in (0,\infty)$, $r > 4$, and $t \in (2/r, 1/2)$ such that the following holds: For some $C_\alpha > 0$ and $C_\beta > 1$, with $\Theta := [2-C_\alpha, 2+C_\alpha] \times [C_\beta^{-1}, C_\beta]$, the family $\{f_{\alpha, \beta} \}_{(\alpha, \beta) \in \Theta}$ satisfies conditions B1--B5 in Proposition~\ref{prop:theta_rate} with respect to $\phi$ defined in equation~\eqref{eqn:phi_defn} for $S = [0,\infty)$, and $C^{**}, c_1, c_2, r$, and $t$. 
\end{proposition}

\begin{proof}
As in the proof of Proposition~\ref{prop:scale_location_family}, we first define $\Theta_0 = [2, 4] \times [1/2, 2]$ and show that the analogous versions of inequlaities~\eqref{eqn:B1},~\eqref{eqn:B2a},~\eqref{eqn:B2b},~\eqref{eqn:B3}, and~\eqref{eqn:B4} hold. The arguments are almost identical, so we only highlight some key points.

Verifying the analog of inequality~\eqref{eqn:B1} is straightforward. For inequalities~\eqref{eqn:B2a} and~\eqref{eqn:B2b}, observe that
\begin{align*}
\nabla f_{\alpha, \beta}(x) &= \left[ \begin{array}{c}
                   \log x + \log \beta - d_{\alpha} \log \Gamma(\alpha) \\
                   -x + \frac{\alpha}{\beta} 
              \end{array} \right],\\ 
H f_{\alpha, \beta}(x) &=  \left[ \begin{array}{cc}
                 d^2_\alpha \log \Gamma(\alpha) & \frac{1}{\beta} \\
                 \frac{1}{\beta} & - \frac{\alpha}{\beta^2}
              \end{array} \right].
\end{align*}
The argument is then straightfoward using the same technique as in the proof of Proposition~\ref{prop:scale_location_family}.

Since
\begin{align*}
\| \nabla f_{\alpha, \beta}(x) \|_2 &\leq | \log x | + x + |\log \beta| + | d_\alpha \log \Gamma(\alpha) | + \frac{\alpha}{\beta} \\
       & \lesssim |\log x | + x + 1,
\end{align*}
there exists $C' >0 $ such that if we let $g^*(x) = C'(|\log x| + x + 1)$, then $g^*(x) \geq \sup_{\theta \in \Theta_0} \| \nabla f_\theta(x) \|_2$ and 
\begin{align*}
\int g^*(x)^r \phi(x) dx &= \int_0^\infty C' (|\log x| + x + 1)^r \phi(x) dx \\
        &\lesssim \int_0^\infty |\log x|^r \phi(x) + \int_0^\infty x^r \phi(x) dx < \infty,
\end{align*}
thus verifying inequality~\eqref{eqn:B3}. Moreover,
\begin{align*}
f'_{\alpha, \beta}(x) = \frac{(\alpha - 1)}{x} - \beta, \quad \text{and} \quad \nabla f'_{\alpha, \beta}(x) &= \left[ \begin{array}{c}
                  \frac{1}{x} \\
                    -1
           \end{array} \right].
\end{align*}
We thus conclude that $|f'_{\alpha, \beta}(x)| \lesssim 1 + x^{-1}$ and $\| \nabla f'_{\alpha, \beta}(x) \|_2 \lesssim 1 + x^{-1}$. Hence, there exists $C' > 0$ such that $h^*(x) := C'( 1 + x^{-1} + x)$ satisfies
\[
  h^*(x) \geq \max \{  \sup_{(\alpha, \beta) \in \Theta_0} \| \nabla f_{\alpha,\beta}(x) \|_2,\, \sup_{(\alpha,\beta) \in \Theta_0} \| \nabla f'_{\alpha,\beta}(x) \|_2,\, \sup_{(\alpha,\beta) \in \Theta_0} | f'_{\alpha,\beta}(x)| \}.
\]
Note that $h^*$ is clearly quasi-convex and
\[
\int_0^\infty h^*(x)^{2t} \phi(x) dx \lesssim 1 + \int_0^\infty x^{-2t} + x^{2t} \phi(x) dx.
\]
Since $2t < 1$ by assumption, the integral converges.

Condition B5 follows since $\alpha > 1$ for all $(\alpha, \beta) \in \Theta_0$. 
\end{proof}


\section{A discussion of networks with discrete weights}
\label{sec:discrete_weights}

Let $P$ and $Q$ be distributions on $\mathbb{N}$, and suppose 
\begin{align*}
  A \sim \begin{cases}
      P, \textrm{ if $\sigma_0(u) = \sigma_0(v)$,} \\
      Q, \textrm{ if $\sigma_0(u) \neq \sigma_0(v)$}.
    \end{cases}
\end{align*}
We propose a crude truncation scheme to adapt the algorithm described in Section~\ref{sec:method} for this setting: for an input parameter $L \in \mathbb{N}$, we collect all edges with weight greater than $L$, and assign them a new weight of $L$. In other words, we create a new matrix $A_L$, where if $A_{uv} \in [L]$, then $A_{L,uv} = A_{u,v}$; and if $A_{uv} \in \{ L+1, L+2,\ldots \}$, we set $A_{L, uv} = L$. In this way, $A_L$ is a network with $L$ labels. We then proceed with initialization (Algorithm~\ref{alg:initialization1}) and refinement (Algorithm~\ref{alg:refinement}). 

From Proposition~\ref{prop:labeled_sbm_rate}, we know that the truncation scheme achieves a rate of $\exp( - \frac{n}{\beta K} I(P, Q)(1+o(1))$ if the following two criteria are met. For convenience, let $P'$ and $Q'$ represent distributions on $[L]$ that result from truncating $P$ and $Q$ at $L$. 
\begin{enumerate}
\item There exists $\rho_n > 1$ such that $\rho_n^{-1} \leq \frac{P'_l}{Q'_l} \leq \rho_n$ for all $l \in [L]$, and $\frac{n I'}{(L_n +1) \rho^2_n \log \rho_n} \rightarrow \infty$.
\item $\lim_{L_n \rightarrow \infty} |I(P', Q')/I(P, Q) - 1| \rightarrow 0$.
\end{enumerate}
To show that these criteria are not unrealistic, we argue informally that the $P$ and $Q$ were members of the Poisson family $\{ \mathrm{Poi}(\lambda) \}_{\lambda \in [1,2]}$, then they satisfy both criteria if $L_n$ is chosen so that $L_n = o(\log \log( nI')$ and $L_n \rightarrow \infty$ as $n \rightarrow \infty$. To be precise, let $a \in (0, 1]$, let $(\lambda_1, \lambda_0) \in [1,2]$, and for $l > 0$, let $P_l = a \frac{\lambda_1^l e^{-\lambda_1}}{l!}$ and $Q_l = a \frac{\lambda_0^l e^{-\lambda_0}}{l!}$. Let $P_0 = (1 - a) + a e^{-\lambda_1}$ and $Q_0 = (1 - a) + a e^{-\lambda_0}$. Note that we assume a common sparsity factor $a$ for simplicity. 

It is then straightforward to verify that $\rho_n \lesssim 2^{L_n}$ satisfies the first criterion. For the second criterion, we assume that $I(P,Q) \leq 1$ and that $a = 0$; the argument for when $a > 0$ is exactly the same. Observe that
\begin{align*}
  \frac{I(P',Q') - I(P,Q)}{I(P,Q)} &\lesssim \frac{1}{(\lambda_1 - \lambda_0)^2} \sum_{l=L}^{\infty} (\sqrt{P_l} - \sqrt{Q_l})^2 \\
                                   &\lesssim \frac{1}{(\lambda_1 - \lambda_0)^2} \sum_{l=L}^{\infty} \biggl( \frac{\lambda_1^l e^{-\lambda_1}}{l!} - \frac{\lambda_0^l e^{-\lambda_0}}{l!} \biggr)^2 \frac{l!}{\lambda_0^l e^{-\lambda_0}} \\
  &\leq \frac{1}{(\lambda_1 - \lambda_0)^2} \sum_{l=L}^{\infty} \biggl( \biggl(\frac{\lambda_1}{\lambda_0}\biggr)^{l} e^{\lambda_0-\lambda_1} - 1 \biggr)^2 \frac{\lambda_0^l e^{-\lambda_0}}{l!} \rightarrow 0,
\end{align*}
as $L \rightarrow \infty$, as required. Our information-theoretic lower bound can be extended to this setting, as well. However, this truncation scheme may not be sensible for other types of discrete distributions; we leave a thorough investigation of networks with discrete weights for future work. 


\section{Appendix for Theorem~\ref{thm:lower_bound}}
\label{sec:lower_bound_proof}

We begin by defining some notation. Let $\sigma, \sigma_0 \,:\, [n] \rightarrow [K]$ be two clusterings. Let
\begin{align}
S_K[ \sigma, \sigma_0] := \argmin_{\pi \in S_K} d_H(\pi \circ \sigma, \sigma_0), \label{eqn:Sk_defn}
\end{align}
where $d_H(\cdot, \; \cdot)$ denotes the Hamming distance, and define
\begin{align}
\mathcal{E}[ \sigma, \sigma_0] := \Big\{ v \,:\, \pi(\sigma(v)) \neq  \sigma_0(v),\, 
          \text{ for some } \pi \in S_K[\sigma, \sigma_0]   \Big\}. \label{eqn:error_set_defn}
\end{align}
When $S_K[\sigma, \sigma_0]$ is a singleton, the set $\mathcal{E}[\sigma, \sigma_0]$ contains all nodes misclustered by $\sigma$ in relation to $\sigma_0$.
When $S_K[\sigma, \sigma_0]$ contains multiple elements,
we continue to call $\mathcal{E}[\sigma, \sigma_0]$ the set of \emph{misclustered} nodes.

\subsection{Proof of Theorem~\ref{thm:lower_bound}}
\label{sec:actual_lower_bound_proof}
Let $\sigma_0$ be defined as in the statement of Theorem~\ref{thm:lower_bound}. For any clustering $\sigma$, we define
\[
\tilde{l}(\sigma, \sigma_0) := \frac{1}{n} 
     \sum_{v=1}^n \mathbf{1}\{ v \in \mathcal{E}[\sigma, \sigma_0]\},
\]
where $\mathcal{E}[\sigma, \sigma_0]$ is defined in equation~\eqref{eqn:error_set_defn}. In particular, note that if $|S_K[\hat{\sigma}(A), \sigma_0]| = 1$, we have $\tilde{l} = l$. We claim the following:\\

\noindent{\textbf{Claim 1:}} There exists a sequence of real numbers $\zeta_n \rightarrow 0$ and $c \in (0, \infty)$ such that, for any permutation equivariant estimator $\hat{\sigma}$,
\begin{align}
  \inf_{\substack{((P_0, p), (Q_0, q)) \in \mathcal{G}^* \\ I'_n \leq I((P_0, p), (Q_0, q)) \leq C}} \E_{\substack{(P_0, p)  \\ (Q_0, q)}} \bigl[ \tilde{l}(\hat{\sigma}(A), \sigma_0) \bigr] \exp \left( (1 + \zeta_n) \frac{n}{\beta K} I((P_{0}, p), (Q_{0}, q)) \right) \geq c.
  \label{eqn:info_lower_bound_claim1}
\end{align}

\noindent{\textbf{Claim 2:}} For any $c > 0$, there exists $c' > 0$ such that, for any permutation equivariant estimator $\hat{\sigma}$,
\begin{align}
  \inf_{\substack{((P_0, p), (Q_0, q)) \in \mathcal{G}^* \\ I((P_0, p), (Q_0, q)) \leq c/n}} \E_{\substack{(P_0, p)  \\ (Q_0, q)}} \bigl[ \tilde{l}(\hat{\sigma}(A), \sigma_0) \bigr] \geq c'.
  \label{eqn:info_lower_bound_claim2}
\end{align}

\noindent We first prove the theorem from the claims. Let $((P_0, p), (Q_0, p)) \in \mathcal{G}^*$, and let $\hat{\sigma}$ be an arbitrary clustering algorithm. We omit the $((P_0,p), (Q_0,q))$ subscript on the expectation and probability from this point on for simplicity of presentation. Let $\hat{\sigma}$ be an arbitrary permutation equivariant estimator. If $\mathbb{P} \left( l(\hat{\sigma}(A), \sigma_0) \geq \frac{1}{2\beta K} \right) \geq \frac{1}{2} \E \tilde{l}(\hat{\sigma}(A), \sigma_0)$, then by Markov's inequality, we have
\begin{align*}
\E l(\hat{\sigma}(A), \sigma_0) &\geq \frac{1}{2 \beta K} \mathbb{P} \left( l(\hat{\sigma}(A), \sigma_0) \geq \frac{1}{2\beta K} \right) \geq \frac{1}{4 \beta K}  \E \tilde{l}(\hat{\sigma}(A), \sigma_0).
\end{align*}
On the other hand, if $\mathbb{P} \left( l(\hat{\sigma}(A), \sigma_0) \geq \frac{1}{2\beta K} \right) < \frac{1}{2} \E \tilde{l}(\hat{\sigma}(A), \sigma_0)$, we have
\begin{align*}
& \E l(\hat{\sigma}(A), \sigma_0) \geq \E\left[ l(\hat{\sigma}(A), \sigma_0) 
       \; \Big | \; l(\hat{\sigma}(A), \sigma_0) < \frac{1}{2 \beta K} \right] 
         \mathbb{P} \left(  l(\hat{\sigma}(A), \sigma_0) < \frac{1}{2 \beta K} \right) \\
  &\quad  \stackrel{(a)}{=} \E \left[ \tilde{l}(\hat{\sigma}(A), \sigma_0) 
       \; \Big | \; l(\hat{\sigma}(A), \sigma_0) < \frac{1}{2 \beta K}\right]
         \mathbb{P} \left(  l(\hat{\sigma}(A), \sigma_0) < \frac{1}{2 \beta K} \right) \\
  &\quad = \E \tilde{l}(\hat{\sigma}(A), \sigma_0) -  
      \E \left[ \tilde{l}(\hat{\sigma}(A), \sigma_0) 
       \; \Big | \; l(\hat{\sigma}(A), \sigma_0) \geq \frac{1}{2 \beta K} \right] 
         \mathbb{P} \left(  l(\hat{\sigma}(A), \sigma_0) \geq \frac{1}{2 \beta K} \right) \\
  &\quad \geq \E \tilde{l}(\hat{\sigma}(A), \sigma_0) - \frac{1}{2}  \E \tilde{l}(\hat{\sigma}(A), \sigma_0) = \frac{1}{2}  \E \tilde{l}(\hat{\sigma}(A), \sigma_0),
\end{align*}
where $(a)$ holds by Lemma~\ref{lem:consensus}. Hence, we have that, in all cases, $\E l(\hat{\sigma}(A), \sigma_0) \geq \frac{1}{4 \beta K} \E \tilde{l}(\hat{\sigma}(A), \sigma_0)$. 

We now focus on proving the claims. Let us arbitrarily fix a permutation equivariant algorithm $\hat{\sigma}$, fix $((P_0, p), (Q_0, q)) \in \mathcal{G}^*$, and suppose $I((P_0, p), (Q_0, q)) \leq 2 \log 2$. Without loss of generality, suppose cluster 1 has size $\frac{n}{\beta K} + 1$ and cluster 2 has size $\frac{n}{\beta K}$. Let $C_k := \{u \in [n] \,:\, \sigma_0(u) = k\}$ denote the $k^\text{th}$ cluster. We also suppose without loss of generality that $C_1 = \{1, 2, ..., \frac{n}{\beta K}+1 \}$ and $C_2 = \{ \frac{n}{\beta K}+2, ..., 2\frac{n}{\beta K}+1 \}$.


Define $\sigma_0^1 := \sigma_0$ and define $\sigma_0^2 \,:\, [n] \rightarrow [K]$ such that $\sigma_0^2(v) = \sigma_0(v)$ for all $v \neq 1$, and $\sigma_0^2(1) = 2$. Define $\sigma^*$ as a random cluster assignment with the distribution  
\begin{align}
\sigma^* := \left \{
     \begin{array}{cc}
     \sigma^1_0,  &  \text{with probability } \frac{1}{2}, \\
     \sigma^2_0, & \text{with probability } \frac{1}{2}.
    \end{array} \right. \label{eqn:sigma_star_defn}
\end{align}
We note that $\sigma^*(u) = \sigma_0(u)$ for all $u \neq 1$ and $\sigma^*(1)$ is either 1 or 2, each with $\frac{1}{2}$ probability.

Let $\mathbb{P}_\Phi$ denote a probability measure on $\{ \sigma_0^1, \sigma_0^2\} \times \bigl(\mathbb{R}^{\frac{n(n-1)}{2}}, \mathcal{B}(\mathbb{R}^{\frac{n(n-1)}{2}})\bigr)$---where $\{\sigma_0^1, \sigma_0^2\}$ is by default equipped by the discrete $\sigma$-algebra---defined by
\[
\mathbb{P}_{\Phi}(\sigma^*, \mathbf{A}) = \mathbb{P}(\sigma^*) \mathbb{P}_{SBM}(\mathbf{A} \given \sigma^*),
\]
for a Borel-measurable set $\mathbf{A} \in  \mathcal{B}(\mathbb{R}^{\frac{n(n-1)}{2}})$, 
where $\mathbb{P}_{SBM}( \cdot \given \sigma^*)$ is the probability measure $\bigl(\mathbb{R}^{\frac{n(n-1)}{2}}, \mathcal{B}(\mathbb{R}^{\frac{n(n-1)}{2}})\bigr)$ defined by the weighted SBM with respect to $((P_0, p), (Q_0, q))$, treating $\sigma^*$ as the true cluster assignment. Let $\Psi$ denote an alternative probability measure on $\{ \sigma_0^1, \sigma_0^2\} \times (\mathbb{R}^{n \times n}, \mathcal{B}(\mathbb{R}^{n \times n}))$ defined by
\[
\mathbb{P}_{\Psi} (\sigma^*, \mathbf{A}) = P(\sigma^*) \mathbb{P}_{\Psi}(\mathbf{A} \given \sigma^*),
\]
where $\mathbb{P}_{\Psi}( \mathbf{A} \given \sigma^* )$ is a product of $\frac{n(n-1)}{2}$ distributions over $\mathbb{R}$, defined as follows: if $A$ is a random upper-triangular matrix taking value in $\mathbb{R}^{\frac{n(n-1)}{2}}$, whose distribution is $\mathbb{P}_{\Psi}( \cdot \given \sigma^* )$, then
\begin{enumerate}
\item for $(u,v) \in [n]^2$ where $u < u$, if $u\neq 1$ and $v \neq 1$, then $A_{uv}$ is distributed as in $\mathbb{P}_{SBM}( \cdot \given \sigma^*)$;
\item if $u = 1$ and $v \notin C_1 \cup C_2$, then $A_{uv}$ is distributed as in $P_{SBM}( \cdot \given \sigma^*)$;
\item if $u = 1$ and $v \in C_1 \cup C_2$, then $A_{uv}$ is distributed as $Y$, where $Y$ is the distribution on $\mathbb{R}$ that minimizes $D((P_0, p), (Q_0, q))$ in Lemma~\ref{lem:information_equivalence}; i.e., $Y_{0} \propto (P_{0} Q_{0})^{1/2}$ and $(1-Y_{0}) y(x) \propto \sqrt{(1-P_{0})p(x)(1-Q_{0}) q(x)}$.
\end{enumerate}
Note that $P_{\Psi}( \cdot \given \sigma^*)$ does not actually depend on whether $\sigma^* = \sigma_0^1$ or $\sigma_0^2$. Since $Y$ is absolutely continuous with respect to $P$ and $Q$, the distribution $\mathbb{P}_{\Psi}$ is absolutely continuous with respect to $\mathbb{P}_{\Phi}$. Thus, we may define a Borel measurable function $\mathcal{Q} \,:\, \{ \sigma_0^1, \sigma_0^2\} \times \mathbb{R}^{\frac{n(n-1)}{2}} \rightarrow [0, \infty)$ as
\begin{align}
  \mathcal{Q}(\sigma^*, A) := & \log \frac{d \mathbb{P}_{\Psi}}{d \mathbb{P}_{\Phi}}(\sigma^*, A) = \log \frac{ d \mathbb{P}_{SBM}( A \given \sigma^*)}{ d \mathbb{P}_{\Psi} ( A \given \sigma^*)}. \nonumber 
\end{align}
For $\sigma^* \in \{ \sigma_0^1, \sigma_0^2\}$ and $A \in \mathbb{R}^{n \times n}$, we denote $P(A_{u, 1}) := P_0$ if $A_{u, 1} = 0$, and $P(A_{u, 1}) := (1-P_0)p(A_{u, 1})$ if $A_{u, 1} \neq 0$, and similarly for $Q$ and $Y$. Then, from the fact that $\mathbb{P}_{SBM}(\cdot \given \sigma^*)$ and $\mathbb{P}_{\Psi}(\cdot \given \sigma^*)$ are both product measures, we have
\begin{align}
  \mathcal{Q}(\sigma^*, A) = \sum_{\substack{ u \in C_{\sigma^*(1)} \\ u \neq 1}} \log \frac{Y(A_{1,u})}{P(A_{1,u})} + \sum_{u \in C_1 \cup C_2 \backslash C_{\sigma^*(1)}  } \log \frac{Y(A_{1,u})}{Q(A_{1,u})}. \label{eqn:Q_definition}
\end{align}
We now define the event
$$E = \Big\{ \{1\} \notin\mathcal{E}[\hat{\sigma}(A), \sigma^*] \text{ and }
\tilde{l}(\hat{\sigma}(A), \sigma^*) \leq \frac{1}{4\beta K} \Big\}.$$ 
Let  $t := - \log (20 \beta K \E_{\Phi} \tilde{l}(\hat{\sigma}(A), \sigma^*))$. Then
\begin{align}
\mathbb{P}_{\Psi}\bigl( \mathcal{Q}(\sigma^*, A) \leq t \bigr) & = \mathbb{P}_{\Psi}\bigl( \mathcal{Q}(\sigma^*, A) \leq t,\, E^c \bigr) + \mathbb{P}_{\Psi}\bigl( \mathcal{Q}(\sigma^*, A) \leq t,\, E \bigr).
\label{eqn:psi_Q_bound}
\end{align}
We bound the first term of inequality~\eqref{eqn:psi_Q_bound} as follows:
\begin{align}
& \mathbb{P}_{\Psi}( \mathcal{Q}(\sigma^*, A) \leq t, E^c) = \int_{ \{\mathcal{Q} \leq t\} \cap E^c} d \mathbb{P}_{\Psi} = \int_{ \{\mathcal{Q} \leq t\} \cap E^c} \exp(\mathcal{Q}) d \mathbb{P}_{\Phi} \nonumber \\
  & \qquad \leq e^t \mathbb{P}_{\Phi}\bigl( \mathcal{Q}(\sigma^*, A) \leq t, E^c \bigr) \leq e^t \mathbb{P}_{\Phi}(E^c) \nonumber \\
    & \qquad \leq e^t \left( \mathbb{P}_{\Phi}(\{1\} \in \mathcal{E}[\hat{\sigma}(A), \sigma^*]) + 
      \mathbb{P}_{\Phi} \left( \tilde{l}(\hat{\sigma}(A), \sigma^*) \geq \frac{1}{4\beta K}  \right) \right).
      \label{eqn:prob_Q_Ec_bound}
\end{align}
Furthermore,
\begin{align}
 \E_{\Phi}  \tilde{l}(\hat{\sigma}(A), \sigma^*) &= 
    \frac{1}{n} \sum_{v=1}^n P_{\Phi}( v \in \mathcal{E}[\hat{\sigma}(A), \sigma^*]) \geq \frac{1}{n} \sum_{v \in C_{\sigma^*(1)}} P_{\Phi}( v \in \mathcal{E}[\hat{\sigma}(A), \sigma^*]) \nonumber \\
  &\stackrel{(a)}= \frac{|C_{\sigma^*(1)}|}{n} P_{\Phi}(1 \in \mathcal{E}[\hat{\sigma}(A), \sigma^*]) \geq \frac{1}{ \beta K} P_{\Phi}(1 \in \mathcal{E}[\hat{\sigma}(A), \sigma^*]), \label{eqn:expectation_lower_bound1}
\end{align}
where $(a)$ follows from Corollary~\ref{cor:permutation_equivariance_symmetry_sbm}, and
\begin{align}
 \E_{\Phi} \tilde{l}(\hat{\sigma}(A), \sigma^*) &\geq  
         \E_{\Phi}\Big[ \tilde{l}(\hat{\sigma}(A), \sigma^*) \,\Big|\, 
        \tilde{l}(\hat{\sigma}(A), \sigma^*) \geq \frac{1}{4\beta K}\Big] P_\Phi\left( \tilde{l}(\hat{\sigma}(A), \sigma^*) \geq \frac{1}{4\beta K} \right) \nonumber  \\
    & \geq \frac{1}{4\beta K} P_\Phi\left( \tilde{l}(\hat{\sigma}(A), \sigma^*) \geq \frac{1}{4\beta K} \right). \label{eqn:expectation_lower_bound2}
\end{align}
Hence, combining inequalities~\eqref{eqn:prob_Q_Ec_bound},~\eqref{eqn:expectation_lower_bound1},~\eqref{eqn:expectation_lower_bound2}, we obtain
\begin{align*}
P_{\Psi}( \mathcal{Q} \leq t, E^c) &\leq 
     e^t 5 \beta K  \E_{\Phi} \bigl[ \tilde{l}(\hat{\sigma}(A), \sigma^*)\bigr]. 
\end{align*}

We now turn to the second term in equation~\eqref{eqn:psi_Q_bound}. We have
\begin{multline}
\label{EqnSponsor}
P_\Psi(E) = \frac{1}{2} P_{\Psi}\left(1 \notin \mathcal{E}[\hat{\sigma}(A), \sigma_0^1] 
\text{ and }\tilde{l}(\hat{\sigma}(A), \sigma_0^1)  \leq \frac{1}{4\beta K} \right) \\
+ \frac{1}{2} P_{\Psi}\left(1 \notin \mathcal{E}[\hat{\sigma}(A), \sigma_0^2]  
\text{ and }\tilde{l}(\hat{\sigma}(A), \sigma_0^2)  \leq \frac{1}{4\beta K}  \right).
\end{multline}

In the event that $\tilde{l}(\hat{\sigma}(A), \sigma_0^1) \leq \frac{1}{4 \beta K}$, we know, by the fact that $l(\hat{\sigma}(A), \sigma_0^1) \leq \tilde{l}(\hat{\sigma}(A), \sigma_0^1)$ and Lemma~\ref{lem:consensus}, that $S_K[\hat{\sigma}(A), \sigma_0^1]$ contains only one element, which we denote by $\pi$.  Since $d_H(\sigma_0^1, \sigma_0^2) = 1$, we have $\frac{1}{n} d_H(\pi \circ \hat{\sigma}(A), \sigma_0^2) \leq \frac{1}{4 \beta K} + \frac{1}{n} \leq \frac{1}{2 \beta K}$, so we may apply Lemma~\ref{lem:consensus} again to conclude that $\pi$ is the only element of $S_K[\hat{\sigma}(A), \sigma_0^2]$, as well. However, since $\sigma_0^1(1) = 1$ and $\sigma_0^2(1) = 2$, we know that $(\pi \circ \hat{\sigma}(A) )(1) $ cannot be equal to both $\sigma_0^1(1)$ and $\sigma_0^2(1)$. Hence, we cannot simultaneously have $1 \notin \mathcal{E}[\hat{\sigma}(A), \sigma_0^1]$ and $1 \notin \mathcal{E}[\hat{\sigma}(A), \sigma_0^2]$. Thus, the two events in equation~\eqref{EqnSponsor} are disjoint, so
$P_{\Psi}(\mathcal{Q} \leq t, E) 
             \leq P_{\Psi}(E) \leq \frac{1}{2}$.
Plugging back into equation~\eqref{eqn:psi_Q_bound}, we conclude that
\[
P_{\Psi}(\mathcal{Q} \leq t) \leq e^t 5 \beta K \E_{\Phi} \tilde{l}(\hat{\sigma}(A) , \sigma^*) + \frac{1}{2} \stackrel{(a)} \leq \frac{3}{4},
\]
where $(a)$ follows from the definition that $t = - \log ( 20 \beta K \E_{\Phi} \tilde{l}(\hat{\sigma}(A), \sigma^*) )$. By Chebyshev's inequality, we also have
\[
P_{\Psi}\left( \mathcal{Q} \leq \E_{\Psi} \mathcal{Q} + \sqrt{ 5 V_{\Psi}(\mathcal{Q})} \right) \geq 4/5,
\]
where $V_\Psi(Q) := \textrm{Var}_{\Psi}\bigl( \mathcal{Q}(\sigma^*, A) \bigr)$. Hence, $\log \frac{1}{20 \beta K \E_{\Phi} \tilde{l}(\hat{\sigma}(A),\sigma_0)} \leq \E_{\Psi} \mathcal{Q} + \sqrt{ 5 V_{\Psi}(\mathcal{Q})}$, or equivalently,
\begin{align}
  \E_{\Phi} \tilde{l}(\hat{\sigma}(A), \sigma^*) \geq \frac{1}{20 \beta K} \exp\Big( - (\E_{\Psi} \mathcal{Q} + \sqrt{ 5 V_{\Psi}(\mathcal{Q})} ) \Big).
  \label{eqn:expected_loss_lower_bound_overall}
\end{align}
We now compute $\E_{\Psi} \mathcal{Q}$ and $V_{\Psi}(\mathcal{Q})$. Note that
\[
\E_{\Psi} \mathcal{Q} = \frac{1}{2} \E_{\Psi} [ \mathcal{Q} \given \sigma^* = \sigma_0^1 ] + 
              \frac{1}{2} \E_{\Psi} [ \mathcal{Q} \given \sigma^* = \sigma_0^2 ].
\]
By Lemma \ref{lem:information_equivalence}, we have
\begin{align*}
\E_{\Psi} [\mathcal{Q} \given \sigma^* = \sigma_0^1] &= \E_{\Psi} \left[\sum_{u: \, u \neq 1,\, \sigma_0^1(u) = 1} \log \frac{Y(A_{1,u})}{P(A_{1,u})} + \sum_{u :\, \sigma_0^1(u) = 2} \log \frac{Y(A_{1,u})}{Q(A_{1,u})}\right]  \\
     &= \frac{n}{\beta K}  \int \log \frac{dY}{dP} dY + \frac{n}{\beta K} \int \log \frac{dY}{dQ} dY \\
     &= \frac{n}{\beta K} 2 D((P_0, p), (Q_0, q)) = \frac{n}{\beta K} I((P_0, p), (Q_0, q)).
\end{align*}
Similarly, we have $\E_{\Psi}[ \mathcal{Q} \given \sigma^* = \sigma_0^2] = \frac{n}{\beta K} I((P_0, p), (Q_0, q))$, so
\begin{align}
  \E_{\Psi} \mathcal{Q} = \frac{n}{\beta k} I((P_0, p), (Q_0, q)).
  \label{eqn:Q_expectation_bound}
\end{align}
By Lemma \ref{lem:Q_variance}, we also have
\begin{align}
  \sqrt{5 V_{\Psi} (\mathcal{Q})} \leq 20 \sqrt{ \frac{n}{\beta K}   I((P_0, p), (Q_0, q)) }.
  \label{eqn:Q_variance_bound}
\end{align}

Let us define $\zeta_n := 30 \bigl( \frac{n}{\beta K} I((P_0, p), (Q_0, q)) \bigr)^{-1/2}$.  Combining inequalities~\eqref{eqn:expected_loss_lower_bound_overall},~\eqref{eqn:Q_expectation_bound}, and~\eqref{eqn:Q_variance_bound}, we then have
\begin{align}
  \mathbb{E}_{\Phi} \tilde{l}(\hat{\sigma}(A), \sigma^*)
  &\geq \frac{1}{20 \beta K} \exp \biggl( - \frac{n}{\beta K} I((P_0, p), (Q_0, q)) - 30 \biggl( \frac{n}{\beta K} I((P_0,p), (Q_0, q)) \biggr)^{1/2} \biggr)
  \label{eqn:expectation_lower_bound1} \\
  &\geq \frac{1}{20 \beta K} \exp \biggl( - (1 - \zeta_n) \frac{n}{\beta K} I((P_0, p), (Q_0, q)) \biggr).
    \label{eqn:expectation_lower_bound2}
\end{align}
Now suppose $I((P_0, p), (Q_0, q)) \geq I'_n$. Since $\zeta_n \leq 30 \bigl( \frac{n}{\beta K} I_n' \bigr)^{-1/2}$, we have $\zeta_n \rightarrow 0$.
On the other hand, fix $c > 0$ and suppose $I((P_0, p), (Q_0, q)) \leq \frac{c}{n}$. Then from inequality~\eqref{eqn:expectation_lower_bound1}, we have
\begin{align*}
  \mathbb{E}_{\Phi} \tilde{l}(\hat{\sigma}(A), \sigma^*)
  \geq \frac{1}{20 \beta K} \exp\biggl( - \frac{c}{\beta K} - 30 \sqrt{\frac{c}{\beta K}} \biggr) := c'. 
\end{align*}
To finish the proof of the two claims listed at the beginning, we need only show that the same lower bound holds for $\mathbb{E} \tilde{l}(\hat{\sigma}(A), \sigma_0)$. 

Define two probability measures $P_1$ and $P_2$ on $(A, \hat{\sigma}(A))$, as follows:
\begin{align*}
P_1(A, \hat{\sigma}(A)) &= 
    P_{SBM}( A \given \sigma_0^1) P_{alg}( \hat{\sigma}(A) \given A), \\
P_2(A, \hat{\sigma}(A)) &= P_{SBM}( A \given \sigma_0^2) P_{alg}( \hat{\sigma}(A) \given A).
\end{align*}
Note that $\E_{\Phi}[ \tilde{l}(\hat{\sigma}(A), \sigma^*) \given \sigma^* = \sigma_0^1] = \E_1 \tilde{l}(\hat{\sigma}(A), \sigma_0^1) $ and $\E_{\Phi}[ \tilde{l}(\hat{\sigma}(A), \sigma^*) \given \sigma^* = \sigma_0^2] = \E_2 \tilde{l}(\hat{\sigma}(A), \sigma_0^2)$, where $\E_1$ and $\E_2$ are expectations taken with respect to $P_1$ and $P_2$, respectively. We claim that $\E_1 \tilde{l}(\hat{\sigma}(A), \sigma_0^1) = \E_2 \tilde{l}(\hat{\sigma}(A), \sigma_0^2)$, in which case $\E_{\Phi} l(\hat{\sigma}(A), \sigma^*) = \E_1 \tilde{l}(\hat{\sigma}(A), \sigma^1_0) = \E \tilde{l}(\hat{\sigma}(A), \sigma_0)$ and the claims follow.

Define a permutation $\pi \in S_n$ that swaps $\{2, \dots, \frac{n}{\beta K} + 1\}$ with $\{ \frac{n}{\beta K} + 2, \dots, 2 \frac{n}{\beta K} + 1\}$ and satisfies $\pi(u) = u$ for $u = 1$ and $u \geq 2 \frac{n}{\beta K} + 2$. Clearly, $\sigma_0^2 = \tau \circ \sigma_0^1 \circ \pi^{-1}$, where $\tau \in S_K$ swaps cluster labels 1 and 2. Now let $A$ be fixed and let $\rho \in S_K$ be arbitrary. We have
\begin{align*}
d_H( \rho \circ \hat{\sigma}(A), \sigma_0^1) &= 
  d_H( \rho \circ \hat{\sigma}(A), \tau^{-1} \circ \sigma_0^2 \circ \pi)\\
  &= d_H( \rho \circ \hat{\sigma}(A) \circ \pi^{-1}, 
                     \tau^{-1} \circ \sigma_0^2) \\
  &\stackrel{(a)} = d_H( \rho \circ \xi^{-1} \circ \hat{\sigma}(\pi A), 
              \tau^{-1} \circ \sigma_0^2) \\
  &= d_H( \tau \circ \rho \circ \xi^{-1} \circ \hat{\sigma}(\pi A), \sigma_0^2),
\end{align*}
where $(a)$ follows because $\hat{\sigma}(\cdot)$ is permutation equivariant, which implies that there exists $\xi \in S_K$ such that $\hat{\sigma}(A) \circ \pi^{-1} = \xi^{-1} \circ \hat{\sigma}(\pi A)$. 
Thus, $\rho \mapsto \tau \circ \rho \circ \xi^{-1}$ is a bijection between $S_K[\hat{\sigma}(A), \sigma_0^1]$ and $S_K[\hat{\sigma}(\pi A), \sigma_0^2]$. Furthermore, if $v$ satisfies $(\rho \circ \hat{\sigma}(A))(v) \neq \sigma_0^1(v)$, then letting $u = \pi(v)$, we equivalently have
\begin{align*}
(\rho \circ \hat{\sigma}(A))(v) &\neq (\tau^{-1} \circ \sigma_0^2 \circ \pi) (v) \iff (\rho \circ \hat{\sigma}(A) \circ \pi^{-1} )(u) \neq (\tau^{-1} \circ \sigma_0^2 )(u),
\end{align*}
so $(\tau \circ \rho \circ \xi^{-1} \circ \hat{\sigma}(\pi A)) (u) \neq
 \sigma_0^2(u)$. Thus, $v \in \mathcal{E}[\hat{\sigma}(A), \sigma_0^1]$ if and only if $\pi(v) \in \mathcal{E}[\hat{\sigma}(\pi A), \sigma_0^2]$.  Finally, we conclude that
\begin{align*}
\E_1 \tilde{l}(\hat{\sigma}(A), \sigma_0^1) &= 
      \frac{1}{n} \sum_{v = 1}^n P_1 ( v \in \mathcal{E}[\hat{\sigma}(A), \sigma_0^1]) = \frac{1}{n} \sum_{v=1}^n  P_1 ( \pi(v) \in \mathcal{E}[\hat{\sigma}(\pi A), \sigma_0^2]) \\
   &\stackrel{(a)} = \frac{1}{n} \sum_{v=1}^n P_2( \pi(v) \in  \mathcal{E}[\hat{\sigma}(A), \sigma_0^2] ) = \E_2 \tilde{l} (\hat{\sigma}(A), \sigma_0^2),
\end{align*}
where $(a)$ follows because $[\pi A]_{ij} = A_{\pi^{-1}(i), \pi^{-1}(j)}$, implying that if $A$ is distributed according to $P_{SBM}(A \given \sigma_0^1)$, then $\pi A$ is distributed according to $P_{SBM}(A \given \sigma_0^2)$. This concludes the proof.


\subsection{Properties of permutation equivariant estimators}

The following lemma establishes a symmetry property used to prove Theorem~\ref{thm:lower_bound}: 
\begin{lemma}
\label{lem:permutation_equivariance_symmetry}
Let the true clustering $\sigma_0$ be arbitrary. Suppose the weight matrix $A$ is drawn from an arbitrary probability measure and $\hat{\sigma}$ is any permutation equivariant estimator. Let $u$ and $v$ be two nodes such that there exists $\pi \in S_n$ satisfying
\begin{itemize}
\item[(1)] $\pi(u) = v$,
\item[(2)] $\pi$ is measure-preserving; i.e., $A \stackrel{d}{=} \pi A$, and
\item[(3)] $\pi$ preserves the true clustering; i.e., there exists $\tau \in S_K$ such that $\tau \circ \sigma_0 \circ \pi^{-1} = \sigma_0$.
\end{itemize}
Then
\[
P( u \in \mathcal{E}[\hat{\sigma}(A), \sigma_0]) = P( v \in \mathcal{E}[\hat{\sigma}(A), \sigma_0] ).
\]
\end{lemma}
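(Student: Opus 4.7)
The plan is to show that the two events $\{u \in \mathcal{E}[\hat\sigma(A), \sigma_0]\}$ and $\{v \in \mathcal{E}[\hat\sigma(A), \sigma_0]\}$ have the same probability by constructing an explicit measure-preserving bijection between the underlying outcome spaces. Concretely, I will exploit $A \stackrel{d}{=} \pi A$ to rewrite
\[
P(v \in \mathcal{E}[\hat\sigma(A), \sigma_0]) = P(v \in \mathcal{E}[\hat\sigma(\pi A), \sigma_0]),
\]
then apply permutation equivariance and hypothesis (3) to identify this event pathwise with $\{u \in \mathcal{E}[\hat\sigma(A), \sigma_0]\}$. (For a randomized $\hat\sigma$, the equality $A \stackrel{d}{=} \pi A$ should be understood together with the independent algorithmic randomness; condition~\eqref{eqn:perm_equiv_key} is then assumed to hold almost surely in that randomness, as noted after Definition~\ref{defn:permutation_equivariance_deterministic}.)

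For the main computation, fix an outcome and pick $\xi = \xi(A) \in S_K$ so that $\hat\sigma(\pi A) \circ \pi = \xi \circ \hat\sigma(A)$; such a $\xi$ exists by permutation equivariance. Rewrite hypothesis (3) as the two equivalent identities $\sigma_0 \circ \pi = \tau \circ \sigma_0$ and $\sigma_0 = \tau \circ \sigma_0 \circ \pi^{-1}$. For any $\rho' \in S_K$, using the right-invariance $d_H(f \circ \pi, g \circ \pi) = d_H(f, g)$ and the fact that left-composition by a bijection on $[K]$ preserves Hamming distance, I will compute
\begin{align*}
d_H(\rho' \circ \hat\sigma(\pi A),\, \sigma_0)
&= d_H(\rho' \circ \xi \circ \hat\sigma(A) \circ \pi^{-1},\, \tau \circ \sigma_0 \circ \pi^{-1}) \\
&= d_H(\rho' \circ \xi \circ \hat\sigma(A),\, \tau \circ \sigma_0)
 = d_H(\tau^{-1} \circ \rho' \circ \xi \circ \hat\sigma(A),\, \sigma_0).
\end{align*}
Hence the map $\rho' \mapsto \tau^{-1} \circ \rho' \circ \xi$ is a bijection of $S_K$ that sends $S_K[\hat\sigma(\pi A), \sigma_0]$ onto $S_K[\hat\sigma(A), \sigma_0]$.

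Using $v = \pi(u)$, $\pi^{-1}(v) = u$, and $\sigma_0(v) = \tau(\sigma_0(u))$, the inequality $(\rho' \circ \hat\sigma(\pi A))(v) \neq \sigma_0(v)$ becomes $(\rho' \circ \xi \circ \hat\sigma(A))(u) \neq \tau(\sigma_0(u))$, equivalently $(\tau^{-1} \circ \rho' \circ \xi \circ \hat\sigma(A))(u) \neq \sigma_0(u)$. Setting $\rho := \tau^{-1} \circ \rho' \circ \xi$ and running $\rho'$ over $S_K[\hat\sigma(\pi A), \sigma_0]$ (equivalently, $\rho$ over $S_K[\hat\sigma(A), \sigma_0]$), I conclude that $v \in \mathcal{E}[\hat\sigma(\pi A), \sigma_0]$ if and only if $u \in \mathcal{E}[\hat\sigma(A), \sigma_0]$ pathwise, which after taking probabilities yields the lemma.

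The plan is essentially bookkeeping, and the main obstacle is simply keeping the two kinds of permutations (on $[n]$ via $\pi$ and on $[K]$ via $\tau, \xi, \rho, \rho'$) organized cleanly; the fact that $\xi$ may depend on $A$ is harmless because the argmin set $S_K[\hat\sigma(\pi A), \sigma_0]$ already depends on $A$ in the same way. A subtle point worth verifying is that the identification of events is truly pointwise (so no measurability issue arises in handling randomized $\hat\sigma$), which follows from applying the almost-sure version of \eqref{eqn:perm_equiv_key} on a full-measure subset of the algorithmic randomness.
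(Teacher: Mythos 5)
Your proof is correct and takes essentially the same approach as the paper: use $A \stackrel{d}{=} \pi A$ to shift to $\hat\sigma(\pi A)$, establish that $\rho' \mapsto \tau^{-1}\circ\rho'\circ\xi$ is a bijection between $S_K[\hat\sigma(\pi A),\sigma_0]$ and $S_K[\hat\sigma(A),\sigma_0]$ via the Hamming-distance identity, and then match the misclassification events for $v$ and $u$ under this correspondence (the paper writes the inverse bijection $\rho \mapsto \tau\circ\rho\circ\xi^{-1}$, but the argument is the same).
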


\begin{proof}
Since $\hat{\sigma}(A) \stackrel{d}{=} \hat{\sigma}(\pi A)$, we have
\[
P \Big(v \in \mathcal{E}[\hat{\sigma}(A), \sigma_0 ] \Big) = 
       P \Big(v \in \mathcal{E}[\hat{\sigma}(\pi (A)), \sigma_0] \Big).
\]
We claim that $u \in \mathcal{E}[\hat{\sigma}(A), \sigma_0]$ if and only if $v \in \mathcal{E}[\hat{\sigma}(\pi A), \sigma_0]$, implying the desired result:
\begin{align*}
P\Big( u \in \mathcal{E}[\hat{\sigma}(A), \sigma_0]\Big) &= P\Big(v \in \mathcal{E}[ \hat{\sigma}(\pi A), \sigma_0]\Big) = P\Big(v \in \mathcal{E}[\hat{\sigma}(A), \sigma_0]\Big).
\end{align*}

Consider a fixed matrix $A$, and let $\tau \in S_K$ satisfy $\tau \circ \sigma_0 \circ \pi^{-1} = \sigma_0$. Let $\xi \in S_K$ be the permutation such that $\hat{\sigma}(\pi A) = \xi \circ \hat{\sigma}(A) \circ \pi^{-1}$. For any $\rho \in S_K$, we have
\begin{align*}
d_H(\rho \circ \hat{\sigma}(A), \; \sigma_0) &= 
      d_H( \tau \circ \rho \circ \xi^{-1} \circ \xi \circ \hat{\sigma}(A) \circ \pi^{-1}, \; \tau \circ \sigma_0 \circ \pi^{-1} ) \\
  &= d_H( \tau \circ \rho \circ \xi^{-1} \circ \hat{\sigma}(\pi A), \; \sigma_0).
\end{align*}
Therefore, $\rho \in S_K[\hat{\sigma}(A), \sigma_0]$ if and only if $\tau \circ \rho \circ \xi^{-1} \in S_K[\hat{\sigma}(\pi(A)), \sigma_0]$. In particular, if $v \in \mathcal{E}[\hat{\sigma}(\pi A), \sigma_0]$, we have $\tau \circ \rho \circ \xi^{-1} \circ \hat{\sigma}(\pi A)(v) \neq \sigma_0(v)$ for some $\rho \in S_K[\hat{\sigma}(A), \sigma_0]$. Then
\begin{align*}
\hat{\sigma}(A)(u) &= \hat{\sigma}(A)(\pi^{-1}(v)) = \xi^{-1} \circ \xi \circ \hat{\sigma}(A) \circ \pi^{-1} (v) = \xi^{-1} \circ \hat{\sigma}( \pi A) (v) \\
   &\neq \rho^{-1} \circ \tau^{-1} \circ \sigma_0(v) = \rho^{-1} \circ \tau^{-1} \circ \sigma_0( \pi( u )) = \rho^{-1} (\sigma_0(u)).
\end{align*}
Thus, $u \in \mathcal{E}[ \hat{\sigma}(A), \sigma_0]$. Similar reasoning shows that if $u \in \mathcal{E}[ \hat{\sigma}(A), \sigma_0]$, then $v \in \mathcal{E}[\hat{\sigma}(\pi A), \sigma_0]$.
\end{proof}

\begin{corollary}
\label{cor:permutation_equivariance_symmetry_sbm}
Let the true clustering $\sigma_0$ be arbitrary. Suppose the weight matrix $A$ is drawn from a weighted SBM and $\hat{\sigma}$ is any permutation equivariant estimator.  Let $u$ and $v$ be two nodes lying in equal-sized clusters. Then
\[
P\Big( u \in \mathcal{E}[\hat{\sigma}(A), \sigma_0]\Big) = P\Big( v \in \mathcal{E}[\hat{\sigma}(A), \sigma_0] \Big).
\]
\end{corollary}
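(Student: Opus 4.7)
The plan is to reduce the corollary to Lemma~\ref{lem:permutation_equivariance_symmetry} by exhibiting, for any pair $u,v$ lying in equally-sized clusters of $\sigma_0$, a permutation $\pi \in S_n$ satisfying the three hypotheses of that lemma. There are two cases to handle, depending on whether $u$ and $v$ lie in the same cluster or in distinct clusters of equal size.

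\textbf{Case 1: $\sigma_0(u)=\sigma_0(v)$.} Take $\pi$ to be the transposition that swaps $u$ and $v$ and fixes every other node, and let $\tau \in S_K$ be the identity. Then $\pi(u)=v$ and $\tau \circ \sigma_0 \circ \pi^{-1} = \sigma_0$ by construction, so hypotheses (1) and (3) of Lemma~\ref{lem:permutation_equivariance_symmetry} hold. For hypothesis (2), observe that in the weighted SBM the entries of $A$ are mutually independent and the marginal law of $A_{ab}$ depends only on the pair $(\sigma_0(a),\sigma_0(b))$. Since $\pi$ preserves cluster memberships, $(\sigma_0(\pi^{-1}(a)),\sigma_0(\pi^{-1}(b))) = (\sigma_0(a),\sigma_0(b))$ for all $a,b$, and hence $[\pi A]_{ab} = A_{\pi^{-1}(a),\pi^{-1}(b)}$ has the same distribution as $A_{ab}$. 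Independence of entries then yields $A \stackrel{d}{=} \pi A$.

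\textbf{Case 2: $\sigma_0(u)\ne\sigma_0(v)$ with $|C_{\sigma_0(u)}|=|C_{\sigma_0(v)}|$.} Let $i=\sigma_0(u)$ and $j=\sigma_0(v)$. Choose any bijection $\pi_{ij}\colon C_i \to C_j$ with $\pi_{ij}(u)=v$ (this exists since the two clusters have the same size), and define $\pi\in S_n$ to agree with $\pi_{ij}$ on $C_i$, with $\pi_{ij}^{-1}$ on $C_j$, and to fix every node outside $C_i \cup C_j$. Let $\tau\in S_K$ be the transposition of the labels $i$ and $j$. Then $\pi(u)=v$, and a direct check shows $\tau\circ\sigma_0\circ\pi^{-1}=\sigma_0$: a node in $\pi(C_i)=C_j$ is sent by $\sigma_0$ to $j$ and then by $\tau$ back to $i$, matching $\sigma_0$ on $C_i$, and similarly for the other cluster; nodes fixed by $\pi$ are fixed by $\tau\circ\sigma_0\circ\pi^{-1}$ since they lie outside $C_i\cup C_j$. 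This yields hypotheses (1) and (3). For hypothesis (2), note that for any $a,b$, the pair $(\sigma_0(\pi^{-1}(a)),\sigma_0(\pi^{-1}(b)))$ equals $(\sigma_0(a),\sigma_0(b))$ up to the swap $i\leftrightarrow j$, which does not change whether $\sigma_0(a)=\sigma_0(b)$ or not. Hence the marginal law of each $[\pi A]_{ab}=A_{\pi^{-1}(a),\pi^{-1}(b)}$ is identical to that of $A_{ab}$ in the homogeneous weighted SBM (both are $P$ if the endpoints share a community and $Q$ otherwise), and independence of entries gives $A\stackrel{d}{=}\pi A$.

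In either case the three hypotheses of Lemma~\ref{lem:permutation_equivariance_symmetry} are verified, and its conclusion $P(u\in\mathcal{E}[\hat\sigma(A),\sigma_0]) = P(v\in\mathcal{E}[\hat\sigma(A),\sigma_0])$ is exactly the claim. The only mildly subtle point is the careful construction of $\tau$ in Case~2 so that $\tau\circ\sigma_0\circ\pi^{-1}=\sigma_0$ rather than merely being equivalent to $\sigma_0$; no further calculations are needed.
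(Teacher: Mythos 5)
Your proof is correct and follows essentially the same approach as the paper's: in both cases (same cluster, distinct equally-sized clusters) you construct the same kind of permutation $\pi$ (a transposition of $u,v$, resp. a bijective swap of the two clusters sending $u\mapsto v$) and the same $\tau$ (identity, resp. label transposition), then invoke Lemma~\ref{lem:permutation_equivariance_symmetry}. The only difference is that you spell out the verification of the measure-preservation hypothesis (2) in terms of independence of entries and homogeneity of the edge law, where the paper merely asserts it; this is a welcome elaboration but not a different argument.
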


\begin{proof}
By Lemma~\ref{lem:permutation_equivariance_symmetry}, it suffices to construct a permutation $\pi \in S_n$ satisfying conditions (1)--(3). First suppose $u$ and $v$ lie in the same cluster. It is easy to see that the conditions are satisfied when $\pi$ is the permutation that swaps $u$ and $v$ and $\tau$ is the identity. If $u$ and $v$ lie in different clusters, suppose without loss of generality that $u$ is in cluster 1 and $v$ is in cluster 2, where clusters 1 and 2 have the same size. Let $\pi$ be the permutation that exchanges all nodes in cluster 1 with all nodes in cluster 2. The conditions are satisfied when $\tau$ is the permutation that transposes cluster labels 1 and 2. 
\end{proof}

\subsection{Properties of Renyi divergence}

We first state a lemma that provides an alternative characterization of the Renyi divergence. The proof of this lemma uses the same technique as Anantharam~\cite{anantharam2017variational}. 
\begin{lemma}
\label{lem:information_equivalence}
Let $P$ and $Q$ be two probability measures on $S \subset \R$ that are absolutely continuous with respect to each other, with point masses $P_0$ and $Q_0$ at zero. Then $I(P, Q) = 2D$, where
\begin{equation*}
D := \inf_{Y \in \mathcal{P}} \max \left\{ \int \log \frac{dY}{dP} dY, \, \int \log \frac{dY}{dQ} dY \right\},
\end{equation*}
and $\mathcal{P}$ denotes the set of probability distributions absolutely continuous with respect to both $P$ and $Q$. Furthermore, the infimum in $D$ is attained by a distribution $Y^*$ whose singular (with respect to the Lebesgue measure) part is $Y^*_0 = \frac{1}{Z} (P_0 Q_0)^{1/2}$ and continuous part is $(1 - Y^*_0) y^*(x) = \frac{1}{Z} \int_S \sqrt{(1 - P_0)(1-Q_0) p(x) q(x)} \, dx$, where $Z =  (P_0 Q_0)^{1/2} + \int_S \sqrt{(1 - P_0)(1-Q_0) p(x) q(x)} \, dx$. 
\end{lemma}

\begin{proof} 
First note that $D$ is finite by choosing $Y = P$. We claim that
\begin{align}
\label{EqnMeteor}
D = \inf_{Y \in \mathcal{P}} \left\{\int \log \frac{dY}{dP} dY: \; \int \log \frac{dP}{dQ} dY = 0\right\}.
\end{align}
This holds because for any $Y \in \mathcal{P}$ such that $\int \log \frac{dP}{dQ} dY \neq 0$, 
we have $\int \log \frac{dY}{dP} dY \neq \int \log \frac{dY}{dQ} dY$. 
Suppose without loss of generality that the first quantity is larger. Then it is possible to take $\tilde{Y} = (1 - \epsilon) Y + \epsilon P$ for $\epsilon$ small enough such that $ \max \left\{ \int \log \frac{d\tilde{Y}}{dP} d\tilde{Y}, \, \int \log \frac{d\tilde{Y}}{dQ} d\tilde{Y} \right\}$ strictly decreases, so the infimum in the definition of $D$ could not have been achieved. 

Now let $Y \in \mathcal{P}$ be such that $\int \log \frac{dP}{dQ} dY = 0$. Since $Y \ll Y^*$, we have
\begin{align*}
  \int \log \frac{dY}{dP} dY
  &= \int \log \frac{dY}{d Y^*} \frac{d Y^*}{d P} dY 
  \geq \int \log \frac{dY}{dY^*} dY + \int \log \frac{dY^*}{dP} dY \\
  &= \frac{1}{2} \int \log \frac{dP}{dQ} dY - \log Z = - \log Z = - \frac{I(P, Q)}{2}.
\end{align*}
We finish the proof by verifying that $\int \log \frac{dP}{dQ} dY^* = 0$, and that in the derivation above, the inequality holds with inequality when $Y = Y^*$. 
\end{proof}


\subsection{Bounding the variance}
\begin{lemma}\label{lem:Q_variance}
  Let $C^*$ and $\mathcal{G}^*$ be defined as in Theorem~\ref{thm:lower_bound}, let $((P_0, p), (Q_0, p) \in \mathcal{G}^*$, and let $\sigma_0$ be a clustering satisfying the hypothesis of Theorem~\ref{thm:lower_bound}. Let $\sigma^*$ be defined as~\eqref{eqn:sigma_star_defn}, and $\mathcal{Q}$ be defined as in equation~\eqref{eqn:Q_definition}. Then
  \[
    V_{\Psi} (\mathcal{Q}) \leq 63 C^{* 2} \frac{n}{\beta K} I((P_0, p), (Q_0, q)) .
  \]
\end{lemma}
\begin{proof}
Observe that since $\E_{\Psi}[ \mathcal{Q} \given \sigma^* = \sigma_0^1] = \E_{\Psi}[ \mathcal{Q} \given \sigma^* = \sigma_0^2]$, we have
\begin{align}
V_{\Psi}(\mathcal{Q}) &= \textrm{Var}( \E_{\Psi}[ \mathcal{Q} \given \sigma^* ] ) + 
                        E[ \textrm{Var}_{\Psi}( \mathcal{Q} \given \sigma^* ) ] =   E[ \textrm{Var}_{\Psi}( \mathcal{Q} \given \sigma^* ) ]
  \nonumber \\
      &=  \frac{1}{2} \textrm{Var}_{\Psi}( \mathcal{Q} \given \sigma^* = \sigma_0^1) + 
          \frac{1}{2} \textrm{Var}_{\Psi}( \mathcal{Q} \given \sigma^* = \sigma_0^2). \label{eqn:variance_decomposition}
          \end{align}
Let us consider the first term of equation~\eqref{eqn:variance_decomposition}. Under the distribution $\mathbb{P}_{\Psi}$, the random variables $A_{1,u}$ for any $u \in C_1 \cup C_2$ are independent and identically distributed according to $Y$. Let $u'$ be an arbitrary node in $C_1 \cup C_2$ such that $u' \neq 1$. We thus have
\begin{align*}
\textrm{Var}_{\Psi}(\mathcal{Q} \given \sigma^* = \sigma_0^1) &= \sum_{ \substack{ u:\, u \neq 1, \\ \sigma_0^1(u) = 1 }} 
         \textrm{Var}_{\Psi} \left( \log \frac{Y(A_{1,u})}{P(A_{1,u})} \right) +
                        \sum_{u :\, \sigma_0^1(u) = 2} \textrm{Var}_{\Psi} \left( \log \frac{Y(A_{1,u})}{Q(A_{1,u})} \right)  \\
   &\leq \frac{n}{\beta K}  \E_{\Psi} \left[ \biggl( \log \frac{Y(A_{1,u'})}{P(A_{1,u'})} \biggr)^2 \right] + 
         \frac{n}{\beta K} \E_{\Psi} \left[ \biggl( \log \frac{Y(A_{1,u'})}{Q(A_{1,u'})} \biggr)^2 \right]. 
\end{align*}
By the same argument, we can show that the second term of equation~\eqref{eqn:variance_decomposition} is upper-bounded by the same quantity, so
\begin{align}
  V_{\Psi}(\mathcal{Q})
  &\leq \frac{n}{\beta K} \frac{n}{\beta K}  \E_{\Psi} \left[ \biggl( \log \frac{Y(A_{1,u'})}{P(A_{1,u'})} \biggr)^2 \right] + 
         \frac{n}{\beta K} \E_{\Psi} \left[ \biggl( \log \frac{Y(A_{1,u'})}{Q(A_{1,u'})} \biggr)^2 \right] \nonumber \\
  &\leq \frac{n}{\beta K} \biggl\{ \int \biggl( \log \frac{dY}{dP} \biggr)^2 dY + \int \biggl( \log \frac{dY}{dQ} \biggr)^2 dY \biggr\}. \label{eqn:variance_integral_bound_overall}
\end{align}
By the definition of $Y$, we have
\begin{align}
 \int \left( \log \frac{dY}{dP} \right)^2 dY 
  = Y_0 \biggl( \log \frac{Y_0}{P_0} \biggr)^2 + (1-Y_0) \int y(x) \biggl( \log \frac{(1-Y_0) y(x)}{(1-P_0)p(x)} \biggr)^2 dx. \label{eqn:lower_bound_var_terms}
\end{align}
For simplicity of presentation, we use the shorthand
\begin{align*}
  I &:= I((P_0, p), (Q_0, q)) \quad \textrm{and} \quad \\
  H &:= (\sqrt{P_0} - \sqrt{Q_0})^2 + (\sqrt{1-P_0} - \sqrt{1-Q_0})^2 + \sqrt{(1-P_0)(1-Q_0)} \int (\sqrt{p(x)} - \sqrt{q(x)})^2 dx,
  \end{align*}
from this point on. Let $Z = \sqrt{P_0 Q_0} + \sqrt{(1-P_0)(1-Q_0)} \int \sqrt{p(x)q(x)} dx$. It is then clear that $I = -2 \log Z$ and $Z \geq \frac{1}{2}$, since $I \leq 2 \log 2$ by assumption. We bound the first term of equation~\eqref{eqn:lower_bound_var_terms}. Note that, since $(x+y)^2 \leq 2 x^2 + 2 y^2$, we have
\begin{align}
  Y_0 \biggl( \log \frac{Y_0}{P_0} \biggr)^2
  \leq \frac{1}{2} Y_0 \biggl( \log \frac{Q_0}{P_0} \biggr)^2 + \frac{1}{2} I^2.
  \label{eqn:var_first_term_decomp}
\end{align}
Suppose $P_0 \leq Q_0$. Then by Lemma~\ref{lem:log_linearize}, we have $0 \leq \log \frac{Q_0}{P_0} \leq \frac{Q_0 - P_0}{P_0}$. Thus, we have 
\begin{align*}
  Y_0 \biggl( \log \frac{Y_0}{P_0} \biggr)^2
  &\leq  C^{*2}  \frac{(Q_0 - P_0)^2}{ P_0 \vee Q_0} + \frac{1}{2} I^2 \\
  &\leq 4 C^{*2} (\sqrt{Q_0} - \sqrt{P_0})^2 + \frac{1}{2} I^2
   \leq  4 C^{*2} H + \frac{1}{2} I^2  \stackrel{(a)} \leq 4 C^{*2} I + \frac{1}{2} I^2,
\end{align*}
using inequality~\eqref{eqn:var_first_term_decomp} and the fact that $((P_0, p), (Q_0,p)) \in \mathcal{G}^*$. Here, $(a)$ follows from the fact that $I = -2 \log (1 - \frac{1}{2} H)$ and Lemma~\ref{lem:log_linearize}. Now suppose $P_0 \geq Q_0$. By Lemma~\ref{lem:log_linearize} again, we have $0 \geq \log \frac{Q_0}{P_0} \geq -(1 + \frac{1}{2} C^*) \bigl( \frac{P_0 - Q_0}{P_0} \bigr) \leq -C^* \bigl( \frac{P_0 - Q_0}{P_0} \bigr)$. Therefore,
\begin{align*}
    Y_0 \biggl( \log \frac{Y_0}{P_0} \biggr)^2
  \leq C^{*2} \frac{(Q_0 - P_0)^2}{ P_0 \vee Q_0} + \frac{1}{2} I^2 \leq 4 C^{*2} I + \frac{1}{2} I^2.
\end{align*}
Thus, we conclude that
\begin{align}
  Y_0 \biggl( \log \frac{Y_0}{P_0} \biggr)^2 \leq 4 C^{*2} I + \frac{1}{2} I^2.
  \label{eqn:var_first_term_bound_final}
\end{align}

Now we turn our attention to the second term in equation~\eqref{eqn:lower_bound_var_terms}. We have
\begin{align}
\label{EqnNoodle}
& (1 - Y_0) \int y(x) \left( \log \frac{1 - Y_0}{1 - P_0} \frac{y(x)}{p(x)} \right)^2 dx \notag  \\
& \qquad \leq 
   (1 - Y_0) \int y(x) \left\{ \frac{1}{2} \left| \log \frac{1-Q_0}{1-P_0} \right| 
              +  \frac{1}{2} \left| \log \frac{q(x)}{p(x)}\right| + \frac{I}{2}\right\}^2 dx \notag \\
  & \qquad \leq \frac{9}{4} (1 - Y_0) \int y(x) \left\{ \left| \log \frac{1-Q_0}{1-P_0} \right|^2 + 
                    \left| \log \frac{q(x)}{p(x)}\right|^2 +  I^2 \right\} dx,
\end{align}
where we have used the fact that $(x + y + z)^2 \leq 9x^2 + 9y^2 + 9z^2$ in the last inequality.
Define
\begin{align*}
\mathcal{A} & := (1 - Y_0) \left| \log \frac{1-Q_0}{1-P_0} \right|^2 \int y(x) dx, \qquad \mathcal{B} := (1 - Y_0) \int y(x) \left| \log \frac{q(x)}{p(x)}\right|^2 dx, \\
\mathcal{C} & := (1 - Y_0) I^2 \int y(x)   dx.
\end{align*}
We bound each term separately, beginning with $\mathcal{A}$. Note that
\begin{align}
  \int y(x) dx \leq 2 \int \sqrt{(1-P_0)(1-Q_0)p(x) q(x)} dx \leq 2.
  \label{eqn:y_density_bound}
\end{align}
Note also that we can use the same reasoning that we applied to derive inequality~\eqref{eqn:var_first_term_bound_final} to show that $(1-Y_0) \left| \log \frac{1-Q_0}{1-P_0} \right|^2 \leq 4 C^{*2} I$. Therefore, we have $\mathcal{A} \leq 8 C^{*2} I$. 
Moving on to $\mathcal{B}$, we have
\begin{align*}
\mathcal{B} 
  & \leq 2 \sqrt{(1 - P_0)(1-Q_0)} \int \sqrt{p(x) q(x)}  \left| \log \frac{q(x)}{p(x)} \right|^2 dx\\
  & \leq \sqrt{(1 - P_0)(1-Q_0)} \int (p(x) + q(x))  \left| \log \frac{q(x)}{p(x)} \right|^2 dx\\
  &\stackrel{(a)} \leq C^* \sqrt{(1 - P_0)(1-Q_0)} \int (\sqrt{p(x)} - \sqrt{q(x)})^2 dx \leq C^* I,
\end{align*}
where $(a)$ follows by the hypothesis of the proposition.
Finally, from inequality~\eqref{eqn:y_density_bound}, we have $\mathcal{C} \leq 8 (1 - Y_0) I^2 \leq (16 \log 2) I $. Substituting back into inequality~\eqref{EqnNoodle}, we therefore obtain
\begin{align*}
(1 - Y_0) \int y(x) \left( \log \frac{1 - Y_0}{1 - P_0} \frac{y(x)}{p(x)} \right)^2 dx \leq \frac{9}{4} (8 C^{*2} + C^* + 16 \log 2) I.
\end{align*}

Substituting inequality~\eqref{eqn:var_first_term_bound_final} and the above inequality back into inequality~\eqref{eqn:lower_bound_var_terms}, we obtain the desired bound on the first term of inequality~\eqref{eqn:variance_integral_bound_overall}. The second term of inequality~\eqref{eqn:variance_integral_bound_overall} can be bounded in exactly the same manner. Thus, we have overall bound
\begin{align*}
  V_{\Psi}(\mathcal{Q}) \leq 2(22 C^{*2} + 3 C^* + 13) \frac{nI}{\beta K} \leq
  63 C^{*2} \frac{nI}{\beta K},
\end{align*}
where we have used the fact that $C^* \geq 1$.
\end{proof}


\section{Additional useful lemmas}

\begin{lemma}
  \label{lem:log_linearize}
  For all $x \in (-\infty, 1)$, we have $\log(1 - x) \leq -x$. Moreover, fix $t \in [0, 1)$. Then for all $x \in [-t, t]$, we have
  \[
    -x \bigl( 1 + \frac{x}{2(1-t)^2} \bigr) \leq \log(1 - x) \leq -x.
  \]
\end{lemma}

\begin{proof}
  Let $x \in [-t, t]$. Then $\log(1 - x) \leq -x$ by concavity of the logarithm. On the other hand, by Taylor's theorem, there exists $x'$ such that $|x'| \leq |x|$ and
  \[
    \log(1 - x) = -x - \frac{1}{2 (1-x')^2} x^2
    \geq -x \bigl( 1 + \frac{x}{2 (1-t)^2} \bigr).
  \]
\end{proof}

\begin{lemma}
  \label{lem:renyi_hellinger}
Let $P$ and $Q$ be two distributions absolutely continuous with respect to each other, and let $H(P,Q) := \int \biggl( \biggl( \frac{dP}{dQ} \biggr)^{1/2} - 1 \biggr) \, dQ$. Then $I(P,Q) \geq H(P, Q)$. If in addition $I(P,Q) \leq 2 \log 2$, then $H(P, Q) \leq 1$ and $I(P, Q) \leq (1 + H(P,Q)) H(P, Q)$. 
\end{lemma}

\begin{proof}
The conclusion follows from the fact that $I(P,Q) = -2 \log (1 - H(P, Q)/2)$ and Lemma~\ref{lem:log_linearize}.
\end{proof}

\bibliographystyle{plain}
\bibliography{refs}

\begin{thebibliography}{10}

\bibitem{Abbe17}
E.~Abbe.
\newblock Community detection and stochastic block models: {R}ecent
  developments.
\newblock {\em arXiv preprint arXiv:1703.10146}, 2017.

\bibitem{AbbEtal14}
E.~Abbe, A.~S. Bandeira, A.~Bracher, and A.~Singer.
\newblock Decoding binary node labels from censored edge measurements: {P}hase
  transition and efficient recovery.
\newblock {\em IEEE Transactions on Network Science and Engineering},
  1(1):10--22, 2014.

\bibitem{abbe2014exact}
E.~Abbe, A.~S. Bandeira, and G.~Hall.
\newblock Exact recovery in the stochastic block model.
\newblock {\em IEEE Transactions on Information Theory}, 62:1, 2014.

\bibitem{abbe2015recovering}
E.~Abbe and C.~Sandon.
\newblock Recovering communities in the general stochastic block model without
  knowing the parameters.
\newblock In {\em Advances in Neural Information Processing Systems}, pages
  676--684, 2015.

\bibitem{AbbSan15}
Emmanuel Abbe and Colin Sandon.
\newblock Community detection in general stochastic block models: Fundamental
  limits and efficient algorithms for recovery.
\newblock pages 670--688, 2015.

\bibitem{aicher2014learning}
C.~Aicher, A.~Z. Jacobs, and A.~Clauset.
\newblock Learning latent block structure in weighted networks.
\newblock {\em Journal of Complex Networks}, 2014.

\bibitem{anantharam2017variational}
Venkat Anantharam.
\newblock A variational characterization of r{\'e}nyi divergences.
\newblock In {\em 2017 IEEE International Symposium on Information Theory
  (ISIT)}, pages 893--897. IEEE, 2017.

\bibitem{balakrishnan2011noise}
S.~Balakrishnan, M.~Xu, A.~Krishnamurthy, and A.~Singh.
\newblock Noise thresholds for spectral clustering.
\newblock In {\em Advances in Neural Information Processing Systems}, pages
  954--962, 2011.

\bibitem{barrat2004architecture}
A.~Barrat, M.~Barthelemy, R.~Pastor-Satorras, and A.~Vespignani.
\newblock The architecture of complex weighted networks.
\newblock {\em Proceedings of the National Academy of Sciences of the United
  States of America}, 101(11):3747--3752, 2004.

\bibitem{BloEtal08}
V.~D. Blondel, J.-L. Guillaume, R.~Lambiotte, and E.~Lefebvre.
\newblock Fast unfolding of communities in large networks.
\newblock {\em Journal of Statistical Mechanics: Theory and Experiment},
  2008(10):P10008, 2008.

\bibitem{BocEtal06}
S.~Boccaletti, V.~Latora, Y.~Moreno, M.~Chavez, and D.-U. Hwang.
\newblock Complex networks: {S}tructure and dynamics.
\newblock {\em Physics reports}, 424(4):175--308, 2006.

\bibitem{chin2015stochastic}
P.~Chin, A.~Rao, and V.~Vu.
\newblock Stochastic block model and community detection in sparse graphs: A
  spectral algorithm with optimal rate of recovery.
\newblock In {\em Proceedings of The 28th Conference on Learning Theory}, pages
  391--423, 2015.

\bibitem{DecEtal11}
A.~Decelle, F.~Krzakala, C.~Moore, and L.~Zdeborov\'a.
\newblock Asymptotic analysis of the stochastic block model for modular
  networks and its algorithmic applications.
\newblock {\em Physical Review E}, 84:066106, Dec 2011.

\bibitem{DavKle10}
D.~Easley and J.~Kleinberg.
\newblock {\em Networks, Crowds, and Markets: Reasoning About a Highly
  Connected World}.
\newblock Cambridge University Press, New York, NY, USA, 2010.

\bibitem{FieEtal85}
S.~E. Fienberg, M.~M. Meyer, and S.~S. Wasserman.
\newblock Statistical analysis of multiple sociometric relations.
\newblock {\em Journal of the American Statistical Association},
  80(389):51--67, 1985.

\bibitem{gao2016community}
C.~Gao, Z.~Ma, A.~Y. Zhang, and H.~H. Zhou.
\newblock Community detection in degree-corrected block models.
\newblock {\em arXiv preprint arXiv:1607.06993}, 2016.

\bibitem{GaoEtal15}
Chao Gao, Zongming Ma, Anderson~Y. Zhang, and Harrison~H. Zhou.
\newblock Achieving optimal misclassification proportion in stochastic block
  model.
\newblock {\em The Journal of Machine Learning Research}, 18, 2017.

\bibitem{GolEtal10}
A.~Goldenberg, A.~X. Zheng, S.~E. Fienberg, and E.~M. Airoldi.
\newblock A survey of statistical network models.
\newblock {\em Found. Trends Mach. Learn.}, 2(2):129--233, February 2010.

\bibitem{HajEtal14}
B.~Hajek, Y.~Wu, and J.~Xu.
\newblock Achieving exact cluster recovery threshold via semidefinite
  programming.
\newblock {\em IEEE Transactions on Information Theory}, 62:5, 2014.

\bibitem{HajEtal15}
B.~Hajek, Y.~Wu, and J.~Xu.
\newblock Achieving exact cluster recovery threshold via semidefinite
  programming: {E}xtensions.
\newblock {\em arXiv preprint arXiv:1502.07738}, 2015.

\bibitem{hajek2015submatrix}
B.~Hajek, Y.~Wu, and J.~Xu.
\newblock Submatrix localization via message passing.
\newblock {\em arXiv preprint arXiv:1510.09219}, 2015.

\bibitem{hajek2017information}
B.~Hajek, Y.~Wu, and J.~Xu.
\newblock Information limits for recovering a hidden community.
\newblock {\em IEEE Transactions on Information Theory}, 2017.

\bibitem{HarSha00}
E~Hartuv and R~Shamir.
\newblock A clustering algorithm based on graph connectivity.
\newblock {\em Information Processing Letters}, 76(4--6):175--181, 2000.

\bibitem{HeiEtal12}
S.~Heimlicher, M.~Lelarge, and L.~Massouli{\'e}.
\newblock Community detection in the labelled stochastic block model.
\newblock {\em arXiv preprint arXiv:1209.2910}, 2012.

\bibitem{HolEtal83}
P.~W. Holland, K.~B. Laskey, and S.~Leinhardt.
\newblock Stochastic blockmodels: {F}irst steps.
\newblock {\em Social Networks}, 5(2):109--137, 1983.

\bibitem{Jac10}
M.~O. Jackson.
\newblock {\em Social and Economic Networks}.
\newblock Princeton University Press, 2010.

\bibitem{JogLoh15}
V.~Jog and P.~Loh.
\newblock Information-theoretic bounds for exact recovery in weighted
  stochastic block models using the {R}enyi divergence.
\newblock {\em arXiv preprint arXiv:1509.06418}, 2015.

\bibitem{lei2015consistency}
J.~Lei and A.~Rinaldo.
\newblock Consistency of spectral clustering in stochastic block models.
\newblock {\em The Annals of Statistics}, 43(1):215--237, 2015.

\bibitem{LelEtal13}
M.~Lelarge, L.~Massouli{\'e}, and J.~Xu.
\newblock Reconstruction in the labeled stochastic block model.
\newblock In {\em Information Theory Workshop (ITW), 2013 IEEE}, pages 1--5.
  IEEE, 2013.

\bibitem{Mas14}
L.~Massouli{\'e}.
\newblock Community detection thresholds and the weak {R}amanujan property.
\newblock In {\em Proceedings of the 46th Annual ACM Symposium on Theory of
  Computing}, STOC '14, pages 694--703. ACM, 2014.

\bibitem{McS01}
F.~McSherry.
\newblock Spectral partitioning of random graphs.
\newblock In {\em 42nd IEEE Symposium on Foundations of Computer Science},
  pages 529--537. IEEE, 2001.

\bibitem{MosEtal12}
E.~{Mossel}, J.~{Neeman}, and A.~{Sly}.
\newblock {Stochastic Block Models and Reconstruction}.
\newblock {\em arXiv preprint arXiv:1202.1499}, 2012.

\bibitem{MosEtal14}
E.~Mossel, J.~Neeman, and A.~Sly.
\newblock Consistency thresholds for binary symmetric block models.
\newblock {\em arXiv preprint arXiv:1407.1591}, 2014.

\bibitem{MosEtal13}
Elchanan Mossel, Joe Neeman, and Allan Sly.
\newblock A proof of the block model threshold conjecture.
\newblock {\em Combinatorica}, pages 1--44, 2013.

\bibitem{NewEtal06}
M.~Newman, A.-L. Barabasi, and D.~J. Watts.
\newblock {\em The Structure and Dynamics of Networks: (Princeton Studies in
  Complexity)}.
\newblock Princeton University Press, Princeton, NJ, USA, 2006.

\bibitem{New04}
M.~E.~J. Newman.
\newblock Analysis of weighted networks.
\newblock {\em Physical Review E}, 70(5):056131, 2004.

\bibitem{NewGir04}
M.~E.~J. Newman and M.~Girvan.
\newblock Finding and evaluating community structure in networks.
\newblock {\em Physical review E}, 69(2):026113, 2004.

\bibitem{PriEtal00}
J.~K. Pritchard, M.~Stephens, and P.~Donnelly.
\newblock Inference of population structure using multilocus genotype data.
\newblock {\em Genetics}, 155(2):945--959, 2000.

\bibitem{RubSpo10}
M.~Rubinov and O.~Sporns.
\newblock Complex network measures of brain connectivity: {U}ses and
  interpretations.
\newblock {\em NeuroImage}, 52(3):1059--1069, 2010.
\newblock Computational Models of the Brain.

\bibitem{Sad72}
D.S. Sade.
\newblock Sociometrics of {M}acaca mulatta: {I.} {L}inkages and cliques in
  grooming matrices.
\newblock {\em Folia Primatologica}, 18(3--4):196--223, 1972.

\bibitem{ShiMal00}
J.~Shi and J.~Malik.
\newblock Normalized cuts and image segmentation.
\newblock {\em IEEE Trans. Pattern Anal. Mach. Intell.}, 22(8):888--905, August
  2000.

\bibitem{yun2016optimal}
S.~Yun and A.~Proutiere.
\newblock Optimal cluster recovery in the labeled stochastic block model.
\newblock In {\em Advances in Neural Information Processing Systems}, pages
  965--973, 2016.

\bibitem{zhangminimax}
Anderson~Y Zhang and Harrison~H Zhou.
\newblock Minimax rates of community detection in stochastic block models.
\newblock {\em The Annals of Statistics}, 44(5):2252--2280, 2016.

\bibitem{ZhaHor05}
B.~Zhang and S.~Horvath.
\newblock A general framework for weighted gene co-expression network analysis.
\newblock {\em Statistical applications in genetics and molecular biology},
  4(1):1128, 2005.

\bibitem{zhao2012consistency}
Y.~Zhao, E.~Levina, and J.~Zhu.
\newblock Consistency of community detection in networks under degree-corrected
  stochastic block models.
\newblock {\em The Annals of Statistics}, 40(4):2266--2292, 2012.

\end{thebibliography}

\end{document}